\newtheorem{theorem}{Theorem}[section]
\newtheorem{proposition}[theorem]{Proposition}
\newtheorem{corollary}[theorem]{Corollary}
\newtheorem{lemma}[theorem]{Lemma}
\theoremstyle{definition}
\newtheorem{definition}[theorem]{Definition}
\newtheorem{remark}[theorem]{Remark}
\newtheorem{example}[theorem]{Example}
\newtheorem{conjecture}[theorem]{Conjecture}
\let\nc\newcommand
\nc{\la}{\label}
\nc{\End}{{\rm{End}}}
\nc{\Hom}{{\rm{Hom}}}
\newcommand{\id}{{\rm{id}}}
\newcommand{\cn}{\mathrm{cn}}
\newcommand{\cl}{\mathrm{cl}}
\newcommand{\im}{{\rm{Im}}}
\newcommand{{\scrc}}{\mathscr{C}}
\newcommand{{\sfc}}{\mathsf{C}}
\newcommand{{\sfr}}{\mathsf{R}}
\newcommand{{\sfskein}}{\mathsf{Skein}}
\newcommand{{\sft}}{\mathsf{T}}
\newcommand{{\sfd}}{\mathsf{D}}
\newcommand{{\sfh}}{\mathsf{H}}
\newcommand{{\cs}}{\mathcal{S}}
\newcommand{{\cc}}{\mathcal{C}}
\newcommand{{\ct}}{\mathcal{T}}
\newcommand{{\ca}}{\mathcal{A}}
\newcommand{{\ck}} {\mathcal{K}}
\newcommand{{\cd}} {\mathcal{D}}
\newcommand{{\ch}} {\mathcal{H}}
\newcommand{\ci}{\mathcal{I}}
\newcommand{\xx}{{\mathbf{x}}}
\newcommand{\yy}{{\mathbf{y}}}
\newcommand{\zz}{{\mathbf{z}}}
\newcommand{\aab}{{\mathbf{a}}}
\newcommand{\bb}{{\mathbf{b}}}
\newcommand{\SL}{\mathrm{SL}}
\newcommand{\GL}{\mathrm{GL}}
\newcommand{\N}{\mathbb{N}}
\newcommand{{\Z}}{\mathbb{Z}}
\newcommand{\Q}{\mathbb{Q}}
\newcommand{\C}{\mathbb{C}}
\newcommand{\fa}{\mathfrak{a}}
\newcommand{\fb}{\mathfrak{b}}
\newcommand{\pic}[2][3]{{\,\vcenter{\hbox{\includegraphics[width= #1 cm]{#2}}}}}
\title{Dubrovnik Skein Theory and Power Sum Elements}
\author{Alexander T. Pokorny}
\begin{document}

\maketitle


\cleardoublepage


\begin{frontmatter}
	\setcounter{secnumdepth}{3}
	\setcounter{tocdepth}{3}





	\begin{abstract}
In this work, we extend some results from the Kauffman bracket and HOMFLYPT skein theories to the Kauffman (Dubrovnik) skein theory. A definition is given for ``power sum" type elements $\widetilde{P}_k$ in the Dubrovnik skein algebra of the annulus $\mathcal{D}(A)$. These elements generalize the Chebyshev polynomials often used when studying Kauffman bracket skein algebras. Threadings of the $\widetilde{P}_k$ are used as generators in a presentation of the Dubrovnik skein algebra of the torus $\mathcal{D}(T^2)$, where they are shown to satisfy simple relations. This description of $\mathcal{D}(T^2)$ is used to describe the natural action of this algebra on the skein module of the solid torus. We give evidence that the universal character rings for the orthogonal and symplectic Lie groups correspond to the skein algebra $\mathcal{D}(A)$ such that the Schur functions of type either B, C or D correspond to annular closures $\widetilde{Q}_\lambda$ of minimal idempotents of the Birman-Murakami-Wenzl algebras $BMW_n$. We also record some miscellaneous applications of the $\widetilde{P}_k$, such as commutation relations for the annular closures of BMW symmetrizers $\widetilde{Q}_{(n)}$ and an expression of central elements of $BMW_n$ in terms of Jucys-Murphy elements.
\end{abstract}
	\cleardoublepage

	\tableofcontents
	\cleardoublepage





\end{frontmatter}


\chapter{Introduction}

\section{History and Development of Skein Theory}

Often in mathematics, the simplest objects are the most difficult to describe. At the very least, this is certainly true for mathematical knots. For over a century, mathematicians and others been puzzled by questions contained in what may be called knot theory. One of the guiding themes in knot theory is the search for new knot invariants, mechanisms defined combinatorially or otherwise, which partition the set of knots into equivalence classes so that each class is distinguishable from the others. 

The latter part of the twentieth century saw a surge of interest in this area with the discovery of the Jones polynomial \cite{Jon85}, a knot invariant which may be calculated by applications of a simple, recursive, diagrammatic formula on knot diagrams called the Kauffman bracket \cite{Kau87}. This caused much excitement for many different reasons. Firstly, applications of the Jones polynomial were used to prove some of the Tait conjectures \cite{Kau87, Thi88} which were century-old open problems in knot theory. Separate from this, connections to physics were discovered where a knot may be regarded as the orbit of a charged particle in three-dimensional spacetime. It was realized that the Jones polynomial of a knot is, in a certain physical model (SU(2) Chern-Simons theory), a quantum averaging of the Wilson operator, which is a ``probability amplitude" that the particle traveled along the orbit \cite{Wit89}. The Jones polynomial is also intimately connected to representation theory, where it may be calculated as the quantum trace of an endomorpism, considered as a braid, of tensor powers of the two-dimensional representation of the Drinfeld-Jimbo quantum group $U_q(\mathfrak{sl}_2)$. There is also a connection to hyperbolic geometry, which has been formulated via the relatively long-standing (generalized) volume conjecture which states that there is a relationship between the complete hyperbolic volume of the complement of a knot in the $3$-sphere and its (colored) Jones polynomial \cite{Hik07}. The conjecture has been verified for certain special cases \cite{KT00}. 

The Jones polynomial has been categorified via Khovanov homology \cite{Kho00} where a link diagram gives rise to a chain complex whose homology is isotopy invariant and whose Euler characteristic is the Jones polynomial of the link. Khovanov homology is important for physics because it allows for a knot to be interpreted as a physical object in four-dimensional spacetime \cite{Wit12}. It is equally important for mathematics as categorification spawns additional structure which can be used to prove things. For example, Khovanov homology can detect the unknot \cite{KM11}. Whether or not the Jones polynomial can detect the unknot remains open.

A key fact about the diagrammatic formulas, generally called a skein relations, which defines the Jones polynomial is that they are defined locally at the crossings of the diagrams. Consequently, one may use the skein relations to define invariants of knots in more general $3$-manifolds $M$. The result is a family of modules, called skein modules, each of which are the set of linear combinations of links in $M$ modulo the skein relations. The direction of this sort of generalization is orthogonal to the direction of categorification, although there is effort going into combining the two ideas \cite{APS04}. The skein module construction is functorial with respect to smooth orientation-preserving embeddings of $3$-manifolds, so the skein relations provide a sort of algebraic topology of smooth embedded submanifolds called a skein theory. Furthermore, the skein module of a trivial interval-bundle over an oriented surface may be equipped with a canonical algebra structure, hence they are called skein algebras. These algebras are of particular interest due to their connections with other algebraic objects arising from low-dimensional topology, geometry, and representation theory. Notably, the skein algebra of a surface is a deformation quantization of the coordinate ring of the $SL_2(\C)$-character variety of the surface with respect to a certain Poisson algebra structure \cite{BFK99}. More recently with the advent of cluster algebras, the skein algebra of a marked surface was shown by to be related to certain quantum cluster algebras of the marked surface if the markings yield triangulations \cite{Mul16}.

Shortly after Jones' discovery of the Jones polynomial, mathematicians quickly defined more general knot polynomials. Where the Jones polynomial may be seen as corresponding to $\mathfrak{sl}_2$, the HOMFLYPT polynomial \cite{FHLMOY85} \cite{PT88} corresponds to $\mathfrak{sl}_n$ or $\mathfrak{gl}_n$ for any $n$, and the Dubrovnik polynomial \cite{Kau90} corresponds to $\mathfrak{sp}_{2n}$ or $\mathfrak{so}_n$ for any $n$. Similar to the Jones polynomial, each of these invariants may be defined and computed using similar recursive linear formulas on knot diagrams. These types of formulas, called skein relations, were first considered by Alexander \cite{Ale28} and popularized by Conway \cite{Con70} in the context of the Alexander polynomial (which corresponds to the Lie superalgebra $\mathfrak{gl}(1|1)$ as above, but we will not discuss this here). The search for knot homologies to extend ideas of Khovanov is an ongoing project among the categorification community. 

The HOMFLYPT and Dubrovnik skein relations each give rise to a skein theory in the same way that the skein relations of the Jones polynomial does. Many of the connections between the Jones polynomial and other areas of mathematics either generalize or are expected to generalize to these other skein theories. For example, the HOMFLYPT and Dubrovnik polynomials provide relationships to physics, just with respect to different Chern-Simons theories. As for categorification, there is presently a large effort to define and understand knot homologies with respect to different knot polynomials \cite{KR08, Web17}. Separate from this, HOMFLYPT and Dubrovnik skein algebras of a surface may be understood as quantizations of the symmetric algebras of the oriented and unoriented Goldman Lie algebras of the surface respectively \cite{Tur91}. 

One difficulty in working with skein algebras is that they are difficult to describe explicitly using generators and relations. In the Kauffman bracket skein theory, the algebras are relatively small making them easier to work with. One typically uses a basis of built from certain Chebyshev polynomials of simple curves due to the surprisingly simple relations they satisfy. The HOMFLYPT skein algebras are much larger since the corresponding skein relation doesn't allow one to eliminate crossings in diagrams like the Kauffman bracket does. Still, the Chebyshev elements admit HOMFLYPT generalizations, called power sum elements \cite{Mor02b}, which satisfy equally simple relations. The name ``power sum element" stems from the fact that the HOMFLYPT skein algebra of the annulus corresponds nicely to the ring of symmetric functions \cite{Luk05}. Under this correspondence, the power sum element in the skein algebra is a power sum symmetric function. 

The Chebyshev elements and power sum elements were used to give beautiful, compact presentations of the respective skein algebras of the torus \cite{FG00, MS17}. In the former case, this shows that the Kauffman bracket skein algebra of the torus is isomorphic to a subalgebra of a specialization of the $\mathfrak{sl}_2$ DAHA (see \cite{MS19} for a survey of this relationship). In the case, Morton and Samuelson exhibit a surprising relationship to the elliptic Hall algebra. This is an algebra which comes from considering extensions in the category of coherent sheaves over a smooth elliptic curve over a finite field \cite{BS12}. Although this relationship is mysterious, the authors suggest this relationship is a shadow of a special case of mirror symmetry. At the very least, this gives motivation to further understand and develop the theory of skein algebras.

\section{Thesis Outline}

In this thesis, we attempt to generalize various facts known about the Kauffman bracket and HOMFLYPT skein theories to the Dubrovnik skein theory. We organize the paper as follows. In Chapter 2, we define precisely what we mean by skein theories and properties thereof. Much of this chapter is a summary of current literature, although the exposition seems to be somewhat unique in that we describe the Kauffman bracket, HOMFLYPT, and Dubrovnik skein theories all in parallel. In any case, it is intended to be a good starting point for someone new to skein theory. We also define a natural transformation from the Dubrovnik skein theory to the Kauffman bracket skein theory. 

The aim of Chapter 3 is to discuss the Dubrovnik skein algebra of the torus, which was first discussed by Morton, Samuelson, and the author \cite{MPS19}. First we define Dubrovnik versions of the HOMFLYPT power sum elements and show that they satisfy very simple relations which describe how a power sum element moves past a single strand. We then use these elements to give a presentation of the skein algebra of the torus. Using the natural transformation above, we show that the Dubrovnik power sum elements project to the Chebyshev elements. We also describe an injective algebra homomorphism from the Dubrovnik skein algebra of the torus to the HOMFLYPT skein algebra of the torus. There is a $\Z/2\Z$-action on the HOMFLYPT skein algebra of the torus, and the image of this homomorphism is the subalgebra invariant with respect to this action. Next, we give a description of the natural action of the Dubrovnik skein algebra of the torus on the skein module of the solid torus. 

In Chapter 4, we compile a few other results that aren't related to the skein algebra of the torus. First, we give a formula which describes both HOMFLYPT and Dubrovnik versions of how other commonly used skein elements, the symmetrizer closures, commute past a single strand. Comparing these formulas with the analagous relations satisfied by the power sum elements really highlight how the simple the latter relations really are. Next, for each type X among B, C, and D, we set up an algebra homomorphism from the ring of symmetric functions to the Dubrovnik skein algebra of the annulus. We conjecture that the Schur functions of type X defined in \cite{KT87} correspond to the basis of the Dubrovnik skein algebra of the annulus defined \cite{LZ02} under this homomorphism. We prove the conjecture holds for Schur functions indexed by partitions of length at most two. Finally, we give formulas for wrapping Dubrovnik power sum elements around braids in the Birman-Murakami-Wenzl (BMW) algebras as meridians. We show how wrapping the identity braid by such a meridian, which is contained in the center of the BMW algebra, admits a simple description in terms of Jucys-Murphy elements. A similar argument shows how the minimal idempotents constructed in \cite{BB01} are eigenvalues of the linear maps defined by the meridian operations. The eigenvalues are given and are all distinct, showing that the eigenspaces are $1$-dimensional, generalizing a result of \cite{LZ02}.

\chapter{Skein Theory} \label{chapter2}

\section{Foundations and General Notions} \label{sec:foundations}
In this work, we will need to discuss a few different variants of skein modules. For this reason, it will be useful to first describe some general framework of skein theory so that each of these variants will be a special case. Unless otherwise stated, we will assume $M$ is an oriented $3$-manifold with boundary $\partial M$ (possibly empty), $\Sigma$ is an oriented surface, $I$ is the real interval $[0,1]$, and $R$ is a commutative and unital ring. 

\begin{definition}
    Let $T_1, T_2: X \to M$ be smooth embeddings of a smooth manifold $X$ into $M$. A \textbf{smooth ambient isotopy} $H: T_1 \Rightarrow T_2$ is a smooth homotopy of diffeomorphisms $H_t: M \to M$ such that $H_0=\id_M$ and $H_1 \circ T_1 = T_2$. Furthermore, we demand that the boundary $\partial M$ is fixed by the homotopy. 
\end{definition}

The relation 
\[
T_1 \sim T_2 \textrm{ if and only if there exists a smooth ambient isotopy } H: T_1 \Rightarrow T_2
\]
is an equivalence relation. The smoothness requirement of $H$ is important when considering knots. Without it, all knots would be equivalent to the unknot by contracting all of the complexity of the knot to a point. 

\begin{definition}
Let $N$ be a finite set of points contained in the boundary $\partial M$. An \textbf{$N$-tangle} in $M$ (or just \textit{tangle} for short) is the smooth ambient isotopy class of a smooth embedding 
\[
T: \underbrace{\underset{j \in J}{\bigsqcup}\, S^1}_{:=L} \sqcup \underbrace{\underset{k \in K}{\bigsqcup}\, I}_{:=B} \to M
\]
for some finite sets $J$ and $K$ such that
\begin{enumerate}
\item the image of $L$ lies in the interior of $M$,
\item the image of the interior of $B$ lies in the interior of $M$,
\item the image of the boundary of $B$ equals $N$.
\end{enumerate}
If $B$ is empty, then the result is called a \textbf{link} in $M$. Similarly, if $L$ is empty, then it's called a \textbf{braid} in $M$. 

One may also consider \textit{oriented} or \textit{framed} tangles by choosing an orientation or framing for each point in $N$ and for each connected component of $L$ and $B$ such that the choices are compatible with each other with respect to the smooth embedding. We define a \textbf{framing} of an tangle $T$ to be an extension to a smooth embedding of $T \times I$ into $M$ up to ambient isotopy, so that each connected component of (the image of) $T$ may be thought of as a ribbon. We impose the condition that the framings may not include half-twists, so a framing is essentially just an assignment of an integer to each connected component of $T$.
If $M = \Sigma \times I$, then we will assume that the points in $N$ are contained in $\Sigma \times \{ \frac{1}{2} \}$ and that their framings are thought to be embedded orthogonally to $\Sigma \times \{ \ast \}$. 
\end{definition}

\begin{figure}[h]
\centering
$\vcenter{\hbox{\includegraphics[height=3cm]{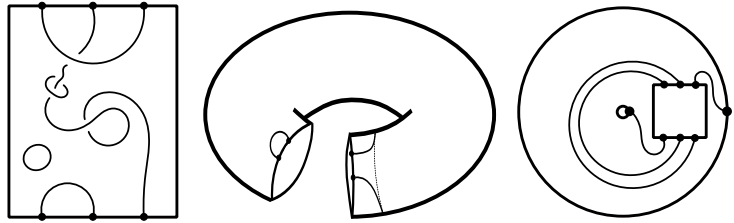}}}$
\caption{Some examples of framed tangle diagrams.}
\end{figure}

We say a framed tangle in $\Sigma \times I$ has \textbf{blackboard framing} if the entire framing is embedded orthogonally to $\Sigma$. Every framed link in $\Sigma \times I$ is isotopic to one with a blackboard framing by turning each twist into a loop with a local blackboard framing:
\[
    \vcenter{\hbox{\includegraphics[height=3cm]{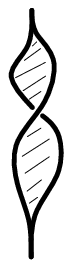}}} \quad = \quad \vcenter{\hbox{\includegraphics[height=3cm]{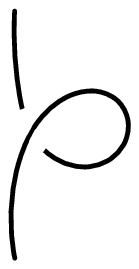}}}.
\]
This suggests that we may represent framed links in $\Sigma \times I$ as link diagrams on $\Sigma$. Indeed, equivalence under ambient isotopy is the same as equivalence under the three Reidemeister moves below.
\begin{equation}
\vcenter{\hbox{\includegraphics[height=3cm]{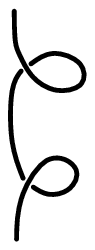}}} \quad = \quad \vcenter{\hbox{\includegraphics[height=3cm]{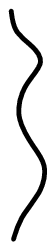}}}\quad = \quad \vcenter{\hbox{\includegraphics[height=3cm]{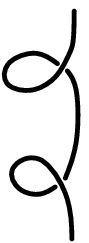}}}
\end{equation}
\begin{equation}
\vcenter{\hbox{\includegraphics[height=3cm]{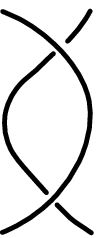}}} \quad = \quad \vcenter{\hbox{\includegraphics[height=3cm]{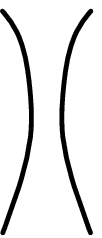}}} \quad = \quad \vcenter{\hbox{\includegraphics[height=3cm]{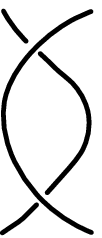}}}
\end{equation}
\begin{equation}
\vcenter{\hbox{\includegraphics[height=3cm]{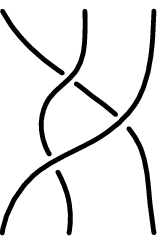}}} \quad = \quad \vcenter{\hbox{\includegraphics[height=3cm]{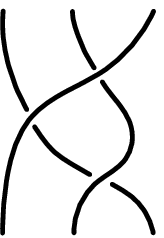}}}.
\end{equation}

Define the \textbf{writhe} of a tangle diagram is the number of positive crossings minus the number of negative crossings (a positive crossing is when the bottom-left to top-right strand is on top). It is easy to see that the Reidemeister moves above preserve the writhe of a diagram, so the writhe is a well defined integer on isotopy classes of diagrams. The writhe should be thought of as a grading on the free $R$-module on the set of tangles in the given space, which provides a good reason to work with framed links over ordinary links. Such a module is a main ingredient of this theory, so let's honor it with a proper discussion.

\begin{definition}
Let $\ct(M, N)$ be the free $R$-module generated by the set of framed $N$-tangles in $M$. Analagously, we can define $\ct^{or}(M,N)$ to be the free $R$-module generated by the set of oriented framed $N$-tangles in $M$. All definitions which are to follow in this section have an analagous definition using oriented tangles. Also, we will formally define $\ct_R(\varnothing, \varnothing) := R$. 
\end{definition}

The construction $\ct( -, -)$ is actually a symmetric monoidal functor $\ct: \sfc \to R\textsf{-Mod}$ for a careful choice of category $\sfc$ which we now describe. The objects of $\sfc$ are pairs $(M, N)$ of the same type as discussed previously. A morphism $(f, W): (M', N') \to (M, N)$ is a pair of a smooth, orientation-preserving embedding $f: M' \to M$ such that $M - f(M')$ is either a smooth 3-manifold or the empty set, and choice of $W \in \ct\big( M-f(M'), N \sqcup f(N') \big)$ (unless $M - f(M')$ is empty, in which case $W$ is a formal symbol for the ``empty link" in the empty set). Composition is given by gluing wirings. In other words, $(g, W') \circ (f, W) = (g \circ f, W' \cup W)$, which is associative since $\circ$ and $\cup$ are associative.

\begin{figure}[h]
\centering
$\vcenter{\hbox{\includegraphics[height=3cm]{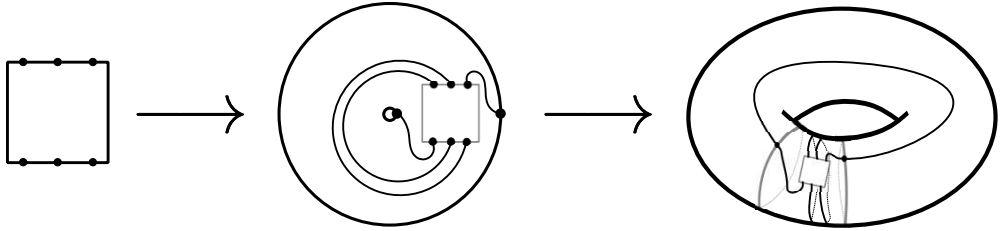}}}$
\caption{An example of how composition works in $\sfc$.}
\end{figure}

The induced map denoted $W: \ct(M', N') \to \ct(M, N)$ is a linear map defined by $W(T) = W \cup T$, and we will refer to such a linear map $W$ as a \textbf{wiring}. We are abusing notation by denoting this linear map by $W$, but it should be clear from the context what $f$ is since it is technically encoded in the data of the element $W \in \ct\big( M-f(M'), N \sqcup f(N') \big)$. It is true that $\ct$ preserves composition and identity morphisms which makes $\ct$ a functor. $\sfc$ can now be equipped with a symmetric monoidal structure via disjoint union. It is clear that 
\[\ct_R(M \sqcup M', N \sqcup N') \cong \ct_R(M, N) \underset{R}{\otimes} \ct_R(M', N')\]
for any sets of framed points $N \subset \partial M$ and $N' \subset \partial M'$. The unit is given by the object $(\varnothing, \varnothing) \in \sfc$ and define $\ct(\varnothing, \varnothing) := R$, which makes $\ct$ a symmetric monoidal functor. 

\begin{definition}
Let $B$ be the smooth closed $3$-ball, $N_i$ be a choice of $2i$ boundary points of $B$, and let $X \subset \underset{i \in \N}{\bigsqcup} \ct(B, N_i)$ be some (typically finite) set, which we will call a set of \textbf{skein relations}. Given any tangle module $\ct(M, N_M)$, there exists a submodule $\ci(X)$ generated by the set 
\[\{ W(x) \mid x \in X \text{ and } W:\ct(B, N_B) \to \ct(M, N_M) \text{ is a wiring diagram} \}.\] 
A quotient of the form $\cs_X(M, N) := \ct(M, N) / \ci(X)$ is called a \textbf{skein module} of $M$ relative to $N$. If $N = \varnothing$ is the empty set, we may use the notation $\cs_X(M) := \cs_X(M, \varnothing)$. Similar definitions may be given using oriented and/or unframed tangles instead. 
\end{definition}

A simple example of a skein relation is 
\begin{align}
&\pic[2.5]{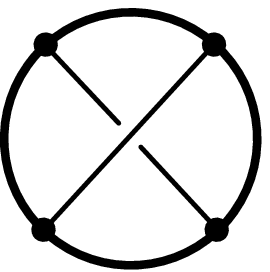} - \pic[2.5]{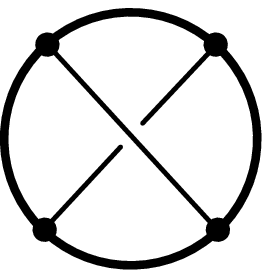}
\end{align}
which eliminates the over-under distinction for crossings. Imposing this skein relation on isotopy classes of tangles results in homotopy classes of tangles. We will explore more examples in Section \ref{sec:skeintheories}.

In general, the construction $\cs_X(-, -)$ is a functor in the same way that $\ct(-, -)$ is; a smooth, orientation-preserving embedding $f: M \to M'$ and an element $W \in \cs_X\big( M-f(M'), N \sqcup f(N') \big)$ (assume $N$ is disjoint from $\im(f)$) defines a linear map $W: \cs_X(M, N) \to \cs_X(M', N')$. In fact, the quotient maps $\alpha_{(M, N)}: \ct(M, N) \to \cs_X(M, N)$ yield a natural transformation. In other words, given a morphism $(M, N) \to (M', N')$ in $\sfc$, the diagram
\begin{center}
\begin{tikzcd}
	\ct(M, N) \arrow[d,"\alpha_{(M, N)}"] \arrow[r, "W"] 
	& \ct(M', N') \arrow[d, "\alpha_{(M', N')}"]\\
	\cs_X(M, N) \arrow[r, "W"] & \cs_X(M', N')
\end{tikzcd}
\end{center}
commutes. Such a functor will be called a $\textbf{skein theory}$ (or \textit{oriented} skein theory if the skein relations are based on oriented tangles). 

For any oriented surface $\Sigma$, we can define a category $\sfskein_X(\Sigma)$ which we will call a $\textbf{skein category}$. The objects of this category are finite sets of framed points (together with a choice of orientation if the skein theory is oriented) $N$ in $\Sigma$, and the morphisms $N \to N'$ are elements of $\cs_X\big(\Sigma \times I, (N \times \{0\}) \sqcup (N' \times \{1\})\big)$, so the category is $R$-linear. Write composition of morphisms by concatenation. If $y:N \to N'$ and $z:N' \to N''$ are morphisms, then their composite $yz:N \to N''$ is constucted by gluing one end of $y$ to the other end of $z$ through $N'$ and rescaling the interval coordinate appropriately.
\\
\begin{figure}[h]
\centering
$\pic[15]{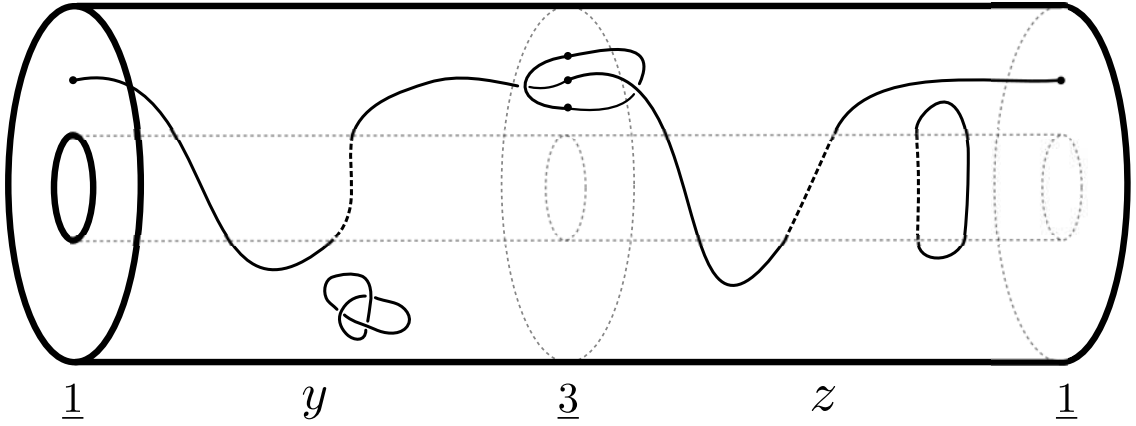}$
\caption{A composition of morphisms in the category $\sfskein_X(A)$ for an annulus $A$.}
\end{figure}

The endomorphism algebras in this category are called \textbf{skein algebras} and are denoted by $\cs_X(\Sigma, N) := \cs_X(\Sigma \times I, (N \times \{0\}) \sqcup (N \times \{1\}) \big)$. If $N$ is the empty set, then we reduce the notation to simply $\cs_X(\Sigma)$.

If $f: \Sigma \to \Sigma'$ is a smooth embedding of surfaces then there is an induced functor 
\[
\sfskein_X(f,W): \sfskein_X(\Sigma') \to \sfskein_X(\Sigma)
\]
defined on objects by $\sfskein_X(f)(N) = f(N)$ and on morphisms in the following way. First, extend $f$ trivially to $f \times \id_I: \Sigma \times I \to \Sigma' \times I$. Then, in the skein algebra of the complement of the image of $f \times \id_I$, choose the multiplicative identity element $e \in \cs_X\big( \Sigma' - \im(f) \big)$ which is the empty tangle. The pair $(f \times \id_I, e)$ is an object in the category $\sfc$, which gives rise to a wiring
\[e: \cs_X\Big(\Sigma \times I, \big(N \times \{0\}\big) \sqcup \big(N' \times \{1\}\big) \Big) \to \cs_X\Big(\Sigma' \times I, \big(f(N) \times \{0\}\big) \sqcup \big( f(N') \times \{1\} \big) \Big)\]
via the functor $\cs_X$. Now we may define what $\sfskein_X(f)$ does to morphisms: $\sfskein_X(f)(y) = e(y)$ for any $y \in \cs_X\Big(\Sigma \times I, (N \times \{0\}) \sqcup (N' \times \{1\}) \Big)$.

\begin{figure}[h]
\centering
$\pic[15]{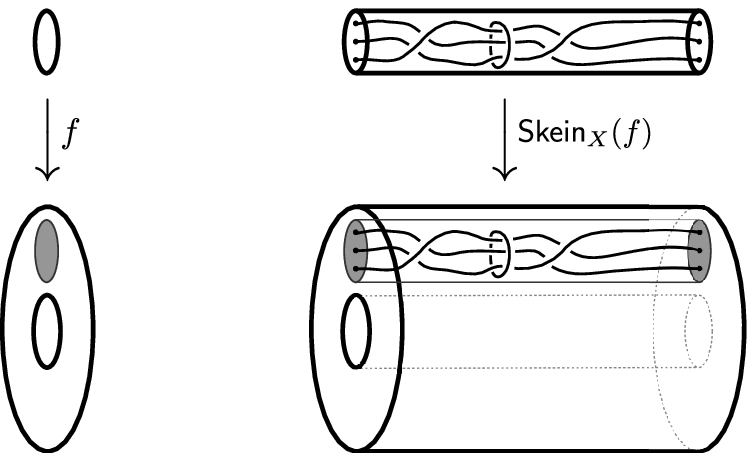}$
\caption{A morphism in $\sfskein_X(D^2)$ and its image under a functor to $\sfskein_X(A)$.}
\end{figure}

It is clear that $\sfskein_X(f)$ preserves composition and identity morphisms. Therefore, if we let $\mathsf{Surf}_{\textrm{emb}}$ be the category of smooth embeddings between smooth oriented surfaces, we can summarize our last few points by saying we have a functor
\[
\sfskein_X: \mathsf{Surf}_{\textrm{emb}} \to \mathsf{Cat}.
\]
In particular, $\sfskein_X(f)$ defines algebra homomorphisms on the skein algebras
\[e: \cs_X\big(\Sigma, N\big) \to \cs_X\big(\Sigma', f(N)\big).\]

Actually, more general functors may be defined. Given such an $f$ and a set of points $N^\circ$ in the complement of the image of $f$, one may define $\sfskein_X(f, N^\circ)$ on objects by $\sfskein_X(f, N^\circ)(N) = N \sqcup N^\circ$ and on morphisms in the same way above, by replacing $e$ with the identity of $\cs_X\big( \Sigma' - \im(f), N^\circ \big)$. Then $\sfskein_X(f, \varnothing) = \sfskein_X(f)$.

\begin{remark} \label{rem:skeinaction}
If $N$ is a set of framed points on $\Sigma$, then a smooth embedding $f: \Sigma \to \partial M$ induces a $\cs_X(\Sigma, N)$-module structure on $\cs_X\big(M, N'\big)$ for any $N'$ with $f(N) \subseteq N'$. The action is given by ``pushing tangles in through the boundary". In other words, the pre-composition of a smooth embedding of a collar neighborhood $g: \partial M \times I \to M$ with $f \times \id_I: \Sigma\times I \to \partial M \times I$ induces a bilinear map
\[
\cs_X(\Sigma, N) \times \cs_X(M, N') \to \cs_X(M, N')
\]
because $M$ minus a collar neighborhood is diffeomorphic to itself. Alternatively, a choice of element in $\cs_X(\Sigma, N')$ produces a wiring $\cs_X(M, N') \to \cs_X(M, N')$.
\end{remark}

\section{Examples of Skein Theories} \label{sec:skeintheories}

The last section leaves us with an important and unanswered question. Which sets of skein relations $X$ produce interesting skein theories? One class of examples is found by importing sets of relations satisfied by morphisms in a linear ribbon category as skein relations. Ribbon categories are braided monoidal categories which are rigid and equipped with a twist morphism for every object, satisfying some compatibility conditions \cite{RT90}. The axioms are such that morphisms built from braidings between tensor products of objects may be interpreted as framed braid diagrams. In particular, the morphisms satisfy the Reidemeister moves shown previously. We will discuss three examples of skein theories derived from skein relations which are meant to emulate linear relations satisfied by the braid and twist morphisms in certain ribbon categories coming from the representation theory of quantum groups, a topic which has generated a lot of interest from mathematicians since the 1980s. The categories of representations of quantum groups are ribbon categories with non-involutive braidings and the skein relations below capture how far off the braidings are from being involutive.  

Here, we are forced to fix a base ring. For our purposes, $R$ must be a commutative ring containing invertible elements $s$ and $v$. Typical choices of $R$ are $\Z[s^{\pm 1}, v^{\pm 1}], \Q(s, v)$, or some other ring in between these. \\

\begin{example}[\textit{Kauffman (Dubrovnik) Skein Relations}]
Let $X_1$ be the set of two unoriented skein relations
\begin{align}
&\pic[2.5]{poscross.eps} = \pic[2.5]{negcross.eps} + (s-s^{-1}) \left( \pic[2.5]{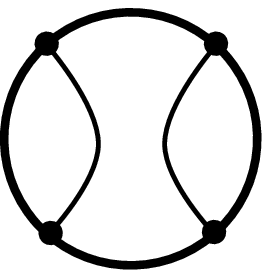} - \pic[2.5]{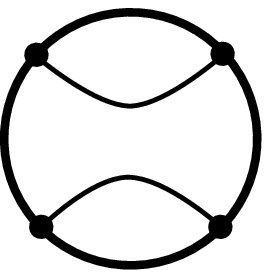} \right), \\ 
&\pic[2.5]{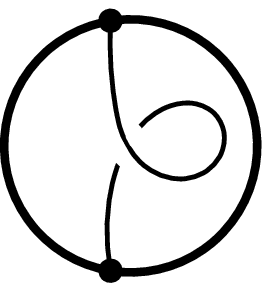} = v \pic[2.5]{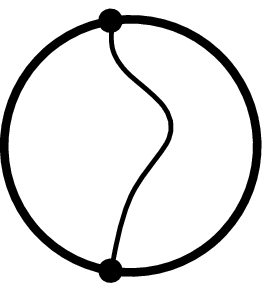}, \\
&\pic[2.5]{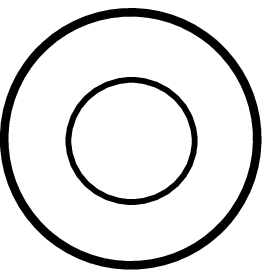} = \left( 1 - \frac{v-v^{-1}}{s-s^{-1}} \right) \pic[2.5]{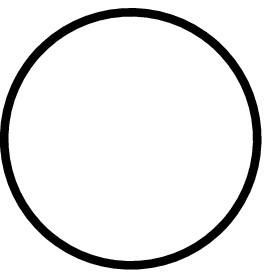}.
\end{align}
The value of the unknot will be denoted by $\delta_\cd := 1 - \frac{v-v^{-1}}{s-s^{-1}}$. The functor $\cd(-,-) := \cs_{X_1}(-,-)$ is the Dubrovnik skein theory (sometimes called the Kauffman skein theory). We will use the notation $\mathsf{D}(-) := \sfskein_{X_1}(-)$ for the Dubrovnik skein categories. Using the Dubrovnik variant is important for us (see Corollary \ref{cor:liealgebra1}). This theory is related to Dubrovnik polynomials in that the Dubrovnik polynomial of a link is a normalized value of the link in $\cd(S^3)$, depending on the framing.
\end{example}

\begin{example}[\textit{HOMFLYPT Skein Relations}]
Next, let $X_2$ be the set of two oriented skein relations
\begin{align}
&\pic[2.5]{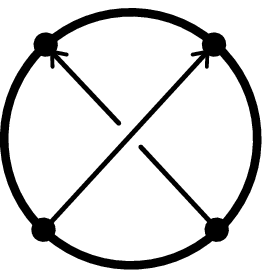} = \pic[2.5]{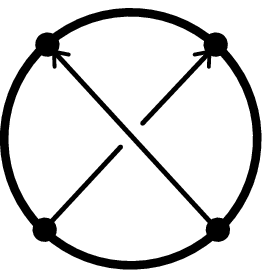} + (s-s^{-1}) \pic[2.5]{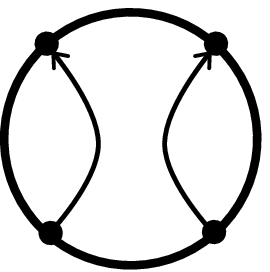}, \\
&\pic[2.5]{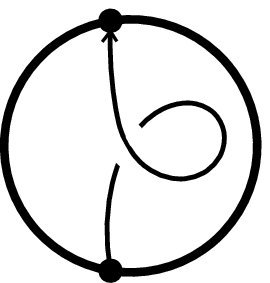} = v \pic[2.5]{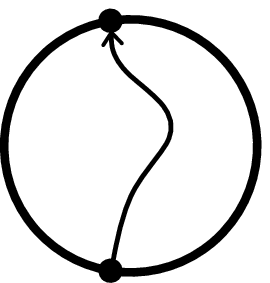}, \\
&\pic{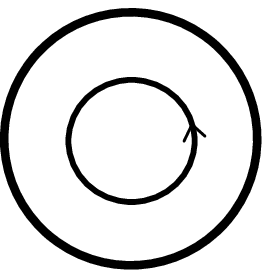} = \frac{v-v^{-1}}{s-s^{-1}} \pic[2.5]{empty.eps}.
\end{align}
The value of the unknot will be denoted by $\delta_\ch := \frac{v-v^{-1}}{s-s^{-1}}$. The functor $\ch(-,-) := \cs_{X_2}(-,-)$ is the HOMFLYPT skein theory and we use the notation $\mathsf{H}(-) := \sfskein_{X_2}(-)$ for the HOMFLYPT skein categories. As in the Dubrovnik case, this theory is related to HOMFLYPT polynomials so that the HOMFLYPT polynomial of a link is a normalized value of the link in $\ch(S^3)$.
\end{example}

\begin{example}[\textit{Kauffman Bracket Skein Relations}]
As a final example, let $X_3$ be the set of two unoriented skein relations
\begin{align}
&\pic[2.5]{poscross.eps} = s \pic[2.5]{idresolution.eps} + s^{-1} \pic[2.5]{capcupresolution.eps}, \\
&\pic[2.5]{vh.eps} = -s^{-3} \pic[2.5]{frameresolution.eps}, \\
&\pic[2.5]{unknot.eps} = -s^2 - s^{-2} \pic[2.5]{empty.eps}.
\end{align}
The value of the unknot will be denoted by $\delta_\ck := -s^2 - s^{-2}$. The functor $\ck(-,-) := \cs_{X_3}(-,-)$ is the Kauffman bracket skein theory (not be confused with the Kauffman skein theory) and we use $\mathsf{K}(-) := \sfskein_{X_3}(-)$ to notate the Kauffman bracket skein categories. The value of a link in $\ck(S^3)$ is equal to its bracket polynomial, which may be normalized to obtain its Jones polynomial. One could choose the framing parameter to be $v$ instead of $-s^{-3}$ like we did before without any obvious consequences, but it is standard in the literature to make the choice we present here.
\end{example}

\begin{remark} \label{rmk:naturaltransformation}
Any linear combination of tangles which satisfy the Dubrovnik skein relations will also satisfy the Kauffman bracket skein relations after making the specialization $v=-s^{-3}$. This can be seen by performing a calculation in the relative skein algebra of the ball with $4$ points. Let $\sigma^\pm$ be the diagrams of positive and negative crossings, $e$ the planar diagram with two vertical strands, and $c$ the planar diagram with two horizontal strands. Then the Dubrovnik skein relation is
\[
\sigma^+_\cd - \sigma^-_\cd - (s-s^{-1})e_\cd + (s-s^{-1})c_\cd = 0
\]
and the Kauffman bracket skein relation implies
\begin{align*}
\sigma^+_\ck &= se_\ck + s^{-1}c_\ck \\
\sigma^-_\ck &= sc_\ck + s^{-1}e_\ck
\end{align*}
where the subscript indicates which skein module the diagrams are in. Consider the assignment $d_\cd \mapsto d_\ch$ for any tangle $d$ in the ball relative to those $4$ points. Then observe:
\begin{eqnarray*}
& &\sigma^+_\ck - \sigma^-_\ck - (s-s^{-1})e_\ck + (s-s^{-1})c_\ck \\
=& &(se_\ck + s^{-1}c_\ck) - (sc_\ck + s^{-1}e_\ck) - (s-s^{-1})e_\ck + (s-s^{-1})c_\ck \\
=& &0
\end{eqnarray*}

Therefore, there is a natural transformation of skein theories 
\[
\eta: \cd(-,-) \to \ck(-,-)
\] 
whose components are essentially the identity map, using the same type of assignment as we did above. Note that any component of $\eta$ corresponding to a skein algebra is an algebra homomorphism since the map preserves the (topological) product structure of the manifold.
\end{remark}

In this work, we will be focused on generalizing existing results from the HOMFLYPT and Kauffman bracket skein theories to the Dubrovnik skein theory, but we will state a few facts regarding the HOMFLYPT and Kauffman bracket skein theories when it is valuable for us to do so. 

\begin{remark}
For a surface $\Sigma$, a link in $\Sigma \times I$ may be considered as a cycle in $H_1(\Sigma \times I, \Z) \cong H_1(\Sigma, \Z)$ and applying a HOMFLYPT skein relation to a link results in a linear combination of diagrams, each belonging to the same homology class. The product of two links in the skein algebra descend to the union of the two chains in the homology group. Therefore, the skein algebra $\ch(\Sigma)$ is graded by the first (singular) homology group $H_1(\Sigma, \Z)$. This isn't true for the skein algebras $\cd(\Sigma)$ and $\ck(\Sigma)$ because the ``cap-cup" resolution diagram in the Dubrovnik and Kauffman bracket skein relations, considered as a chain in $H_1(\Sigma, \Z)$, may not belong to the same homology class as the original link. Another problem is that the links in the skein algebras are unoriented, but chains in $H_1(\Sigma, \Z)$ are. Both of these problems are resolved by replacing $\Z$ with $\Z/2\Z$. So $\cd(\Sigma)$ and $\ck(\Sigma)$ are $H_1(\Sigma, \Z/2\Z)$-graded algebras. 
\end{remark}

\section{Skein Algebras of Tangles in a Cube} \label{sec:cube}

The Birman-Murakami-Wenzl (BMW) algebras, Hecke algebras (of type A), and Temperley-Lieb algebras are endomorphism algebras of certain representations of quantum groups. These algebras widely-studied due to their importance in representation theory, combinatorics, and geometry. For us, they are important because of their relation to the skein theories defined above. 

The endomorphism algebras in the categories $\mathsf{D}(I^2), \mathsf{H}(I^2)$, and $\mathsf{K}(I^2)$ are well-known and motivate the choices of their defining skein relations. For an integer $n \geq 1$, let $\underline{n}$ be a set of $n$ points in $I^2$, chosen to be evenly spaced along the line segment $\{ 1/2 \} \times I$ (choose all points to share the same orientation if in the context of an oriented skein theory). Then the endomorphism algebras 
\[
BMW_n := \End_\mathsf{D}(I^2)(\underline{n}), \qquad H_n := \End_\mathsf{H}(I^2)(\underline{n}), \qquad TL_n := \End_\mathsf{K}(I^2)(\underline{n})
\]
are known to be isomorphic to the Birman-Murakami-Wenzl, (Type A) Hecke, and Temperley-Lieb algebras respectively after certain specializations of the parameters (see \cite{Abr08}, \cite{AM98}, \cite{Mor10}).

\begin{figure}[H]
\centering
$\pic[13]{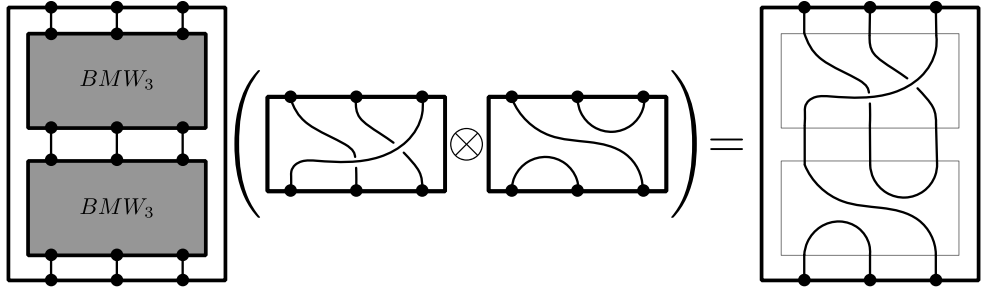}$
\caption{The multiplication structure in $BMW_n$ is induced by a wiring diagram.}
\end{figure}

Let $q \in \C$ be not a root of unity, $U_q(\mathfrak{gl}_N)$ be the Drinfeld-Jimbo quantum group associated to the Lie algebra $\mathfrak{gl}_N$, and $V$ be the natural representation of $U_q(\mathfrak{gl}_N)$ (see \cite{CP94}). Then $H_n$ acts on the $n$-fold tensor product $V^{\otimes n}$ by $U_q(\mathfrak{gl}_N)$-linear endomorphisms, and this action generates $\End_{U_q(\mathfrak{gl}_N)}(V^{\otimes n})$. 

This is actually part of the statement of quantum Frobenius-Schur-Weyl duality: $V^{\otimes n}$ is a $U_q(\mathfrak{gl}_N)$-$H_n$-bimodule, and the action of one algebra generates the linear endomorphisms with respect to the other. So a representation of one algebra determines a representation of the other via tensor product with $V^{\otimes n}$. The Birman-Murakami-Wenzl algebra plays the role of the Hecke algebra for $U_q(\mathfrak{g}_N)$ in the case where $\mathfrak{g}$ is one of the orthogonal or symplectic Lie algebras. For this reason, from a Lie theoretic point of view, the HOMFLYPT skein theory is thought of as a ``type A" theory, while the Dubrovnik skein theory is a ``types B, C, D" skein theory.

As for the Temperley-Lieb algebra $TL_n$, it acts on the $n$-fold tensor power $V^{\otimes n}$ of the natural representation of $U_q(\mathfrak{gl}_2)$ (one may replace $\mathfrak{gl}_2$ with $\mathfrak{sl}_2$). Actually, there is a surjective algebra homomorphism $\pi_{TL}: H_n \to TL_n$ and the action $H_n \to \End_{U_q(\mathfrak{gl}_2)}(V^{\otimes n})$ factors through this homomorphism (see \cite{Jim86}). One can conclude that the action of $H_n$ on $V^{\otimes n}$ is not faithful, at least in the case when $N=2$ and $n \geq 2$.

Much is known about these algebras and some of the results surrounding them are very useful in our context. One of the main ideas is that each of the algebras discussed above has a family of idempotents which provide algebraically nice closures to links in the skein algebra of the annulus. Let's first describe the Hecke algebra in more detail before focusing on the BMW algebra. 

Recall that a parition $\lambda$ of a non-negative integer $n$ is a tuple of weakly-decreasing positive integers $(\lambda_1, \dots, \lambda_k)$ that sum to $n$. For any partition $\lambda$ of $n$, there exists an element $y_\lambda \in H_n$ which is idempotent, so that $y_\lambda^2 = y_\lambda$, and minimal in the sense that it generates a minimal left-ideal of $H_n$ (see \cite{AM98}). The elements $y_n := y_{(n)}$ corresponding to single row partitions are called \textbf{(Hecke) symmetrizers}. These elements have a certain absorption property which makes them unique, which we will now describe. Let $\sigma_i \in H_n$ be the positive crossing between the $i^{\rm{th}}$ and $(i+1)^{\rm{th}}$ strands. The algebra $H_n$ is generated by the $\sigma_i$ and the symmetrizers are the unique idempotent elements satisfying $\sigma_i y_n = s y_n = y_n \sigma_i$. In this way, $y_n$ corresponds to a $1$-dimensional representation of $H_n$ (it is a deformation of the trivial representation of $\C S_n$ where $s=1$).

A similar story holds for the BMW algebra. Firstly, $BMW_n$ is generated by positive crossing elements $\sigma_i$ and cap-cup elements $c_i$. The $c_i$ generate a proper ideal $I_n$ in $BMW_n$. In \cite{BB01}, Beliakova and Blanchet show that the complement of $I_n$ in $BMW_n$ is isomorphic to $H_n$, giving an isomorphism $BMW_n \cong H_n \oplus I_n$. Then they construct an additive and multiplicative (but non-unital) homomorphism 
\begin{equation}
\Gamma_n : H_n \to BMW_n
\end{equation}
which is a section of the natural projection $BMW_n \to H_n$ such that 
\begin{equation} \label{eq:bbsectionproperty}
\Gamma_n(x)y = 0 = y \Gamma_n(x) \quad \textrm{for } x \in H_n, y \in I_n.
\end{equation}
Using this section, one can transport the minimal idempotents $y_\lambda \in H_n$ to minimal idempotents $\tilde{y}_\lambda := \Gamma_n(y_\lambda) \in BMW_n$. The elements $\tilde{y}_n := \tilde{y}_{(n)}$ are called the \textbf{(BMW) symmetrizers} and are the unique idempotent elements of $BMW_n$ satisfying the properties of the Hecke symmetrizers and property \eqref{eq:bbsectionproperty}. Let $[n]$ be the quantum integers
\[
[n] := \frac{s^n - s^{-n}}{s-s^{-1}}
\]
and define the constants
\[
\beta_n:=\frac{1-s^2}{s^{2n-1}v^{-1}-1}.
\]
By \cite{She16}, the BMW symmetrizers satisfy a very useful recurrence relation
\begin{equation} \label{eq:shellyrecurrence}
[n+1] \pic[2.7]{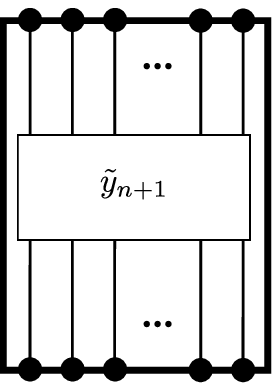} = [n]s^{-1} \pic[2.7]{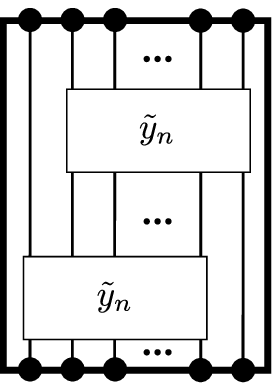} + \pic[2.7]{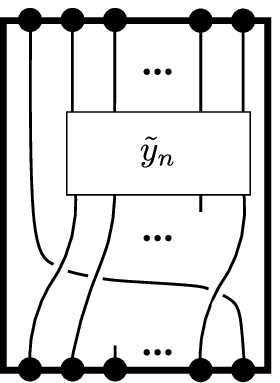} + [n]s^{-1} \beta_n \pic[2.7]{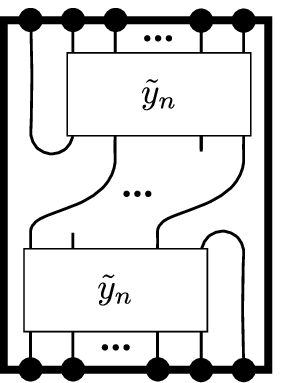}.
\end{equation}

For what it's worth, this relation descends to a well-known recurrence relation for the $y_n \in H_n$ via the projection map described above ($\tilde{y}_n$ gets sent to $y_n$ and the last diagram on the right-hand side gets sent to $0$).

\section{Skein Algebras of the Annulus} \label{sub:annulus}

Let's use the notation $A := S^1 \times [0,1]$ for the annulus. The thickened annulus represents perhaps the simplest space with non-trivial topology that can harbor links. There are often many different smooth embeddings of the thickened annulus into a given 3-manifold $M$ (one for every conjugacy class of $\pi_1(M)$, at the very least). Keep in mind that if we have a smooth embedding $f: A \times I \hookrightarrow M$, then we have an induced linear map between skein modules $\cs_X(f): \cs_X (A) \to \cs_X (M)$, allowing us to prod for information about $\cs_X(M)$. One useful type of embedding of $A$ is into a tubular neighborhood of a knot, which is often called \textit{decorating} or \textit{threading} a knot.

\begin{figure}[h]
\centering
$\pic{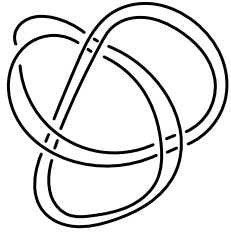}$
\caption{Threading a trefoil knot by a Turaev generator $z_1$.}
\end{figure}

A first observation is that any skein algebra of the form $\cs_X(A)$ is commutative because the link algebra $\cs_\varnothing(A)$ is a commutative algebra. To see this, consider the product of two links $L_1$ and $L_2$ in the link algebra and start by stretching $L_2$ towards the outer boundary past $L_1$, followed by moving it down below the furthest point of $L_1$, and finally contracting $L_2$ can to its original radial position. This is essentially the Eckmann-Hilton argument. 

We'll tackle describing the structure of $\ck(A)$ before the others since it is the easiest. The Kauffman bracket skein relation allows one to resolve all crossings in any diagram on any surface $\Sigma$.The set of non-trivial multi-curves (non-trivial meaning that no curve bounds a disk), together with the empty link in $A \times \{ \ast \}$ forms a basis of $\ck(A)$ (actually, the theorem is stated for any surface, see \cite{BP00} for example). There is only one non-trivial curve $z$ and the set $\{ z^k \}_{k \in \N}$ exhaust all of the multicurves. Therefore, $\ck(A)$ is a polynomial algebra $R[z]$. 

The HOMFLYPT and Dubrovnik cases are more complicated because the skein relations do not allow one to resolve all the crossings in a diagram. However, they do allow one to change a diagram with a negative crossing into a diagram with a positive crossing, plus diagrams with a lesser number of crossings. It follows that the skein algebras $\ch(A)$ and $\cd(A)$ are generated by knots with only positive crossings. Let $z_i$ be a knot in $A$ with winding number $i$ around the annulus. Turaev shows in \cite{Tur90} that the $z_i$ are algebraically independent by showing their HOMFLYPT and Dubrovnik polynomials are algebraically independent. Therefore, $\ch(A)$ is a polynomial algebra $R[z_i, i \in \Z_{\neq 0}]$. $\cd(A)$ has half as many generators since the $z_i$ in that setting are unoriented, but $\cd(A)$ is a polynomial algebra $R[z_i, i \in \Z_{> 0}]$ by the same arguments. 

\begin{figure}[H]
\centering
$\pic[5]{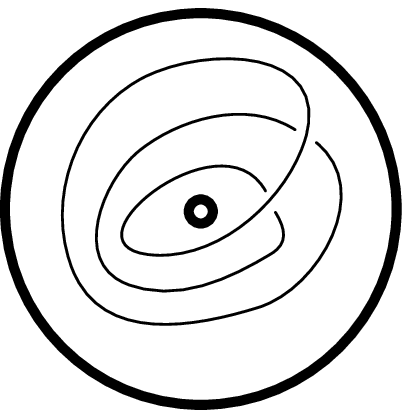}$
\caption{The generator $z_3 \in \cd(A)$.}
\end{figure}

The skein algebras of tangles in a cube relate to the skein algebras of the annulus via a wiring $\cl$. We will call the image of an element under this wiring the \textbf{annular closure} of the element. This map and it satisfies the ``trace property" $\cl(abc) = \cl(cab)$. Note that is it not unital nor multiplicative, but it is true that $\cl(a \otimes b) = \cl(a) \cl(b)$ where $a \otimes b$ is an element obtained by concatenating elements $a$ and $b$. Furthermore, it is well-known that any annular link is the closure of some braid.

\begin{figure}[h]
\centering
$\pic[5]{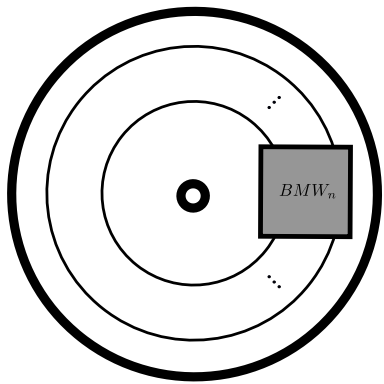}$
\caption{This annular closure wiring diagram.}
\end{figure}

Let $\widetilde{Q}_\lambda := \cl(\tilde{y}_\lambda) \in \cd(A)$. Zhong and Lu show in \cite{LZ02} that the set $\{ \widetilde{Q}_\lambda \}_\lambda$ forms a basis of $\cd(A)$ over the base ring $R=\Q(s, v)$, where $\lambda$ ranges over all partitions. Actually, it is an eigenbasis with respect to the meridian map $\phi:\cd(A) \to \cd(A)$ where the eigenvalue of $\widetilde{Q}_\lambda$ is 
\[
c_\lambda = \delta_\cd + ( s - s^{-1} ) \sum_{\square \in \lambda} v^{-1} s^{2 \textrm{cn}(\square)} - v s^{-2 \textrm{cn}(\square)}
\]
and where $\textrm{cn}(\square) := j - i$ is the \textit{content} of the box in the $\square$ in $i^\textrm{th}$ row and $j^\textrm{th}$ column of the Young diagram of the partition $\lambda$. Two distinct partitions $\lambda$ and $\lambda '$ give rise to distinct values of $c_\lambda$ and $c_\lambda'$. Therefore, each of the eigenspaces is $1$-dimensional. 

\begin{figure}[h]
\centering
$\pic[5]{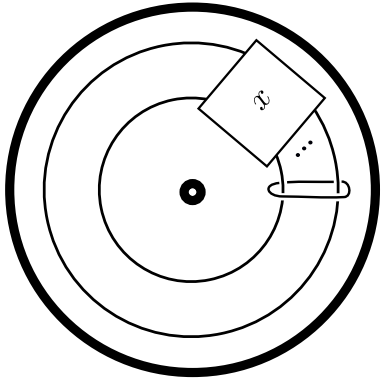}$
\caption{The meridian map $\phi$ applied to the annular closure of some $x$.}
\end{figure}

The behavior of $\widetilde{Q}_\lambda$ is similar to that of the $Q_\lambda := \cl^+(y_\lambda) \in \ch(A)$ where $\cl^+(-)$ is defined by orienting the strands in wiring digram of $\cl(-)$ counter-clockwise. Using the description $\ch(A) = R[z_i, i \in \Z_{\neq 0}]$, consider the subalgebras $\ch(A)^+ := R[z_i, i \in \Z_{> 0}]$ and $\ch(A)^- := R[z_i, i \in \Z_{< 0}]$. Then the two subalgebras are isomorphic via the linear involution defined by $z_i \mapsto z_{-i}$ and $\ch(A) \cong \ch(A)^+ \otimes \ch(A)^-$. The $\{ Q_\lambda \}_\lambda$ forms an eigenbasis of $\ch(A)^+$ with resepect to the meridian map and the eigenspaces are $1$-dimensional. As a side remark, the linear involution $z_i \mapsto z_{-i}$ may be realized topologically by rotating the thickened annulus $\pi$ radians around an appropriately centered axis parallel to $A$ and taking the induced skein map. We will call this map the \textit{flip map}.

In \cite{Luk05}, it's shown how to interpret $\ch(A)^+$ as the ring of symmetric functions $\Lambda$ (also see \cite{Mor02b}). There is an injective algebra homomorphism $\Lambda \to \ch(A)^+$ which sends the Schur function $s_\lambda$ to the minimal idempotent closure $Q_\lambda$. This homomorphism may be upgraded to an isomorphism if we extend the scalars of $\Lambda$ to $\Lambda_R = R \otimes \Lambda$. This theorem has plenty of implications. For example, the structure constants of $\Lambda$ in the basis $\{ s_\lambda \}_\lambda$ are the Littlewood-Richardson coefficients, which are then sent to structure constants of $\ch(A)^+$ in the basis $\{ Q_\lambda \}_\lambda$. In Section \ref{sec:Lukac}, we give a summary about the ring $\Lambda$ and discuss partial results surrounding the Dubrovnik analogue of this homomorphism.

Another application of the interpretation of $\ch(A)^+$ as $\Lambda$ is the definition of new special links in the skein algebra. There is a family of elements in $\Lambda$ known as the power sum symmetric functions, whose counterparts in $\ch(A)^+$ we will denote by $P_k$ for integers $k \geq 1$. These elements are algebraically independent in $\Lambda_\Q := \Q \otimes \Lambda$. Therefore, ordered monomials in the $P_k$ form a basis of $\ch(A)^+$.

\section{Skein Algebras of the Torus}

The power sum elements $P_k$ described above are known to behave wonderfully in skein theoretic computations. This allows for a simple description of the HOMFLYPT skein algebra of the torus $\ch(T^2)$ in terms of generators and relations. First, let's define the generators. Given an $r$ which is either a rational number, or $\pm \infty$, there is an oriented smooth embedding 
\[
\iota_{r}: A \hookrightarrow T^2
\]
of the annulus into a tubular neighborhood of the line of slope $r$ in the flat torus. Consider the embeddings $\iota_{r}$ and $\iota_{-r}$ to be the same embedding with opposite orientations. Now $\iota_r$ induces an algebra homomorphism
\[
\ch(\iota_r): \ch(A) \to \ch(T^2)
\]
on the level of skein algebras. The $\iota_r$ are distinct isotopically (even homotopically) and are exhaustive in the sense that any knot in the thickened torus is may be represented as being contained in the image of an $\iota_q$. Therefore, any generating set of $\ch(A)^+$ defines a generating set of $\ch(T^2)$. Let's consider this with respect to the the power-sum monomial basis of $\ch(A)^+$. More precisely, for any pair of integers $\xx = (a, b)$ where $k = \gcd(\xx)$, define 
\[
P_\xx := \ch(\iota_{a/b})(P_k).
\]

In \cite{MS17}, Morton and Samuelson prove that the skein algebra $\ch(T^2)$ admits a presentation with generators the elements of $\{ P_\xx \mid \xx \in \Z^2 \}$, subject to the single family of relations 
\[
[P_\xx, P_\yy] = \big( s^{\det(\xx,\yy)} - s^{-\det(\xx,\yy)} \big) P_{\xx + \yy}.
\]
This presentation exhibits a relationship to the elliptic Hall algebra $\mathcal{E}_{q,t}$: a 2-parameter family of algebras obtained via Hall algebra decategorification of categories of coherent sheaves over certain elliptic curves (see \cite{BS12}). Along the diagonal of the parameter space, the Burban-Schiffman presentation of the elliptic Hall algebra matches the Morton-Samuelson presentation of the skein algebra of the torus, which makes $\mathcal{E}_{s,s} \cong \ch(T^2)$. At the very least, this highlights two things. Firstly, the skein algebra of the torus admits a known deformation. Secondly, skein algebras are connected to surprising areas of mathematics and hence deserve our attention. 

Next we describe the Kauffman bracket skein algebra of the torus, although the order of exposition is opposite to the historical order of discovery. The story is actually quite similar to the HOMFLYPT case. Recall that $\ck(A)$ is a polynomial algebra $R[z]$ where $z$ is the simple closed curve around the hole of the annulus with winding number $1$. There exist polynomials known as \textit{Chebyshev polynomials} $T_k \in R[z]$ for all integers $k \geq 1$ which form a basis of $\ck(A)$. For any pair of integers $\xx = (a, b) \in \Z^2$, define elements $T_\xx \in \ck(T^2)$ as 
\[
T_\xx := \ck(\iota_{a/b})(T_k).
\]
Note that $T_\xx = T_{-\xx}$ because the $T_k$ are fixed under the flip map, which is possible since the Kauffman bracket skein theory is an unoriented skein theory. With this in mind, we may pass to the smaller indexing set $Z^2 / \langle \xx = -\xx \rangle$. Using these elements, Frohman and Gelca prove in \cite{FG00} that the skein algebra $\ck(T^2)$ admits a presentation with generators $T_\xx$ subject to the relations
\[
T_\xx T_\yy = s^{\det(\xx, \yy)} T_{\xx + \yy} + s^{-\det(\xx, \yy)} T_{\xx - \yy}
\]
which the authors call the ``product-to-sum" formulas. There is an algebra called the \textit{noncommutative torus}, which is a one-parameter deformation of the algebra of continuous functions on the torus. The Frohman-Gelca presentation of $\ck(T^2)$ matches a presentation of the invariant subalgebra of the noncommutative torus with respect to a certain involutive action.

Given the similarity between the presentations of $\ch(T^2)$ and $\ck(T^2)$, one might suspect that there exists a similar description of the Dubrovnik skein algebra $\cd(T^2)$. The answer to this question is affirmative, which is what we discuss in Chapter \ref{chap:torus}.

\section{A Relative Skein Algebra of the Annulus} \label{sec:relativeannulus}

At this point in the story we have introduced special links in the skein algebras of the annulus and described how we can transport these into other skein modules via threading knots. In order to say anything meaningful about these threadings, we ought to know how these special links will interact with other links once in the resulting space. For example, is there an easy way to describe how can we pass a special link through a single strand of another link? To answer this question universally, we should answer it in a certain relative skein algebra which is the endomorphism algebra of one point in the skein category of the annulus. Appropriate wirings from this skein algebra into other skein modules will give us our desired description. Here we will describe this algebra in more detail and summarize what is already known. 

First we will restrict ourselves to the HOMFLYPT case, for which we summarize the results from \cite{Mor02b}. The object we would like to discuss is $\ca_\ch := \ch(A, \underline{1})$, which is depicted diagramatically as an annulus with one point on each boundary component. This algebra is closely related to the affine Hecke algebra of type A, $\dot{H}_1$ (see \cite{MS17}). Two of the most basic elements of $\ca_\ch$ are the elements $e$, the identity of $\ca_\ch$, and $a$, which is a single strand once around the hole of the annulus. The product in the algebra in this digrammatic notation is given by nesting annuli. 

\begin{figure}[h]
\centering
$e = \pic[5]{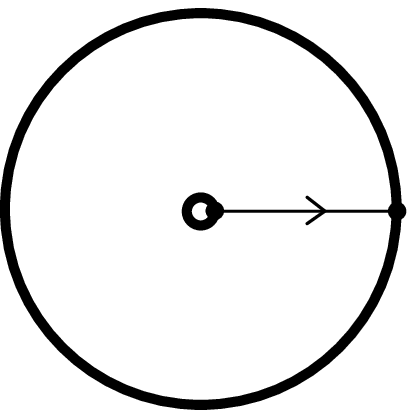} \qquad \qquad \qquad \qquad a = \pic[5]{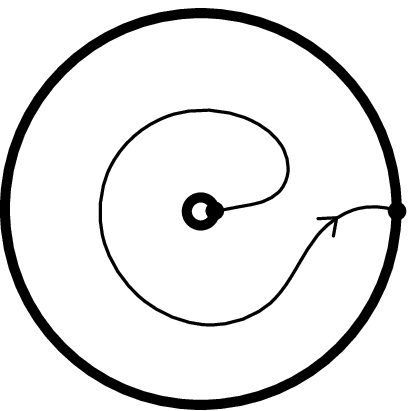}$
\caption{Left: the identity element $e \in \ca_\ch$. Right: $a \in \ca_\ch$.}
\end{figure}

Let $\cc_\ch := \ch(A)$. Then, $\ca_\ch$ admits both a left $\cc_\ch$-action by pushing links in front of tangles in $\ca_\ch$.
It is known that there is an equality of algebras between $\ca_\ch$ and the Laurent polynomial algebra $\cc_\ch[a, a^{-1}]$ with coefficients in $\cc_\ch$. In particular, $\ca_\ch$ is commutative and the left action is determined by how it acts on $e$. Analagously, there is a right $\cc_\ch$-action by pulling links in behind tangles in $\ca_\ch$. 

\begin{figure}[H]
\centering
$\cl(x) \cdot e = \pic[5]{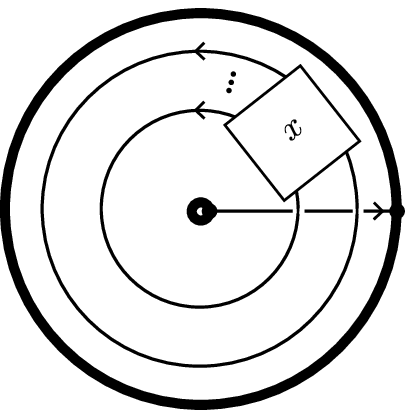} \qquad \qquad e \cdot \cl(x) = \pic[5]{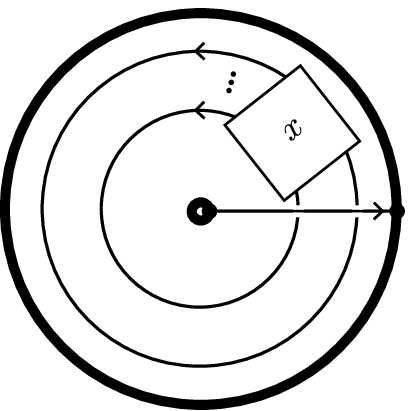}$
\caption{The annular closure of some $x$ acting separately on both the left and right of $e$.}
\end{figure}

Both actions are examples of those arising from embeddings into the boundary of the thickened annulus, as described in Remark \ref{rem:skeinaction}. The left and right actions obviously commute, endowing $\ca_\ch$ with a $\cc_\ch$-$\cc_\ch$-bimodule structure. It is probably worth pointing out that the left action is not equal to the right action. A simple measurement of how far off the two actions are from being equal would answer our question posed above. The answer is given in \cite{Mor02b} as a commutator relation 
\begin{equation} \label{eq:pkcommutator}
e \cdot P_k - P_k \cdot e = (s^k - s^{-k}) a^k.
\end{equation}
The proof involves the definition of a certain wiring diagram, defining a linear map $H_n \to \ca_\ch$. The idempotents $y_{n}$ satisfy a certain recurrence relation, and the commutator relation above follows from writing $P_k$ in terms of the $y_n$ and calculations involving the image of this recurrence relation.

Eventually we would like to prove a Dubrovnik analogue of Equation \eqref{eq:pkcommutator} (see Section \ref{sec:powersumelements}), but we still haven't defined any Dubrovnik analogue of the $P_k$. Nevertheless, we can discuss some of what was previously known about the algebra $\ca_\cd := \cd(A, [1])$ (see \cite{She16} for more details).

Let $\cc_\cd := \cd(A)$ and let $a, e \in \ca_\cd$ be the unoriented versions of the elements of the same name above. As in the HOMFLYPT case, there is a left-$\cc_\cd$ action on $\ca_\cd$ and the algebra $\ca_\cd$ is equal to the Laurent polynomial algebra $\cc_\cd[a^{\pm 1}]$. There is also a right-$\cc_\cd$ action on $\ca_\cd$, endowing $\ca_\cd$ with a bimodule structure. 

\begin{figure}[h] \label{fig:WandWstar}
\centering
$W = \pic[5]{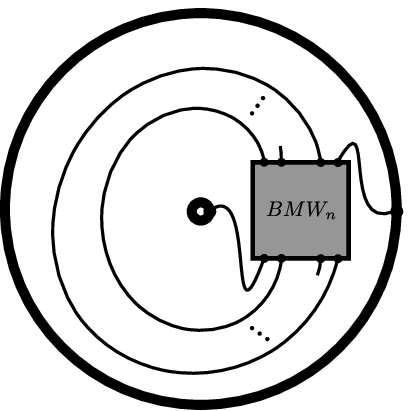} \qquad \qquad \qquad W^* = \pic[5]{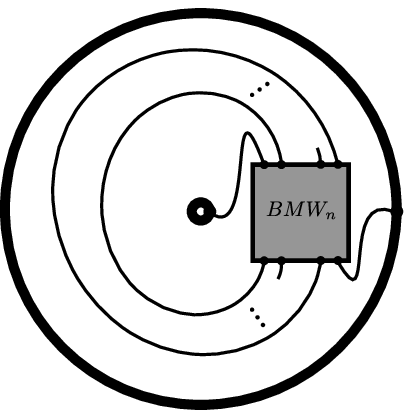}$
\caption{Wiring diagrams $W$ and $W^*$ which both induce linear maps.}
\end{figure}

Consider the wiring diagrams pictured above in Figure \ref{fig:WandWstar} which define linear maps $BMW_n \to \ca_\cd$. Let $W_n := W(\tilde{y}_{n+1})$ and $W^*_n :=  W^*(\tilde{y}_{n+1})$ where the subscript denotes the ``winding number'' around $A$. Taking the image of Equation \eqref{eq:shellyrecurrence} under $W$ gives a relation
\begin{equation} \label{eq:recursionina1}
[n+1] W_n = e \cdot \tilde{h}_n + [n] s^{-1} a W_{n-1} + [n] s^{-1} \beta_n a^{-1} W^*_{n-1}
\end{equation}
where $\tilde{h}_n := \widetilde{Q}_{(n)}$ is the annular closure of the symmetrizer $\tilde{y}_n$.

There are maps
\[
	(-)^*: BMW_n \to BMW_n \qquad \overline{(-)}: BMW_n \to BMW_n
\]
induced by the diffeomorphisms of the thickened square $(x, y, t) \mapsto (x, 1-y, 1-t)$ and $(x, y, t) \mapsto (x, y, 1-t)$, respectively. The map $\overline{(-)}$ is often called the \emph{mirror map} and is an $R$-anti-linear involution, while $(-)^*$ will be called the \emph{flip map} and is an $R$-linear involution. The symmetrizers $\tilde{y}_n$ are fixed under these maps because the mirror and flip maps preserve the properties that determine $\tilde{y}_n$ uniquely.
Using the quotient map defined by the equivalence relation $(x, 0, t) \sim (x, 1, t)$, we may analagously define maps
\[
	(-)^*: \ca_\cd \to \ca_\cd, \qquad \overline{(-)}: \ca_\cd \to \ca_\cd
\]
which are linear and anti-linear involutions, respectively. We will also call these the flip map and the mirror map; it will be clear from the context which is being applied. These maps satisfy the relations
\begin{align*}
	\left( W \left( x \right) \right)^* &= W^* \left( x^* \right), & (y \cdot e)^* &= e \cdot y^*, \\
	\overline{W \left( x \right)} &= W \left( \overline{x} \right), & \overline{(y \cdot e)} &= e \cdot \overline{y}
\end{align*}
for any $x \in BMW_n$ or $y \in \cc_\cd$. Apply the flip, the mirror, and the composite of the two separately to Equation \eqref{eq:recursionina1} to obtain alternate versions of the original recurrence relation:
\begin{align} 
[n+1] W^*_n = \tilde{h}_n \cdot e + [n] s^{-1} a W^*_{n-1} + [n] s^{-1} \beta_n a W_{n-1} \label{eq:recursionina2}, \\
[n+1] W_n = \tilde{h}_n \cdot e + [n] s a W_{n-1} + [n] s \bar{\beta}_n a^{-1} W^*_{n-1} \label{eq:recursionina3}, \\
\quad [n+1] W^*_n = e \cdot \tilde{h}_n + [n] s a^{-1} W^*_{n-1} + [n] s^{-1} \bar{\beta}_n a W_{n-1}. \label{eq:recursionina4}
\end{align}

Rearranging the difference of Equations \eqref{eq:recursionina1} and \eqref{eq:recursionina2} gives a relation
\begin{equation} \label{eq:annfund}
\tilde{h}_n \cdot e - e \cdot \tilde{h}_n = (s^n - s^{-n}) (a^{-1} W^*_{n-1} - a W_{n-1})
\end{equation}
which Shelly calls a \textit{fundamental skein relation} in $\ca_\cd$ since it reduces to to usual Dubrovnik skein relation when $n=1$. In Section \ref{sec:morecommutationrelations}, we will provide a similar relation which amounts to rewriting the right-hand side of the equation in terms of elements of the form $h_i \cdot a^k$ (alternatively $a^k \cdot h_i$).

\chapter{The Skein Algebra of the Torus} \label{chap:torus}

This chapter is dedicated to the study of the Dubrovnik skein algebra of the torus $\cd(T^2)$, the main result being a simple presentation for the algebra, stated in Theorem \ref{thm:toruspresentation}. The generators $\widetilde{P}_\xx$ are embeddings of certain special elements $\widetilde{P}_k \in \cd(A)$, which we define in Section \ref{sec:powersumelements}. The $\widetilde{P}_k$ generalize both the power sum elements $P_k \in \ch(A)$ and the Chebyshev polynomials $T_k \in \ck(A)$ which are both used widely in the context of their respective skein theories due to the simple relations they satisfy. In particular, we give a simple formula in Theorem \ref{thm:powersumcommutator} which explains how to move any embedding of $\widetilde{P}_k$ past a single strand in any skein module, which gives us a family of relations between the $\widetilde{P}_\xx$. In Section \ref{sec:presentation}, we show how this and another family of relations imply the relations given in the stated presentation. 

The definition of the $\widetilde{P}_k$ make it clear why these elements generalize their HOMFLYPT counterparts, but the same isn't immediately clear for the Kauffman bracket case for the $T_k$. In Section \ref{sec:compatibility} we justify this claim by showing that the image of $\widetilde{P}_k$ is equal to $T_k$ under the $A$-component of the natural transformation of skein theories $\eta:\cd \to \ck$ described in the previous chapter (see Remark \ref{rmk:naturaltransformation}). A corollary of this is that the presentation we give for $\cd(T^2)$ is compatible with $\ck(T^2)$ under $\eta$. Also described in this section is a ``compatibility" of the presentations of $\cd(T^2)$ and $\ch(T^2)$, given as an injective algebra homomorphism between the two. This homomorphism differs between the previous described $\cd(T^2) \to \ck(T^2)$ in the sense that it doesn't arise from any natural transformation between skein theories and is simply defined via generators and relations. However, this map is interesting because we show that $\cd(T^2)$ and $\ch(T^2)$ are universal enveloping algebras of some Lie algebras, and this homomorphism arises in the image of the universal enveloping algebra functor. 

In Section \ref{sec:action}, we give a description of the natural action of $\cd(T^2)$  on $\cd(D^2 \times S^1)$, which exists because the boundary of the solid torus is $T^2$. First we give a small generating set of the algebra $\cd(T^2)$, consisting of only five elements, together with an algorithm for expressing any $\widetilde{P}_\xx$ in terms of these other generators. Then, we show how these five elements act of $\cd(D^2 \times S^1)$, which implicitly describes how each $\widetilde{P}_\xx$ acts. 

This chapter includes many, but not all of the details from the paper \cite{MPS19} which is co-authored by Hugh Morton, Peter Samuelson, and myself. The proofs included in this chapter are the proofs from \cite{MPS19} that either I wrote or contributed significantly to.

\section{Power Sum Elements} \label{sec:powersumelements}

Recall that there is a injective algebra homomorphism $\Lambda \to \ch(A)^+$ which sends the Schur function $s_\lambda$ to the minimal idempotent closure $Q_\lambda$. Use $h_n := Q_{(n)}$ to denote the image of the $n^\textrm{th}$ complete homogeneous symmetric function under this homomorphism. In \cite{MS17} the authors import power sum elements from $\Lambda$ to $P_k \in \ch(A)$. The power sum elements have a concrete definition in $\Lambda$, but alternatively they may be defined using an equation of formal power series in the ring $\ch(A)[[t]]$ as
\begin{equation}
\sum_{k=1}^\infty \frac{P_k}{k} t^k = \ln \Bigg( 1 + \sum_{n=1}^\infty h_n t^n \Bigg)
\end{equation}
which writes each $P_k$ in terms of the generators $h_n$. 

Using the Beliakova-Blanchet section $\Gamma: H_n \to BMW_n$, we may emulate this definition to define ``power sum" elements $\widetilde{P}_k \in \cd(A)$ by the formal power series equation
\begin{equation}
\sum_{k=1}^\infty \frac{\widetilde{P}_k}{k} t^k = \ln \Bigg( 1 + \sum_{n=1}^\infty \tilde{h}_n t^n \Bigg)
\end{equation}
where $\tilde{h}_n := \widetilde{Q}_{(n)}$ is the annular closure of the BMW symmetrizers $\tilde{y}_n = \Gamma(y_n)$.

Let's now continue our discussion of Section \ref{sec:relativeannulus} with the following theorem.

\begin{theorem} \label{thm:powersumcommutator}
For any $k \geq 1$, the relation
\begin{equation} \label{eq:powersumcommutator}
e \cdot \widetilde{P}_k - \widetilde{P}_k \cdot e = (s^k - s^{-k}) (a^k - a^{-k})
\end{equation}
holds. Equivalently,
\begin{equation}
a^i \cdot \widetilde{P}_k - \widetilde{P}_k \cdot a^i = (s^k - s^{-k}) (a^{k+i} - a^{-k+i})
\end{equation}
for any integer $i$.
\end{theorem}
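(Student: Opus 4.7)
The strategy is to apply the $R$-linear map $\delta : \cc_\cd \to \ca_\cd$ defined by $\delta(y) := e \cdot y - y \cdot e$ to the defining generating-function identity
$$
H(t) := 1 + \sum_{n \geq 1} \tilde{h}_n t^n = \exp\!\left(\sum_{k \geq 1} \frac{\widetilde{P}_k}{k} t^k\right)
$$
and to extract the commutator for $\widetilde{P}_k$ as the coefficient of $t^{k-1}$. Because $\ca_\cd$ is a $\cc_\cd$-bimodule, $\delta$ obeys the Leibniz-type rule $\delta(yz) = y \cdot \delta(z) + \delta(y) \cdot z$, where the first dot is the left action and the second dot is the right action. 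Differentiating the identity above formally yields the Newton relation $H'(t) = H(t)\,\widetilde{P}'(t)$, and applying $\delta$ turns it into a linear equation in $\ca_\cd[[t]]$ that expresses $\delta(H(t))$ in terms of $\delta(\widetilde{P}'(t))$ and $H(t)$. Once $\delta(H(t))$ is known in closed form, one can solve this equation for $\delta(\widetilde{P}'(t))$ by formal division.

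To compute $\delta(H(t))$, I would begin with the fundamental skein relation \eqref{eq:annfund}, which gives
$$
\delta(\tilde{h}_n) = (s^n - s^{-n})\bigl(a W_{n-1} - a^{-1} W^*_{n-1}\bigr).
$$
The main technical work is then to eliminate $W_{n-1}$ and $W^*_{n-1}$ using the four recurrences \eqref{eq:recursionina1}--\eqref{eq:recursionina4}, rewriting $\delta(H(t))$ as a closed-form expression in $H(t)$ and $a^{\pm 1}$. Packaging the auxiliary elements into generating functions $\mathcal{W}(t) = \sum_{n \geq 0} W_n t^n$ and $\mathcal{W}^*(t) = \sum_{n \geq 0} W^*_n t^n$, I would combine the four recurrences into a coupled linear system for $\mathcal{W}(t)$ and $\mathcal{W}^*(t)$ and solve it in terms of $H(t)$. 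Substituting back should produce the desired closed form for $\delta(H(t))$, whose comparison with the predicted answer
$$
\delta(\widetilde{P}(t)) = \sum_{k \geq 1} \frac{(s^k - s^{-k})(a^k - a^{-k})}{k} t^k = \ln \frac{(1 - s^{-1} a t)(1 - s a^{-1} t)}{(1 - s a t)(1 - s^{-1} a^{-1} t)}
$$
serves as a sanity check. The equivalent second statement $a^i \cdot \widetilde{P}_k - \widetilde{P}_k \cdot a^i = (s^k - s^{-k})(a^{k+i} - a^{-k+i})$ then follows from the $i = 0$ case by multiplying both sides by $a^i$ in the commutative algebra $\ca_\cd = \cc_\cd[a^{\pm 1}]$.

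The main obstacle is the closed-form elimination of $W_{n-1}$ and $W^*_{n-1}$: the coefficients $[n]$, $\beta_n$, and $\bar{\beta}_n$ in the recurrences depend nontrivially on both $n$ and $v$, so combining the four recurrences into a single tractable generating-function identity requires careful algebra. A secondary subtlety is bookkeeping the bimodule structure throughout: the Leibniz rule for $\delta$ pairs a left action with a right action, and these two embeddings of $\cc_\cd$ into $\ca_\cd$ do not coincide, so one must track which side each factor of $H(t)$ or $\widetilde{P}'(t)$ is acting from when solving the Newton-derived equation for $\delta(\widetilde{P}'(t))$.
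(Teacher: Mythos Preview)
Your generating-function reduction is essentially the paper's Lemma~\ref{lem:powersumcommutator1}, but the paper gets there more cleanly than via Newton's identity and a Leibniz rule for $\delta$. Since $e\cdot(-)$ and $(-)\cdot e$ are \emph{algebra} homomorphisms $\cc_\cd\to\ca_\cd$, they commute with $\ln$; applying them to $\ln H(t)=\sum_k \widetilde P_k t^k/k$ and subtracting gives
\[
\ln\bigl(e\cdot H(t)\bigr)-\ln\bigl(H(t)\cdot e\bigr)=\ln\frac{(1-s^{-1}at)(1-sa^{-1}t)}{(1-sat)(1-s^{-1}a^{-1}t)},
\]
and exponentiating yields the three-term recurrence
\[
e\cdot(\tilde h_{n+2}+\tilde h_n)-(\tilde h_{n+2}+\tilde h_n)\cdot e=(sa+s^{-1}a^{-1})(e\cdot\tilde h_{n+1})-(s^{-1}a+sa^{-1})(\tilde h_{n+1}\cdot e).
\]
This avoids the bimodule bookkeeping you flag as a secondary subtlety.

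The real gap is in your second step. You propose to package $W_n,W^*_n$ into generating functions and solve a coupled linear system, but as you note, the coefficients $[n]$, $\beta_n=\dfrac{1-s^2}{s^{2n-1}v^{-1}-1}$, and $\bar\beta_n$ depend on $n$ in a way that does not translate into a constant-coefficient system in $t$; there is no evident closed form for $\mathcal W(t),\mathcal W^*(t)$. The paper does \emph{not} solve for these. Instead, having reduced to the three-term recurrence above, it verifies that recurrence for each $n$ directly: rewrite both sides using the fundamental relation $e\cdot\tilde h_n-\tilde h_n\cdot e=\{n\}(aW_{n-1}-a^{-1}W^*_{n-1})$, then repeatedly substitute the four recursions \eqref{eq:r1}--\eqref{eq:r4} to push everything down to $W_{n-1},W^*_{n-1}$. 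The $v$-dependence collapses only at the very end, via the scalar identity
\[
(\bar\beta_{n+1}-\beta_{n+1})(s-s^{-1}\beta_n)=-(s-s^{-1}),
\]
which is what makes the whole calculation close. Your proposal does not isolate this identity, and without it there is no mechanism for the $\beta_n$'s to disappear from the answer.
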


We will split the proof of this theorem into two technical lemmas.

\begin{lemma} \label{lem:powersumcommutator1}
The relations of Theorem \ref{thm:powersumcommutator} hold if and only if 
\begin{equation} \label{eq:skewcommutator1}
e \cdot (\tilde{h}_{n+2} + \tilde{h}_n) - (\tilde{h}_{n+2} + \tilde{h}_n) \cdot e = (sa + s^{-1}a^{-1}) (e \cdot \tilde{h}_{n+1}) - (s^{-1}a + sa^{-1}) (\tilde{h}_{n+1} \cdot e)
\end{equation}
for all integers $n \geq -1$, where $\tilde{h}_0 := 1$ and $\tilde{h}_{-1} := 0$.
\end{lemma}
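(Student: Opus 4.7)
The plan is to show both the relations in Theorem \ref{thm:powersumcommutator} and the relations \eqref{eq:skewcommutator1} are equivalent to a single generating function identity in the commutative algebra $\ca_\cd[[t]]$, working from the defining equation $\widetilde{H}(t) = \exp(\widetilde{P}(t))$, where $\widetilde{H}(t) := 1 + \sum_{n \geq 1} \tilde{h}_n t^n$ and $\widetilde{P}(t) := \sum_{k \geq 1} \widetilde{P}_k t^k / k$.

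First, I would convert the relations of Theorem \ref{thm:powersumcommutator} by summing $e \cdot \widetilde{P}_k - \widetilde{P}_k \cdot e = (s^k - s^{-k})(a^k - a^{-k})$ with weights $t^k / k$. Expanding the four resulting terms using $\sum_{k \geq 1} z^k/k = -\log(1-z)$, the right-hand side collapses to a single logarithm, so Theorem \ref{thm:powersumcommutator} becomes equivalent to
\[
e \cdot \widetilde{P}(t) - \widetilde{P}(t) \cdot e = \log \frac{(1-s^{-1}at)(1-sa^{-1}t)}{(1-sat)(1-s^{-1}a^{-1}t)}. \quad (\star)
\]
Next, I would multiply \eqref{eq:skewcommutator1} by $t^{n+2}$ and sum over $n \geq -1$, using $\tilde{h}_0 = 1$ and $\tilde{h}_{-1} = 0$. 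The left-hand side telescopes to $(1 + t^2)[e, \widetilde{H}(t)]$, and using the factorizations
\[
1 + t^2 - t(sa + s^{-1}a^{-1}) = (1 - sat)(1 - s^{-1}a^{-1}t), \quad 1 + t^2 - t(s^{-1}a + sa^{-1}) = (1 - s^{-1}at)(1 - sa^{-1}t),
\]
relation \eqref{eq:skewcommutator1} becomes equivalent to
\[
(1 - sat)(1 - s^{-1}a^{-1}t)(e \cdot \widetilde{H}(t)) = (1 - s^{-1}at)(1 - sa^{-1}t)(\widetilde{H}(t) \cdot e). \quad (\star\star)
\]

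The bridge between $(\star)$ and $(\star\star)$ comes from the observation that both $R$-linear maps $\cc_\cd \to \ca_\cd$ given by $y \mapsto y \cdot e$ and $y \mapsto e \cdot y$ are ring homomorphisms into the commutative algebra $\ca_\cd$. The first is the tautological inclusion coming from the identification $\ca_\cd = \cc_\cd[a^{\pm 1}]$; the second amounts to the skein-theoretic identity $(e \cdot y_1)(e \cdot y_2) = e \cdot (y_1 y_2)$, which holds because closed links placed behind the vertical strand $e$ can be freely isotoped within the thickened annulus. Applying these ring homomorphisms to $\widetilde{H}(t) = \exp(\widetilde{P}(t))$ gives $\widetilde{H}(t) \cdot e = \widetilde{H}(t)$ and $e \cdot \widetilde{H}(t) = \exp(e \cdot \widetilde{P}(t))$ as formal power series in $\ca_\cd[[t]]$.

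Substituting these expressions into $(\star\star)$, dividing by $\widetilde{H}(t)$ in the commutative ring $\ca_\cd[[t]]$, and taking formal logarithms recovers $(\star)$; conversely, exponentiating $(\star)$ produces $(\star\star)$. Since every step is reversible, $(\star)$ and $(\star\star)$ are equivalent, so the two families of relations are equivalent. The main technical obstacle is verifying that $y \mapsto e \cdot y$ preserves multiplication; beyond this skein-theoretic fact, the remainder of the argument is routine formal power series manipulation in a commutative ring.
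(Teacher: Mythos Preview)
Your proposal is correct and follows essentially the same approach as the paper: both arguments package the relations into generating functions, use that the left and right $\cc_\cd$-actions $y \mapsto e\cdot y$ and $y \mapsto y\cdot e$ are algebra homomorphisms into the commutative ring $\ca_\cd$ (and hence commute with $\exp$/$\log$), and pass between the additive identity $(\star)$ and the multiplicative identity $(\star\star)$ via the factorizations $1 - (s^{\pm 1}a + s^{\mp 1}a^{-1})t + t^2 = (1 - s^{\pm 1}at)(1 - s^{\mp 1}a^{-1}t)$. The paper simply asserts that $e\cdot(-)$ and $(-)\cdot e$ are algebra homomorphisms, so your explicit identification of this as the one nontrivial skein-theoretic input is on point.
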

\begin{proof}
The relations of Theorem \ref{thm:powersumcommutator} may be organized into a single power series equation
\begin{equation} \label{eq:skewcommutator1}
\sum_{k=1}^\infty \frac{e \cdot \widetilde{P}_k - \widetilde{P}_k \cdot e}{k} t^k = \sum_{k=1}^{\infty} \frac {(s^k - s^{-k}) (a^k - a^{-k})}{k} t^k
\end{equation}
in $\ca_\cd [[t]]$. Rewrite this equation as
\begin{equation} \label{eq:powersumcommutator2} 
e \cdot \Bigg( \sum_{k=1}^\infty \widetilde{P}_k \Bigg) - \Bigg( \sum_{k=1}^\infty \widetilde{P}_k \Bigg) \cdot e = \sum_{k=1}^{\infty} \frac {(sat)^k}{k} + \sum_{k=1}^{\infty} \frac {(s^{-1}a^{-1}t)^k}{k} - \sum_{k=1}^{\infty} \frac {(s^{-1}at)^k}{k} - \sum_{k=1}^{\infty} \frac {(sa^{-1}t)^k}{k}
\end{equation}
We can make sense of the left-hand side by extending the algebra homomorphism $x \mapsto e \cdot (x)$ to an algebra homomorphism of rings of formal power series
\begin{center}
\begin{tikzcd}
\cd(A) \arrow[r, "e \cdot (-)"] \arrow[d, hook] & \ca_\cd \arrow[d, hook] \\
\cd(A)[[t]] \arrow[r, "e \cdot (-)"] & \ca_\cd[[t]]
\end{tikzcd}
\end{center}
and similarly for $(-) \cdot e$. Now for shorthand, define 
\[
H(t) := 1 + \sum_{n=1}^\infty \tilde{h}_n t^n
\]
and recall the Taylor series expansion
\[
-\ln(1-x) = \sum_{k=1}^\infty \frac{x^k}{k}
\]
which is a variation of the Newton-Mercator series. Then the Equation \eqref{eq:skewcommutator1} becomes
\begin{equation} \label{eq:powersumcommutator3} 
e \cdot \Big( \ln\big(H(t)\big) \Big) - \Big( \ln \big(H(t)\big) \Big) \cdot e = - \ln(1 - sat) - \ln(1 - s^{-1}a^{-1}t) + \ln(1 - s^{-1}at) + \ln(1 - sa^{-1}t).
\end{equation}
The maps $e \cdot (-)$ and $(-) \cdot e$ commute with the natural logarithm. Use this and other natural log properties to write
\begin{equation}
\ln\Big( e \cdot \big( H(t) \big) \big( 1 - (sa + s^{-1}a^{-1})t + t^2 \big) \Big) = \ln\Big( \big( H(t) \big) \cdot e \big( 1 - (s^{-1}a + sa^{-1})t + t^2 \big) \Big).
\end{equation}
Exponentiating both sides and equating coefficients gives the system of equations defined in the statement of the lemma. Each step of the proof is invertible, and thus the two sets of relations are logically equivalent.
\end{proof}

\begin{lemma}
The relations of Lemma \ref{lem:powersumcommutator1} hold.
\end{lemma}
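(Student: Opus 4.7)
\emph{Plan of proof.} My first step will be to apply the fundamental skein relation \eqref{eq:annfund} at indices $n+2$ and $n$ to the two commutators on the left-hand side, rewriting
\[
e\cdot(\tilde{h}_{n+2}+\tilde{h}_n)-(\tilde{h}_{n+2}+\tilde{h}_n)\cdot e = (s^{n+2}-s^{-n-2})(aW_{n+1}-a^{-1}W^*_{n+1}) + (s^n-s^{-n})(aW_{n-1}-a^{-1}W^*_{n-1}).
\]
This shifts the problem entirely into the language of the wiring elements $W_k$ and $W^*_k$, which is precisely where the four recurrences \eqref{eq:recursionina1}--\eqref{eq:recursionina4} provide direct leverage.

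Next I will use these four recurrences at index $n+1$ to expand $aW_{n+1}$ and $a^{-1}W^*_{n+1}$ into the form suggested by the right-hand side of the target equation. Note that \eqref{eq:recursionina1} and \eqref{eq:recursionina4} both introduce $e\cdot\tilde{h}_{n+1}$, while \eqref{eq:recursionina3} and \eqref{eq:recursionina2} both introduce $\tilde{h}_{n+1}\cdot e$. Taking appropriate $s^{\pm 1}$-weighted combinations (and multiplying through by $a^{\pm 1}$), the coefficient $(sa+s^{-1}a^{-1})$ should appear in front of $e\cdot\tilde{h}_{n+1}$ and $-(s^{-1}a+sa^{-1})$ in front of $\tilde{h}_{n+1}\cdot e$, matching the target identity exactly. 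The residue consists of $a^{\pm 2}W_n$ and $W^*_n$ contributions weighted by $[n+1]$, $\beta_{n+1}$, and $\bar{\beta}_{n+1}$.

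Third, I will expand these residual $W_n, W^*_n$ terms by applying the recurrences a second time, now at index $n$. After this second round, the pieces linear in $e\cdot\tilde{h}_n$ and $\tilde{h}_n\cdot e$ must combine to zero (as no such terms appear in the target), and the leftover $W_{n-1}, W^*_{n-1}$ contributions must precisely cancel the $(s^n-s^{-n})(aW_{n-1}-a^{-1}W^*_{n-1})$ contribution from the first step. Both cancellations rely on the algebraic identities $[n+1]^2-[n][n+2]=1$ and $s\bar{\beta}_n - s^{-1}\beta_n = -(s-s^{-1})$, each verifiable directly from the definitions of $[n]$ and $\beta_n$. The base cases $n=-1$ and $n=0$ must be checked separately, since $\tilde{h}_{-1}=0$ degenerates the recurrence; the $n=-1$ case reduces to $e\cdot\tilde{h}_1 - \tilde{h}_1\cdot e = (s-s^{-1})(a-a^{-1})$, which is \eqref{eq:annfund} at index $1$. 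The main obstacle I anticipate is organizational rather than conceptual: the bookkeeping of four recurrences with $\beta$/$\bar{\beta}$ coefficients and quantum integer arithmetic must be managed carefully so that the many cross terms really do pair up and cancel.
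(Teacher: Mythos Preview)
Your plan is correct and follows essentially the same route as the paper: rewrite the target relation via the fundamental relation \eqref{eq:annfund} in terms of $W_{n+1},W^*_{n+1},W_{n-1},W^*_{n-1}$, then repeatedly apply the four recurrences \eqref{eq:recursionina1}--\eqref{eq:recursionina4} and finish with a scalar identity in $\beta_n,\bar\beta_n$. The paper organizes the recurrence applications slightly differently (it first expands with \eqref{eq:recursionina1}, \eqref{eq:recursionina2} and then re-expands the resulting $a^{\pm 2}$ terms with \eqref{eq:recursionina3}, \eqref{eq:recursionina4}), and the terminal identity it reaches is $(\bar\beta_{n+1}-\beta_{n+1})(s-s^{-1}\beta_n)=-(s-s^{-1})$ rather than your $s\bar\beta_n-s^{-1}\beta_n=-(s-s^{-1})$; both identities are true and either organization works. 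Your remark that the difficulty is bookkeeping rather than conception is exactly right, and the paper does not treat $n=0$ separately --- the recurrences and \eqref{eq:annfund} already cover it once the $n=-1$ base case is in hand.
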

\begin{proof}
If $n=-1$, the relation we would like to show becomes 
\begin{equation*}
e \cdot \tilde{h}_1 - \tilde{h}_1 \cdot e = (s-s^{-1}) \left( a - a^{-1} \right)
\end{equation*}
which is just the Dubrovnik skein relation. 

For general values of $n$, the proof is a technical computation using repeated applications of the recursive formula for the $\tilde{h}_n$. Since we will need them here, let's recall the formulas given in Section \ref{sec:relativeannulus}. There are recursive formulas
\begin{align}
[n+1] W_n &= e \cdot \tilde{h}_n + [n] s^{-1} a W_{n-1} + [n] s^{-1} \beta_n a^{-1} W^*_{n-1}, \label{eq:r1} \\
[n+1] W^*_n &= \tilde{h}_n \cdot e + [n] s^{-1} a^{-1} W^*_{n-1} + [n] s^{-1} \beta_n a W_{n-1}, \label{eq:r2} \\
[n+1] W_n &= \tilde{h}_n \cdot e + [n] s a W_{n-1} + [n] s \bar{\beta}_n a^{-1} W^*_{n-1}, \label{eq:r3} \\
[n+1] W^*_n &= e \cdot \tilde{h}_n + [n] s a^{-1} W^*_{n-1} + [n] s^{-1} \bar{\beta}_n a W_{n-1} \label{eq:r4}
\end{align}
and the ``fundamental relation"
\begin{equation}
e \cdot \tilde{h}_n - \tilde{h}_n \cdot e = (s^n - s^{-n}) (a W_{n-1} - a^{-1} W^*_{n-1}). \label{eq:f}
\end{equation}

Let's use the shorthand notation 
\[
\{n\} := s^n - s^{-n}.
\]
Start by applying Equation \eqref{eq:f} to the relation of Lemma \ref{lem:powersumcommutator1} to obtain an equivalent relation
\begin{align}\label{eq_perprel4}
\begin{split}
\{n+2\} \left( aW_{n+1} - a^{-1}W^*_{n+1} \right) =& \left( sa+s^{-1}a^{-1} \right) \left( e \cdot \tilde{h}_{n+1} \right) - \left( sa^{-1}+s^{-1}a \right) \left( \tilde{h}_{n+1} \cdot e \right) \\
 & \qquad\qquad\qquad\qquad\qquad\qquad - \{n\}\left( aW_{n-1}-a^{-1}W^*_{n-1} \right).
\end{split}
\end{align}
We will show that the left-hand side of this equation may be reduced to the right-hand side by a series of applications of the recursive formulas, which we will signify with an asterisk $\ast$. 
\begin{eqnarray*}
&& \{n+2\} \left( aW_{n+1} - a^{-1}W^*_{n+1} \right) \\
\overset{\ast}{=}&& \{n+2\} \left( \frac{a}{[n+2]} \left( e \cdot \tilde{h}_{n+1} + [n+1]s^{-1}aW_n + [n+1]s^{-1}\beta_{n+1}a^{-1}W^*_n \right) \right.- \\
&&\left.\qquad\qquad\frac{a^{-1}}{[n+2]} \left( \tilde{h}_{n+1} \cdot e + [n+1]s^{-1}a^{-1}W^*_n + [n+1]s^{-1}\beta_{n+1}aW_n \right) \right)  \\
=&& \left( s - s^{-1} \right) \left( \left( a \cdot \tilde{h}_{n+1} + [n+1]s^{-1}a^2W_n + [n+1]s^{-1}\beta_{n+1}W^*_n \right) \right. \\
&&\qquad\qquad\,\,\,\,\,\left.-\left( \tilde{h}_{n+1} \cdot a^{-1} + [n+1]s^{-1}a^{-2}W^*_n + [n+1]s^{-1}\beta_{n+1}W_n \right)\right) \\
\overset{\ast}{=}&&\left( s-s^{-1} \right) \left( \left( a \cdot \tilde{h}_{n+1} + s^{-2}a \left( [n+2]W_{n+1} - \tilde{h}_{n+1} \cdot e - [n+1]s\bar{\beta}_{n+1}a^{-1}W^*_n \right) \right. \right. \\
&&\qquad\qquad\qquad\qquad\qquad\qquad\qquad\qquad\qquad\qquad\qquad\qquad
+ [n+1]s^{-1}\beta_{n+1}W^*_n \Big)  \\
&&\qquad\qquad\,\,\,\, \left. - \left( \tilde{h}_{n+1} \cdot a^{-1} + s^{-2}a^{-1}\left( [n+2]W^*_{n+1} - e \cdot \tilde{h}_{n+1} - [n+1]s\bar{\beta}_{n+1}aW_n \right) \right. \right. \\
&&\qquad\qquad\qquad\qquad\qquad\qquad\qquad\qquad\qquad\qquad\qquad\qquad\quad\quad
+ [n+1]s^{-1}\beta_{n+1}W_n \Big) \Big) \\
=&&\left( sa + s^{-1}a^{-1} \right) \left( e \cdot \tilde{h}_{n+1} \right) - \left( sa^{-1} + s^{-1}a \right) \left( \tilde{h}_{n+1} \cdot e \right) \\
&+& \left( s^{-1}a^{-1} + s^{-3}a \right) \left( \tilde{h}_{n+1} \cdot e \right) - \left( s^{-1}a + s^{-3}a^{-1} \right) \left( e \cdot \tilde{h}_{n+1} \right) \\
&+& \{n+2\}s^{-2}\left( aW_{n+1} - a^{-1} W^*_{n+1} \right) + \{n+1\}s^{-1}\left( \bar{\beta}_{n+1} - \beta_{n+1} \right) \left( W_n - W^*_n \right).
\end{eqnarray*}

We break the computation here to note that the first two terms in the last line also appear on the right hand side of \eqref{eq_perprel4}. Thus, we would like to prove the following equality:
\begin{eqnarray*}
&-&\{n\} \left( aW_{n-1} - a^{-1}W^*_{n-1} \right) \\
=&& \left( s^{-1}a^{-1} + s^{-3}a \right) \left( \tilde{h}_{n+1} \cdot e \right) - \left( s^{-1}a + s^{-3}a^{-1} \right) \left( e \cdot \tilde{h}_{n+1} \right) \\
&+&\{n+2\}s^{-2}\left( aW_{n+1} - a^{-1} W^*_{n+1} \right) - \{n+1\}s^{-1}\left( \bar{\beta}_{n+1} - \beta_{n+1} \right) \left( W_n - W^*_n \right).
\end{eqnarray*}
We will work the right-hand side of this above equation down to the left-hand side by continuing to apply the same identities. A large number of terms cancel and what remains is the desired relation.
\begin{eqnarray*}
&& \left( s^{-1}a^{-1} + s^{-3}a \right) \left( \tilde{h}_{n+1} \cdot e \right) - \left( s^{-1}a + s^{-3}a^{-1} \right) \left( e \cdot \tilde{h}_{n+1} \right) \\
&+& \{n+2\}s^{-2}\left( aW_{n+1} - a^{-1} W^*_{n+1} \right) - \{n+1\}s^{-1}\left( \bar{\beta}_{n+1} - \beta_{n+1} \right) \left( W_n - W^*_n \right) \\
=&& \left( s^{-1}a^{-1} + s^{-3}a \right) \left( \tilde{h}_{n+1} \cdot e \right) - \left( s^{-1}a+s^{-3}a^{-1} \right) \left( e \cdot \tilde{h}_{n+1} \right) \\
&+& [n+2]\left( s^{-1}-s^{-3} \right) \left(aW_{n+1} - a^{-1}W^*_{n+1} \right) \\
&+& [n+1]\left( 1-s^{-2} \right) \left( \bar{\beta}_{n+1}-\beta_{n+1} \right) \left(W_n - W^*_n \right) \\
=&& s^{-1}a\left( [n+2]W_{n+1} - e \cdot \tilde{h}_{n+1} \right) \\
&+& s^{-3}a^{-1}\left( [n+2]W^*_{n+1} - e \cdot \tilde{h}_{n+1} \right) \\
&-& s^{-3}a\left( [n+2]W_{n+1} - \tilde{h}_{n+1} \cdot e \right) \\
&-& s^{-1}a^{-1}\left( [n+2]W^*_{n+1} - \tilde{h}_{n+1} \cdot e \right) \\
&+& [n+1]\left( 1-s^{-2} \right) \left( \bar{\beta}_{n+1} -\beta_{n+1} \right) \left( W_n - W^*_n \right) \\
\overset{\ast}{=}&& s^{-1}a\left( [n+1]s^{-1}aW_n + [n+1]s^{-1}\beta_{n+1}a^{-1}W^*_n \right) \\
&+& s^{-3}a^{-1}\left( [n+1]sa^{-1}W^*_n + [n+1]s\bar{\beta}_{n+1}aW_n \right) \\
&-&s^{-3}a\left( [n+1]saW_n + [n+1]s\bar{\beta}_{n+1}a^{-1}W^*_n \right) \\
&-& s^{-1}a^{-1}\left( [n+1]s^{-1}a^{-1}W^*_n + [n+1]s^{-1}\beta_{n+1}aW_n \right) \\
&+& [n+1]\left(1-s^{-2} \right) \left( \bar{\beta}_{n+1} - \beta_{n+1} \right) \left( W_n - W^*_n \right)\\
=&& [n+1]\left( s^{-2}a^2W_n + s^{-2}\beta_{n+1}W^*_n + s^{-2}a^{-2}W^*_n + s^{-2}\bar{\beta}_{n+1}W_n - s^{-2}a^{2}W_n \right. \\
&&\qquad\quad \left. - s^{-2}\bar{\beta}_{n+1}W^*_n - s^{-2}a^{-2}W^*_n - s^{-2}\beta_{n+1}W_n + \bar{\beta}_{n+1}W_n - \bar{\beta}_{n+1}W^*_n - \beta_{n+1}W_n \right. \\
&&\qquad\quad \left. + \beta_{n+1}W^*_n - s^{-2}\bar{\beta}_{n+1}W_n + s^{-2}\bar{\beta}_{n+1}W^*_n + s^{-2}\beta_{n+1}W_n - s^{-2}\beta_{n+1}W^*_n \right) \\
=&& [n+1]\left( \bar{\beta}_{n+1} - \beta_{n+1} \right) \left( W_n - W^*_n \right) \\
=&& \left( \bar{\beta}_{n+1} - \beta_{n+1} \right) \left( \left( [n+1]W_n \right) - \left( [n+1]W^*_n \right) \right) \\
\overset{\ast}{=}&& \left( \bar{\beta}_{n+1} - \beta_{n+1} \right) \left( \left( e \cdot \tilde{h}_{n} + [n]s^{-1}aW_{n-1} + [n]s^{-1}\beta_{n}a^{-1}W^*_{n-1} \right) \right. \\
&& \qquad\qquad\qquad\qquad \left. -\left( e \cdot \tilde{h}_{n} + [n]sa^{-1}W^*_{n-1} + [n]s\bar{\beta}_{n}aW_{n-1} \right) \right) \\
=&& \left( \bar{\beta}_{n+1} - \beta_{n+1} \right) \left( [n]\left( s^{-1} - s\bar{\beta}_{n} \right) aW_{n-1} - [n]\left( s-s^{-1}\beta_{n} \right) a^{-1}W^*_{n-1} \right) \\
=&& [n]\left(\bar{\beta}_{n+1} - \beta_{n+1} \right) \left( s - s^{-1}\beta_{n} \right) \left( aW_{n-1} - a^{-1}W^*_{n-1} \right) \\
=&-& \{n\}\left( aW_{n-1} - a^{-1}W^*_{n-1} \right).
\end{eqnarray*}
Where the last equality follows from the identity
\begin{equation}
 \left(\bar{\beta}_{n+1} - \beta_{n+1} \right) \left( s - s^{-1}\beta_{n} \right) = -(s - s^{-1})
\end{equation}
which may be shown using simple algebraic manipulations in the base ring. This completes the proof.
\end{proof}

\begin{remark}
Theorem \ref{thm:powersumcommutator} is part of a Dubrovnik generalization of the ``Miraculous Cancellations" first proven by Bonahon and Wong in \cite{BW16} and again by Lê in \cite{Le15} using only skein theoretic methods. Under the natural transformation $\eta:\cd \to \ck$, the $\widetilde{P}_k$ are sent to the Chebyshev polynomials $T_k$ (see Section \ref{sec:bracketcompatibility}), and the relation of Theorem \ref{thm:powersumcommutator} is sent to a commutation relation for the $T_k$. Now, if $s$ is specialized to be a $2k$-th root of unity so that $s^k = s^{-k}$, then the commutation relation implies that any threading of $T_k$ in any skein algebra $\ck_{s^{2k}=1}(\Sigma)$ is a central element. More generally, a threading in any skein module $\ck_{s^{2k}=1}(M, N)$ is \textit{transparent}, which essentially means it may move freely about the space, past any tangle. This gives rise to the \textbf{Chebyshev-Frobenius homomorphism}, first defined for skein algebras in \cite{BW16} and extended to general relative skein modules in \cite{LP19}.

It would be nice to say that Theorem \ref{thm:powersumcommutator} implies a generalization of the Chebyshev-Frobenius homomorphism for Dubrovnik skein modules, but there are a couple problems currently preventing this. The first is that our choice of base ring is $\Q(s,v)$ doesn't allow for specializations to roots of unity. This choice is also used in \cite{BB01} and subsequently \cite{LZ02}, the results of which we use here in a fundamental way. Even in the literature regarding HOMFLYPT skein theory, the elements $(s^k - s^{-k})^{-1}$ are adjoined to the base ring. Solving this first problem would show that the threading of any knot by $\tilde{P}_k$ in any skein algebra $\cd(\Sigma)$ would be central. The second problem is that Theorem \ref{thm:powersumcommutator} doesn't imply self-transparency. That is, it doesn't show how $\tilde{P}_k$ may pass through itself. It would be interesting to see if this problem may be resolved somehow, perhaps by following arguments found in \cite{Le15}, but it is not something we address outside of this remark. 
\end{remark}

\section{A Presentation of $\cd(T^2)$} \label{sec:presentation}

In this section we will demonstrate the value of the elements $\widetilde{P}_k$  by showing that the skein algebra $\cd(T^2)$ admits a very simple presentation. First things first, let's define the generators. Recall that given an extended rational number $r$, there is a (homotopically) distinct oriented simple closed curve on the flat torus $T^2$ with slope $r$, and hence a smooth embedding of the annulus 
\[
\iota_r: A \hookrightarrow T^2
\]
into tubular neighborhood of the curve. This induces an algebra homomorphism
\[
\cd(\iota_r): \cd(A) \to \cd(T^2)
\]
on the level of skein algebras. Any knot in $\cd(T^2)$ is contained in the image of some $\cd(\iota_r)$. Since $\cd$ is an unoriented skein theory, let's only consider $r \geq 0$ to avoid redundancy due to the choice of orientations. Given any equivalence class $\xx = (a, b)$ in $\Z^2 / \langle \xx = - \xx \rangle$ with $k := \gcd(a, b)$, define the element
\[
\widetilde{P}_\xx := \cd\big(\iota_{a/b}\big)(\widetilde{P}_k)
\]
to be the embedding of the ``power sum" element $\widetilde{P}_k$ into this tubular neighborhood. Let's state the main theorem of the paper.

\begin{theorem} \label{thm:toruspresentation}
The skein algebra $\cd(T^2)$ is presented by generators 
\[
\{ \widetilde{P}_\xx \mid \xx \in \Z^2 / \langle \xx = - \xx \rangle \}
\]
and relations
\begin{equation} \label{eq:mainrelations}
[\widetilde{P}_\xx, \widetilde{P}_\yy] = (s^{\det(\xx, \yy)} - s^{-\det(\xx, \yy)}) (\widetilde{P}_{\xx + \yy} - \widetilde{P}_{\xx - \yy}).
\end{equation}
\end{theorem}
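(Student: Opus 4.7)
The plan is to parallel the Morton--Samuelson proof of the HOMFLYPT torus presentation, with Theorem \ref{thm:powersumcommutator} replacing \eqref{eq:pkcommutator} as the key skein-theoretic input. I would proceed in three stages: showing that the $\widetilde{P}_\xx$ generate $\cd(T^2)$, deriving relation \eqref{eq:mainrelations} from Theorem \ref{thm:powersumcommutator} via an annulus-closure argument, and verifying that these relations are complete by comparison with a known basis.

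Generation is the most straightforward step. The defining series $\sum_k \widetilde{P}_k t^k/k = \ln(1 + \sum_n \tilde{h}_n t^n)$ yields an upper-triangular change of variables, invertible over $\Q(s,v)$, between the $\widetilde{P}_k$ and the $\tilde{h}_n$; combined with the Lu--Zhong basis theorem for $\cd(A)$, this shows the $\widetilde{P}_k$ polynomially generate $\cd(A)$ after extension of scalars. Since every essential multicurve on $T^2$ is a disjoint union of parallel copies of a single primitive slope, it lies in the image of some $\cd(\iota_r)$, and so the $\widetilde{P}_\xx$ span $\cd(T^2)$.

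For the commutator, I would first reduce to a standard configuration using the $SL(2,\Z)$-action on $\cd(T^2)$ induced by self-diffeomorphisms of $T^2$: choose a $\Z^2$-basis in which $\xx = (k,0)$ and $\yy = (i,1)$, so that $\det(\xx,\yy) = k$. Realize $T^2$ from the annulus $A$ by identifying the two boundary circles; this yields a closure map $\cl : \ca_\cd \to \cd(T^2)$ sending the strand $e$ to $\widetilde{P}_{(0,1)}$, the strand $a^m$ to $\widetilde{P}_{(m,1)}$, and the left and right $\cc_\cd$-actions on $\ca_\cd$ to multiplication by $\widetilde{P}_{(k,0)}$ on the two sides of the stacking direction in $\cd(T^2)$. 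Apply the $a^i$-version of Theorem \ref{thm:powersumcommutator} and take closures: the left-hand side becomes $\pm [\widetilde{P}_{(k,0)}, \widetilde{P}_{(i,1)}]$ (the sign fixed by the stacking convention), and the right-hand side becomes $(s^k - s^{-k})(\widetilde{P}_{(k+i,1)} - \widetilde{P}_{(-k+i,1)})$, which using $\widetilde{P}_{-\zz} = \widetilde{P}_\zz$ matches $(s^{\det(\xx,\yy)} - s^{-\det(\xx,\yy)})(\widetilde{P}_{\xx+\yy} - \widetilde{P}_{\xx-\yy})$. Running this through the full $SL(2,\Z)$-orbit of the standard pair, and invoking antisymmetry of the commutator, establishes \eqref{eq:mainrelations} for every pair with $|\det(\xx,\yy)|$ equal to $\gcd(\xx)$ or $\gcd(\yy)$.

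The main obstacle is covering the remaining pairs (those with $|\det(\xx,\yy)|$ a proper multiple of both gcds, e.g.\ $\xx=(2,0),\yy=(1,2)$) together with proving completeness. For the remaining pairs, I would either upgrade the closure argument by using a more general wiring whose closing arc winds non-trivially in both torus directions and re-derive a suitable variant of Theorem \ref{thm:powersumcommutator}, or argue inductively on $\gcd(\yy)$ by applying the Leibniz rule to expressions for $\widetilde{P}_\yy$ in terms of products of $\widetilde{P}_{\yy'}$ with $\gcd(\yy') < \gcd(\yy)$, using the relations already established. For completeness, I would filter the abstractly presented algebra by monomial degree, use \eqref{eq:mainrelations} to reduce every element to an ordered-monomial normal form with respect to a fixed total order on $\Z^2/\langle\xx=-\xx\rangle$, and match the resulting spanning set against the multicurve basis of $\cd(T^2)$. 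The delicate point is that the right-hand side of \eqref{eq:mainrelations} is itself a $\Z$-linear combination of two generators (rather than a single monomial), so the normal-form reduction requires careful bookkeeping of these linear correction terms when pushing monomials past one another.
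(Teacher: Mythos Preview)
Your overall architecture matches the paper's, but there is a genuine gap in the relations step. The closure of the $a^i$-version of Theorem~\ref{thm:powersumcommutator} into $\cd(T^2)$ yields only the family $[\widetilde{P}_{(0,k)},\widetilde{P}_{(1,i)}]$, whose $GL_2(\Z)$-orbit (plus antisymmetry) is exactly the set of pairs with $|\det(\xx,\yy)|=\gcd(\xx)$ or $|\det(\xx,\yy)|=\gcd(\yy)$, as you correctly say. But this does \emph{not} contain the ``angled'' family \eqref{eq:angledrelations}, namely $[\widetilde{P}_{1,0},\widetilde{P}_{1,n}]$ for $n>1$: here both vectors are primitive and $|\det|=n\neq 1$. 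The paper needs \eqref{eq:angledrelations} as a second, independent skein-theoretic input, proved not in $\ca_\cd$ but in the two-point relative algebra $\cd(A,\underline{2})$ via a separate wiring into the torus. Your phrase ``more general wiring'' gestures at this, but it is not a variant of Theorem~\ref{thm:powersumcommutator}; it is a genuinely new computation in a larger relative skein algebra, and without it the base of the induction is too thin.

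For the induction itself, your proposal to induct on $\gcd(\yy)$ via ``Leibniz on products of $\widetilde{P}_{\yy'}$'' is mis-targeted: $\widetilde{P}_\yy$ is a generator, not a product, and the obstruction persists even when both $\gcd$'s equal~$1$ (exactly the \eqref{eq:angledrelations} case). The paper instead inducts on $|\det(\xx,\yy)|$: one writes $\xx=\aab+\bb$ with $d(\aab,\bb)$ controlled by an elementary Diophantine lemma, and then a Jacobi-identity computation (Lemma~\ref{lemma_trueforab}) reduces goodness of $(\xx,\yy)$ to goodness of six auxiliary pairs, each of strictly smaller $|\det|$. Both \eqref{eq:perprelations} and \eqref{eq:angledrelations} are invoked at specific points of this induction. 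Your generation argument and your sketch for completeness are at the same level of detail as what the paper records (the full completeness proof is deferred to \cite{MPS19}).
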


\begin{corollary} \label{cor:liealgebra1}
The linear span of the set $\{ \widetilde{P}_\xx \mid \xx \in \Z^2 / \langle \xx = - \xx \rangle \}$ is a Lie algebra, and  $\cd(T^2)$ is its universal enveloping algebra. 
\end{corollary}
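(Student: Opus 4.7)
The plan is to observe that the presenting relations of $\cd(T^2)$ given by Theorem \ref{thm:toruspresentation} are commutator relations whose right-hand sides already lie in the linear span of the generators. This is precisely the shape of relations defining a universal enveloping algebra, so the corollary should follow almost formally from that theorem.

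First I would set $L := \mathrm{span}_R\{\widetilde{P}_\xx \mid \xx \in \Z^2/\langle\xx=-\xx\rangle\} \subseteq \cd(T^2)$, equipped with the commutator bracket inherited from the associative algebra $\cd(T^2)$. Antisymmetry and the Jacobi identity for this bracket are automatic from associativity, so the only thing to check is closure. But Theorem \ref{thm:toruspresentation} asserts
\[
[\widetilde{P}_\xx, \widetilde{P}_\yy] = (s^{\det(\xx,\yy)} - s^{-\det(\xx,\yy)})(\widetilde{P}_{\xx+\yy} - \widetilde{P}_{\xx-\yy}) \in L,
\]
which gives closure and hence proves the first half of the statement.

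Next I would identify $\cd(T^2)$ with $U(L)$ by constructing mutually inverse algebra homomorphisms. In one direction, the Lie algebra inclusion $L \hookrightarrow \cd(T^2)$ extends, by the universal property of $U(L)$, to a unital algebra homomorphism $\phi : U(L) \to \cd(T^2)$. Surjectivity is immediate, since the image contains every $\widetilde{P}_\xx$ and these generate $\cd(T^2)$ by Theorem \ref{thm:toruspresentation}. In the other direction, I would use that presentation itself to define $\psi : \cd(T^2) \to U(L)$ sending each generator $\widetilde{P}_\xx$ to the same element regarded inside $U(L)$; the defining relations of $\cd(T^2)$ translate, inside $U(L)$, into the tautology $\widetilde{P}_\xx \widetilde{P}_\yy - \widetilde{P}_\yy \widetilde{P}_\xx = [\widetilde{P}_\xx,\widetilde{P}_\yy]_L$, so $\psi$ is well-defined. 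Since $\phi \circ \psi$ and $\psi \circ \phi$ act as the identity on the generators, they are the identity everywhere and $\phi$ is an isomorphism.

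The main subtlety, rather than a real obstacle, is a bookkeeping one: one must ensure that the presentation of $U(L)$ as an associative algebra by a spanning set with commutator relations genuinely matches the presentation of $\cd(T^2)$ in Theorem \ref{thm:toruspresentation}. Any $R$-linear dependencies among the $\widetilde{P}_\xx$ in $L$ are automatically respected on both sides, since $\psi$ sends generators of $\cd(T^2)$ to literally the same elements of $L \subset U(L)$. Once this matching of presentations is verified, the corollary is a direct formal consequence of Theorem \ref{thm:toruspresentation}.
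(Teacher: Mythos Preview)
Your proposal is correct and matches the paper's own treatment: the paper states this as an immediate corollary of Theorem \ref{thm:toruspresentation} without supplying a proof, since the presenting relations are precisely commutator relations with right-hand sides in the span of the generators, which is exactly the universal enveloping algebra presentation. Your write-up spells out the standard universal-property argument that the paper leaves implicit.
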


The full proof of this theorem may be found in the collaboration \cite{MPS19}. Here, we will be focused on showing that the generators satisfy the relations given in the presentation. We formulate this as a proposition.
\begin{proposition}
The following special cases of Equation \eqref{eq:mainrelations} hold
\begin{align}
[\widetilde{P}_{1, 0}, \widetilde{P}_{0, n}] &= (s^n - s^{-n}) (\widetilde{P}_{1, n} - \widetilde{P}_{1, -n}) \label{eq:perprelations} \\
[\widetilde{P}_{1, 0}, \widetilde{P}_{1, n}] &= (s^n - s^{-n}) (\widetilde{P}_{2, n} - \widetilde{P}_{0, n}) \label{eq:angledrelations}
\end{align}
for any $n \geq 1$. Furthermore, these relations imply all of the relations defined by Equation \eqref{eq:mainrelations}.
\end{proposition}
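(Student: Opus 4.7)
Plan: The strategy is to verify (eq:perprelations) directly from Theorem \ref{thm:powersumcommutator} via an explicit wiring of the relative annulus algebra $\ca_\cd$ into $\cd(T^2)$, to obtain (eq:angledrelations) similarly in the $n=1$ case and by induction for $n\geq 2$, and then to use the $SL_2(\Z)$-action on $\cd(T^2)$ (together with a final Jacobi argument for the non-primitive cases) to bootstrap these two families to the full family (eq:mainrelations).

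For (eq:perprelations) I construct a wiring $W:\ca_\cd\to\cd(T^2)$ by embedding $A\times I$ as a tubular neighborhood of the $(0,1)$-curve in $T^2\times I$ and closing the two distinguished boundary points of the relative algebra by an arc through the complementary annulus that realizes the $(1,0)$-direction. Under $W$, the closed link $\widetilde{P}_k$ maps to $\widetilde{P}_{0,k}$, the identity strand $e$ maps to the primitive curve $\widetilde{P}_{1,0}$, and $a^{\pm k}$ (a single strand winding $\pm k$ times about the annular core) maps to $\widetilde{P}_{1,\pm k}$, which is primitive since $\gcd(1,\pm k)=1$. Applying $W$ to the identity of Theorem \ref{thm:powersumcommutator} with $k=n$ then yields (eq:perprelations) immediately. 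The $n=1$ case of (eq:angledrelations) follows by the entirely analogous wiring around the $(1,1)$-curve closed through the $(1,0)$-direction; this wiring is valid because $\det\bigl((1,1),(1,0)\bigr)=-1$.

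For $n\geq 2$ the analogous direct approach fails: the $(1,0)$-curve is not contained in the complement of a tubular neighborhood of the $(1,n)$-curve, since the geometric intersection number is $n$, so no single wiring from $\ca_\cd$ produces (eq:angledrelations) with the required coefficient $(s^n-s^{-n})$. I would therefore induct on $n$, applying the Jacobi identity to triples such as $(\widetilde{P}_{1,0},\widetilde{P}_{0,1},\widetilde{P}_{1,n-1})$ together with the following ingredients: the inductive hypothesis giving (eq:angledrelations) at $n-1$; the $SL_2(\Z)$-orbit of (eq:perprelations) (and the $n=1$ case of (eq:angledrelations)) supplying all needed commutators of the form $[\widetilde{P}_{0,1},\widetilde{P}_{1,n-1}]$; and the vanishing $[\widetilde{P}_{0,j},\widetilde{P}_{0,k}]=0$, valid because both elements lie in the commutative subalgebra $\iota_0(\cd(A))$. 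This leaves $[\widetilde{P}_{1,0},\widetilde{P}_{1,n}]$ as the only unknown commutator in the Jacobi equation, and its value can then be extracted.

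To conclude that (eq:perprelations) and (eq:angledrelations) together imply the full family (eq:mainrelations), I invoke the $SL_2(\Z)$-action on $\cd(T^2)$ by algebra automorphisms $\Phi_M$ induced by self-diffeomorphisms of $T^2$; these send $\widetilde{P}_\xx$ to $\widetilde{P}_{M\xx}$. The orbit of (eq:perprelations) under $\Phi$ covers every relation $[\widetilde{P}_\xx,\widetilde{P}_{n\yy}]=(s^n-s^{-n})(\widetilde{P}_{\xx+n\yy}-\widetilde{P}_{\xx-n\yy})$ for $(\xx,\yy)$ a $\Z$-basis of $\Z^2$, and the orbit of (eq:angledrelations) covers the analogous relation for every primitive pair $(\xx,\yy)$ with $|\det(\xx,\yy)|=n$. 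Together these orbits handle every instance of (eq:mainrelations) in which at least one of $\xx,\yy$ is primitive; the residual cases, with both non-primitive, are reduced by an inductive Jacobi argument using a primitive auxiliary element together with the commutativity $[\widetilde{P}_{m\zz},\widetilde{P}_{k\zz}]=0$ inside each tube $\iota_\zz(\cd(A))$. The main obstacle I anticipate is the inductive step for (eq:angledrelations): direct wirings cannot produce the coefficient $(s^n-s^{-n})$ when the target is the primitive curve $\widetilde{P}_{1,n}$ (they yield only $(s-s^{-1})$), so the correct coefficient must emerge from a delicate telescoping of lower-degree commutators via Jacobi, and verifying that the resulting sum consolidates precisely into $(s^n-s^{-n})(\widetilde{P}_{2,n}-\widetilde{P}_{0,n})$ is the technical heart of the argument.
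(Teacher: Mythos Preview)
Your derivation of \eqref{eq:perprelations} via the wiring $\ca_\cd \to \cd(T^2)$ matches the paper exactly. However, both remaining parts of your argument diverge from the paper and contain genuine gaps.

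For \eqref{eq:angledrelations}, the paper does \emph{not} induct on $n$ via Jacobi. Instead it proves an analogue of Theorem~\ref{thm:powersumcommutator} in the two-point relative algebra $\cd(A,[2])$ and then wires that into $\cd(T^2)$. Your proposed Jacobi induction on the triple $(\widetilde{P}_{1,0},\widetilde{P}_{0,1},\widetilde{P}_{1,n-1})$ runs into the problem that the resulting identity involves \emph{several} unknown determinant-$n$ commutators simultaneously (for instance $[\widetilde{P}_{1,0},\widetilde{P}_{1,n}]$, $[\widetilde{P}_{1,-1},\widetilde{P}_{1,n-1}]$, and $[\widetilde{P}_{2,1},\widetilde{P}_{0,1}]$ all appear when $n=2$). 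These are related by $SL_2(\Z)$-automorphisms of the algebra, not by scalars, so a single Jacobi relation does not solve for the unknown; you would need an independent input at level $n$, which is precisely what the paper supplies via $\cd(A,[2])$.

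For the implication, your key claim that the $SL_2(\Z)$-orbit of \eqref{eq:angledrelations} covers every pair of primitive vectors with $|\det|=n$ is false. The left $SL_2(\Z)$-orbits on $\{A\in M_2(\Z):\det A=n\}$ are parametrized by upper-triangular representatives $\bigl(\begin{smallmatrix}a&b\\0&d\end{smallmatrix}\bigr)$ with $ad=n$, $0\le b<d$, so there are many orbits even among pairs of primitive columns; e.g.\ $\bigl((1,0),(2,5)\bigr)$ is not in the orbit of $\bigl((1,0),(1,5)\bigr)$ even after allowing sign changes and swaps. The paper's argument (Proposition~\ref{lemma_allfromsome}) is accordingly much more delicate: it inducts on $|\det(\xx,\yy)|$, uses a Diophantine lemma (Lemma~\ref{lemma_diophantine}) to write $\xx=\aab+\bb$ with $d(\aab,\bb)$ controlled, and then applies a Jacobi step (Lemma~\ref{lemma_trueforab}) requiring that five auxiliary determinants all drop strictly below $|\det(\xx,\yy)|$, with a nontrivial case analysis. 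Your sketch omits this structure entirely.
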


Equation \eqref{eq:perprelations} is the image of Equation \eqref{eq:powersumcommutator} under the wiring of $\ca_\cd$ into $\cd(T^2)$ depicted below.

\begin{figure}[h]
\centering
$\pic[7]{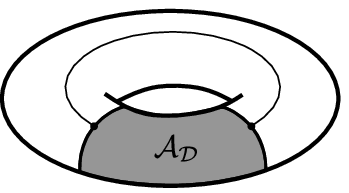}$
\caption{A wiring diagram which defines a linear map $\ca_\cd \to \cd(T^2)$.}
\end{figure}

We will not give the proof of Equation \eqref{eq:angledrelations} here, but the idea is similar to the idea for the proof of Equation \eqref{eq:perprelations}: first show a similar relation holds in the skein algebra of the annulus relative to two points $\cd(A, [2])$, and then Equation \eqref{eq:angledrelations} is the image of this relation under a simple wiring into $\cd(T^2)$. 

What's left for us to show here is the second statement of the proposition, that Equations \eqref{eq:perprelations} and \eqref{eq:angledrelations} imply Equation \eqref{eq:mainrelations}. We will devote the rest of this section to doing so, starting with two technical lemmas and ending with Proposition \ref{lemma_allfromsome}. 

We use the notation
\begin{align*} 
d(\xx, \yy) &:= \det\left[\xx\,\, \yy\right] \quad \quad \,\,  \textrm{for } \xx, \yy \in \Z^2, \\
d(\xx) &:= gcd(m,n) \quad \quad \textrm{when } \xx = (m,n).
\end{align*} 
We will also use the following terminology: 
\[
(\xx,\yy) \in \Z^2 \times \Z^2 \textrm{ is \emph{good} if  } [\widetilde{P}_\xx,\widetilde{P}_\yy] = \{d(\xx,\yy)\} \left( \widetilde{P}_{\xx+\yy}-\widetilde{P}_{\xx-\yy} \right).
\]

\begin{remark}\label{remark_goodsymmetry}
Note that because $\widetilde{P}_{\xx}=\widetilde{P}_{-\xx}$, if $(\xx, \yy)$ is good, then the pairs $(\pm\xx, \pm\yy)$ are good as well. 
\end{remark}

The idea of the proof is to induct on the absolute value of the determinant of the matrix with columns $\xx$ and $\yy$. To induct, we write $\xx = \aab + \bb$ for carefully chosen vectors $\aab, \bb$ and then use the following lemma. 

\begin{lemma}\label{lemma_trueforab}
	Assume $\aab + \bb = \xx$ and that $(\aab,\bb)$ is good. Further assume that the five pairs of vectors $(\yy, \aab)$, $(\yy, \bb)$, $(\yy+\aab,\bb)$, $(\yy+\bb,\aab)$, and $(\aab-\bb, \yy)$, are good. Then the pair $(\xx,\yy)$ is good.
\end{lemma}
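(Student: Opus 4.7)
The plan is to use the goodness of $(\aab,\bb)$ to solve for
$$\widetilde{P}_\xx = \widetilde{P}_{\aab-\bb} + \frac{1}{\{d(\aab,\bb)\}}[\widetilde{P}_\aab,\widetilde{P}_\bb],$$
and then take the commutator with $\widetilde{P}_\yy$ on both sides. This splits $[\widetilde{P}_\xx,\widetilde{P}_\yy]$ into
$$[\widetilde{P}_{\aab-\bb},\widetilde{P}_\yy] \;+\; \tfrac{1}{\{d(\aab,\bb)\}}\bigl[[\widetilde{P}_\aab,\widetilde{P}_\bb],\widetilde{P}_\yy\bigr].$$
The first piece is directly evaluated by the goodness of $(\aab-\bb,\yy)$, producing a combination of $\widetilde{P}_{\yy+\aab-\bb}$ and $\widetilde{P}_{\yy+\bb-\aab}$.

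For the double commutator, I would apply the Jacobi identity in the form $[[\widetilde{P}_\aab,\widetilde{P}_\bb],\widetilde{P}_\yy] = [\widetilde{P}_\aab,[\widetilde{P}_\bb,\widetilde{P}_\yy]] - [\widetilde{P}_\bb,[\widetilde{P}_\aab,\widetilde{P}_\yy]]$. The goodness of $(\yy,\bb)$ and $(\yy,\aab)$ lets me replace each inner commutator by the appropriate difference $\widetilde{P}_{\yy+\bb}-\widetilde{P}_{\yy-\bb}$ or $\widetilde{P}_{\yy+\aab}-\widetilde{P}_{\yy-\aab}$, with scalar coefficient. Pushing $\widetilde{P}_\aab$ or $\widetilde{P}_\bb$ inside produces four commutators: two of the form $[\widetilde{P}_{\yy+\aab},\widetilde{P}_\bb]$ and $[\widetilde{P}_{\yy+\bb},\widetilde{P}_\aab]$, which are dispatched by goodness of $(\yy+\aab,\bb)$ and $(\yy+\bb,\aab)$, and two of the form $[\widetilde{P}_{\yy-\aab},\widetilde{P}_\bb]$ and $[\widetilde{P}_\aab,\widetilde{P}_{\yy-\bb}]$, which I would reduce by invoking $(\yy,\aab)$ and $(\yy,\bb)$ again to rewrite $\widetilde{P}_{\yy-\aab}$ and $\widetilde{P}_{\yy-\bb}$ in terms of their $+$ counterparts plus an inner commutator, then applying Jacobi once more.

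After collecting terms, $[\widetilde{P}_\xx,\widetilde{P}_\yy]$ is expressed as a linear combination of $\widetilde{P}_{\yy+\xx}$, $\widetilde{P}_{\yy-\xx}$, $\widetilde{P}_{\yy+\aab-\bb}$, $\widetilde{P}_{\yy+\bb-\aab}$ whose coefficients are products of quantum brackets $\{n\}=s^n-s^{-n}$. Writing $\alpha=d(\aab,\bb)$, $\beta=d(\yy,\aab)$, $\gamma=d(\yy,\bb)$, so that $d(\xx,\yy)=-(\beta+\gamma)$ by bilinearity of $d$, the whole computation reduces to the elementary bracket identity
$$\{x\}\{y+z\} + \{y\}\{z-x\} = \{x+y\}\{z\},$$
which I would verify by direct expansion and apply with several sign choices. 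The principal obstacle will be the algebraic bookkeeping: checking that the coefficient of $\widetilde{P}_{\yy+\xx}$ collapses to $\{\beta\}\{\gamma+\alpha\}-\{\gamma\}\{\beta-\alpha\}-\{\alpha\}\{\beta+\gamma\}=0$, that the coefficient of $\widetilde{P}_{\yy-\xx}$ becomes exactly $\{\beta+\gamma\}=-\{d(\xx,\yy)\}$, and that the leftover terms in $\widetilde{P}_{\yy\pm(\aab-\bb)}$ produced by the Jacobi expansion are precisely those absorbed by the contribution from the $(\aab-\bb,\yy)$ hypothesis---this last cancellation is where that fifth good pair plays its essential, and otherwise mysterious, role.
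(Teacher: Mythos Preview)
Your overall framework is exactly the paper's: solve for $\widetilde{P}_\xx$ via goodness of $(\aab,\bb)$, take the commutator with $\widetilde{P}_\yy$, expand the double bracket $[[\widetilde{P}_\aab,\widetilde{P}_\bb],\widetilde{P}_\yy]$ by Jacobi, evaluate the resulting single commutators by the goodness hypotheses, and simplify the four coefficients using the product-to-sum identity $\{n\}\{m\}=\{n+m\}^+-\{n-m\}^+$ (your identity $\{x\}\{y+z\}+\{y\}\{z-x\}=\{x+y\}\{z\}$ is a rephrasing of this). The $(\aab-\bb,\yy)$ hypothesis is used precisely where you say, to absorb the contribution of $[\widetilde{P}_{\aab-\bb},\widetilde{P}_\yy]$.

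The genuine gap is your treatment of the two ``leftover'' commutators $[\widetilde{P}_{\yy-\aab},\widetilde{P}_\bb]$ and $[\widetilde{P}_\aab,\widetilde{P}_{\yy-\bb}]$. Rewriting $\widetilde{P}_{\yy-\aab}=\widetilde{P}_{\yy+\aab}-\{d(\yy,\aab)\}^{-1}[\widetilde{P}_\yy,\widetilde{P}_\aab]$ and applying Jacobi once more does not close the computation: the Jacobi expansion of $[\widetilde{P}_\bb,[\widetilde{P}_\yy,\widetilde{P}_\aab]]$ reintroduces $[[\widetilde{P}_\aab,\widetilde{P}_\bb],\widetilde{P}_\yy]$ itself, together with the \emph{other} problematic commutator. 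If you set $J=[[\widetilde{P}_\aab,\widetilde{P}_\bb],\widetilde{P}_\yy]$, $A=[\widetilde{P}_\aab,\widetilde{P}_{\yy-\bb}]$, $B=[\widetilde{P}_\bb,\widetilde{P}_{\yy-\aab}]$, you obtain three linear relations among $J,A,B$ which are dependent (substituting one into another collapses to $0=0$), so nothing is determined. The paper does not attempt this workaround: it simply evaluates $[\widetilde{P}_{\yy-\aab},\widetilde{P}_\bb]$ and $[\widetilde{P}_{\bb-\yy},\widetilde{P}_\aab]$ directly as $\{d(\yy-\aab,\bb)\}(\widetilde{P}_{\yy-\aab+\bb}-\widetilde{P}_{\yy-\aab-\bb})$, etc.\ --- tacitly treating those two further pairs as good. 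In the inductive application (Proposition~\ref{lemma_allfromsome}) this is harmless, since those pairs also have smaller determinant. One more bookkeeping slip: the coefficient of $\widetilde{P}_{\xx+\yy}$ in $[\widetilde{P}_\xx,\widetilde{P}_\yy]$ must come out to $\{d(\xx,\yy)\}=-\{\beta+\gamma\}$, not $0$.
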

\begin{proof}
We  use the Jacobi identity and the goodness assumptions to compute
	\begin{eqnarray*}
&-&\{d(\aab, \bb)\} [\widetilde{P}_{\aab+\bb}, \widetilde{P}_y] + \{d(\aab, \bb)\} [\widetilde{P}_{\aab-\bb}, \widetilde{P}_y] \\
=&-&[[\widetilde{P}_{\aab}, \widetilde{P}_{\bb}], \widetilde{P}_{\yy}] \\
=&&[[\widetilde{P}_{\yy}, \widetilde{P}_{\aab}], \widetilde{P}_{\bb}] + [[\widetilde{P}_{\bb}, \widetilde{P}_y], \widetilde{P}_{\aab}] \\
=&&\{d(\yy, \aab)\} [\widetilde{P}_{\yy+\aab} - \widetilde{P}_{\yy-\aab}, \widetilde{P}_{\bb}] + \{d(b,y)\} [\widetilde{P}_{\bb+\yy} - \widetilde{P}_{\bb-\yy}], \widetilde{P}_{\aab}] \\
=&&\{d(\yy,\aab)\} \left( \{d(\yy+\aab, \bb)\} \left( \widetilde{P}_{\yy+\aab+\bb} - \widetilde{P}_{\yy+\aab-\bb} \right) \right. \\
&& \left. \qquad\qquad -\{d(\yy-\aab,\bb)\} \left( \widetilde{P}_{\yy-\aab+\bb} - \widetilde{P}_{\yy-\aab-\bb} \right) \right) \\
&+&\{d(\bb,\yy)\} \left( \{d(\bb+\yy, \aab)\} \left( \widetilde{P}_{\bb+\yy+\aab} - \widetilde{P}_{\bb+\yy-\aab} \right) \right. \\
&& \left. \qquad\qquad -\{d(\bb-\yy, \aab)\} \left( \widetilde{P}_{\bb-\yy+\aab} - \widetilde{P}_{\bb-\yy-\aab} \right) \right) \\
=&& \left( \{d(\yy,\aab)\} \{d(\yy+\aab,\bb)\} + \{d(\bb,\yy)\} \{d(\bb+\yy,\aab)\} \right) \widetilde{P}_{\aab+\bb+\yy} \\
&+& \left( \{d(\yy,\aab)\} \{d(\yy-\aab,\bb)\} - \{d(\bb,\yy)\} \{d(\bb-\yy,\aab)\} \right) \widetilde{P}_{\aab+\bb-\yy} \\
&-& \left( \{d(\yy,\aab)\} \{d(\yy+\aab,\bb)\} - \{d(\bb,\yy)\} \{d(\bb-\yy,\aab)\} \right) \widetilde{P}_{\aab-\bb+\yy} \\
&-& \left( \{d(\yy,\aab)\} \{d(\yy-\aab,\bb)\} + \{d(\bb,\yy)\} \{d(\bb+\yy,\aab)\} \right) \widetilde{P}_{\aab-\bb-\yy} \\
=:&& c_1 \widetilde{P}_{\aab+\bb+\yy} + c_2 \widetilde{P}_{\aab+\bb-\yy} - c_3 \widetilde{P}_{\aab-\bb+\yy} - c_4 \widetilde{P}_{\aab-\bb-\yy}.
	\end{eqnarray*}
Define constants $\{n\}^+ := s^n + s^{-n}$, and these satisfy the relations
\[
\{n\}\{m\} = \{n+m\}^+ - \{n-m\}+.
\]
Using some simple algebra, we can show
	\begin{eqnarray*}
c_1 \,\,\, =& &  \{d(\yy,\aab)\} \{d(\yy+\aab,\bb)\} + \{d(\bb,\yy)\} \{d(\bb+\yy,\aab)\} \\
=&& \{d(\yy,\aab)+d(\yy+\aab,\bb)\}^+ - \{d(\yy,\aab)-d(\yy+\aab,\bb)\}^+ \\
&+& \{d(\bb,\yy)+d(\bb+\yy,\aab)\}^+ - \{d(\bb,\yy)-d(\bb+\yy,\aab)\}^+  \\
=&& \{d(\yy,\aab+\bb)+d(\aab,\bb)\}^+ - \{d(\yy,\aab-\bb)-d(\aab,\bb)\}^+ \\
&+& \{d(\yy,\aab-\bb)-d(\aab,\bb)\}^+ - \{d(\aab,\bb)-d(\yy,\aab+\bb)\}^+ \\
=&& \{d(\aab,\bb)+d(\yy,\aab+\bb)\}^+ - \{d(\aab,\bb)-d(\yy,\aab+\bb)\}^+ \\
=&& \{d(\aab,\bb)\} \{d(\yy,\aab+\bb)\} \\
=&& - \{d(\aab,\bb)\} \{d(\xx,\yy)\}.
	\end{eqnarray*}
Similar computations for the other $c_i$ show that 
	\begin{align}
\frac{1}{\{d(\aab,\bb)\}}[[\widetilde{P}_\aab,\widetilde{P}_\bb],\widetilde{P}_\yy] &=  \frac{-1}{\{d(\aab,\bb)\}} \left( c_1 \widetilde{P}_{\aab+\bb+\yy} + c_2 \widetilde{P}_{\aab+\bb-\yy} - c_3 \widetilde{P}_{\aab-\bb+\yy} - c_4 \widetilde{P}_{\aab-\bb-\yy}\right) \notag\\ 
&=  \{d(\xx,\yy)\} \left( \widetilde{P}_{\xx+\yy} - \widetilde{P}_{\xx-\yy} \right) 
-\{d(\aab-\bb,\yy)\} \left( \widetilde{P}_{\aab-\bb+\yy} - \widetilde{P}_{\aab-\bb-\yy} \right)\notag \\
&= \{d(\xx,\yy)\} \left( \widetilde{P}_{\xx+\yy} - \widetilde{P}_{\xx-\yy} \right) 
- [\widetilde{P}_{\aab-\bb},\widetilde{P}_\yy]. \label{eq:astep}
	\end{align}
	Since the pair $(\aab,\bb)$ is good, we have 
\begin{equation}\label{eq:added}
[\widetilde{P}_{\aab}, \widetilde{P}_{\bb}] = \{d(\aab, \bb )\} \big( \widetilde{P}_{\xx} - \widetilde{P}_{\aab-\bb} \big).
\end{equation}
Finally, combining equations \eqref{eq:added} and \eqref{eq:astep} shows that the pair $(\xx,\yy)$ is good.
\end{proof}

Now we'll need to prove the following elementary lemma (which is a slight modification of Lemma 1 in \cite{FG00}). This is used to make a careful choice of vectors $\aab, \bb$ so that the previous lemma can be applied. 

\begin{lemma}\label{lemma_diophantine}
	Suppose $p,q \in \N$ are relatively prime with $q < p$. Then there exist $u,v,w,z \in \Z$ such that the following conditions hold:
	\begin{eqnarray*}
	0 \,<\, u, w &<& p, \\
	\det\left[ \begin{array}{cc} u&w\\v&z\end{array}\right] &=& 1, \\
	\left[ \begin{array}{cc} u&w\\v&z\end{array}\right]\left[\begin{array}{c}1\\1\end{array}\right] &=& \left[ \begin{array}{c}p\\q\end{array}\right].
	\end{eqnarray*}
\end{lemma}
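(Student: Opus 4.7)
The plan is to reduce the system to the single Bézout-type equation $uq - vp = 1$ and then use $\gcd(p,q)=1$ to solve it while controlling the range of $u$.

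First I would unpack the matrix conditions. The equation $\left[\begin{smallmatrix} u&w\\v&z\end{smallmatrix}\right]\left[\begin{smallmatrix}1\\1\end{smallmatrix}\right] = \left[\begin{smallmatrix}p\\q\end{smallmatrix}\right]$ is just $u+w=p$ and $v+z=q$, so I set $w := p-u$ and $z := q-v$. Substituting into the determinant condition yields
\[
uz - vw = u(q-v) - v(p-u) = uq - vp,
\]
so the entire system collapses to: find integers $u, v$ with $uq - vp = 1$ and $0 < u < p$ (the analogous bound $0 < w < p$ is then automatic from $w = p - u$).

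Next I would invoke Bézout: since $\gcd(p,q)=1$, there exist $u_0, v_0 \in \Z$ with $u_0 q - v_0 p = 1$. For any $k \in \Z$ the pair $(u_0 + kp,\, v_0 + kq)$ also satisfies the equation, so I may translate $u_0$ by a multiple of $p$ to land in the residue class $\{0, 1, \dots, p-1\}$. The value $u \equiv 0 \pmod{p}$ is excluded, since $u=0$ would force $-vp = 1$, impossible for $p > 1$; the edge case $p=1$ is vacuous because then the hypothesis $q < p$ with $q \in \N$ cannot be satisfied under the usual convention (or, if allowed, $q=0$ and one takes $u=w=v=z$ trivially). Thus I can arrange $0 < u < p$, and the corresponding $w = p - u$ automatically lies in the same range. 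Setting $v$ and $z := q - v$ accordingly completes the construction.

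There is no real obstacle here; the lemma is essentially a repackaging of the extended Euclidean algorithm, and the only thing to verify carefully is that the residue class $u \bmod p$ can indeed be chosen strictly between $0$ and $p$, which is immediate from $\gcd(p,q)=1$. The modest modification compared with Frohman--Gelca's Lemma 1 is just the relabeling to fit the matrix form used in Lemma \ref{lemma_trueforab}, so I expect the proof to be a short, self-contained paragraph.
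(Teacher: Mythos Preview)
Your proof is correct and follows essentially the same route as the paper: reduce the matrix conditions to $u+w=p$, $v+z=q$, and $uq-vp=1$, solve the latter by B\'ezout, translate $u$ by a multiple of $p$ into $\{0,\dots,p-1\}$, and rule out $u=0$ because $-vp=1$ is impossible for $p>1$. The paper phrases the B\'ezout step with different letters ($b,a$ in place of $u,v$) but the argument is identical.
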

\begin{proof}
	Since $p$ and $q$ are relatively prime, there exist $a,b \in \Z$ with $ bq - ap = 1$. This solution can be modified to give another solution $a' = a + q$ and $b' = b + p$, so we may assume $0 \leq b < p$. We then define 
	\[
	u=b,\quad v=a,\quad w=p-b,\quad z=q-a.
	\]
	By definition, $u,v,w,z$ satisfy $u + w = p$ and $v + z = q$, and the inequalities $0 \leq b < p$ and $p > 1$ imply the condition $0 < u, w < p$. To finish the proof, we compute
	\[
	uz - wv = b(q-a) - a(p-b) = bq - ap = 1.
	\]
\end{proof}

\begin{remark}\label{remark_gl2z}
The mapping class group of a (smooth) surface $\Sigma$ is the group of isotopy classes of (smooth) orientation-preserving homeomorphisms $\Sigma \to \Sigma$. Any skein algebra $\cs_X(\Sigma)$ is acted on by the mapping class group of $\Sigma$: the mapping class group is generated by Dehn twists on $\Sigma$ and may be extended trivially onto $\Sigma \times I$, which induce algebra endomorphisms on the skein algebra due to functoriality of $\cs_X(-)$. Thus, there is an $SL_2(\Z)$-action on $\cd(T^2)$ and is such that $g \cdot \widetilde{P}_\xx = \widetilde{P}_{g\xx}$ for any $g \in \SL_2(\Z)$. One could choose to extend this to an action of the \textit{extended} mapping class group of $\Sigma$ (that is, to include orientation-reversing diffeomorphisms) so that $GL_2(Z)$ acts on $\cd(T^2)$. If matrices of determinant $-1$ are to act on $\cd(T^2)$ in the same way as before, then they will induce anti-linear algebra anti-homomorphisms on $\cd(T^2)$. One could turn these into honest algebra homomorphisms by composing with the mirror map (note that $\widetilde{P}_\xx$ is fixed by the mirror map since the BMW symmetrizers are). It will be important that the orbits of this action on the $\widetilde{P}_{\xx}$ are the fibers of the assignment $\widetilde{P}_{\xx} \mapsto d(\xx)$ (which is essentially the statement of Lemma \ref{lemma_diophantine}). In other words, up to the action of $GL_2(\Z)$, vectors in $\Z^2$ are classified by the GCD of their entries.
\end{remark}

\begin{proposition}\label{lemma_allfromsome}
Suppose $A$ is an algebra with elements $\widetilde{P}_\xx$ for $\xx \in \Z^2/\langle -\xx = \xx \rangle$ that satisfy equations \eqref{eq:perprelations} and \eqref{eq:angledrelations}. Furthermore, suppose that there is a $\GL_2(\Z)$-action on $A$ so that $g \cdot \widetilde{P}_\xx = \widetilde{P}_{g \xx}$ for any $g \in \GL_2(\Z)$. Then, any pair $(\xx, \yy)$ is a good pair. 
\end{proposition}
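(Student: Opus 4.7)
The plan is strong induction on $M := |d(\xx, \yy)|$, with Lemma \ref{lemma_trueforab} as the inductive engine and Lemma \ref{lemma_diophantine} supplying the required decomposition of $\xx$. Throughout I use the $\GL_2(\Z)$-equivariance $g \cdot \widetilde{P}_\xx = \widetilde{P}_{g\xx}$, which makes goodness a $\GL_2(\Z)$-invariant property (the sign of $\det g$ being harmlessly absorbed by the symmetries $\widetilde{P}_{-\xx} = \widetilde{P}_\xx$ and $\{-n\} = -\{n\}$). This lets me freely normalize the pair $(\xx, \yy)$: in particular, move $\xx$ to $(d(\xx), 0)$, and use the stabilizer of $(1, 0)$ in $\SL_2(\Z)$ to further reduce $\yy = (y_1, y_2)$ modulo $y_2$ in its first coordinate.

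The base cases are produced by the $\GL_2(\Z)$-orbits of \eqref{eq:perprelations} and \eqref{eq:angledrelations}: the former covers all pairs $(\vv, n\ww)$ with $\{\vv, \ww\}$ a unimodular basis of $\Z^2$, and the latter covers all pairs $(\vv, \vv + n\ww)$ of the same type. In particular every pair with $M = 1$ is handled directly, and many pairs with larger $M$ are handled by falling into these orbits. For the inductive step with $M \geq 2$, assume goodness for all pairs of strictly smaller $|d|$. Normalize so $\xx = (p, q)$ is primitive, and apply Lemma \ref{lemma_diophantine} to write $\xx = \aab + \bb$ with $\aab = (u, v), \bb = (w, z), 0 < u, w < p$ and $d(\aab, \bb) = 1$. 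The pair $(\aab, \bb)$ is good since $|d(\aab, \bb)| = 1 < M$. For the other five pairs appearing in Lemma \ref{lemma_trueforab}, use the identities $d(\yy, \aab) + d(\yy, \bb) = -d(\xx, \yy)$, $d(\yy+\aab, \bb) = d(\yy, \bb) + 1$, etc., combined with the Diophantine bound $0 < u, w < p$ and the $\yy$-reduction, to check that each is either a base-case orbit element or has $|d| < M$; in either case it is good, so Lemma \ref{lemma_trueforab} delivers goodness of $(\xx, \yy)$. The non-primitive case of $\xx$ follows by a short bootstrap from the primitive cases just established.

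For the parallel case $M = 0$, treated after the nonzero case is complete: normalize to $\xx = (a, 0), \yy = (b, 0)$ and choose any auxiliary $n \neq 0$ together with integers $c, d$ satisfying $c + d = a$. The non-parallel pair $((c, n), (d, -n))$ has $d = -na \neq 0$, so is good by the nonzero case, which yields
\[
\widetilde{P}_\xx = \widetilde{P}_{(c-d, 2n)} - \frac{1}{\{na\}}\bigl[\widetilde{P}_{(c, n)}, \widetilde{P}_{(d, -n)}\bigr].
\]
Substituting this expression into $[\widetilde{P}_\xx, \widetilde{P}_\yy]$ re-writes the commutator as a sum of brackets of already-good non-parallel pairs; a Jacobi-identity expansion then causes the terms in $\widetilde{P}_{(a\pm b, 0)}$ to cancel in pairs and the remaining $\widetilde{P}_{(c-d\pm b, 2n)}$ terms to telescope to zero, establishing commutativity.

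The main obstacle is the verification, in the inductive step, that each of the five auxiliary pair-determinants is either strictly less than $M$ or already lies in a base-case orbit. This does not follow from Lemma \ref{lemma_diophantine} alone; one must combine its Diophantine constraints with the $\yy$-reduction and a short case analysis on the signs of $d(\yy, \aab)$ and $d(\yy, \bb)$, and occasionally re-normalize via a different $\GL_2(\Z)$-representative to push a "boundary" auxiliary pair (with $|d| = M$) into the $\GL_2(\Z)$-orbit of \eqref{eq:angledrelations}. The parallel case and the non-primitive bootstrap, by contrast, are routine bookkeeping once the nonzero primitive cases are in hand.
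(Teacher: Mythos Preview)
Your overall architecture matches the paper's: strong induction on $|d(\xx,\yy)|$, $\GL_2(\Z)$-normalization, and Lemma~\ref{lemma_trueforab} fed by the decomposition from Lemma~\ref{lemma_diophantine}. But what you have written is a plan, not a proof, and the part you flag as ``the main obstacle'' is precisely where the actual work lies and where your sketch is insufficient.

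Concretely: you assert that the five auxiliary determinants $|d(\yy,\aab)|$, $|d(\yy,\bb)|$, $|d(\yy+\aab,\bb)|$, $|d(\yy+\bb,\aab)|$, $|d(\aab-\bb,\yy)|$ are each either strictly below $M$ or lie in a base-case orbit, via ``the Diophantine bound $0<u,w<p$ and the $\yy$-reduction'' plus ``a short case analysis''. The paper carries this out, and it is not short. With the normalization $\yy=(0,r)$, $\xx=(p,q)$, $d(\xx)\le d(\yy)$, $0\le q<p$, one first disposes of the case where one of $d(\xx),d(\yy)$ equals $1$; then in the general case one must take $\aab,\bb$ scaled by $d(\xx)$, and even then the bound on $|d(\yy+\bb,\aab)|=d(\xx)ur+d(\xx)^2$ can fail to beat $pr$ in a boundary situation (Subcase~1b: $u+1=p'$), forcing a completely different choice of $\aab,\bb$ (Subcase~1c). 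The $q=0$ case (Case~2) likewise needs its own ad~hoc $\aab,\bb$. None of this is visible in ``occasionally re-normalize via a different $\GL_2(\Z)$-representative'', and your claim that boundary auxiliary pairs with $|d|=M$ can be pushed into the orbit of \eqref{eq:angledrelations} is unsubstantiated---the paper instead changes the splitting $\xx=\aab+\bb$.

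Two further gaps. First, your ``non-primitive bootstrap'' is a phrase, not an argument; the paper does not bootstrap from the primitive case but handles non-primitive $\xx$ directly with the scaled $\aab,\bb$ above, and the inequality $|d(\aab,\bb)|=d(\xx)^2\le d(\xx)d(\yy)<pr$ genuinely uses the normalization $d(\xx)\le d(\yy)$ that you never set up. Second, your $M=0$ paragraph is again only a sketch (``a Jacobi-identity expansion then causes the terms \ldots\ to cancel''); you would need to actually exhibit the cancellation. In short, the strategy is right, but the proof is the case analysis, and that is missing.
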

\begin{proof}
	The proof proceeds by induction on $\lvert d(\xx,\yy)\rvert $, and the base case $\lvert d(\xx,\yy)\rvert = 1$ is immediate from Remark \ref{remark_gl2z} and the assumption \eqref{eq:perprelations} for $\xx = (1,0)$ and $\yy = (0,1)$. We now make the following inductive assumption:
	\begin{equation}\label{assumption1} 
	\textrm{For all } \xx',\yy' \in \Z^2\textrm{ with } \lvert d(\xx',\yy')\rvert < \lvert d(\xx,\yy) \rvert, \textrm{ the pair } (\xx',\yy') \textrm{ is good.}
	\end{equation}
	We would like to show that $[\widetilde{P}_{\xx},\widetilde{P}_{\yy}] = \{d(\xx,\yy)\} \big( \widetilde{P}_{\xx + \yy} - \widetilde{P}_{\xx-\yy} \big)$. By Remark \ref{remark_gl2z}, we may assume
	\[
	\yy = \begin{bmatrix}0\\r\end{bmatrix},\quad \xx = \begin{bmatrix}p\\q\end{bmatrix},\quad d(\xx) \leq d(\yy),\quad 0 \leq q < p.
	\]
	
	If $p=1$, then this equation follows from Equation \eqref{eq:angledrelations}, so we may also assume $p > 1$. Furthermore, we may assume that $r>0$ by Remark \ref{remark_goodsymmetry}. 

	We will now show that if either $d(\xx)=1$ or $d(\yy)=1$, then $(\xx,\yy)$ is good. By symmetry of the above construction of $\xx$ and $\yy$, we may assume $d(\xx)=1$, which immediately implies $q>0$. Furthermore, we may now assume that $r>1$ by assuming Equation \eqref{eq:angledrelations}. We apply Lemma \ref{lemma_diophantine} to $p, q$ to obtain  $u,v,w,z \in \Z$ satisfying 
	\begin{equation}\label{assumption1.49}
	uz - vw = 1,\quad uq - vp = 1,\quad u + w = p,\quad v + z = q,\quad 0 < u,w < p.
	\end{equation}
	We then define vectors $\aab$ and $\bb$ as follows:
	\begin{equation}\label{assumption1.9}
	\aab := \begin{bmatrix}  u\\ v\end{bmatrix},\quad 
	\bb := \begin{bmatrix} w\\ z\end{bmatrix},
	\quad \aab + \bb = \xx,\quad d(\aab, \bb) = 1. 
	\end{equation}

	Using Lemma \ref{lemma_trueforab} and Assumption (\ref{assumption1}), it is sufficient to show that each of $\lvert d(\aab, \bb) \rvert$, $\lvert d(\yy,\bb) \rvert$, $ \lvert d(\yy,\aab) \rvert$, $\lvert d(\yy+\aab,\bb) \rvert$, $\lvert d(\yy+\bb,\aab) \rvert$, and $\lvert d(\aab-\bb,\yy) \rvert$ are strictly less than $pr = \lvert d(\xx,\yy) \rvert$. First, $ \lvert d(\aab,\bb) \rvert = 1$ is strictly less than $pr$ since $p>1$ and $r>0$. Second, $ \lvert d(\yy,\aab) \rvert = ur $ and $ \lvert d(\yy,\bb) \rvert = wr $ are strictly less than $pr$ by the inequalities in (\ref{assumption1.49}). Third, we compute
	\begin{eqnarray*}
		\vert d(\yy+\aab, \bb) \rvert &=&\vert - d(\yy+\aab, \bb) \rvert \\
		&=& \lvert - d(\yy,\bb) - d(\aab, \bb) \rvert \\
		&=& \lvert wr - 1 \rvert \\
		&=& wr-1 \\
		&<& wr \\
		&<& pr.
	\end{eqnarray*}

	Fourth, we compute
	\begin{eqnarray*}
		\lvert -d(\yy+\bb, \aab) \rvert &=& \lvert -d(\yy+\bb, \aab) \rvert \\
		&=& \lvert -d(\yy,\aab) - d(\bb, \aab) \rvert \\
		&=& \lvert ur + 1 \rvert \\
		&=& ur+1 \\
		&\leq& (p-1)r+1 \\
		&=& pr-r+1 \\
		&<& pr.
	\end{eqnarray*}

	Finally, we compute
	\begin{eqnarray*}
		\lvert d(\aab-\bb,\yy) \rvert &=& \lvert d(\aab,\yy) - d(\bb,\yy) \rvert \\
		&=& \lvert d(\yy,\aab) - d(\yy,\bb) \rvert \\
		&=& \lvert ur - wr \rvert \\
		&=& \lvert u-w \rvert r \\
		&<& \lvert u+w \rvert r \\
		&=& pr.	
	\end{eqnarray*}
	
	So we have shown that $(\xx,\yy)$ is good if $d(\xx)=1$ or $d(\yy)=1$. Let us now turn our attention to the more general case. We will immediately split this into cases depending on $q$.
	
	\noindent \emph{Case 1:} Assume $0 < q$. 
	
	Let $p' = p / d(\xx)$ and $q' = q / d(\xx)$. By the assumption $0 < q$, we see that $d(\xx) < p$, so $p' > 1$. We can therefore apply Lemma \ref{lemma_diophantine} to $p',q'$ to obtain $u,v,w,z \in \Z$ satisfying 
	\begin{equation}\label{assumption1.5}
	uz - vw = 1,\quad uq' - vp' = 1,\quad u + w = p',\quad v + z = q',\quad 0 < u,w < p'.
	\end{equation}
	In a way similar to the above, we may pick vectors $\aab$ and $\bb$ like so: 
	\begin{equation}\label{assumption2}
	\aab := \begin{bmatrix}  d(\xx)u\\ d(\xx)v\end{bmatrix},\quad 
	\bb := \begin{bmatrix} d(\xx)w\\ d(\xx)z\end{bmatrix},
	\quad \aab + \bb = \xx,\quad d(\aab, \bb) = d(\xx)^2. 
	\end{equation}
	
	As before, it is sufficient to show that each of $\lvert d(\aab, \bb) \rvert$, $\lvert d(\yy,\bb) \rvert$, $\lvert d(\yy,\aab) \rvert$, $\lvert d(\yy+\aab,\bb) \rvert$, $\lvert d(\yy+\bb,\aab) \rvert$, and $\lvert d(\aab-\bb,\yy) \rvert$ are strictly less than $pr  = \lvert d(\xx,\yy) \rvert$. First,
	\[
	\lvert d(\aab,\bb) \rvert = d(\xx)^2 \leq d(\xx)d(\yy) = d(\xx)r < pr
	\]
	where the last inequality follows from the assumption $0 < q < p$. Second, we can compute $\lvert d(\yy,\bb) \rvert = d(\xx)w r $ and $ \lvert d(\yy,\aab) \rvert =  d(\xx)u r $ are strictly less than $pr$ by the inequalities in (\ref{assumption1.5}). Third, we compute
	\begin{eqnarray*}
		\lvert d(\yy+\aab, \bb) \rvert &=& \lvert -d(\yy+\aab, \bb) \rvert \\
		&=& \lvert -d(\yy,\bb) - d(\aab, \bb) \rvert\\
		&=& \lvert d(\xx)wr - d(\xx)^2 \rvert \\
		&=& d(\xx)wr - d(\xx)^2 \\ 
		&<& d(\xx)wr\\
		&\leq& pr.
	\end{eqnarray*}
	Finally, we compute
	\begin{eqnarray*}
		\lvert d(\yy+\bb, \aab) \rvert &=& \lvert -d(\yy+\bb, \aab) \rvert \\
		&=& \lvert -d(\yy,\aab) - d(\bb, \aab) \rvert \\
		&=& d(\xx)ur + d(\xx)^2 \\
		&\leq& \left(d(\xx)u + d(\xx)\right)d(\yy)\\
		&=& (u+1)d(\xx)r.
	\end{eqnarray*}
	
	Therefore, we will be finished once we show that $(u+1)d(\xx)$ is strictly less than $p$. We now split into subcases:\\[2mm]
	
	\noindent\emph{Subcase 1a:} If $u + 1 < p'$, then $(u+1)d(\xx)r < p'd(\xx)r = pr$, and we are done. 
	
	\noindent \emph{Subcase 1b:} Assume $u + 1 = p'$. By equation (\ref{assumption1.5}), we have 
	\[
	1 = uq' - vp' = (p'-1)q' - vp'  \implies p'(q'-v) = 1 + q' < 1 + p'.
	\]
	Since $p' > 1$, the last inequality implies $q' - v = 1$, which implies $v = q'-1$ and $z = 1$ since $v + z = q'$. Now the equation $uz-vw = 1$ implies $(p'-1)-(q'-1) = 1$, which implies $q' = p'-1$. Writing $d=d(\xx)$ for short, we have 
	\[
	\lvert d(\yy + \bb, \aab) \rvert = \left| \det\left[ \begin{array}{cc}d&p -d\\d + r&p-2d\end{array}\right] \right| =  \left| d(p-2d) - (p-d)(d+r) \right| =  \left| rp + d\big(d-r\big) \right|
	\]
	which is at most $rp$ since we are assuming $d(\xx) \leq r$. If this inequality is strict, then we are done. Otherwise, we move onto the next subcase.
	
	\noindent\emph{Subcase 1c:} In this subcase, we are reduced to showing the following pair of vectors is good: 
	\[
	\yy = (0,r),\quad \quad \xx = (rp', rp'-r).
	\]
	If $r=1$, then $d(\yy)=1$, which makes $(\xx,\yy)$ good. Thus, we may assume that $r>1$. 
	
	We must replace our previous choice of $\aab$ and $\bb$ with a choice which is better adapted to this particular subcase. We define
	\[
	\aab := \begin{bmatrix} 1\\-1 \end{bmatrix},\quad \bb := \begin{bmatrix} rp'-1\\rp' - r + 1 \end{bmatrix}.
	\]

	We know that the pairs $(\aab, \bb), (\yy, \aab), (\yy+\bb,\aab)$ are good since $d(\aab)=1$. Since $r > 1$ and $p'>1$, we can compute that the determinants of the pairs $(\yy,\bb), (\yy+\aab,\bb), (\aab-\bb,\yy)$ are strictly less than $r^2p' = \lvert d(\xx,\yy) \rvert$:
	\begin{eqnarray*}
		\lvert d(\aab-\bb,\yy)\rvert &=&  \lvert r^2p' - 2r \rvert \\
		&=& r^2p' - 2r \\
		&<& r^2p'
	\end{eqnarray*}

	\begin{eqnarray*}
		\lvert d(\yy,\bb)\rvert &=&  \lvert r^2p' - r \rvert \\
		&=& r^2p' - r \\
		&<& r^2p'
	\end{eqnarray*}

	\begin{eqnarray*}
		\lvert d(\yy+\aab, \bb)\rvert &=&  \lvert r^2p' - (rp'+r-1) \rvert \\
		&=&  r^2p' - (rp'+r-1) \\
		&<& r^2p'.
	\end{eqnarray*}

This together with Assumption (\ref{assumption1}) and Lemma \ref{lemma_trueforab} shows that $(\xx,\yy)$ is good, which finishes the proof of this subcase and finishes the proof of Case 1. \\[2mm]
	
	\noindent \emph{Case 2:} In this case we assume $q=0$. We define $\aab, \bb$ similarly to Subcase 1c, so we have
	\[
	\yy = \begin{bmatrix} 0\\r\end{bmatrix},\quad \xx = \begin{bmatrix} p\\0\end{bmatrix},\quad \aab := \begin{bmatrix} 1\\-1 \end{bmatrix},\quad \bb := \begin{bmatrix} p-1\\ 1 \end{bmatrix}.
	\]

	Since $d(\aab) = d(\bb) = 1$, the pairs $(\aab,\bb), (\yy,\aab), (\yy,\bb), (\yy+\aab,\bb), (\yy+\bb,\aab)$ are all good. We must check that $\lvert d(\aab-\bb,\yy) \rvert < pr = \lvert d(\xx,\yy) \rvert$. If $r=1$, then the relation \eqref{eq:perprelations} implies that the pair $(\xx,\yy)$ is good. Thus, we may assume that $r>1$. We may also assume that $p>1$. Finally, we check $\lvert d(\aab-\bb,\yy) \rvert = \lvert rp-2r \rvert = rp-2r < rp$. By using Assumption (\ref{assumption1}) and Lemma \ref{lemma_trueforab}, this completes Case 2 which completes the proof. 
\end{proof}

\section{Relationships With $\ck(T^2)$ and $\ch(T^2)$} \label{sec:compatibility}

\subsection{The Kauffman Bracket Case} \label{sec:bracketcompatibility}

Recall that the Dubrovnik skein relations satisfy the Kauffman Bracket skein relations, and hence there is a natural transformation of functors
\[
\eta: \cd \to \ck
\]
whose components are essentially the identity map (more details are given in Section \ref{sec:skeintheories}). In particular, there is an algebra homomorphism from the Birman-Murakami-Wenzl algebra to the Temperley-Lieb algebra.

\begin{theorem} \label{thm:symmtoJW}
Under the algebra homomorphism
\[
\eta_{(I^2, \underline{n})}: BMW_n \to TL_n
\]
the BMW symmetrizer $\tilde{y}_n$ is sent to the Jones-Wenzl idempotent $JW_n$.
\end{theorem}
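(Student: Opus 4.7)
The plan is to verify that $\eta_{(I^2, \underline{n})}(\tilde{y}_n)$ satisfies the defining properties of the Jones--Wenzl idempotent $JW_n \in TL_n$ and then invoke uniqueness. I use the characterization of $JW_n$ as the unique non-zero idempotent of $TL_n$ annihilated on both sides by every cap-cup generator $e_i$; for generic $s$ the bimodule $\{x \in TL_n : e_i x = 0 = x e_i\ \text{for all}\ i\}$ is one-dimensional, so its only idempotents are $0$ and $JW_n$.

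Two of these properties follow immediately. Idempotency: since $\eta$ is an algebra homomorphism, $\eta(\tilde{y}_n)^2 = \eta(\tilde{y}_n^2) = \eta(\tilde{y}_n)$. Annihilation by $e_i$: applying $\eta$ to the BMW absorption relation $\sigma_i \tilde{y}_n = s\tilde{y}_n$ and using the Kauffman bracket rewriting $\eta(\sigma_i) = s + s^{-1}e_i$ gives
\[
(s + s^{-1}e_i)\,\eta(\tilde{y}_n) = s\,\eta(\tilde{y}_n),
\]
so $e_i\,\eta(\tilde{y}_n) = 0$; the symmetric argument applied to $\tilde{y}_n \sigma_i = s \tilde{y}_n$ yields $\eta(\tilde{y}_n)\,e_i = 0$.

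The remaining task is to rule out $\eta(\tilde{y}_n) = 0$. I would proceed by induction on $n$ using the recursion \eqref{eq:shellyrecurrence} and the base case $\tilde{y}_1 = 1 \mapsto 1 = JW_1$. Under $\eta$ the first two summands on the right of that recursion already produce the shape of Wenzl's recursion for $JW_{n+1}$ in terms of $\eta(\tilde{y}_n)$, while the $\beta_n$-term involving the cap-cup diagram can be simplified using the inductive hypothesis $e_n\,\eta(\tilde{y}_n) = 0$ together with the specialization $v \mapsto -s^{-3}$ implicit in $\eta$. A careful accounting should show that the image of \eqref{eq:shellyrecurrence} matches Wenzl's recursion for $JW_{n+1}$, which inductively produces a non-zero idempotent.

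Combining the three properties with the uniqueness statement above gives $\eta(\tilde{y}_n) = JW_n$. The main obstacle is the non-triviality step: one must verify that the extra cap-cup term in the BMW recursion, carrying the coefficient $\beta_n = (1 - s^2)/(s^{2n-1}v^{-1} - 1)$, collapses under the specialization $v \mapsto -s^{-3}$ and combines with the other terms to reproduce Wenzl's recursion exactly. This is a direct computation in $TL_n$, but requires careful tracking of the scalar coefficients and an appeal to the absorption properties already established for $\eta(\tilde{y}_n)$.
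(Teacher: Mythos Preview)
Your approach matches the paper's: reduce to showing $\eta(\tilde y_n)$ is a nonzero idempotent killed by all cap-cup generators, then invoke the uniqueness of $JW_n$; the only substantive step is nonvanishing, handled by induction via the recursion \eqref{eq:shellyrecurrence}. The paper carries out exactly this induction, with a lengthy diagrammatic computation showing that the image of \eqref{eq:shellyrecurrence} under $\eta$ (with $v=-s^{-3}$) reproduces Wenzl's recursion \eqref{eq:jwformula} on the nose.

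One point in your sketch is misleading, however: the identity ``$e_n\,\eta(\tilde y_n)=0$'' is not available. The cap-cup annihilation $e_i\,JW_n=0$ holds only for $i\le n-1$, since $\sigma_i\tilde y_n = s\tilde y_n$ is a relation in $BMW_n$, not $BMW_{n+1}$; the generator $e_n$ touches the new $(n{+}1)$st strand and does not kill $JW_n\otimes 1$. Consequently the $\beta_n$-term in \eqref{eq:shellyrecurrence} does \emph{not} vanish under $\eta$. In the paper's computation all three terms of the BMW recursion contribute nontrivially: each is rewritten using Kauffman-bracket crossing resolutions together with the crossing absorption $\sigma_i\,JW_n=s\,JW_n$ (not cap-cup annihilation), and the resulting linear combination is matched against the two terms of Wenzl's recursion only after a further multiplication by $\sigma_n\cdots\sigma_1$ and a nontrivial scalar identity. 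So your ``careful accounting'' is the right plan, but it is more laborious than your sketch suggests, and the shortcut you propose for the $\beta_n$-term does not work.
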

\begin{proof}
The element $JW_n$ is the unique element of $TL_n$ satisfying the properties
\begin{enumerate}
\item $JW_n \neq 0$,
\item $JW_n^2 = JW_n$,
\item $JW_n c_i = c_i JW_n = 0$ for all cap-cup generators $c_i$. 
\end{enumerate}
Let's abuse some notation by using $\eta$ instead of $\eta_{(I^2, \underline{n+1})}$ since it should be clear from context what we mean. Since $\eta$ is an algebra map, it is true that $\eta(\tilde{y}_n)$ are idempotent and absorb $c_i$ in the same way as $JW_n$, but it is not clear if these images are non-zero. If $n=1$, then the statement is true since $\tilde{y}_1$ and $JW_1$ are the simply the identity, a single strand. We will proceed assuming the induction hypothesis, that  $\eta(\tilde{y}_k) = JW_k$ for all $k \leq n$. The Jones-Wenzl idempotents satisfy the recurrence relation
\begin{equation}\label{eq:jwformula}
\pic[2.7]{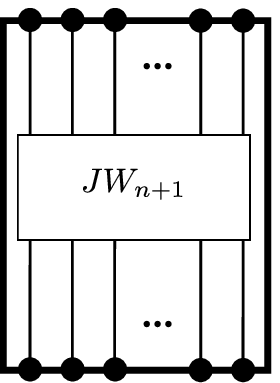} = \pic[2.7]{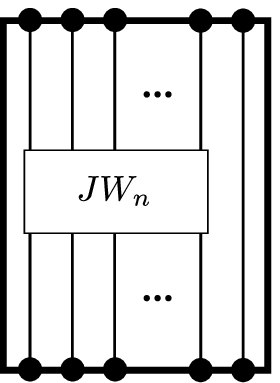} + \frac{s^{2n} - s^{-2n}}{s^{2n+2} - s^{-2n-2}} \pic[2.7]{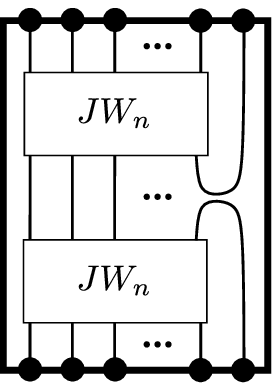}
\end{equation}
(see \cite{Mor17} and substitute $q=s^2$) and recall the recurrence relation for the BMW symmetrizers
\begin{equation}\label{eq:bmwsymmformula}
[n+1] \pic[2.5]{y_n+1.eps} = [n]s^{-1} \pic[2.5]{shelly1.eps} + \pic[2.5]{shelly2.eps} + [n]s^{-1} \beta_n \pic[2.5]{shelly3.eps}
\end{equation}
where \[\beta_n = \frac{1-s^2}{s^{2n-1}v^{-1} - 1}.\]
We finish the proof by showing that the image of Equation \eqref{eq:bmwsymmformula} is Equation \eqref{eq:jwformula} under $\eta$. So specialize $v=-s^{-3}$ and apply $\eta$ to Equation \eqref{eq:bmwsymmformula} and use the induction hypothesis to get
\begin{equation} \label{eq:jwproof1}
[n+1] \pic[2.5]{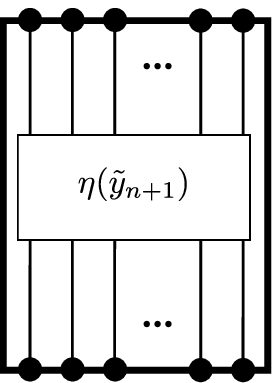} = [n]s^{-1} \pic[2.5]{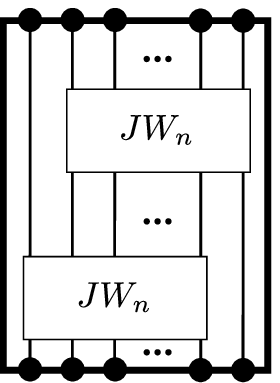} + \pic[2.5]{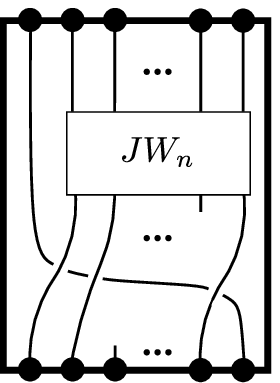} + \frac{s^n - s^{-n}}{s^{2n+2} + 1} \pic[2.5]{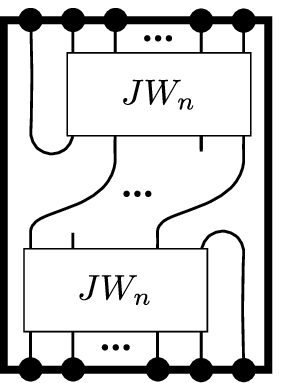}
\end{equation}
which is true after calculating $\eta([n]s^{-1} \beta_n)$. First, let's work with the second diagram on the right-hand side.
\begin{align*}
\pic[2.7]{JWshelly2.eps} = \pic[2.7]{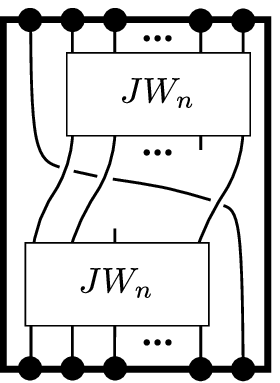} &= s \pic[2.7]{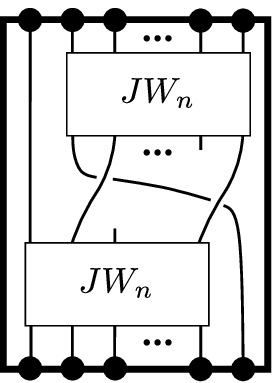} + s^{-1} \pic[2.7]{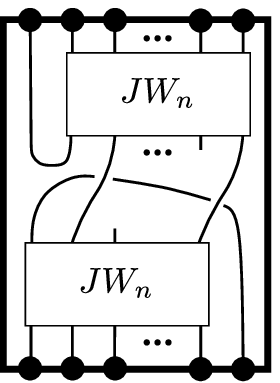} \\
&= s^n \pic[2.7]{JWshelly1.eps} + s^{-n} \pic[2.7]{JWshelly3.eps}
\end{align*}
since $\sigma_i JW_n = JW_n \sigma_i = s JW_n$ for a positive crossing $\sigma_i$. So Equation \eqref{eq:jwproof1} becomes
\begin{equation} \label{eq:jwproof2}
[n+1] \pic[2.2]{eta_n+1.eps} = \left([n]s^{-1} + s^n\right) \pic[2.2]{JWshelly1.eps} + \left(\frac{s^n - s^{-n}}{s^{2n+2} + 1} + s^{-n}\right) \pic[2.2]{JWshelly3.eps}.
\end{equation}
Let's divide both sides of Equation \eqref{eq:jwproof2} by $[n+1]$ and simplify the constants slightly.
\begin{equation} \label{eq:jwproof3}
\pic[2.7]{eta_n+1.eps} = \pic[2.7]{JWshelly1.eps} + \frac{(s^{n+2} - s^{-n-2}) + (s^n - s^{-n})}{(s^{n+2} - s^{-n-2}) - (s^n - s^{-n})} \pic[2.7]{JWshelly3.eps}
\end{equation}
Take the first diagram on the right-hand side and rewrite it using the following technique.
\begin{align*}
\pic[2.7]{JWshelly1.eps} = \pic[2.7]{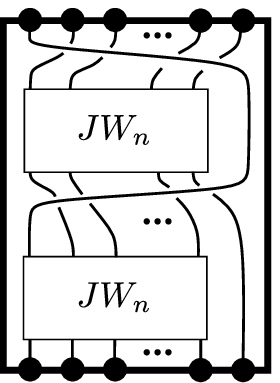} &= s \pic[2.7]{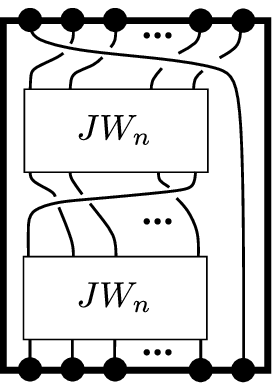} + s^{-1} \pic[2.7]{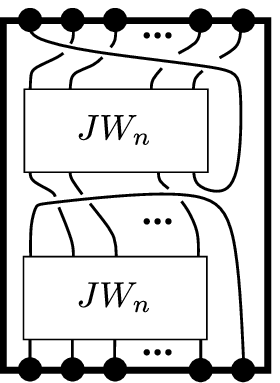} \\
&= s^n \pic[2.7]{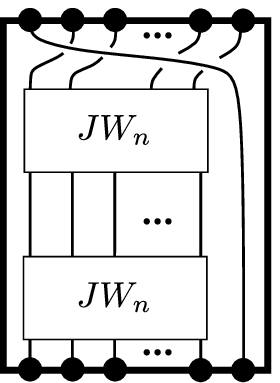} + s^{n-2} \pic[2.7]{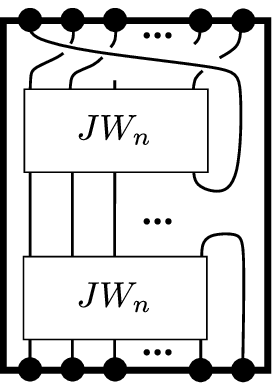} \\
&= s^n \pic[2.7]{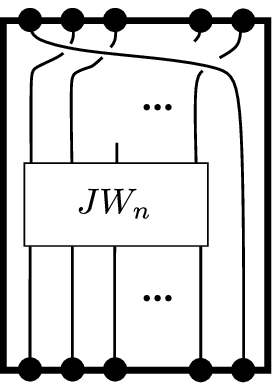} + s^{n-2} \pic[2.7]{JWshelly1wrapb2.eps}
\end{align*}
We can also rewrite the second diagram on the right-hand side of Equation \eqref{eq:jwproof3} in a similar way.
\begin{align*}
\pic[2.7]{JWshelly3.eps} &= \pic[2.7]{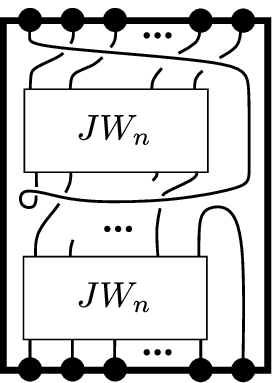} = -s^{-3} \pic[2.7]{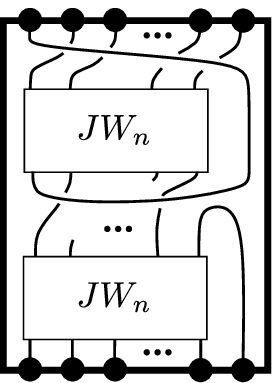} = -s^{-n-2} \pic[2.7]{JWshelly1wrapb2.eps}
\end{align*}
Then Equation \eqref{eq:jwproof3} becomes
\begin{equation}
\pic[2.4]{eta_n+1.eps} = s^n \pic[2.4]{JWshelly1wrapa2absorb.eps} + \left(s^{n-2} - \frac{1 - s^{-2n-4} + s^{-2} - s^{-2n-2}}{s^{n+2} - s^{-n-2} - s^n + s^{-n}} \right) \pic[2.4]{JWshelly1wrapb2.eps}
\end{equation}
Multiply both sides of the equation by $\sigma_n \cdots \sigma_1$ to get the equation
\begin{equation} \label{eq:jwproof5}
\pic[2.4]{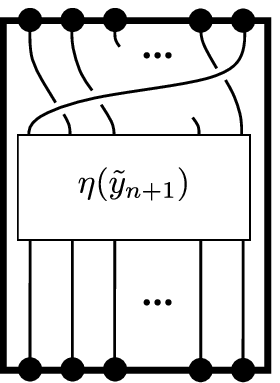} = s^n \pic[2.4]{JWntimes1.eps} + \left(s^{n-2} - \frac{1 - s^{-2n-4} + s^{-2} - s^{-2n-2}}{s^{n+2} - s^{-n-2} - s^n + s^{-n}} \right) \pic[2.4]{JWncJWn.eps}
\end{equation}
On the left-hand side, the crossings are absorbed and scale the diagram by $s^n$. Divide both sides by $s^n$. One can show that the equality of scalars
\[
s^{-n} \left(s^{n-2} - \frac{1 - s^{-2n-4} + s^{-2} - s^{-2n-2}}{s^{n+2} - s^{-n-2} - s^n + s^{-n}} \right) = \frac{s^{2n} - s^{-2n}}{s^{2n+2} - s^{-2n-2}}
\]
holds in the base ring. The right-hand side of the resulting equation is precisely the right-hand side of Equation \eqref{eq:jwformula}. Therefore, $\eta(\tilde{y}_{n+1}) = JW_{n+1}$, which completes the proof. 
\end{proof}

\begin{remark}
In \cite{She16}, it is pointed out that, under the composition of algebra maps $BMW_n \to H_n \to TL_n$ (see Section \ref{sec:cube}), the BMW symmetrizer $\tilde{y}_n$ is sent to the Jones-Wenzl idempotent $JW_n$. However, these maps are defined abstractly on generators, and so \textit{a priori} they are not induced by any natural transformations of skein theories. Furthermore, the diagram
\begin{center}
\begin{tikzcd}
	& BMW_n \arrow[dl, "{\pi_H}", two heads] \arrow[dd, "{\eta_{(I^2, \underline{n})}}", two heads] \\
H_n \arrow[dr, "{\pi_{TL}}", two heads] & \\
	& TL_n
\end{tikzcd}
\end{center}
\textit{does not commute} since $\pi_H(c_i)=0$. However, Theorem \ref{thm:symmtoJW} shows that restricting the maps of the diagram on the relevant idempotents \textit{does} commute.
\end{remark}

It was mentioned in Section \ref{sec:skeintheories} that \textit{Chebyshev polynomials} are important elements of the skein algebra $\ck(A) = R[z]$. Let's now elaborate what that means. The 
\textbf{Chebyshev polynomials of the second kind} $C^S_n = C^S_n(z)$ are defined recursively as
\[
C^U_0 = 1, \qquad \qquad C^U_1 = 2z, \qquad \qquad C^U_n = 2 z C^U_{n-1} - C^U_{n-2}.
\]
Let's consider slightly modified versions $U_n := C_n^U(z/2)$. These polynomials have amazing properties, most of which we won't discuss here. One such fact is that the annular closure of the Jones-Wenzl idempotent $JW_n$ is equal to $U_n(z)$. This can be shown by taking the annular closure of Equation \eqref{eq:jwformula}, the recurrence relation for the $JW_n$, and using some well-known identities to show that the result is the same as the recurrence relation $U_{n+1} = z U_{n} - U_{n-1}$.

There are also \textbf{Chebyshev polynomials of the first kind} $C^T_k = C^T_k(z)$ which are defined recursively as
\[
C^T_0 = 1, \qquad \qquad C^T_1 = z, \qquad \qquad C^T_k = 2 z C^T_{k-1} - C^T_{k-2}.
\]
Again, we will need slightly modified versions of these, $T_k := 2C^T_k(z/2)$.  

It may be interesting to point out that these Chebyshev polynomials satisfy the relations
\begin{align}
C^T_k(z + z^{-1}) &= z^k + z^{-k}  \\
C^U_k(z + z^{-1}) &= \frac{z^k + z^{-k}}{z + z^{-1}}
\end{align}
however, we won't use these facts here.

\begin{corollary}
Under the algebra homomorphism
\[
\eta_A: \cd(A) \to \ck(A)
\]
the image of the special element $\widetilde{P}_k$ is equal to the Chebyshev polynomial $T_k$.
\end{corollary}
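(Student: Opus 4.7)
The plan is to push the defining power series for $\widetilde{P}_k$ through $\eta_A$ and then verify a classical Chebyshev generating-function identity. Since $\eta$ is a natural transformation of skein theories, it commutes with all wirings and in particular with the annular closure map $\cl$. Therefore
\[
\eta_A(\tilde{h}_n) = \eta_A\bigl(\cl(\tilde{y}_n)\bigr) = \cl\bigl(\eta_{(I^2,\underline{n})}(\tilde{y}_n)\bigr) = \cl(JW_n) = U_n(z),
\]
where the third equality uses Theorem \ref{thm:symmtoJW} and the final one uses the already-noted fact that the annular closure of the Jones--Wenzl idempotent is $U_n$.

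Next, since $\eta_A$ is an $R$-algebra homomorphism, it extends to an algebra homomorphism $\cd(A)[[t]] \to \ck(A)[[t]]$ which commutes with $\ln$ of power series with constant term $1$. Applying this extension to the defining identity of the $\widetilde{P}_k$ gives
\[
\sum_{k=1}^\infty \frac{\eta_A(\widetilde{P}_k)}{k} t^k = \ln\Bigl(1 + \sum_{n=1}^\infty U_n(z)\, t^n\Bigr)
\]
in $\ck(A)[[t]] = R[z][[t]]$. Hence it suffices to prove the scalar identity
\[
\ln\Bigl(1 + \sum_{n=1}^\infty U_n(z)\, t^n\Bigr) = \sum_{k=1}^\infty \frac{T_k(z)}{k}\, t^k,
\]
after which equating coefficients of $t^k$ yields $\eta_A(\widetilde{P}_k) = T_k$.

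Finally, to prove the scalar identity, I would use the recurrences $U_n = z U_{n-1} - U_{n-2}$ with $U_0 = 1,\ U_1 = z$ to see that the generating function $G(t) := 1 + \sum_{n \geq 1} U_n(z)\, t^n$ satisfies $(1 - zt + t^2)\, G(t) = 1$, so $G(t) = (1 - zt + t^2)^{-1}$. Then $\ln G(t) = -\ln(1 - zt + t^2)$, and substituting the factorization $1 - zt + t^2 = (1 - wt)(1 - w^{-1}t)$ under the change of variables $z = w + w^{-1}$ and applying the Mercator series gives
\[
-\ln(1-zt+t^2) = \sum_{k=1}^\infty \frac{w^k + w^{-k}}{k} t^k = \sum_{k=1}^\infty \frac{T_k(z)}{k}\, t^k,
\]
using the identity $T_k(w + w^{-1}) = w^k + w^{-k}$ mentioned in the text. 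Since both sides are polynomials in $z$ for each fixed $k$, the identity holds over $R[z]$.

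The argument has no real obstacle: the only substantive input is Theorem \ref{thm:symmtoJW}, which has already been established, together with naturality of $\eta$ and a standard Chebyshev generating-function identity. The point that requires mild care is simply matching the normalization conventions $U_n = C^U_n(z/2)$ and $T_k = 2\,C^T_k(z/2)$ used in the preceding paragraph with the classical identities.
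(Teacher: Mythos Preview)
Your proof is correct and follows essentially the same route as the paper: both use naturality of $\eta$ together with Theorem \ref{thm:symmtoJW} to get $\eta_A(\tilde{h}_n) = U_n$, push $\eta_A$ through the defining logarithmic power series, and reduce to the classical Chebyshev identity $\ln\bigl(\sum_{n\geq 0} U_n t^n\bigr) = \sum_{k\geq 1} T_k t^k / k$. The only cosmetic difference is in verifying that last identity: the paper quotes the standard generating functions for $C^U_n$ and $C^T_k$ and renormalizes, whereas you derive $G(t) = (1 - zt + t^2)^{-1}$ from the recurrence and then factor and apply the Mercator series directly.
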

\begin{proof}
Since $\eta$ is natural, the closure into the annulus induces a commuting square
\begin{center}
\begin{tikzcd}
BMW_n \arrow[r, "\cl"] \arrow[d, "\eta"] & \cd(A) \arrow[d, "\eta"] \\
TL_n \arrow[r, "\cl"] & \ck(A)
\end{tikzcd}
\end{center}
Recall the $\widetilde{P}_k$ are uniquely determined by the power series equation
\[
\sum_{k=1}^\infty \frac{\widetilde{P}_k}{k} t^k = \ln \Bigg( 1 + \sum_{n=1}^\infty \tilde{h}_n t^n \Bigg).
\]
If we show that the following equality holds
\[
\sum_{k=1}^\infty \frac{T_k}{k} t^k = \ln \Bigg( 1 + \sum_{n=1}^\infty U_n t^n \Bigg)
\]
then the proof is complete since $\eta_A(\tilde{h}_n)=U_n$. The Chebyshev polynomials admit well-known generating functions (see Wikipedia, for example)
\begin{align*}
\sum_{n \geq 0} C_n^U(z)t^n &= \frac{1}{1-2tz+t^2} \\
\sum_{k \geq 1} C_k^T(z)\frac{t^k}{k} &= \ln \left( \frac{1}{\sqrt{1-2tz+t^2}} \right).
\end{align*}
These imply
\begin{align*}
\sum_{n \geq 0} C_n^U(z/2)t^n &= \frac{1}{1-tz+t^2} \\
\sum_{k \geq 1} C_k^T(z/2)\frac{t^k}{k} &= \ln \left( \frac{1}{\sqrt{1-tz+t^2}} \right).
\end{align*}
Multiply the second equation by 2 to get
\[
\sum_{k \geq 1} 2C_k^T(z/2)\frac{t^k}{k} = \ln \left( \frac{1}{1-tz+t^2} \right)
\]
which implies the desired equation since $T_k = 2C_k^T(z/2)$ and $U_n := C_n^U(z/2)$.
\end{proof}

Recall once more the embedding of the annulus
\[
\iota_r: A \hookrightarrow T^2
\]
into a tubular neighborhood of the simple closed curve of rational slope $r=a/b$ for $(a, b)$ considered as an element of $\Z^2 / \langle \xx = - \xx \rangle$. These embeddings induce naturality squares
\begin{center}
\begin{tikzcd}
\cd(A) \arrow[r, "\cd(\iota_r)"] \arrow[d, "\eta"] & \cd(T^2) \arrow[d, "\eta"] \\
\ck(A) \arrow[r, "\ck(\iota_r)"] & \ck(T^2)
\end{tikzcd}
\end{center}
which imply that $\eta_{T^2}(\widetilde{P}_\xx) = T_\xx$ for any $\xx$ where the $T_\xx$ are the generators in the Frohman-Gelca presentation of $\ck(T^2)$. Indeed, one can check that the $T_k$ satisfy the relations Theorem \ref{thm:toruspresentation}:
\begin{align*}
[T_\xx, T_\yy] &= T_\xx T_\yy - T_\yy T_\xx \\
&= (s^{\det(\xx, \yy)} T_{\xx + \yy} + s^{-\det(\xx, \yy)} T_{\xx - \yy}) - (s^{\det(\yy, \xx)} T_{\yy + \xx} + s^{-\det(\yy, \xx)} T_{\yy - \xx}) \\
&= (s^{\det(\xx, \yy)} T_{\xx + \yy} + s^{-\det(\xx, \yy)} T_{\xx - \yy}) - (s^{-\det(\xx, \yy)} T_{\xx + \yy} + s^{\det(\xx, \yy)} T_{\xx - \yy}) \\
&= (s^{\det(\xx, \yy)} - s^{-\det(\xx, \yy)}) (T_{\xx + \yy} - T_{\xx - \yy})
\end{align*}
Let's summarize this discussion with a corollary.
\begin{corollary} \label{cor:frohmangelcacompatibility}
The presentation of $\cd(T^2)$ is compatible with the Frohman-Gelca presentation of $\ck(T^2)$ under the algebra homomorphism
\[
\eta_{T^2}: \cd(T^2) \to \ck(T^2).
\]
\end{corollary}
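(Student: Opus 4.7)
The plan is to combine the three facts established in the discussion leading up to the corollary. First, I would invoke the naturality square for $\iota_r$ relating $\eta_A$, $\eta_{T^2}$, $\cd(\iota_r)$, and $\ck(\iota_r)$, together with the already-proven identity $\eta_A(\widetilde{P}_k) = T_k$, to deduce
\[
\eta_{T^2}(\widetilde{P}_\xx) \;=\; \ck(\iota_r)\bigl(\eta_A(\widetilde{P}_k)\bigr) \;=\; \ck(\iota_r)(T_k) \;=\; T_\xx
\]
for every $\xx = (a,b) \in \Z^2 / \langle \xx = -\xx \rangle$ with $k := \gcd(a,b)$ and $r = a/b$. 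This shows that $\eta_{T^2}$ carries each generator in the presentation of Theorem \ref{thm:toruspresentation} to the corresponding Frohman-Gelca generator of $\ck(T^2)$.

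Second, since $\eta_{T^2}$ is an algebra homomorphism, applying it to the bracket relations in Theorem \ref{thm:toruspresentation} yields
\[
[T_\xx, T_\yy] \;=\; \bigl(s^{\det(\xx,\yy)} - s^{-\det(\xx,\yy)}\bigr)\bigl(T_{\xx+\yy} - T_{\xx-\yy}\bigr)
\]
in $\ck(T^2)$. I would then verify that this identity is a consequence of the Frohman-Gelca product-to-sum rule by expanding $T_\xx T_\yy$ and $T_\yy T_\xx$ separately and subtracting, using the antisymmetry of $\det$ and the fact that $T_\zz = T_{-\zz}$ (so that the roles of $\xx+\yy$ and $\xx-\yy$ swap in the $T_\yy T_\xx$ expansion). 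This is exactly the three-line computation displayed just above the corollary statement.

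Taken together, these two observations say that $\eta_{T^2}$ sends generators to generators and each defining relation of the Dubrovnik presentation of $\cd(T^2)$ to a valid relation in the Frohman-Gelca presentation of $\ck(T^2)$, which is precisely what it means for the two presentations to be compatible under $\eta_{T^2}$. I do not expect any real obstacle here: all the technical content has been absorbed into Theorem \ref{thm:toruspresentation}, the preceding corollary identifying $\eta_A(\widetilde{P}_k)$ with $T_k$, and the Frohman-Gelca product-to-sum formulas. The present corollary is essentially a bookkeeping step assembling these three inputs.
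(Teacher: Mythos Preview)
Your proposal is correct and matches the paper's approach exactly: the paper also derives $\eta_{T^2}(\widetilde{P}_\xx) = T_\xx$ from the naturality square and $\eta_A(\widetilde{P}_k) = T_k$, then verifies the bracket relation for the $T_\xx$ via the Frohman--Gelca product-to-sum formula using the same three-line computation you describe. The corollary is indeed just a summary of this discussion.
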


It should probably be emphasized that this homomorphism is defined topologically on the level of diagrams, and not simply via an abstract assignment of generators. This is important because skein theoretic methods will translate through these types of homomorphisms.

\subsection{The HOMFLYPT Case}

Recall that $\ch(T^2)$ is presented by generators 
\[
\{ P_\xx \mid \xx \in \Z^2 \}
\]
and relations
\[
[P_\xx, P_\yy] = (s^{\det(\xx, \yy)} - s^{-\det(\xx, \yy)}) P_\xx.
\]
This implies that the span of the generators is a Lie algebra and $\ch(T^2)$ is its universal enveloping algebra. Let's use the notation 
\[
\mathfrak{g}_\ch := R\{ P_\xx \mid \xx \in \Z^2 \} \qquad \qquad \mathfrak{g}_\cd := R\{ \widetilde{P}_\xx \mid \xx \in \Z^2 / \langle \xx = -\xx \rangle \}.
\]

\begin{proposition}
There is an injective Lie algebra homomorphism 
\[
\varphi: \mathfrak{g}_\cd \to \mathfrak{g}_\ch 
\]
defined by $\varphi(\widetilde{P}_\xx) = P_\xx + P_{-\xx}$. This induces an algebra homomorphism
\[
U(\varphi): \cd(T^2) \to \ch(T^2)
\]
where $U(-)$ is the universal enveloping algebra functor. Furthermore, the restriction of this homomorphism to an annulus provides an identification for $\cd(A)$ as a subalgebra of $\ch(A)$.
\end{proposition}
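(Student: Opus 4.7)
The plan is to break the proposition into three components: show that $\varphi$ is a well-defined Lie algebra homomorphism, establish its injectivity, and then use functoriality of $U$ to obtain the algebra map $\cd(T^2) \to \ch(T^2)$ together with its annular restriction.

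The first step is essentially a direct computation. Well-definedness on classes in $\Z^2 / \langle \xx = -\xx \rangle$ is immediate since $\varphi(\widetilde{P}_{-\xx}) = P_{-\xx} + P_\xx = \varphi(\widetilde{P}_\xx)$. For the bracket, I would expand $[\varphi(\widetilde{P}_\xx), \varphi(\widetilde{P}_\yy)] = [P_\xx + P_{-\xx}, P_\yy + P_{-\yy}]$ into four terms, each evaluated via the Morton--Samuelson relation $[P_\xx, P_\yy] = (s^{\det(\xx,\yy)} - s^{-\det(\xx,\yy)}) P_{\xx + \yy}$. Writing $d := \det(\xx, \yy)$ and using $\det(-\xx, \yy) = \det(\xx, -\yy) = -d$ together with $\det(-\xx, -\yy) = d$, the four brackets collapse to $(s^d - s^{-d})(P_{\xx+\yy} + P_{-\xx-\yy} - P_{\xx-\yy} - P_{-\xx+\yy})$, which is exactly $\varphi$ applied to the right-hand side of \eqref{eq:mainrelations}. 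This confirms that $\varphi$ respects the Lie bracket.

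For injectivity of $\varphi$, I would argue at the level of the basis $\{P_\xx : \xx \in \Z^2\}$ of $\mathfrak{g}_\ch$: distinct orbit representatives $[\xx] \in \Z^2/\langle \xx = -\xx\rangle$ produce disjoint supports for the sums $P_\xx + P_{-\xx}$, so these images are $R$-linearly independent. Applying the universal enveloping algebra functor then yields an algebra homomorphism $U(\varphi): U(\mathfrak{g}_\cd) \to U(\mathfrak{g}_\ch)$. By Corollary \ref{cor:liealgebra1} we have $U(\mathfrak{g}_\cd) \cong \cd(T^2)$, and the analogous Morton--Samuelson presentation gives $U(\mathfrak{g}_\ch) \cong \ch(T^2)$, producing the desired algebra homomorphism between skein algebras.

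For the annular restriction I would fix the embedding $\iota_0: A \hookrightarrow T^2$ along the horizontal slope, identifying $\cd(A)$ with the subalgebra of $\cd(T^2)$ generated by $\{\widetilde{P}_{(k,0)}\}_{k \geq 1}$ and $\ch(A)$ with the subalgebra of $\ch(T^2)$ generated by $\{P_{(k,0)}\}_{k \in \Z_{\neq 0}}$. Then $U(\varphi)(\widetilde{P}_{(k,0)}) = P_{(k,0)} + P_{(-k,0)}$ visibly lies inside the image of $\ch(\iota_0)$, so $U(\varphi)$ restricts to an algebra map $\cd(A) \to \ch(A)$ sending $\widetilde{P}_k \mapsto P_k + P_{-k}$. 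The main obstacle is injectivity of this restriction: the plan is to use that $\cd(A) \cong R[z_i : i > 0]$ is a polynomial algebra (by Section \ref{sub:annulus}) and that the $\widetilde{P}_k$ are algebraically independent via the triangular change of variables from the $\tilde{h}_n$, while $\ch(A) \cong R[z_i : i \in \Z_{\neq 0}]$ and the sums $\{P_k + P_{-k}\}_{k \geq 1}$ can be extended to an algebraically independent set of generators of $\ch(A)$. A polynomial algebra map sending an algebraically independent family to an algebraically independent family is automatically injective, which completes the argument.
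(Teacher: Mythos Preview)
Your proposal is correct and follows essentially the same route as the paper: the core of the argument is the direct expansion of $[\varphi(\widetilde{P}_\xx), \varphi(\widetilde{P}_\yy)]$ into four Morton--Samuelson brackets and the matching against $\varphi$ applied to \eqref{eq:mainrelations}, together with the observation $\varphi(\widetilde{P}_\xx) = \varphi(\widetilde{P}_{-\xx})$ for well-definedness. You supply explicit arguments for injectivity and for the annular restriction that the paper leaves implicit, but these are natural elaborations rather than a different strategy.
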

\begin{proof}
The assignment is linear since it's defined on basis elements. Let's check that
\[
\varphi\big( [\widetilde{P}_\xx, \widetilde{P}_\yy] \big) = [\varphi(\widetilde{P}_\xx), \varphi(\widetilde{P}_\yy)]
\]
for any $\xx$ and $\yy$. The left-hand side is
\begin{eqnarray*}
&&\varphi\big( [\widetilde{P}_\xx, \widetilde{P}_\yy] \big) \\
=&& \varphi\big( (s^{\det(\xx, \yy)} - s^{-\det(\xx, \yy)})(\widetilde{P}_{\xx+\yy} - \widetilde{P}_{\xx-\yy}) \big) \\
=&& (s^{\det(\xx, \yy)} - s^{-\det(\xx, \yy)}) \big( \varphi(\widetilde{P}_{\xx+\yy}) - \varphi(\widetilde{P}_{\xx-\yy}) \big) \\
=&& (s^{\det(\xx, \yy)} - s^{-\det(\xx, \yy)}) \big( P_{\xx+\yy} + P_{-\xx-\yy} - P_{\xx-\yy} - P_{-\xx+\yy} \big) \\
=&& (s^{\det(\xx, \yy)} - s^{-\det(\xx, \yy)}) P_{\xx+\yy} + (s^{\det(\xx, \yy)} - s^{-\det(\xx, \yy)}) P_{-\xx-\yy} \\
&-& (s^{\det(\xx, \yy)} - s^{-\det(\xx, \yy)}) P_{\xx-\yy} - (s^{\det(\xx, \yy)} - s^{-\det(\xx, \yy)}) P_{-\xx+\yy} \\
=&& (s^{\det(\xx, \yy)} - s^{-\det(\xx, \yy)}) P_{\xx+\yy} + (s^{\det(-\xx, -\yy)} - s^{-\det(-\xx, -\yy)}) P_{-\xx-\yy} \\
&-& (s^{-\det(\xx, -\yy)} - s^{\det(\xx, -\yy)}) P_{\xx-\yy} - (s^{-\det(-\xx, \yy)} - s^{\det(-\xx, \yy)}) P_{-\xx+\yy} \\
=&& [P_\xx, P_\yy] + [P_{-\xx}, P_{-\yy}] + [P_{\xx}, P_{-\yy}] + [P_{-\xx}, P_{\yy}] \\
=&& [P_{\xx} + P_{-\xx}, P_{\yy} + P_{-\yy}] \\
=&& [\varphi(\widetilde{P}_\xx), \varphi(\widetilde{P}_\yy)].
\end{eqnarray*}
Note that $\varphi(\widetilde{P}_\xx) = \varphi(\widetilde{P}_{-\xx})$, so the map is well-defined. This completes the proof.
\end{proof}

Let $\Z^\times$ be the group of units of $\Z$. There is a $\Z^\times$-action on $\ch(T^2)$ given by $(-1) \cdot P_{\xx} = P_{-\xx}$. Then the proposition essentially says that the image of $\phi$ is the subalgebra of invariants with respect to this action.

\begin{remark}
Maybe it's worth pointing out that this homomorphism is simply defined abstractly on generators. It is not clear whether or not this map is induced by something topological in nature, as was the case for the map $\eta_{T^2}: \cd(T^2) \to \ck(T^2)$. After all, we are relating an unoriented skein theory with an oriented one. If this map were to arise from topology, it would have to involve a choice of orientations for each component of each link. All attempts to find such a choice have been unsuccessful. Therefore, it isn't clear that a description of the natural action of $\cd(T^2)$ on $\cd(D^2 \times I)$ is compatible with $\phi$. In other words, it's not immediately clear whether or not the formulas of \cite{MS17} which describe the action of $\ch(T^2)$ on $\ch(D^2 \times I)$ describe analagous formulas in the Dubrovnik case. We examine this closer in the section that follows.
\end{remark}

\section{An Action $\cd(T^2) \curvearrowright \cd(D^2 \times I)$} \label{sec:action}

Recall that the skein algebra of the boundary of a 3-manifold acts on the skein module of that manifold, hence $\cd(D^2 \times I)$ carries the structure of a $\cd(T^2)$-module. In this section we give an implicit description of this action. The first step is to show that $\cd(T^2)$ is generated by five specific elements. The proof we give may be viewed as an algorithm to write each basis element as a product of generators. Next, formulas are given for how each of those generators act. Note that $\cd(D^2 \times I)$ also carries an algebra structure and is isomorphic to $\cd(A)$ as algebras, but the $\cd(T^2)$ action does not commute with the product. Thus, we consider $\cd(D^2 \times I)$ simply as a vector space with basis $\{\tilde{y}_\lambda \}$.

\subsection{A Small Generating Set of $\cd(T^2)$} \label{sec:generators}

First, we prove a technical lemma. 
\begin{lemma} \label{lem:generateline}
Let $\xx, \yy, \zz \in \Z^2$ be  such that $\xx = \yy-\zz$ and so $d := \det(\yy,\zz)$ is non-zero. Then $\widetilde{P}_{\xx+n\zz}$ is an element of the subalgebra generated by $\widetilde{P}_\xx, \widetilde{P}_\yy, \widetilde{P}_\zz$ for all $n \in \Z$. 
\end{lemma}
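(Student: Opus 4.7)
My plan is to exploit the commutation relation from Theorem \ref{thm:toruspresentation} with $\widetilde{P}_\zz$ in order to march along the arithmetic progression $\{\xx + n\zz\}_{n \in \Z}$. The key preliminary observation is that by bilinearity of $\det$ and $\det(\zz,\zz)=0$,
\[
\det(\xx + n\zz,\, \zz) = \det(\xx,\zz) = \det(\yy-\zz,\zz) = \det(\yy,\zz) = d
\]
for every integer $n$. Consequently, applying Theorem \ref{thm:toruspresentation} with $\aab = \xx + n\zz$ and $\bb = \zz$ yields the three-term recurrence
\[
[\widetilde{P}_{\xx+n\zz},\,\widetilde{P}_\zz] = (s^d - s^{-d})\bigl(\widetilde{P}_{\xx+(n+1)\zz} - \widetilde{P}_{\xx+(n-1)\zz}\bigr).
\]
Because $d \neq 0$, the scalar $s^d - s^{-d}$ is invertible in the base ring $\Q(s,v)$, so this recurrence can be solved for either endpoint in terms of the other endpoint and the commutator with $\widetilde{P}_\zz$ of the middle term.

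The base cases are immediate: $\widetilde{P}_{\xx+0\cdot\zz}=\widetilde{P}_\xx$ and $\widetilde{P}_{\xx+\zz}=\widetilde{P}_\yy$ are already generators of the subalgebra. For $n \geq 2$ I would proceed by induction, using the rearrangement
\[
\widetilde{P}_{\xx+(n+1)\zz} = \widetilde{P}_{\xx+(n-1)\zz} + \frac{[\widetilde{P}_{\xx+n\zz},\,\widetilde{P}_\zz]}{s^d - s^{-d}}
\]
to express the new element in terms of $\widetilde{P}_\zz$ and elements already known (by inductive hypothesis) to lie in the subalgebra. To handle negative $n$, I would specialize the recurrence to $n=0$ to obtain $\widetilde{P}_{\xx-\zz}$ in terms of $\widetilde{P}_\xx$, $\widetilde{P}_\yy$, and $\widetilde{P}_\zz$, then run the symmetric downward induction.

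There is really no obstacle here; once one notes that the determinant is invariant along the line $\xx + n\zz$, the lemma reduces to the elementary fact that a three-term linear recurrence with invertible leading coefficient is determined by any two consecutive terms. The only mild subtlety worth mentioning is the use of the identification $\widetilde{P}_{-\xx'} = \widetilde{P}_{\xx'}$ in $\Z^2/\langle\xx=-\xx\rangle$, which is harmless since it never obstructs the recurrence.
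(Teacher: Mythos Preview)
Your proof is correct and follows essentially the same approach as the paper: both use the commutation relation from Theorem \ref{thm:toruspresentation} with $\widetilde{P}_\zz$ to obtain a three-term recurrence along the line $\xx + n\zz$, solve for the outermost term, and induct in both directions from the two known base points $\widetilde{P}_\xx$ and $\widetilde{P}_\yy$. The paper's write-up indexes the line by $\yy + n\zz$ rather than $\xx + n\zz$ and is terser about the determinant invariance and the invertibility of $s^d - s^{-d}$, but the argument is the same.
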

\begin{proof}
The statement is trivial if $n=0$. By induction on positive $n$, one can express $\widetilde{P}_{\yy+n\zz}$ as
\[
    \widetilde{P}_{\yy+n\zz} = \{ d \}^{-1} \Big( [\widetilde{P}_{\yy+ (n-1)\zz}, \widetilde{P}_\zz] + \{ d \} \widetilde{P}_{\yy+(n-2)\zz} \Big)
\]
which holds for $n=1$ by Theorem \ref{thm:toruspresentation}. Since $\widetilde{P}_\zz = \widetilde{P}_{-\zz}$, we can repeat a similar induction to generate $\widetilde{P}_{\yy -n\zz}$:
\[
    \widetilde{P}_{\yy-n\zz} = \{ d \}^{-1} \Big( [ \widetilde{P}_{\yy-(n-1)\zz}, \widetilde{P}_{-\zz} ] + \{ d \} \widetilde{P}_{\yy-(n-2)\zz} \Big)
\]
where now the base case can be any $n \geq 1$. 
\end{proof}
The above lemma may the thought of geometrically as a way of generating the integer points of a line with rational slope in the lattice $\{ \widetilde{P}_{n,m} \vert (n,m) \in \Z^2 \}$, given the right initial conditions.

\begin{proposition} \label{prop:generators}
The algebra $\cd(T^2)$ is generated by the elements $\widetilde{P}_{0,0}, \widetilde{P}_{1,0}, \widetilde{P}_{0,1}, \widetilde{P}_{1,1}, \widetilde{P}_{2,0}$.
\end{proposition}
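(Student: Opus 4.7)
My plan is to use Lemma \ref{lem:generateline} and the commutator relations of Theorem \ref{thm:toruspresentation} in a carefully ordered sequence to show that every $\widetilde{P}_\xx$ with $\xx \in \Z^2/\langle \xx = -\xx \rangle$ lies in the subalgebra $S \subseteq \cd(T^2)$ generated by the five listed elements. The idea is to first populate the rows $b=1$ and column $a=1$ of the lattice, then use a single commutator to break out into the row $b=2$, use another family of commutators to fill the row $b=0$ for $|a| \geq 3$, and finally sweep out everything else with one more application of the lemma.

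First I would apply Lemma \ref{lem:generateline} twice: with $(\xx,\yy,\zz) = ((1,0),(1,1),(0,1))$ and with $(\xx,\yy,\zz) = ((0,1),(1,1),(1,0))$. Both applications take their three inputs from the given generators, and in each case $\det(\yy,\zz) = \pm 1 \neq 0$, so I conclude $\widetilde{P}_{(1,n)} \in S$ and $\widetilde{P}_{(n,1)} \in S$ for every $n \in \Z$.

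Next, the single commutator relation
\[
[\widetilde{P}_{(1,1)},\widetilde{P}_{(1,-1)}] = \{-2\}\bigl(\widetilde{P}_{(2,0)} - \widetilde{P}_{(0,2)}\bigr)
\]
yields $\widetilde{P}_{(0,2)} \in S$, since $\widetilde{P}_{(2,0)}$ is a generator and $\widetilde{P}_{(1,\pm 1)}$ are in $S$ from the previous step. A third application of Lemma \ref{lem:generateline} with $(\xx,\yy,\zz) = ((0,2),(1,2),(1,0))$ (determinant $-2$) then produces $\widetilde{P}_{(n,2)} \in S$ for all $n \in \Z$. With both rows $b=1$ and $b=2$ populated, the identity
\[
[\widetilde{P}_{(a,1)},\widetilde{P}_{(0,1)}] = \{a\}\bigl(\widetilde{P}_{(a,2)} - \widetilde{P}_{(a,0)}\bigr),
\]
valid for any $a \neq 0$, immediately gives $\widetilde{P}_{(a,0)} \in S$ for every $a$.

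Having $\widetilde{P}_{(a,0)}$ and $\widetilde{P}_{(a,1)}$ in $S$ for every $a$, a fourth and final application of Lemma \ref{lem:generateline} with $(\xx,\yy,\zz) = ((a,0),(a,1),(0,1))$ (determinant $a$, for each $a \neq 0$) generates the entire column $\widetilde{P}_{(a,n)}$ for all $n \in \Z$. The only remaining elements are $\widetilde{P}_{(0,n)}$ for $n \neq 0$, which fall out of the analogous commutator $[\widetilde{P}_{(1,n)},\widetilde{P}_{(1,0)}] = \{-n\}\bigl(\widetilde{P}_{(2,n)} - \widetilde{P}_{(0,n)}\bigr)$ once the right-hand side's other terms are known to lie in $S$. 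The main obstacle is not any single step but the bookkeeping: Lemma \ref{lem:generateline} always demands three pre-existing elements, and the naive choices lead to circular dependencies; the key trick that unlocks the cascade is producing $\widetilde{P}_{(0,2)}$ via the single commutator above, which is what lets us reach off of the two coordinate axes into a third row of the lattice.
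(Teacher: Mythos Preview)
Your proof is correct and uses the same two tools as the paper---Lemma~\ref{lem:generateline} and single commutator identities from Theorem~\ref{thm:toruspresentation}---but the geometric sweep of the lattice is organised differently. The paper first produces $\widetilde{P}_{1,-1}$ from a single commutator, then applies the lemma along two parallel \emph{diagonal} lines of slope $-1$ (through $(0,1)$ and through $(1,1)$, the latter using the seed $\widetilde{P}_{2,0}$), and finally fills in vertical columns. You instead populate the row $b=1$ and column $a=1$ first, pull $\widetilde{P}_{0,2}$ out of the commutator $[\widetilde{P}_{1,1},\widetilde{P}_{1,-1}]$, extend to the row $b=2$, recover the row $b=0$, and then fill vertical columns. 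Both routes hinge on the same observation: the fifth generator $\widetilde{P}_{2,0}$ is needed precisely to break into a line of even content (the paper uses it as a seed on the diagonal through $(1,1)$; you use it to isolate $\widetilde{P}_{0,2}$). Your ordering is arguably a touch more transparent, since it never needs a preliminary step to manufacture an auxiliary element like $\widetilde{P}_{1,-1}$ before the first application of the lemma---that element already appears as part of your first sweep along the column $a=1$.
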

\begin{proof}
Let's visualize and keep track of which $\widetilde{P}_{n, m}$ we've generated by dots on a lattice. So, we start with the configuration below.

\begin{center}
\begin{tikzpicture}[scale=.7]
\draw[thin, gray, ->] (0,-5) -- (0,5) node[anchor=north west] {m};
\draw[thin, gray, ->] (-5,0) -- (5,0) node[anchor=south east] {n};
\foreach \x in {-4,-3,-2,-1,1,2,3,4}
\draw[thin, gray] (\x,2pt) -- (\x,-2pt);
\foreach \y in {-4,-3,-2,-1,1,2,3,4}
\draw[thin, gray] (2pt,\y) -- (-2pt,\y);

\node[draw,circle,inner sep=2pt,fill] at (0,0) {};
\node[draw,circle,inner sep=2pt,fill] at (1,0) {};
\node[draw,circle,inner sep=2pt,fill] at (0,1) {};
\node[draw,circle,inner sep=2pt,fill] at (1,1) {};
\node[draw,circle,inner sep=2pt,fill] at (2,0) {};
\end{tikzpicture}
\end{center}

First, we generate the element
\[
    \widetilde{P}_{1,-1} = \{ 1 \}^{-1} \Big( [\widetilde{P}_{1,0}, D_{0,1}] + \{ 1 \}^{-1} D_{1,1} \Big).
\]

\begin{center}
\begin{tikzpicture}[scale=.7]
\draw[thin, gray, ->] (0,-5) -- (0,5) node[anchor=north west] {m};
\draw[thin, gray, ->] (-5,0) -- (5,0) node[anchor=south east] {n};
\foreach \x in {-4,-3,-2,-1,1,2,3,4}
\draw[thin, gray] (\x,2pt) -- (\x,-2pt);
\foreach \y in {-4,-3,-2,-1,1,2,3,4}
\draw[thin, gray] (2pt,\y) -- (-2pt,\y);

\node[draw,circle,inner sep=2pt,fill] at (0,0) {};
\node[draw,circle,inner sep=2pt,fill] at (1,0) {};
\node[draw,circle,inner sep=2pt,fill] at (0,1) {};
\node[draw,circle,inner sep=2pt,fill] at (1,1) {};
\node[draw,circle,inner sep=2pt,fill] at (2,0) {};
\node[draw,circle,inner sep=2pt,fill] at (1,-1) {};
\end{tikzpicture}
\end{center}

Then, we can generate two parallel lines of minimal distance apart by applying \ref{lem:generateline} to the two sets of values
\begin{align*}
        \xx_1 = (0,1), \quad \yy_1 = (1,0), \quad \zz_1 = (1,-1) \\
        \xx_2 = (0,2), \quad \yy_2 = (1,1), \quad \zz_2 = (1,-1)
\end{align*}
thus generating $\widetilde{P}_{n, 1-n}, \widetilde{P}_{n, 2-n}$ for all $n \in \Z$.

\begin{center}
\begin{tikzpicture}[scale=.7]
\draw[thin, gray, ->] (0,-5) -- (0,5) node[anchor=north west] {m};
\draw[thin, gray, ->] (-5,0) -- (5,0) node[anchor=south east] {n};
\foreach \x in {-4,-3,-2,-1,1,2,3,4}
\draw[thin, gray] (\x,2pt) -- (\x,-2pt);
\foreach \y in {-4,-3,-2,-1,1,2,3,4}
\draw[thin, gray] (2pt,\y) -- (-2pt,\y);

\node[draw,circle,inner sep=2pt,fill] at (0,0) {};
\node[draw,circle,inner sep=2pt,fill] at (1,0) {};
\node[draw,circle,inner sep=2pt,fill] at (0,1) {};
\node[draw,circle,inner sep=2pt,fill] at (1,1) {};
\node[draw,circle,inner sep=2pt,fill] at (2,0) {};
\node[draw,circle,inner sep=2pt,fill] at (1,-1) {};

\draw [thin, red,-latex, <->] (-3.5,4.5) -- (4.5, -3.5) {};
\draw [thin, red,-latex, <->] (-2.5,4.5) -- (4.5, -2.5) {};

\foreach \x in {-3,-2,-1,1,2,3,4}
\node[draw,circle,inner sep=2pt,fill] at (\x,1 - \x) {};
\foreach \x in {-2,-1,0,1,2,3,4}
\node[draw,circle,inner sep=2pt,fill] at (\x,2 - \x) {};

\draw [ultra thick,-latex,red] (0,0) -- (1,-1) node [below] {$\zz$};
\draw [ultra thick,-latex,red] (0,1) -- (1,0) {};
\draw [ultra thick,-latex,red] (1,1) -- (2,0) {};
\end{tikzpicture}
\end{center}

Then for any $n \neq 0$, we may apply Lemma \ref{lem:generateline} to the triple
\[
    \xx = (n, 1-n), \quad \yy = (n, 2-n), \quad \zz = (0,1) 
\]
to generate the vertical line of points through $(n,0)$. 

\begin{center}
\begin{tikzpicture}[scale=.7]
\draw[thin, gray, ->] (0,-5) -- (0,5) node[anchor=north west] {m};
\draw[thin, gray, ->] (-5,0) -- (5,0) node[anchor=south east] {n};
\foreach \x in {-4,-3,-2,-1,1,2,3,4}
\draw[thin, gray] (\x,2pt) -- (\x,-2pt);
\foreach \y in {-4,-3,-2,-1,1,2,3,4}
\draw[thin, gray] (2pt,\y) -- (-2pt,\y);

\node[draw,circle,inner sep=2pt,fill] at (0,0) {};
\node[draw,circle,inner sep=2pt,fill] at (1,0) {};
\node[draw,circle,inner sep=2pt,fill] at (0,1) {};
\node[draw,circle,inner sep=2pt,fill] at (1,1) {};
\node[draw,circle,inner sep=2pt,fill] at (2,0) {};
\node[draw,circle,inner sep=2pt,fill] at (1,-1) {};

\draw [ultra thick,-latex,red] (0,0) -- (0,1) node [below left] {$\zz$};

\node[draw,circle,inner sep=2pt,fill] at (0,2) {};

\foreach \x in {-4,-3,-2,-1,1,2,3,4}
\draw [thin, red,-latex, <->] (\x, -4.5) -- (\x, 4.5) {};

\foreach \x in {-4,-3,-2,-1,1,2,3,4}
\foreach \y in {-4,-3,-2,-1,0,1,2,3,4}
\node[draw,circle,inner sep=2pt,fill] at (\x,\y) {};

\foreach \x in {-2,-1,1,2,3,4}
\draw [ultra thick,-latex,red] (\x,-\x + 1) -- (\x,-\x+2) {};

\end{tikzpicture}
\end{center}

Finally, any $\widetilde{P}_{0,m}$ for $m \neq 0$ can be expressed as
\[
    \widetilde{P}_{0,m} = \{ m \}^{-1} \Big( [\widetilde{P}_{1,m}, \widetilde{P}_{-1,0}] + \{ m \} \widetilde{P}_{2,m} \Big)
\]
which generates the last of what is left of the $\widetilde{P}_{n,m}$. This completes the proof.
\end{proof}

\begin{remark}
The proof above provides an algorithm for expressing any $\widetilde{P}_{n,m}$ in terms of the five generators. Of course, this expression is complicated, but it is not surprising since the generating set is so small.
\end{remark}

\subsection{Recalling Some Formulas From the Literature}

Let $\Lambda = (\Lambda_1, \dots, \Lambda_n)$ be a sequence of Young diagrams such that $\Lambda_1 = (1)$, $|\Lambda_{i+1}| = |\Lambda_{i}| \pm 1$, either $\Lambda_{i+1} \subset \Lambda_{i}$ or $\Lambda_{i} \subset \Lambda_{i+1}$, and $\Lambda_n=\lambda$. 
We will call such a sequence an \textit{up-down tableau} of length $n$ and shape $\lambda(\Lambda) := \Lambda_n$. 
If $\Lambda = (\Lambda_1, \dots, \Lambda_{n-1}, \Lambda_n)$, then define $\Lambda' = (\Lambda_1, \dots, \Lambda_{n-1})$. 
In particular, $|\lambda(\Lambda)| = |\lambda(\Lambda')| \pm 1$. Following \cite{BB01}, recursively define morphisms $\fa_\Lambda$ and $\fb_\Lambda$ by:
\begin{align*}
    \fa_1 &:= \id_1 =: \fb_1 
\end{align*}
and if $|\Lambda_n| = |\Lambda_{n-1}| + 1$, then 
\begin{align*}
    \fa_\Lambda &:= (\fa_{\Lambda'} \otimes 1_1) \tilde{y}_{\Lambda(\Lambda)} \\
    \fb_\Lambda &:= \tilde{y}_{\lambda(\Lambda)} (\fb_{\Lambda'} \otimes 1_1)
\end{align*}
and if $|\Lambda_n| = |\Lambda_{n-1}| - 1$, then
\begin{align*}
    \fa_\Lambda &:= \frac{ \langle \Lambda_n \rangle}{ \langle \Lambda_{n-1} \rangle} (\fa_{\Lambda'} \otimes 1_1) (\tilde{y}_{\lambda(\Lambda)} \otimes \cap) \\
    \fb_\Lambda &:= (\tilde{y}_{\lambda(\Lambda)} \otimes \cup) (\fb_{\Lambda'} \otimes 1_1)
\end{align*}
where the morphisms $\cup \in \Hom_{\sfd(I^2)} \big( \varnothing, [2] \big)$ and $\cap \in \Hom_{\sfd(I^2)} \big( [2], \varnothing \big)$ are the obvious cup and cap morphisms, $\otimes$ is the monoidal product of $\sfd(I^2)$ induced by some standard embedding $I^2 \sqcup I^2 \to I^2$, and $\langle \lambda \rangle$ for a partition $\lambda$ is the quantum trace of a certain path idempotent in $BMW_n$ (see \cite{BB01} for details). 

\begin{theorem}[\cite{BB01}, Section 5] \label{thm:bmwbasis}
Let $d_\lambda^{(n)}$ be the number of up-down tableaux of length $n$ and shape $\lambda$. 
Let $\mathcal{M}_{d_\lambda}$ denote the $d_\lambda^{(n)} \times d_\lambda^{(n)}$ matrix algebra. 
Also, let
\[
    q_\Lambda := \alpha_\Lambda \beta_\Lambda
\]

\begin{enumerate} 
\item There is an algebra isomorphism
\[
    \rho: \underset{|\lambda| = n, n-2, \dots}{\bigoplus} \mathcal{M}_{d_{\lambda}^{(n)}} \overset{\sim}{\longrightarrow} BMW_n
\]
such that the image of a standard basis element is $\rho( e_{\Lambda,\Xi}^\lambda) = \alpha_\Lambda \beta_\Xi$ where $\lambda(\Lambda) = \lambda = \lambda(\Xi)$. \\
\item (Branching formula)
\[
    q_\Lambda \otimes 1_1 = \sum_{\Xi'=\Lambda} q_\Xi
\] 
for all up-down tableaux $\Lambda$, and
\[
\tilde{y}_\lambda \otimes \id_1 = \sum_{\substack{\lambda \subset \mu \\ \mu = \lambda + \square}} (\tilde{y}_\lambda \otimes \id_1) \tilde{y}_\mu (\tilde{y}_\lambda \otimes \id_1) + \sum_{\substack{\nu \subset \lambda \\ \lambda = \nu + \square}} \frac{\langle \nu \rangle}{\langle \lambda \rangle} (\tilde{y}_\lambda \otimes \id_1) (\tilde{y}_\nu \otimes c_1) (\tilde{y}_\lambda \otimes \id_1) 
\]
\item (Braiding coefficient)
\[
\tilde{y}_\mu (\tilde{y}_\lambda \otimes \id_1) \sigma_{n-1} \cdot \sigma_1 \sigma_1 \cdot \sigma_{n-1} \tilde{y}_\mu = s^{2\cn(\square)}\tilde{y}_\mu
\]
where the Young diagram of $\lambda$ is obtained by removing the cell $\square$ from the Young diagram of $\mu$.  
\item 
\[
    \fb_\Lambda \fa_\Lambda = \tilde{y}_{\lambda(\Lambda)}
\]
for all up-down tableaux $\Lambda$.
\end{enumerate}
\end{theorem}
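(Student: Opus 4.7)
The plan is to prove the four parts in an order that leverages part (4) as the technical foundation for the rest. First I would establish (4) by induction on the length $n$ of the up-down tableau $\Lambda$, using the recursive definitions (I will write $\alpha_\Lambda = \fa_\Lambda$, $\beta_\Lambda = \fb_\Lambda$ as the theorem statement does). The base case $n=1$ is immediate. In the adding-a-box case $|\Lambda_n| = |\Lambda_{n-1}| + 1$, direct substitution gives
\[
\beta_\Lambda \alpha_\Lambda = \tilde{y}_{\lambda(\Lambda)}(\beta_{\Lambda'}\alpha_{\Lambda'} \otimes 1_1)\tilde{y}_{\lambda(\Lambda)},
\]
and applying the inductive hypothesis $\beta_{\Lambda'}\alpha_{\Lambda'} = \tilde{y}_{\lambda(\Lambda')}$ together with the absorption property $\tilde{y}_{\lambda(\Lambda)}(\tilde{y}_{\lambda(\Lambda')} \otimes 1_1) = \tilde{y}_{\lambda(\Lambda)}$ (which follows from uniqueness of minimal idempotents and $\lambda(\Lambda') \subset \lambda(\Lambda)$) collapses the expression to $\tilde{y}_{\lambda(\Lambda)}$. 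The removing-a-box case is analogous but produces a cup-cap sandwich whose quantum-trace contribution $\langle \Lambda_{n-1}\rangle/\langle \Lambda_n\rangle$ is cancelled exactly by the $\langle \Lambda_n\rangle/\langle \Lambda_{n-1}\rangle$ factor built into $\alpha_\Lambda$.

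Next, for (1), I would use (4) to verify matrix-unit multiplication $(\alpha_\Lambda \beta_\Xi)(\alpha_{\Xi'}\beta_{\Lambda'}) = \delta_{\Xi,\Xi'}\alpha_\Lambda \beta_{\Lambda'}$ whenever $\lambda(\Xi)=\lambda(\Xi')$, and zero otherwise. Orthogonality across distinct shapes $\lambda \neq \lambda'$ follows because distinct $\tilde{y}_\mu$ are orthogonal minimal idempotents in the semisimple quotient, combined with the Beliakova--Blanchet section property \eqref{eq:bbsectionproperty}. Injectivity of $\rho$ is then automatic; surjectivity reduces to the dimension count
\[
\sum_\lambda \big(d_\lambda^{(n)}\big)^{2} = \dim BMW_n,
\]
which is the classical Brauer identity (Brauer diagrams on $n$ strands are in bijection with pairs of up-down tableaux of length $n$ and common shape).

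For the branching formula (2), the first identity $q_\Lambda \otimes 1_1 = \sum_{\Xi'=\Lambda} q_\Xi$ follows from the second by iteration, reading off the rank-one contributions in each $\mu$-block. The substantive content is the $\tilde{y}_\lambda \otimes \id_1$ decomposition, for which I would check that the two families of sub-idempotents (indexed respectively by $\mu = \lambda + \square$ and $\nu = \lambda - \square$) are mutually orthogonal by absorption. Distinct $\tilde{y}_\mu$-type summands annihilate each other by minimality; a $\tilde{y}_\nu \otimes c_1$ summand annihilates a $\tilde{y}_\mu$ summand because the former lies in the cap-cup ideal $I_n$ and $\tilde{y}_\mu = \Gamma(y_\mu)$ kills $I_n$ by \eqref{eq:bbsectionproperty}; the $\nu$-type summands among themselves decouple by a similar absorption argument using orthogonality of $\tilde{y}_\nu$ and $\tilde{y}_{\nu'}$. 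That the summands total to $\tilde{y}_\lambda \otimes \id_1$ then follows from matching dimensions through the Bratteli diagram of the inclusion $BMW_{n-1} \subset BMW_n$.

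Finally, for (3), I would exploit that $\sigma_{n-1}\cdots\sigma_1\sigma_1\cdots\sigma_{n-1}$ is the BMW analogue of a Jucys--Murphy element and commutes with $BMW_{n-1} \otimes 1_1$, so it acts as a scalar on each $\mu$-component of $(\tilde{y}_\lambda \otimes \id_1)BMW_n(\tilde{y}_\lambda \otimes \id_1)$. To pin down the scalar as $s^{2\cn(\square)}$, I would induct on $|\lambda|$, observing that passing from a tableau for $\lambda$ to one for $\mu = \lambda + \square$ multiplies the eigenvalue by $s^{2\cn(\square)}$ (the square of the ribbon-twist eigenvalue associated to the added box). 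Tracking this shift through the recurrence \eqref{eq:shellyrecurrence} for $\tilde{y}_\mu$ confirms the formula. The main obstacle I expect is the orthogonality argument in (2): verifying that the cup-cap summands decouple cleanly from the symmetrizer summands requires sustained skein manipulation and careful handling of the $\beta_n$ constants against the path quantum dimensions $\langle \lambda \rangle$. Once (2) is established, (3) reduces to a routine, if tedious, content computation in the BMW setting.
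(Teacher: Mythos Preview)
The paper does not give its own proof of this theorem: it is stated as a citation from \cite{BB01} (Beliakova--Blanchet), Section 5, and the text moves directly to Remark \ref{rmk:qclosure} with no argument in between. So there is no in-paper proof to compare your proposal against.

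That said, your outline follows the standard Beliakova--Blanchet strategy fairly closely: (4) by induction on tableau length, then matrix-unit relations for (1) via (4) plus the Brauer dimension count, the branching rule (2) by orthogonality of the minimal idempotents, and (3) via the Jucys--Murphy action. One point to be careful about: in the removing-a-box step of (4) you need more than just a scalar cancellation --- the expression $\beta_\Lambda \alpha_\Lambda$ involves $(\tilde y_{\lambda(\Lambda)} \otimes \cup)(\beta_{\Lambda'}\alpha_{\Lambda'} \otimes 1_1)(\tilde y_{\lambda(\Lambda)} \otimes \cap)$, and after applying the inductive hypothesis you must evaluate a partial trace of $\tilde y_{\lambda(\Lambda')}$ over its last strand, which is precisely what produces the factor $\langle \Lambda_{n-1}\rangle / \langle \Lambda_n \rangle$; your sketch implicitly assumes this partial-trace identity without stating it. Similarly, in (2) the orthogonality of the $\nu$-type summands among themselves is more delicate than you indicate, since $\tilde y_\nu \otimes c_1$ and $\tilde y_{\nu'} \otimes c_1$ are not literally products of orthogonal idempotents --- one needs the full path-idempotent machinery of \cite{BB01} rather than just minimality of the $\tilde y_\nu$.
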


\begin{remark} \label{rmk:qclosure}
Recall our notation $\widetilde{Q}_\lambda := \cl(\tilde{y}_\lambda)$. Then Theorem \ref{thm:bmwbasis} implies
\[
    \cl(q_\Lambda) = \cl(\fa_\Lambda \fb_\Lambda) = \cl(\fb_\Lambda \fa_\Lambda) = \cl(\tilde{y}_{\lambda(\Lambda)}) = \widetilde{Q}_{\lambda(\Lambda)}.
\]
\end{remark}

Let's formally restate a theorem which was mentioned in Section \ref{sub:annulus}.

\begin{theorem}[\cite{LZ02}, Corollary 2, Proposition 2.1]\label{thm:ann_basis} 
$\,$
\begin{enumerate}
    \item The elements $\widetilde{Q}_\lambda$ form a basis of the skein module of a solid torus $\cd(D^2 \times S^1)$, where $\lambda$ ranges over all Young diagrams. 
    \item Let $\phi: \cc \to \cc$ be the meridian map so that $\phi(x)$ is a simple loop wrapped around $x$. Then
    \[
        \phi \big( \widetilde{Q}_\lambda \big) = c_\lambda \widetilde{Q}_\lambda
    \]
    where 
        \[c_\lambda = \delta_\cd + (s - s^{-1}) \Bigg( v^{-1}\sum_{x \in \lambda} s^{2\cn(x)} - v\sum_{x \in \lambda} s^{-2\cn(x)} \Bigg)
    \]
\end{enumerate}
\end{theorem}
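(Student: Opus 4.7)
The plan is to establish part (2) first, then derive part (1) as a consequence. For (2), I would use the Beliakova-Blanchet path-idempotent machinery from Theorem~\ref{thm:bmwbasis}. The meridian $\phi(\widetilde{Q}_\lambda)$ is the annular closure of $\tilde{y}_\lambda$ together with an additional unknotted loop encircling its $|\lambda|$ strands. Topologically, this extra meridian is equivalent to adding a single new strand just above $\tilde{y}_\lambda$, performing a full twist of that strand around the existing ones, and closing off the strand by a cap and a cup; up to framing and quantum-dimension factors, this is exactly the composite appearing in the branching formula $\tilde{y}_\lambda \otimes \id_1 = \sum_{\mu=\lambda+\square}(\cdots)\tilde{y}_\mu(\cdots) + \sum_{\nu=\lambda-\square}\tfrac{\langle\nu\rangle}{\langle\lambda\rangle}(\cdots)(\tilde{y}_\nu \otimes c_1)(\cdots)$. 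On each path-idempotent summand $q_\Xi$ indexed by an up-down tableau $\Xi$ extending the tableau for $\lambda$, the braiding coefficient in part (3) of Theorem~\ref{thm:bmwbasis} diagonalises the full twist with eigenvalue $v^{\mp 1} s^{\pm 2 \cn(\square)}$, the sign depending on whether $\Xi$ ends by adding or removing the cell $\square$.

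Summing these contributions and invoking Remark~\ref{rmk:qclosure}, which identifies $\cl(q_\Xi) = \widetilde{Q}_\lambda$, should collapse the whole expression to a single scalar multiple of $\widetilde{Q}_\lambda$. The step I expect to be the main obstacle is showing that the added-cell and removed-cell contributions combine cleanly into $\delta_\cd + (s-s^{-1})\bigl(v^{-1}\sum_{x\in\lambda}s^{2\cn(x)} - v\sum_{x\in\lambda}s^{-2\cn(x)}\bigr)$. This requires careful bookkeeping of the quantum-dimension ratios $\langle\mu\rangle/\langle\lambda\rangle$ and $\langle\nu\rangle/\langle\lambda\rangle$, the framing factors $v^{\pm 1}$ coming from the Reidemeister-I normalisation on the new strand, and the constant $\delta_\cd$ that arises from the loop value when no strand is genuinely linked.

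For (1), linear independence is immediate from (2) once one checks that the eigenvalues $c_\lambda$ are pairwise distinct. Distinct partitions have distinct multisets of contents (the multiset of contents uniquely reconstructs a Young diagram), so the Laurent polynomials $v^{-1}\sum s^{2\cn} - v\sum s^{-2\cn}$ are pairwise distinct in $R$, and eigenvectors of $\phi$ with distinct eigenvalues are linearly independent. For spanning, I would use the known description $\cd(A) \cong R[z_i : i \geq 1]$ together with the Beliakova-Blanchet decomposition $BMW_n \cong \bigoplus_\lambda \mathcal{M}_{d_\lambda^{(n)}}$ from Theorem~\ref{thm:bmwbasis}(1). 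Every annular link is the closure of some braid $b \in BMW_n$, and the decomposition expresses $b$ as a linear combination of $\fa_\Lambda \fb_\Xi$ with $\lambda(\Lambda)=\lambda(\Xi)$; applying $\cl$ and using the trace property $\cl(ab)=\cl(ba)$ together with $\fb_\Lambda \fa_\Lambda = \tilde{y}_{\lambda(\Lambda)}$ from Theorem~\ref{thm:bmwbasis}(4) collapses these to a sum of $\widetilde{Q}_{\lambda(\Lambda)}$'s. Taking a union over $n$ shows $\{\widetilde{Q}_\lambda\}_\lambda$ spans $\cd(A)$, which is identified with $\cd(D^2 \times S^1)$ since both are the skein module of a solid torus presented as $A \times [0,1]$ cored out of a ball.
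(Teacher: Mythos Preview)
The paper does not itself prove Theorem~\ref{thm:ann_basis}; it is cited from \cite{LZ02}. However, part~(2) is exactly the $k=1$ case of Proposition~\ref{prop:zlgeneralization}, which the paper does prove, and that proof is structured quite differently from yours. There one inducts on $|\lambda|$: insert $\tilde{y}_\mu \otimes \id_1$ for a fixed $\mu \subset \lambda$ with $|\mu|=|\lambda|-1$, move the meridian past the outermost strand via Theorem~\ref{thm:powersumcommutator} (for $k=1$ this is just the Dubrovnik skein relation), and apply the braiding coefficient and the induction hypothesis to the three resulting pieces. Because only one cell $\square$ is in play at each step, the scalar picked up is simply $(s-s^{-1})(v^{-1}s^{2\cn(\square)} - v s^{-2\cn(\square)})$, and the content sum over $\lambda$ assembles itself automatically from the induction.

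Your approach for (2)---open the meridian into a strand, apply the full branching formula to $\tilde{y}_\lambda \otimes \id_1$, diagonalise the encirclement on each summand, then close---is plausible but has real gaps. The braiding coefficient in Theorem~\ref{thm:bmwbasis}(3) is stated only for $\mu = \lambda + \square$ and gives eigenvalue $s^{2\cn(\square)}$, with no $v$; the removing-cell eigenvalue and the factors of $v^{\pm 1}$ come from a separate computation (the paper invokes \cite{BB01}, Proposition~6.1, for exactly this when proving equation~\eqref{eq:1,1}). More seriously, the ``bookkeeping'' you flag as the main obstacle is the whole content of the result: after closing, each branching summand carries a quantum-dimension ratio, and converting $\sum_{\mu=\lambda+\square}(\cdots) + \sum_{\nu=\lambda-\square}(\cdots)$ into $\delta_\cd + (s-s^{-1})\sum_{x\in\lambda}(\cdots)$ is a nontrivial identity in $R$ that you have not supplied. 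The inductive route sidesteps this entirely.

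Your argument for part~(1) is essentially correct and matches how the paper uses the result. One small point on spanning: the trace property gives $\cl(\fa_\Lambda \fb_\Xi) = \cl(\fb_\Xi \fa_\Lambda)$, and in general $\fb_\Xi \fa_\Lambda \neq \tilde{y}_\lambda$; but since $\fb_\Xi \fa_\Lambda \in \tilde{y}_\lambda BMW_{|\lambda|} \tilde{y}_\lambda = R \cdot \tilde{y}_\lambda$ by minimality, its closure is still a scalar multiple of $\widetilde{Q}_\lambda$, so your conclusion stands.
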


This is the basis we will choose to act by $\cd(T^2)$ on in Section \ref{sec:action}.

\subsection{Showing How Generators Act} \label{sec:action}

We will fix the convention that $\widetilde{P}_{1,0}$ acts as a meridian link and $\widetilde{P}_{0,1}$ acts as a longitude link. This next proposition shows how each element of the generating set of Proposition \ref{prop:generators} act.

\begin{proposition} \label{prop:generatoractions}
In the basis $\{ \widetilde{Q}_\lambda \}$ of $\cd(D^2 \times S^1)$, the action of $\cd(T^2)$ is determined by the equations below.
\begin{align}
    \widetilde{P}_{1,0} \cdot \widetilde{Q}_\lambda &= \Bigg(\langle \widetilde{P}_1 \rangle + \{1\} \Big( v^{-1}\sum_{\square\in\lambda} s^{2\cn(\square)} - v \sum_{\square\in\lambda} s^{-2\cn(\square)} \Big) \Bigg) \widetilde{Q}_\lambda \label{eq:1,0}\\
    \widetilde{P}_{2,0} \cdot \widetilde{Q}_\lambda &= \Bigg(\langle \widetilde{P}_2 \rangle + \{2\} \Big( v^{-2}\sum_{\square\in\lambda} s^{4\cn(\square)} - v^2 \sum_{\square\in\lambda} s^{-4\cn(\square)} \Big) \Bigg) \widetilde{Q}_\lambda \label{eq:2,0}\\
    \widetilde{P}_{0,1} \cdot \widetilde{Q}_\lambda &= \sum_{\substack{\lambda \subset \mu \\ \mu = \lambda + \square}} \widetilde{Q}_\mu + \sum_{\substack{\nu \subset \lambda \\ \lambda = \nu + \square}} \widetilde{Q}_\nu \label{eq:0,1}\\
    \widetilde{P}_{1,1} \cdot \widetilde{Q}_\lambda &= v^{-1} \sum_{\substack{\lambda \subset \mu \\ \mu = \lambda + \square}} s^{2\cn(\square)} \widetilde{Q}_\mu + v \sum_{\substack{\nu \subset \lambda \\ \lambda = \nu + \square}} s^{-2\cn(\square)} \widetilde{Q}_\nu \label{eq:1,1}
\end{align}
\end{proposition}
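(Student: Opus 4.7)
The plan is to verify the four equations separately, using the structural ingredients collected earlier in the chapter. For Equation \eqref{eq:1,0}, I would begin by reading off from the defining series $\sum_{k\geq 1} \frac{\widetilde{P}_k}{k}t^k = \ln\bigl(1+\sum_{n\geq 1}\tilde{h}_n t^n\bigr)$ that $\widetilde{P}_1 = \tilde{h}_1$, a single unknot. The pushed-in $\widetilde{P}_{1,0}$ is therefore exactly the meridian map $\phi$ of Theorem \ref{thm:ann_basis}(2), and the eigenvalue $c_\lambda$ from that theorem matches the right-hand side of Equation \eqref{eq:1,0} once one observes that $\langle\widetilde{P}_1\rangle = \delta_\cd$ and $\{1\} = s - s^{-1}$.

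For Equation \eqref{eq:0,1}, the plan is to use the fact that a pushed-in longitude acts by multiplication by $\tilde{h}_1$ in $\cd(A) \cong \cd(D^2 \times S^1)$. Picking any up-down tableau $\Lambda$ with $\lambda(\Lambda) = \lambda$, Remark \ref{rmk:qclosure} gives $\widetilde{Q}_\lambda = \cl(q_\Lambda)$, and the product $\tilde{h}_1 \cdot \widetilde{Q}_\lambda$ equals $\cl(q_\Lambda \otimes \id_1)$, since multiplying by $\tilde{h}_1$ amounts to inserting a parallel strand before closing. The first branching identity of Theorem \ref{thm:bmwbasis}(2) then gives $q_\Lambda \otimes \id_1 = \sum_{\Xi' = \Lambda} q_\Xi$, and taking annular closures (using Remark \ref{rmk:qclosure} again) assembles exactly one $\widetilde{Q}_\mu$ for each $\mu = \lambda + \square$ and one $\widetilde{Q}_\nu$ for each $\nu = \lambda - \square$.

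For Equation \eqref{eq:2,0}, I would establish a Dubrovnik Adams-operation-type eigenvalue formula: a $\widetilde{P}_k$-decorated meridian acts on $\widetilde{Q}_\lambda$ with eigenvalue obtained from $c_\lambda$ by substituting $s \mapsto s^k$ and $v \mapsto v^k$ in the non-constant part, with constant term $\langle\widetilde{P}_k\rangle$. For $k = 2$ specifically, one option is to expand $\widetilde{P}_2 = 2\tilde{h}_2 - \tilde{h}_1^2$ and compute the meridian action of each summand by cabling together with Theorem \ref{thm:ann_basis}(2); another is to invoke the general meridian formula in Chapter 4, where the eigenvalues of $\widetilde{P}_k$-meridians on $\widetilde{Q}_\lambda$ are computed via Jucys--Murphy elements, and Equation \eqref{eq:2,0} becomes the $k = 2$ specialization.

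Equation \eqref{eq:1,1} is the most intricate and will be the main obstacle. The plan is to realize the $(1,1)$-embedding of $\widetilde{P}_1$, pushed into the solid torus, as a longitudinal parallel strand that additionally wraps once meridionally through the $n$ strands comprising $\widetilde{Q}_\lambda$, together with a framing correction of $v^{\pm 1}$ from the diagonal slope. Algebraically this produces $\cl((\tilde{y}_\lambda \otimes \id_1)\,\omega)$ up to framing, where $\omega$ is the braid $\sigma_n \cdots \sigma_1 \sigma_1 \cdots \sigma_n$ realizing the full meridional wrap. Branching $\tilde{y}_\lambda \otimes \id_1$ exactly as in the proof of Equation \eqref{eq:0,1}, the braiding coefficient of Theorem \ref{thm:bmwbasis}(3) immediately evaluates $\omega$ on each $\tilde{y}_\mu$ (with $\mu = \lambda + \square$) as the scalar $s^{2\cn(\square)}$, while a mirrored argument using the flip map $(-)^*$ handles the $\tilde{y}_\nu$ pieces and produces $s^{-2\cn(\square)}$. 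The hard part will be tracking the framing and braiding sign conventions carefully so that the $v^{\pm 1}$ factors land on the correct side (boxes added versus boxes removed); a secondary difficulty is making the Adams-operation formula in Equation \eqref{eq:2,0} self-contained rather than forward-referencing Chapter 4.
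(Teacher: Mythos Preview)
Your plan is essentially the paper's own argument. A few remarks.

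For \eqref{eq:1,0} and \eqref{eq:0,1} your assignment (meridian $\leadsto$ eigenvalue formula from Theorem~\ref{thm:ann_basis}, longitude $\leadsto$ annular closure of the branching formula from Theorem~\ref{thm:bmwbasis}) is exactly what the paper does; in fact the paper's proof text has these two references swapped, but the mathematics is precisely what you wrote.

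For \eqref{eq:2,0} the paper takes your second option and simply forward-references the Jucys--Murphy computation (Proposition~\ref{prop:zlgeneralization} at $k=2$), so your concern about self-containment applies equally to the paper.

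For \eqref{eq:1,1} your outline---framing correction, then the branching formula for $\tilde{y}_\lambda \otimes \id_1$, then the braiding coefficient of Theorem~\ref{thm:bmwbasis}(3) on the $\mu$-summands---matches the paper exactly. The one point where your plan diverges is the $\nu$-summands: the flip map $(-)^*$ is $R$-linear and will not by itself produce $s^{-2\cn(\square)}$, and the $\nu$-diagrams carry the cap-cup factor $(\tilde{y}_\nu \otimes c_1)$ rather than a pure braiding, so they are not a direct mirror of the $\mu$-diagrams. The paper does not attempt a symmetry argument here; instead it cites a computation from the proof of Proposition~6.1 in \cite{BB01}, which evaluates the relevant diagram as $v^{2}s^{-2\cn(\square)}\frac{\langle\lambda\rangle}{\langle\nu\rangle}\,\widetilde{Q}_\nu$. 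The factor $\frac{\langle\lambda\rangle}{\langle\nu\rangle}$ cancels the branching coefficient $\frac{\langle\nu\rangle}{\langle\lambda\rangle}$, and $v^{-1}\cdot v^{2}=v$ gives the coefficient in \eqref{eq:1,1}. You will need that specific input (or an equivalent direct computation) rather than a flip argument.
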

\begin{proof}
The equation \eqref{eq:1,0} follows by considering the annular closure of the branching formula of Theorem \ref{thm:bmwbasis}, together with Remark \ref{rmk:qclosure}. 
Also, \eqref{eq:0,1} is precisely the result of the meridian map eigenvalue computation of Theorem \ref{thm:ann_basis}. Equation \eqref{eq:2,0} follows from Proposition \ref{prop:zlgeneralization} for $k=2$. All that's left to show is equation \eqref{eq:1,1}. To do this, we will apply the branching rule and braiding coefficient identities of Theorem \ref{thm:bmwbasis}. We will work with rectangular diagrams with the top edge identified with the bottom edge, creating annular diagrams. First, apply a framing relation and the branching rule
\begin{align*}
\pic{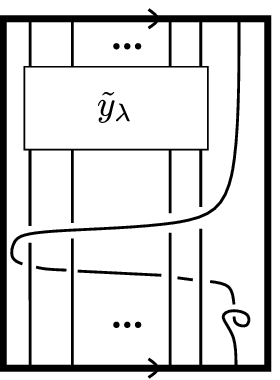} &= v^{-1} \pic{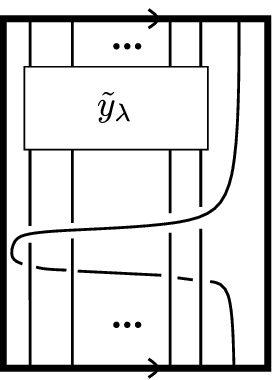} \\
&= v^{-1} \sum_{\substack{\lambda \subset \mu \\ \mu = \lambda + \square}} \pic{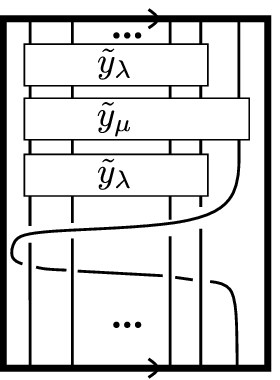} + v^{-1} \sum_{\substack{\nu \subset \lambda \\ \lambda = \nu + \square}} \frac{\langle \nu \rangle}{\langle \lambda \rangle} \pic{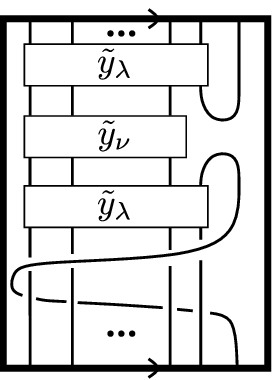}
\end{align*}
where the first sum is over the set of partitions $\mu$ whose Young diagram is obtained by adding a single cell $\square$ to $\lambda$, and similarly for the second sum. Next, we can use the idempotent property to duplicate and absorb idempotents. The right-hand side of the above equation becomes
\[
v^{-1} \sum_{\substack{\lambda \subset \mu \\ \mu = \lambda + \square}} \pic{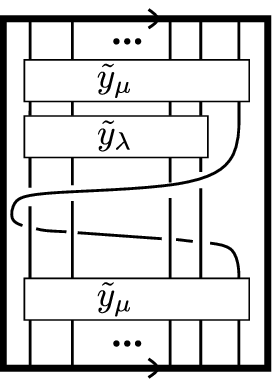} + v^{-1} \sum_{\substack{\nu \subset \lambda \\ \lambda = \nu + \square}} \frac{\langle \nu \rangle}{\langle \lambda \rangle} \pic{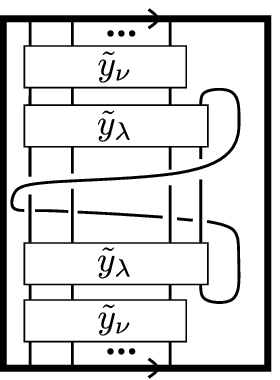}.
\]
By the braiding coefficient formula, the diagram in the first sum is equal to $s^{2\cn(\square)} \widetilde{Q}_\mu$. The diagram in the second sum is calculated during the proof of Proposition 6.1 in \cite{BB01} to be $v^2 s^{-2\cn(\square)} \langle \lambda \rangle / \langle \nu \rangle \widetilde{Q}_\nu$. This completes the proof. 
\end{proof}

\chapter{More Formulas and Partial Results}

\section{Commutation Relations For BMW and Hecke Symmetrizer Closures} \label{sec:morecommutationrelations}

This next theorem follows directly from Equation \eqref{eq:skewcommutator2}, which makes it equivalent to Theorem \ref{thm:powersumcommutator} in some sense. This expresses the left $\cd(A)$-action on $\ca_\cd$ in terms of the right action, and vice versa. This implies a commutation relation for the closures of the BMW symmetrizers in terms of either the elements of the set $\{\tilde{h}_j \cdot a^i \}_{j, i}$ or $\{ a^i \cdot \tilde{h}_j \}_{j, i \geq 0}$ which are subsets of the bases $\{ \widetilde{Q}_\lambda \cdot a^i \}_{i \geq 0, \lambda}$ and $\{ a^i \cdot \widetilde{Q}_\lambda \}_{i \geq 0, \lambda}$ of $\ca_\cd$, respectively. These supersets are bases since $\ca_\cd = \cd(A)[a, a^{-1}]$ as algebras in the category of left $\cd(A)$-modules and because the map defined by $\widetilde{Q}_\lambda \cdot a^i \mapsto a^i \cdot \widetilde{Q}_\lambda$ is an invertible algebra homomorphism. 

\begin{theorem} \label{prop:hncommutator}
For any $n \geq 1$, the relations
\begin{equation}
\tilde{h}_n \cdot e = \sum_{i=0}^n d_i (e \cdot \tilde{h}_{n-i})
\end{equation}
and
\begin{equation}
e \cdot \tilde{h}_n = \sum_{i=0}^n \bar{d}_i (\tilde{h}_{n-i} \cdot e)
\end{equation}
hold in $\ca_\cd$, where
\begin{align*}
d_0 & = 1, \\
d_i & = \sum_{l=0}^{i-1} (1 - s^2) s^{2l-i} a^{i-2l} + (1 - s^{-2}) s^{i-2l} a^{2l-i} \qquad \forall i \geq 1, \\
\bar{d}_i & = \sum_{l=0}^{i-1} (1 - s^{-2}) s^{i-2l} a^{i-2l} + (1 - s^{2}) s^{2l-i} a^{2l-i} \qquad \forall i \geq 1.
\end{align*}
Equivalently,
\begin{equation}
e \cdot \tilde{h}_n - \tilde{h}_n \cdot e = \sum_{i=1}^n \bar{d}_i (\tilde{h}_{n-i} \cdot e)
\end{equation}
or 
\begin{equation}
\tilde{h}_n \cdot e - e \cdot \tilde{h}_n = \sum_{i=1}^n d_i (e \cdot \tilde{h}_{n-i}).
\end{equation}
\end{theorem}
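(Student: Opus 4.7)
The plan is to derive both identities by extracting coefficients from a generating-function identity that is already produced, but not used for this purpose, in the proof of Lemma~\ref{lem:powersumcommutator1}. Writing $H(t) := 1 + \sum_{n \ge 1} \tilde h_n t^n$ and exponentiating the equation displayed immediately after \eqref{eq:powersumcommutator3}, one has in $\ca_\cd[[t]]$
\[
(e \cdot H(t))\bigl(1 - (sa + s^{-1}a^{-1})t + t^2\bigr) \;=\; (H(t) \cdot e)\bigl(1 - (s^{-1}a + sa^{-1})t + t^2\bigr).
\]
Since $\ca_\cd$ is commutative and each polynomial factor has constant term $1$, both factors are invertible in $\ca_\cd[[t]]$, so this rearranges to
\[
H(t) \cdot e = (e \cdot H(t))\,D(t), \qquad e \cdot H(t) = (H(t) \cdot e)\,\bar D(t),
\]
where
\[
D(t) = \frac{1 - (sa + s^{-1}a^{-1})t + t^2}{1 - (s^{-1}a + sa^{-1})t + t^2}, \qquad \bar D(t) = \frac{1 - (s^{-1}a + sa^{-1})t + t^2}{1 - (sa + s^{-1}a^{-1})t + t^2}.
\]

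All that remains is to identify the power-series expansions $D(t) = \sum_{i \ge 0} d_i t^i$ and $\bar D(t) = \sum_{i \ge 0} \bar d_i t^i$ explicitly. For this I would factor the denominator of $D(t)$ as $(1 - s^{-1}a\,t)(1 - sa^{-1}\,t)$, which is valid because $(s^{-1}a)(sa^{-1}) = 1$, and expand the resulting product of two geometric series. Writing
\[
D(t) = 1 + \frac{(\beta - \alpha)\,t}{1 - \beta t + t^2}, \qquad \alpha := sa + s^{-1}a^{-1},\ \beta := s^{-1}a + sa^{-1},
\]
and observing the cancellation $\beta - \alpha = (s - s^{-1})(a^{-1} - a)$, one multiplies this simple factor by the geometric expansion and re-indexes the sum to read off the formula for $d_i$; the expression for $\bar d_i$ follows from the same calculation with $\alpha$ and $\beta$ exchanged. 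Extracting the coefficient of $t^n$ from the two displayed identities produces the first pair of relations of the theorem, and the ``equivalent'' second pair is obtained by separating out the $i = 0$ term (which is just $e \cdot \tilde h_n$ or $\tilde h_n \cdot e$) on the right-hand side.

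The main obstacle, to the extent there is one, is only the combinatorial bookkeeping in the power-series expansion. I would sanity-check the computation at $i = 1$, which recovers the Dubrovnik skein relation $\tilde h_1 \cdot e - e \cdot \tilde h_1 = (s - s^{-1})(a^{-1} - a)$, and at $i = 2$ before writing out the general re-indexing that matches the sum over $l \in \{0,\dots,i-1\}$ in the statement. No additional skein-theoretic input is required beyond Lemma~\ref{lem:powersumcommutator1}.
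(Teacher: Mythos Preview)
Your approach is correct and is genuinely different from the paper's. Both rest on the same key input—the identity from Lemma~\ref{lem:powersumcommutator1} that $(e\cdot H(t))(1-(sa+s^{-1}a^{-1})t+t^2)=(H(t)\cdot e)(1-(s^{-1}a+sa^{-1})t+t^2)$—but they process it differently. The paper extracts from this the three-term recurrence
\[
\tilde h_n\cdot e = e\cdot \tilde h_n-(sa+s^{-1}a^{-1})(e\cdot \tilde h_{n-1})+e\cdot \tilde h_{n-2}+(s^{-1}a+sa^{-1})(\tilde h_{n-1}\cdot e)-\tilde h_{n-2}\cdot e,
\]
and then proves by induction on $n$ that the stated $d_i$ satisfy the resulting recursion $(s^{-1}a+sa^{-1})d_{i+1}-d_i=d_{i+2}$; the formulas for $d_i$ were found experimentally beforehand. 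Your route instead keeps the identity packaged, divides through to get $H(t)\cdot e=(e\cdot H(t))D(t)$, factors the denominator $(1-s^{-1}at)(1-sa^{-1}t)$, and reads off $d_i$ directly from the geometric-series expansion. This has the advantage of \emph{deriving} the closed form rather than verifying a guess, and the bookkeeping is no worse than the paper's inductive step. The paper's argument, on the other hand, makes the recursion $d_{i+2}=(s^{-1}a+sa^{-1})d_{i+1}-d_i$ explicit, which is occasionally useful in its own right.
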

\begin{proof}
The formulas for the $d_i$ were discovered experimentally by coding a solver using the SymPy package in Python. The second equation is just the mirror map applied to the first equation, so we will just prove the first equation.

The idea of the proof depends on a reformulation of Equation \eqref{eq:skewcommutator2} as
\[
\tilde{h}_n \cdot e = e \cdot \tilde{h}_n - ( s a + s^{-1} a^{-1} ) ( e \cdot \tilde{h}_{n-1} ) + e \cdot \tilde{h}_{n-2} + ( s^{-1} a + s a^{-1} ) \tilde{h}_{n-1} \cdot e - \tilde{h}_{n-2} \cdot e
\]
and a recursive application of this formula to its last two terms on the right-hand side of the equation. 

The case of $n=0$ is trivial. For $n=1$, just apply the Kauffman skein relation. Now assume the induction hypothesis, that the formula in the statement is true for all $k \leq n-1$. Then apply this assumption to Equation \eqref{eq:skewcommutator2}:
\begin{align*}
\tilde{h}_n \cdot e &= e \cdot \tilde{h}_n - ( s a + s^{-1} a^{-1} ) ( e \cdot \tilde{h}_{n-1} ) + e \cdot \tilde{h}_{n-2} + ( s^{-1} a + s a^{-1} ) ( \tilde{h}_{n-1} \cdot e ) - \tilde{h}_{n-2} \cdot e \\
&= e \cdot \tilde{h}_n - ( s a + s^{-1} a^{-1} ) ( e \cdot \tilde{h}_{n-1} ) + e \cdot \tilde{h}_{n-2} + ( s^{-1} a + s a^{-1} ) \sum_{i=0}^{n-1} d_i (e \cdot \tilde{h}_{n-1-i}) \\
&\quad \,\, - \sum_{i=0}^{n-2} d_i (e \cdot \tilde{h}_{n-2-i}) \\
&= e \cdot \tilde{h}_n + d_1 ( e \cdot \tilde{h}_{n-1} ) + ( s^{-1} a + s a^{-1} ) \sum_{i=1}^{n-1} d_i (e \cdot \tilde{h}_{n-1-i}) - \sum_{i=1}^{n-2} d_i (e \cdot \tilde{h}_{n-2-i}) \\
&= e \cdot \tilde{h}_n + d_1 ( e \cdot \tilde{h}_{n-1} ) + ( s^{-1} a + s a^{-1} ) \sum_{i=0}^{n-2} d_{i+1} (e \cdot \tilde{h}_{n-2-i}) - \sum_{i=1}^{n-2} d_i (e \cdot \tilde{h}_{n-2-i}) \\
&= e \cdot \tilde{h}_n + d_1 ( e \cdot \tilde{h}_{n-1} ) + ( s^{-1} a + s a^{-1} ) d_1 ( e \cdot \tilde{h}_{n-2} ) \\
&\quad\,\, + \sum_{i=1}^{n-2} \big( ( s^{-1} a + s a^{-1} ) d_{i+1} - d_i \big) (e \cdot \tilde{h}_{n-2-i}). \\
\end{align*}
It is a straightforward computation to show that $( s^{-1} a + s a^{-1} ) d_1 = d_2$:
\begin{align*}
( s^{-1} a + s a^{-1} ) d_1 &= ( s^{-1} a + s a^{-1} ) \big( ( 1 - s^2 ) s^{-1} a + ( 1 + s^{-2} ) s a^{-1} \big) \\
&= ( 1 - s^2 ) s^{-2} a^2 + (1 - s^{-2} ) s^0 a^0 + ( 1 - s^2 ) s^0 a^0 + ( 1 - s^{-2} ) s^2 a^{-2} \\
&= d_2.
\end{align*}
It's slightly more tedious to show that $( s^{-1} a + s a^{-1} ) d_{i+1} - d_i = d_{i+2}$ for all $i \geq 1$:
\begin{eqnarray*}
&&( s^{-1} a + s a^{-1} ) d_{i+1} - d_i \\
=&& ( s^{-1} a + s a^{-1} ) \sum_{l=0}^{i} (1 - s^2) s^{2l-i} a^{i-2l} + (1 - s^{-2}) s^{i-2l} a^{2l-i} \\
&-& \sum_{l=0}^{i-1} (1 - s^2) s^{2l-(i-1)} a^{(i-1)-2l} + (1 - s^{-2}) s^{(i-1)-2l} a^{2l-(i-1)} \\
=&& \sum_{l=0}^{i} (1 - s^2) s^{2l-(i+1)} a^{(i+1)-2l} + (1 - s^{-2}) s^{(i+1)-2l} a^{2l-(i+1)} \\
&+& \sum_{l=0}^{i} (1 - s^2) s^{2l-(i-1)} a^{(i-1)-2l} + (1 - s^{-2}) s^{(i-1)-2l} a^{2l-(i-1)} \\
&-& \sum_{l=0}^{i-1} (1 - s^2) s^{2l-(i-1)} a^{(i-1)-2l} + (1 - s^{-2}) s^{(i-1)-2l} a^{2l-(i-1)} \\
=&& \sum_{l=0}^{i} (1 - s^2) s^{2l-(i+1)} a^{(i+1)-2l} + (1 - s^{-2}) s^{(i+1)-2l} a^{2l-(i+1)} \\
&+& (1 - s^2) s^{i+1} a^{-1-i} + (1 - s^{-2}) s^{-1-i} a^{i+1} \\
=&& d_{i+2}.
\end{eqnarray*}
This completes the proof of the statement. 
\end{proof}

\begin{remark}
There exists an algebra homomorphism from $\cd(A)$ to the ring of symmetric functions $\Lambda_R$ (see Section \ref{sec:Lukac}). Conjecturally, this map is an isomorphism, which would imply that the sets $\{ \tilde{h}_\lambda \cdot a^i \}_{\lambda, i}$ and $\{ a^i \cdot \tilde{h}_\lambda \}_{\lambda, i}$ over integers $i$ and partitions $\lambda$, where $\tilde{h}_\lambda := \tilde{h}_{\lambda_1} \cdots \tilde{h}_{\lambda_r}$, form bases of $\ca_\cd = \cd(A)[a, a^{-1}]$. If so, then Theorem \ref{prop:hncommutator} provides transition formulas between these two bases, which would then give a full description of $\ca_\cd$ as a $\cd(A)$-$\cd(A)$-bimodule. 
\end{remark}

One might expect similar formulas to hold in the HOMFLYPT case. To our knowledge, there is no HOMFLYPT analogue of Theorem \ref{prop:hncommutator} written down in the literature. Let's do that here. 

\begin{lemma} \label{lem:homfly1}
For all integers $n$, the following relation holds in $\ca_\ch$
\begin{equation}
e \cdot h_n - h_n \cdot e = s a \cdot h_{n-1} - h_{n-1} \cdot  s^{-1} a
\end{equation}
where we use the convention $h_0 = 1$ and $h_n = 0$ if $n < 0$. 
\end{lemma}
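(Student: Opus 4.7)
The plan is to deduce the identity from Morton's commutator relation $e \cdot P_k - P_k \cdot e = (s^k - s^{-k}) a^k$ (Equation \eqref{eq:pkcommutator}) via the generating-function bridge between the $h_n$ and the $P_k$. This is essentially the HOMFLYPT analogue of the manipulation carried out in the proof of Lemma \ref{lem:powersumcommutator1}, and it is structurally simpler than the Dubrovnik version because the HOMFLYPT setting admits no cap-cup terms and hence no analogue of the $W^*$ contribution.

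First I would set $H(t) := 1 + \sum_{n \geq 1} h_n t^n \in \cc_\ch[[t]]$, so that the HOMFLYPT defining relation for the power sum elements gives $\ln H(t) = \sum_{k \geq 1} P_k t^k/k$. The left and right actions $\cc_\ch \to \ca_\ch$ extend coefficient-wise to algebra homomorphisms $\cc_\ch[[t]] \to \ca_\ch[[t]]$. Since $\ca_\ch$ is commutative, both extensions commute with $\exp$ and $\ln$ on series with constant term $1$, so applying $e \cdot (-) - (-) \cdot e$ to $\ln H(t)$ and inserting Morton's relation yields
\[
\ln(e \cdot H(t)) - \ln(H(t) \cdot e) = \sum_{k \geq 1} \frac{(s^k - s^{-k}) a^k}{k} t^k = \ln(1 - s^{-1} at) - \ln(1 - sat),
\]
using the standard series $-\ln(1-x) = \sum_{k \geq 1} x^k/k$.

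Exponentiating both sides in the commutative ring $\ca_\ch[[t]]$ should collapse this to the compact identity
\[
(1 - sat)(e \cdot H(t)) = (1 - s^{-1} at)(H(t) \cdot e),
\]
and equating coefficients of $t^n$ for $n \geq 1$, together with commutativity of $\ca_\ch$ to move powers of $a$ past the $h_{n-1}$ insertions, will give exactly the stated relation. The cases $n \leq 0$ are trivial from the conventions $h_0 = 1$ and $h_n = 0$ for $n < 0$. There is no real obstacle: the only thing to verify is that left and right insertion are algebra homomorphisms and commute with each other (and hence with exp and log on series with constant term $1$), all of which is immediate from the topological definition of the actions.
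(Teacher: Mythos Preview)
Your proposal is correct and follows essentially the same route as the paper: both arguments pass from Morton's commutator $e\cdot P_k - P_k\cdot e = (s^k - s^{-k})a^k$ to a logarithmic generating-function identity, exponentiate to obtain $(1-sat)(e\cdot H(t)) = (1-s^{-1}at)(H(t)\cdot e)$, and then read off the coefficient of $t^n$. Your remark that commutativity of $\ca_\ch$ is what makes the $\exp/\ln$ manipulation legitimate is exactly the implicit justification the paper is relying on.
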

\begin{proof}
Recall the power sum elements $P_k$ satisfy the power series equation
\begin{equation} \label{def:Pk}
\sum_{k=1}^\infty \frac{P_k}{k} x^k = \ln \Big( \sum_{n=0}^\infty h_n x^n \Big)
\end{equation}
By Theorem 4.2 of \cite{Mor02b}, the power sum elements satisfy a commutation relation in $\ca_\ch$
\begin{equation}
e \cdot P_k - P_k \cdot e = (s^{k} - s^{-k}) a^k
\end{equation} 
which may be rephrased as a power series equation 
\[
e \cdot \Big( \sum_{k=1}^\infty \frac{P_k}{k} x^k \Big) - \Big( \sum_{k=1}^\infty \frac{P_k}{k} x^k \Big) \cdot e = \sum_{k=1}^\infty s^k a^k - \sum_{k=1}^\infty s^{-k} a^k.
\]
On the left-hand side, use the defining equation \eqref{def:Pk}. Use the power series formulation of natual log on the right-hand side. So we have
\[
\ln \Bigg( e \cdot \Big( \sum_{k=0}^\infty h_k x^k \Big) \Bigg) - \ln \Bigg( \Big( \sum_{k=0}^\infty h_k x^k \Big) \cdot e \Bigg) = \ln ( 1 - s a x ) - \ln ( 1 - s^{-1} a x ).
\]
After moving terms around, using properties of natural log, and exponentiating both sides, we arrive at the equation
\[
\Big( \sum_{n=0}^\infty (h_n \cdot e ) x^n \Big) ( 1 - s a x ) = \Big( \sum_{n=0}^\infty ( e \cdot h_n ) x^k \Big) ( 1 - s^{-1} a x )
\]
which implies the statement of the lemma.
\end{proof}

Recall that the algebra $\ca_\ch$ is equal to the Laurent polynomial ring $\ch(A)^+[a, a^{-1}]$. Under the isomorphism between $\ch(A)^+$ and the ring of symmetric functions $\Lambda_R$, the $h_n$ identify with the complete homogeneous symmetric functions. It is well-known that ordered monomials in the complete homogeneous symmetric functions form a basis of $\Lambda$, hence the sets $\{h_\lambda \cdot a^i \}_{\lambda, i}$ and $\{a^i \cdot h_\lambda \}_{\lambda, i}$ over integers $i$ and partitions $\lambda$, where $h_\lambda := h_{\lambda_1} \cdots h_{\lambda_r}$, form bases of $\ca_\ch$. The following theorem gives transition formulas between these two bases. 

\begin{theorem} \label{prop:homfly2}
The Hecke symmetrizers $h_n$ satisfy the equations
\[
h_n \cdot e = e \cdot h_n + ( 1 - s^2 ) \sum_{l=1}^{n} s^{-l} ( a^l \cdot h_{n-l} )
\]
and
\[
e \cdot h_n = h_n \cdot e + (1 - s^{-2} ) \sum_{l=1}^{n} s^l ( h_{n-l} \cdot a^l ).
\]
\end{theorem}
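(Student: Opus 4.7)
The plan is to deduce both identities from Lemma \ref{lem:homfly1} by induction on $n$, using that $\ca_\ch = \ch(A)^+[a^{\pm 1}]$ is commutative so that any power of $a$ may be freely commuted past any element of $\ca_\ch$.

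For the first equation, the base case $n = 1$ is a direct rewriting of Lemma \ref{lem:homfly1} (with $h_0 = 1$), using the scalar identity $s^{-1} - s = s^{-1}(1 - s^2)$. For the inductive step, rearrange the lemma to solve for $h_n \cdot e$ and commute the resulting trailing $a$ across $h_{n-1}\cdot e$ to obtain
\[
h_n \cdot e \;=\; e \cdot h_n \;-\; s\,(a \cdot h_{n-1}) \;+\; s^{-1}\,a\,(h_{n-1} \cdot e).
\]
Substituting the inductive hypothesis for $h_{n-1}\cdot e$ produces a standalone piece $(s^{-1} - s)\,a\,(e \cdot h_{n-1}) = (1 - s^2)\,s^{-1}\,(a \cdot h_{n-1})$, which is exactly the $l=1$ contribution of the target sum, together with a double sum that, after absorbing the prefactor $s^{-1}a$ and re-indexing $l \mapsto l-1$, supplies the remaining terms with $l \geq 2$. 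Assembling these yields the first equation.

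The second equation is proved by the same induction, starting from Lemma \ref{lem:homfly1} rearranged instead to solve for $e \cdot h_n$; equivalently, it is the image of the first equation under the symmetry of $\ca_\ch$ that swaps the left and right $\ch(A)^+$-actions and exchanges $s$ with $s^{-1}$. The whole argument is essentially bookkeeping once the lemma is in hand; the only input beyond it is the commutativity of $\ca_\ch$, and the only delicate point is matching the factored scalar coefficients $(1 - s^2)s^{-l}$ and $(1 - s^{-2})s^l$ against the commutator form supplied by the lemma, which is handled by the scalar identity just noted.
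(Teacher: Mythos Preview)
Your proposal is correct and follows essentially the same route as the paper's proof: both induct on $n$, rearrange Lemma \ref{lem:homfly1} to isolate $h_n\cdot e$, substitute the inductive hypothesis for $h_{n-1}\cdot e$, and collect terms (with the second identity declared analogous). The only cosmetic difference is that the paper invokes the HOMFLY skein relation directly for the base case $n=1$, whereas you phrase it as the $n=1$ instance of Lemma \ref{lem:homfly1}; these are the same statement.
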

\begin{proof}
We will prove the first equality. The second is completely analagous. Proceed by induction. When $n=1$, the statement follows from the HOMFLY skein relation. 

We can rearrange the terms of Lemma \ref{lem:homfly1} to get
\begin{equation} \label{eq:homfly1b}
h_n \cdot e = e \cdot h_n + s^{-1} a ( h_{n-1} \cdot e ) - s a ( e \cdot h_{n-1} ).
\end{equation}
By the induction hypothesis,
\begin{align*}
h_n \cdot e & = e \cdot h_n + s^{-1} a ( h_{n-1} \cdot e ) - s a ( e \cdot h_{n-1} ) \\
& = e \cdot h_n + s^{-1} a \Big( e \cdot h_{n-1} + ( 1 - s^2 ) \sum_{j=1}^{n-1} s^{-j} a^j ( e \cdot h_{n-1-j} ) \Big) - s a ( e \cdot h_{n-1} ) \\
& = e \cdot h_n + ( s^{-1} - s ) a ( e \cdot h_{n-1} ) + ( 1 - s^2 ) \sum_{j=1}^{n-1} s^{-j-1} a^{j+1} ( e \cdot h_{n-1-j} ) \\ 
& = e \cdot h_n + ( 1 - s^2 ) s^{-1} a ( e \cdot h_{n-1} ) + ( 1 - s^2 ) \sum_{j=1}^{n-1} s^{-(j+1)} a^{j+1} ( e \cdot h_{n-(j+1)} ) \\
&= e \cdot h_n + ( 1 - s^2 ) \sum_{l=1}^{n} s^{-l} a^{l} ( e \cdot h_{n-l} )
\end{align*}
where the last equality follows from the substitution $j=l+1$. 
\end{proof}

\section{Type B/C/D Schur Functions and BMW Idempotent Closures} \label{sec:Lukac}
In \cite{Luk05}, it is shown using skein theory techniques that there is an algebra isomorphism between the ring of symmetric functions $\Lambda$ and the positive part of the skein algebra of the annulus $\ch(A)^+$. The isomorphism is defined on generators by sending the complete homogeneous symmetric functions to the annular closures of Hecke symmetrizers. Futhermore, it is shown that the image of the Schur function $s_\lambda$ is the idempotent closure $Q_\lambda$. This fact has many implications. For example, the structure constants of $\ch(A)^+$ in the basis $\{ Q_\lambda \}_\lambda$ are the Littlewood-Richardson constants. Also, the definition of $P_k$ implies that these elements truly correspond with the power sum symmetric functions. Through this isomorphism, one could transfer structure of $\Lambda$ to $\ch(A)^+$, such as the Hopf algebra structure or the plethysm structure (see \cite{MM08}). 

From a Lie-theoretic perspective, the ring of symmetric polynomials $\Lambda_N$ in $N$ variables is the ring of polynomial representations of $GL_n(\C)$. There are maps of graded rings $\Lambda_n \to \Lambda_{n-1}$ defined by specializing the $N^{\mathrm{th}}$ variable to $0$, which together form an inverse system whose inverse limit in the category of graded rings is isomorphic to $\Lambda$. This strengthens the existing relationship between the HOMFLYPT skein theory and $GL_n(\C)$ (recall that the skein relations are modeled after relations of morphisms in $U_q(\mathfrak{gl}_n)$-Mod). 

This section is an attempt to emulate Lukac's argument in the Dubrovnik case. First we will review some of the theory behind character rings of the orthogonal and symplectic groups. In particular, they are isomorphic as rings and there are ``Schur functions" indexed over partitions for the different types. Next, for any of the character rings, we can define a homomorphism to $\cd(A)$. We conjecture that the Schur functions are sent to the annular closures of the BMW idempotents $\widetilde{Q}_\lambda$, and we prove the conjecture when the length of $\lambda$ is at most $2$.

\subsection{Universal Character Rings of Orthogonal and Symplectic Type}

Here we will summarize some results from \cite{KT87}. Recall that a character of a (finite-dimensional) group representation is the post-composition of the representation with the trace function. The character of a direct sum of representations is the pointwise sum of the individual characters. Also, the character of a tensor product of representations is the pointwise product of the individual characters. For compact connected Lie groups $G$, two representations of $G$ are isomorphic only if their characters are equal. In this way, the character ring of $G$ is a decategorification of the category of finite-dimensional representations of $G$. Let $R(B_n)$, $R(C_n)$, and $R(D_n)$ denote the character rings of the Lie groups $SO_{2n+1}(\C)$, $SP_{2n}(\C)$, and $SO_{2n}(\C)$ respectively (the type $D_n$ case will require some subtle care, but we will not address these issues here). 

The irreducible representations of $SO_{2n+1}(\C)$ and $SP_{2n}(\C)$ are indexed by partitions $\lambda$ of length at most $n$, and denote their respective characters by $sb_{n, \lambda}$ and $sc_{n, \lambda}$ respectively. Let $sd_{n, \lambda}$ denote the character of the restriction of the appropriate irreducible representation of $O_{2n}(\C)$ to $SO_{2n}(\C)$. These are called the \textbf{Schur polynomials} of types $B_n$, $C_n$, and $D_n$.

Let $G$ be any of $SO_{2n+1}(\C)$, $SP_{2n}(\C)$, and $SO_{2n}(\C)$. A character of $G$ is determined by its value on a maximal torus of $G$, which may be chosen to be a set of diagonal matrices. 
\begin{align*}
&B_n: &\quad T &= \{ \textrm{diag}(t_1, \dots, t_n, 1, t_n^{-1}, \dots, t_1^{-1}) \} \\ 
&C_n: &\quad T &= \{ \textrm{diag}(t_1, \dots, t_n, t_n^{-1}, \dots, t_1^{-1}) \} \\ 
&D_n: &\quad T &= \{ \textrm{diag}(t_1, \dots, t_n, t_n^{-1}, \dots, t_1^{-1}) \}
\end{align*}
Furthermore, it can be shown that $R(G) = \Z[t_1^{\pm 1}, \dots, t_n^{\pm 1}]^{\Z_2^n \rtimes \mathfrak{S}_n} = \Z[t_1+t_1^{-1}, \dots, t_n+t_n^{-1}]^{\mathfrak{S}_n}$. This is isomorphic to the ring of symmetric polynomials $\Lambda_n = \Z[c_1, \dots, c_n]^{S_n}$ under the identification $c_i = t_i + t_i^{-1}$. 

Let $V$ be the natural representation of the group $G$. Let $h_{G,i} := \textrm{Sym}^i(V)$ and $e_{G,i} := \textrm{Alt}^i(V)$ be the symmetric and alternating powers of $V$. Also, let $h_{G,i}^\circ = h_{G,i} - h_{G_i-2}$ and $e_{G,i}^\circ = e_{G,i} - e_{G_i-2}$. The following statements are true.
\begin{enumerate}
\item The characters $h_{B_n,i}^\circ$ and $e_{B_n,i}$ are irreducible, and \[R(B_n) = \Z[h_{B_n,1}^\circ, \dots, h_{B_n,n}^\circ] = \Z[e_{B_n,1}, \dots, e_{B_n,n}]. \]
\item The characters $h_{C_n,i}$ and $e_{C_n,i}^\circ$ are irreducible, and  \[R(C_n) = \Z[h_{C_n,1}, \dots, h_{C_n,n}] = \Z[e_{C_n,1}^\circ, \dots, e_{C_n,n}^\circ]. \]
\item The characters $h_{D_n,i}^\circ$ are irreducible, $e_{D_n,i}$ is irreducible if $i \neq n$, and \[R(D_n) = \Z[h_{D_n,1}^\circ, \dots, h_{D_n,n}^\circ] = \Z[e_{D_n,1}, \dots, e_{D_n,n}]. \]
\end{enumerate}
The characters $h_{G,i}$ are equal to the Schur polynomials corresponding to a single row, and $e_{G_i}$, a single column.

The ring $\Lambda_n$ is called the ring of symmetric polynomials in $n$ indeterminates, and it may be identified with the character ring of finite-dimensional polynomial representations of the Lie group $GL_n(\C)$. The projection maps $\Lambda_n \to \Lambda_{n-1}$ form an inverse system in the category of graded rings, whose inverse limit is what we call the \textbf{ring of symmetric functions} $\Lambda$. The ring $\Lambda$ may be concretely defined as the restriction of the ring of formal power series in countable many indeterminates to those series which have bounded degree and which are invariant under permutations of the indeterminates. 

There are symmetric functions $sb_\lambda=sd_\lambda$ (they are equal) which project to the Schur polynomials $sb_{n,\lambda}$ and $sd_{n, \lambda}$ under the projections. Similarly, there are $sc_\lambda$ which project to $sc_{n, \lambda}$. These symmetric functions are called \text{Schur functions} of types B, C, and D. The sets $\{sb_\lambda \}_\lambda$ and $\{sc_\lambda \}_\lambda$ form $\Z$-bases of $\Lambda$. Furthermore, there is an involutive algebra automorphism defined by $\omega(sb_\lambda) = sc_{\lambda^t}$ where $\lambda^t$ is the transpose partition of $\lambda$. This implies that the structure constants for both bases totally coincide, i.e. if 
\[
sb_{\mu} sb_{\nu} = \sum_\lambda b_{\mu \nu}^\lambda sb_\lambda
\]
and
\[
sc_{\mu} sc_{\nu} = \sum_\lambda c_{\mu \nu}^{\lambda} sc_{\lambda}
\]
then $c_{\mu \nu}^\lambda = b_{\mu^t \nu^t}^{\lambda^t}$. Furthermore, Koike and Terada show that $b_{\mu \nu}^\lambda = b_{\mu^t \nu^t}^{\lambda^t}$ and $c_{\mu \nu}^\lambda = c_{\mu^t \nu^t}^{\lambda^t}$. Therefore, $c_{\mu \nu}^\lambda = b_{\mu \nu}^\lambda$. These structure constants are natural numbers, and may be expressed as sums of products of Littlewood-Richardson numbers (see \cite{Koi89}).

In regards to $\omega$, if we let $hb_i := sb_{(i)}$ be the Schur functions of type B corresponding to one row and $ec_i := sc_{(1^i)}$ be the Schur functions of type C corresponding to one column, then 
\begin{equation} \label{eq:omega1}
\omega(hb_i) = ec_i.
\end{equation}
Let $hb_i^\circ := hb_i - hb_{i-2}$. For a partition $\lambda=(\lambda_0, \lambda_1, \dots, \lambda_{r-1})$ of length $r$, set $hb_\lambda^\circ(i,j) := hb_{\lambda_i - i + j}^\circ + hb_{\lambda_i - i - j}^\circ$. The classical Jacobi-Trudi formula for the type A case generalizes to the type B case as
\begin{equation} \label{eq:jacobitrudi}
sb_\lambda = \frac{1}{2} \det \big( hb_\lambda(i, j) \big)_{0 \leq i, j \leq r-1}
\end{equation}
which implies a type C generalization by applying Equation \eqref{eq:omega1}. The $1/2$ is there simply to compensate for the first column for when $j=0$.

\subsection{Determinantal Calculations}

The computations in this section will be very technical. Let's fix some notation to hopefully make everything slightly easier to read. First, let
\begin{align*}
t_n &:= \tilde{h}_n \cdot e - e  \cdot \tilde{h}_n, \\
h(n, k) &:= \tilde{h}_{n+k} + \tilde{h}_{n-k}, \\
t(n, k) &:= t_{n+k} + t_{n-k} = h(n, k) \cdot e - e \cdot h(n, k).
\end{align*}
Let's also define the `skew commutator' elements
\begin{align*}
\varsigma^+ (n, k) &:= s^{-1} h(n, k) \cdot a - s a \cdot h(n, k) \\
\varsigma^- (n, k) &:=  s h(n, k) \cdot a^{-1} - s^{-1} a^{-1} \cdot h(n, k).
\end{align*}
It might be worth pointing out these simple relations: 
\begin{equation}
\begin{split}
t(n, -k) &= t(n, k), \\
h(n, -k) &= h(n, k), \\
\varsigma^\pm (n, -k) &= \varsigma^\pm (n, k), \\
t(n, 0) &= 2 t_n, \\
h(n, 0) &= 2 \tilde{h}_n. \\
\end{split}
\end{equation}
There is a way to write a commutator in terms of skew commutators. This is given by the equation
\begin{equation} \label{eq:skewcommutator3}
t(n, k) = \frac{-1}{s-s^{-1}} \big( a^{-1} \varsigma^+(n, k) - a \varsigma^-(n, k) \big)
\end{equation}
for all $k \geq 0$. Also, we may restate the identity of Lemma \ref{lem:powersumcommutator1} in terms of this new notation.
\begin{equation} \label{eq:skewcommutator2}
t(n, 1) = \frac{1}{2} \varsigma^+ (n, 0) + \frac{1}{2} \varsigma^- (n, 0)
\end{equation}

\begin{lemma} \label{lemma:skewcommutatordecomp}
For all $k \geq 0$, the following holds in $\ca$.
\[
t(n, k+1) =  \varsigma^+ (n, k) + \varsigma^- (n, k) - t(n, k-1)
\]
\end{lemma}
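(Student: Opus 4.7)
The relation to be shown is a three-term recursion in $k$ in which every ingredient --- $h(n,k)$, $t(n,k)$, and $\varsigma^\pm(n,k)$ --- is by construction an $R$-linear combination of terms attached to the two endpoints $n+k$ and $n-k$. The case $k=0$ is exactly Equation~\eqref{eq:skewcommutator2} rewritten using $t(n,-1) = t(n,1)$, so the real content is to promote that ``collapsed'' $k=0$ identity to its ``split'' form for general $k$ by superposing two shifted copies.

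My plan is to restate \eqref{eq:skewcommutator2} with the center $n$ replaced by a dummy index $N$ and the factor $h(N,0) = 2\tilde h_N$ expanded, yielding
\[
t_{N+1} + t_{N-1} \;=\; \bigl(s^{-1}\tilde h_N \cdot a - s\,a\cdot \tilde h_N\bigr) + \bigl(s\,\tilde h_N \cdot a^{-1} - s^{-1} a^{-1}\cdot \tilde h_N\bigr),
\]
and then to add the two instances $N = n+k$ and $N = n-k$. On the left, the four resulting $t_\bullet$-terms regroup as $(t_{n+k+1} + t_{n-k-1}) + (t_{n+k-1} + t_{n-k+1}) = t(n,k+1) + t(n,k-1)$. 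On the right, the two $a$-terms collect, via $\tilde h_{n+k} + \tilde h_{n-k} = h(n,k)$, into $s^{-1} h(n,k)\cdot a - s\,a\cdot h(n,k) = \varsigma^+(n,k)$, and the two $a^{-1}$-terms collect analogously into $\varsigma^-(n,k)$. Rearranging the resulting equality $t(n,k+1) + t(n,k-1) = \varsigma^+(n,k) + \varsigma^-(n,k)$ gives the lemma.

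There is no genuine obstacle: once Lemma~\ref{lem:powersumcommutator1} is in hand, the argument is purely the bilinearity of the construction in its two endpoints. The only mild nuisance is boundary behaviour when $n-k$ falls below the range $n \ge -1$ for which \eqref{eq:skewcommutator2} was originally stated. I would handle this by extending the paper's convention $\tilde h_{-1} := 0$ to $\tilde h_m := 0$ for all $m < 0$, under which both sides of the $N$-th instance of \eqref{eq:skewcommutator2} vanish identically for $N \le -1$; the summation step then goes through for every $n$ and every $k \ge 0$.
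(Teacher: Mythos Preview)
Your argument is correct and is essentially identical to the paper's: both proofs take the single-center identity coming from Lemma~\ref{lem:powersumcommutator1} (restated as \eqref{eq:skewcommutator2}), instantiate it at the two shifted centers $N=n+k$ and $N=n-k$, and add to regroup the left side as $t(n,k+1)+t(n,k-1)$ and the right side as $\varsigma^+(n,k)+\varsigma^-(n,k)$. Your remark about extending $\tilde h_m:=0$ for $m<0$ to handle small $n-k$ is a reasonable housekeeping point that the paper leaves implicit.
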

\begin{proof}
The base case is $2$ times Equation \ref{eq:skewcommutator2} since $t(n, 1) = t(n, -1)$. To show the general case, use equation \eqref{eq:skewcommutator2} twice.
\begin{align*}
t(n, k+1)
&= t_{n+(k+1)} + t_{n-(k+1)} \\
&= \big( ( s^{-1} \tilde{h}_{n+k} \cdot e - s e \cdot \tilde{h}_{n+k} ) a + ( s \tilde{h}_{n+k} \cdot e - s^{-1} e \cdot \tilde{h}_{n+k} ) a^{-1} - t_{n+(k-1)} \big) \\
&\qquad + \big( ( s^{-1} \tilde{h}_{n-k} \cdot e - s e \cdot \tilde{h}_{n-k} ) a + ( s \tilde{h}_{n-k} \cdot e - s^{-1} e \cdot \tilde{h}_{n-k} ) a^{-1} - t_{n-(k-1)} \big) \\
&= \big( s^{-1} h(n, k) \cdot e - s e \cdot h(n, k) \big) a + \big( s h(n, k) \cdot e - s^{-1} e \cdot h(n, k) \big) a^{-1} \\
&\qquad - t(n, k-1) \\
&= \varsigma^+ (n, k) + \varsigma^- (n, k) - t(n, k-1)
\end{align*}
\end{proof}

\begin{remark}
An alternate formulation of the first statement of the lemma above is
\begin{equation} \label{eq:skewcommutator4}
t(n, k+1) + t(n, k-1) = \varsigma^+ (n, k) + \varsigma^- (n, k)
\end{equation}
which is a generalization of Equation \eqref{eq:skewcommutator2}. Also, one may apply the identity to itself recursively to get a closed form for the $t(n,k)$ in terms of skew-commutators, but the closed form splits into cases depending on $k$. Thirdly, this lemma yields a kind of recursive formula for the skew commutators, given as
\begin{equation} \label{eq:skewcommutator5}
\varsigma^+ (n, k) + \varsigma^- (n, k) = \frac{-1}{s-s^{-1}} \Big( a^{-1} \big( \varsigma^+(n, k+1) + \varsigma^+(n, k-1) \big) - a \big( \varsigma^-(n, k+1) + \varsigma^-(n, k-1) \big) \Big)
\end{equation}
which follows from Equation \eqref{eq:skewcommutator3}
\end{remark}

\begin{lemma} \label{lemma:detgymnastics3}
The following identities hold in $\ca$. 
\leavevmode 
\begin{enumerate}
\item
\begin{equation*}
\begin{vmatrix}
h(n, k) \cdot e & \varsigma^+ (n, k) \\
h(m, k) \cdot e & \varsigma^+ (m, k)
\end{vmatrix}
= s^2
\begin{vmatrix}
e \cdot h(n, k) & \varsigma^+ (n, k) \\
e \cdot h(m, k) & \varsigma^+ (m, k)
\end{vmatrix}
\end{equation*}
\item
\begin{equation*}
\begin{vmatrix}
h(n, k) \cdot e & \varsigma^- (n, k) \\
h(m, k) \cdot e & \varsigma^- (m, k) 
\end{vmatrix}
= s^{-2}
\begin{vmatrix}
e \cdot h(n, k) & \varsigma^- (n, k) \\
e \cdot h(m, k) & \varsigma^- (m, k) 
\end{vmatrix}
\end{equation*}
\end{enumerate}
\end{lemma}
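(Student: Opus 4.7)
The plan is to exploit a simple factorization of the skew commutators inside the commutative algebra $\ca_\cd \cong \cc_\cd[a^{\pm 1}]$. The key observation is that for any $x \in \cc_\cd$ one has $x \cdot a = (x \cdot e)\, a$ and $a \cdot x = (e \cdot x)\, a$ in $\ca_\cd$, where the juxtaposition on each right-hand side denotes the (commutative) multiplication in $\ca_\cd$; the analogous identities with $a^{-1}$ in place of $a$ hold as well. Geometrically each such equality is the statement that one can slide a decoration along a winding strand, and algebraically they say that the left and right bimodule actions of $\cc_\cd$ on $\ca_\cd$ both factor through multiplication by $a$ in $\ca_\cd$ itself, via the two embeddings $x \mapsto x \cdot e$ and $x \mapsto e \cdot x$.

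Substituting these identifications into the definitions of the skew commutators yields the factorizations
$\varsigma^+(n,k) = s^{-1}\bigl(h(n,k)\cdot e - s^{2}\,e\cdot h(n,k)\bigr)\,a$
and
$\varsigma^-(n,k) = s\bigl(h(n,k)\cdot e - s^{-2}\,e\cdot h(n,k)\bigr)\,a^{-1}$,
which already isolate the ratios $s^{\pm 2}$ appearing in the lemma. Since $a$ is a unit and $\ca_\cd$ is commutative, the scalar $s^{\mp 1} a^{\pm 1}$ can then be pulled uniformly out of the second column of each of the four determinants involved. After this step, the second column of each side is a $\cc_\cd$-linear combination of $h(\cdot,k)\cdot e$ and $e\cdot h(\cdot,k)$, and a single column operation on each side brings both determinants in each identity into a common form proportional to
$\det\!\begin{pmatrix} h(n,k)\cdot e & e\cdot h(n,k) \\ h(m,k)\cdot e & e\cdot h(m,k) \end{pmatrix}$.
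Tracking the prefactors through the two column reductions then recovers exactly the factor $s^2$ in part (1) and $s^{-2}$ in part (2), and part (2) can alternatively be deduced from part (1) by applying the mirror involution.

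The main (and essentially only) obstacle is verifying the two identifications $x\cdot a = (x\cdot e)\,a$ and $a\cdot x = (e\cdot x)\,a$ with care, since the left action, right action, and $\ca_\cd$-multiplication are all written with the same diagrammatic symbol $\cdot$ and are easy to conflate. Once those equalities are established from the definitions of the bimodule structure on $\ca_\cd$ and of $a$ as a single winding strand, everything else is formal bookkeeping using the commutativity of $\ca_\cd$ and the invertibility of $a$.
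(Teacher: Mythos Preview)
Your proposal is correct and rests on the same underlying observation as the paper's proof, namely that $\varsigma^+(n,k) = s^{-1}\bigl(h(n,k)\cdot e - s^2\, e\cdot h(n,k)\bigr)a$ (and analogously for $\varsigma^-$); the paper arrives at this implicitly by expanding the product $P(n,m) := \bigl(h(n,k)\cdot e - s^2 e\cdot h(n,k)\bigr)\varsigma^+(m,k)$ and checking it is symmetric in $n,m$ using commutativity of $\cc_\cd$. Your route is slightly cleaner: once you record the factorization explicitly, the symmetry of $P(n,m)$ is immediate (it is a product of two expressions of the same shape with $n$ and $m$ interchanged, times $s^{-1}a$), so the expansion the paper carries out becomes unnecessary, and your column-operation formulation makes the factor $s^{\pm 2}$ appear transparently rather than emerging at the end of a cancellation.
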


\begin{proof}
Here is the computation for the first item. The proof of the second runs completely parallel. First, expand the determinants.
\[
\big( h(n, k) \cdot e \big) \varsigma^+ (m, k) - \big( h(m, k) \cdot e \big) \varsigma^+ (n, k) = s^2 \big( e \cdot h(n, k) \big) \varsigma^+ (m, k) - s^2 \big( e \cdot h(m, k) \big) \varsigma^+ (n, k)
\]
Collect terms in the following way.
\[
\big( h(n, k) \cdot e - s^2 e \cdot h(n, k) \big) \varsigma^+ (m, k) = \big( h(m, k) \cdot e - s^2 e \cdot h(m, k) \big) \varsigma^+(n, k)
\]
The right-hand side is equal to the left-hand side with the indices $n$ and $m$ interchanged. So the equation holds if and only if the left hand side is invariant under permuting $n$ and $m$. Let $P(n,m)$ equal the left-hand side. Expand the product in $P(n, m)$ using the definition of $\varsigma^+(m, k)$
\begin{align*}
P(n, m) & = \big( h(n, k) \cdot e - s^2 e \cdot h(n, k) \big) \varsigma^+ (m, k) \\
& = \big( h(n, k) \cdot e - s^2 e \cdot h(n, k) \big) \big( s^{-1} h(n, k) \cdot a - s a \cdot h(n, k) \big) \\
& = \Big( s^{-1} \big( h(n, k) h(m, k) \big) \cdot e + s^3 e \cdot \big( h(n, k) h(m, k) \big) \\
& \qquad  - s h(n, k) \cdot e \cdot h(m, k) - s h(m, k) \cdot e \cdot h(n, k) \Big) a
\end{align*}
Use that $\cc$ is commutative to get that $P(n, m) - P(m, n) = 0$. This completes the proof. 
\end{proof}

\begin{remark}
It may be worth pointing out that the factor of $a$ in $\varsigma^+(n, k)$ and the factor of $a^{-1}$ in $\varsigma^-(n, k)$ don't affect the identities above. In other words, scaling identity (1) by $a^{-1}$ gives
\begin{equation}
\begin{vmatrix}
h(n, k) \cdot e & s^{-1} h(n, k) \cdot e - s e \cdot h(n, k) \\
h(m, k) \cdot e & s^{-1} h(m, k) \cdot e - s e \cdot h(m, k)
\end{vmatrix}
= s^2
\begin{vmatrix}
e \cdot h(n, k) & s^{-1} h(n, k) \cdot e - s e \cdot h(n, k) \\
e \cdot h(m, k) & s^{-1} h(m, k) \cdot e - s e \cdot h(m, k)
\end{vmatrix}
\end{equation}
and scaling identity (2) by $a$ gives
\begin{equation}
\begin{vmatrix}
h(n, k) \cdot e &  s h(n, k) \cdot e - s^{-1} e \cdot h(n, k) \\
h(m, k) \cdot e & s h(m, k) \cdot e - s^{-1} e \cdot h(m, k)
\end{vmatrix}
= s^{-2}
\begin{vmatrix}
e \cdot h(n, k) & s h(n, k) \cdot e - s^{-1} e \cdot h(n, k) \\
e \cdot h(m, k) & s h(m, k) \cdot e - s^{-1} e \cdot h(m, k)
\end{vmatrix}.
\end{equation}
\end{remark}

The previous two lemmas together imply the next lemma. 

\begin{lemma} \label{lemma:detgymnastics4}
For all $k \geq 0$, we have
\begin{align*}
&\quad \,
\begin{vmatrix}
h(n, k) \cdot e & t(n, k+1) \\
h(m, k) \cdot e & t(m, k+1)
\end{vmatrix} \\
&=
\begin{vmatrix}
e \cdot h(n, k) & s^2 \varsigma^+ (n, k) + s^{-2} \varsigma^- (n, k) - t(n, k-1) \\
e \cdot h(m, k) & s^2 \varsigma^+ (m, k) + s^{-2} \varsigma^- (m, k) - t(m, k-1)
\end{vmatrix}\\
& \qquad +
\sum_{l=1}^{k}
\begin{vmatrix}
e \cdot h(n, k-l) & (s^2 - 1) \varsigma^+ (n, k-l) + (s^{-2} - 1) \varsigma^- (n, k-l) \\
e \cdot h(m, k-l) & (s^2 - 1) \varsigma^+ (m, k-l) + (s^{-2} - 1) \varsigma^- (m, k-l)
\end{vmatrix}.
\end{align*} 
\end{lemma}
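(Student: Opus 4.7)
The plan is to start with the left-hand side and systematically apply Lemma~\ref{lemma:skewcommutatordecomp} and Lemma~\ref{lemma:detgymnastics3}, without induction. First I would use Lemma~\ref{lemma:skewcommutatordecomp} to replace $t(n,k+1)$ in the second column of the LHS by $\varsigma^+(n,k) + \varsigma^-(n,k) - t(n,k-1)$ (and similarly for the $m$-row), so multilinearity splits the LHS into three determinants: two with $\varsigma^\pm$ columns, one with a $t(\cdot, k-1)$ column. On the $\varsigma^\pm$ pieces I would apply Lemma~\ref{lemma:detgymnastics3} to convert $h(n,k)\cdot e$ into $s^{\pm 2}\, e\cdot h(n,k)$. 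On the third piece, I would decompose $h(n,k)\cdot e = e\cdot h(n,k) + t(n,k)$ by linearity; the $e\cdot h$ contribution assembles with the previous two into the first determinant on the RHS, and what remains is the residual
\[
-\begin{vmatrix} t(n,k) & t(n,k-1) \\ t(m,k) & t(m,k-1) \end{vmatrix}.
\]

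The next task is to identify this residual with the sum on the RHS. The key observation is the rearranged form of Lemma~\ref{lemma:detgymnastics3},
\[
\begin{vmatrix} t(n,r) & \varsigma^\pm(n,r) \\ t(m,r) & \varsigma^\pm(m,r) \end{vmatrix} = (s^{\pm 2} - 1)\begin{vmatrix} e\cdot h(n,r) & \varsigma^\pm(n,r) \\ e\cdot h(m,r) & \varsigma^\pm(m,r) \end{vmatrix},
\]
which lets me recast the $l$-th summand of the RHS as a determinant with $t$'s in the first column and $\varsigma^+(n,k-l) + \varsigma^-(n,k-l)$ in the second. Applying Lemma~\ref{lemma:skewcommutatordecomp} in reverse then converts the second column to $t(n,k-l+1) + t(n,k-l-1)$, so each summand becomes $-D_{k-l+1} + D_{k-l}$, where $D_r$ denotes the $t$-only determinant with columns $t(\cdot,r)$ and $t(\cdot,r-1)$. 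Summing from $l=1$ to $l=k$ telescopes to $-D_k + D_0$, which would match the residual $-D_k$ identified in the previous step provided the boundary term $D_0$ vanishes.

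The hardest part will be confirming that the boundary term
\[
D_0 = \begin{vmatrix} t(n,0) & t(n,-1) \\ t(m,0) & t(m,-1) \end{vmatrix}
\]
vanishes (or is absorbed correctly into an adjusted bookkeeping for the $k=0$ case). Using $t(n,-1) = t(n,1)$ and Equation~\eqref{eq:skewcommutator2} to rewrite $t(n,1)$ as $\tfrac{1}{2}(\varsigma^+(n,0) + \varsigma^-(n,0))$, together with Lemma~\ref{lemma:detgymnastics3} at $k=0$, this reduces to a subtle cancellation in $\ca$ that requires tracking the order of multiplication carefully, since $t_n, t_m \in \ca$ do not commute. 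This base-case identity, together with the clean application of Lemma~\ref{lemma:detgymnastics3} to isolate the $s^{\pm 2}$ factors, is the main obstacle to completing the argument; once it is handled, the rest is bookkeeping via linearity and telescoping.
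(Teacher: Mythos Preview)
Your opening move—splitting the second column via Lemma~\ref{lemma:skewcommutatordecomp} and then applying Lemma~\ref{lemma:detgymnastics3} to the two $\varsigma^\pm$ pieces—is exactly what the paper does. Your decomposition of the remaining piece via $h(n,k)\cdot e = e\cdot h(n,k) + t(n,k)$ is also the paper's move, just packaged differently: the paper swaps columns first and records a one-step identity \eqref{eq:detgymnastics5}, which it then iterates $k$ times down to a terminal term $L_0 = \begin{vmatrix} h(n,0)\cdot e & t(n,1)\\ h(m,0)\cdot e & t(m,1)\end{vmatrix}$. After reindexing, the iterated sum produces the first RHS determinant plus the summands for $1\le l\le k-1$, and the leftover boundary $-\begin{vmatrix} e\cdot h(n,0) & t(n,1)\\ e\cdot h(m,0) & t(m,1)\end{vmatrix} + L_0$ is identified with the $l=k$ summand by invoking the separately-proved $k=0$ case of the lemma.

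Your telescoping alternative is a clean reformulation: the rearranged form of Lemma~\ref{lemma:detgymnastics3} is correct, and the telescoping of the sum on the RHS to $-D_k + D_0$ is valid. One correction, though: your worry about noncommutativity is misplaced. The algebra $\ca_\cd$ equals $\cc_\cd[a^{\pm 1}]$ with $\cc_\cd$ commutative (see Section~\ref{sec:relativeannulus}), so $\ca_\cd$ is commutative; all of $t_n, t_m, \varsigma^\pm(\cdot,\cdot)$ commute with one another, and column swaps in these $2\times 2$ determinants carry the ordinary sign. So the obstruction at $D_0$ is not about ordering of products.

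The actual gap is that your sketched plan to show $D_0=0$ is circular. Writing $t(n,-1)=t(n,1)=\tfrac{1}{2}\big(\varsigma^+(n,0)+\varsigma^-(n,0)\big)$ and applying Lemma~\ref{lemma:detgymnastics3} at $k=0$ just reproduces a scalar multiple of $D_0$ on both sides; it does not force $D_0$ to vanish. The paper avoids isolating $D_0$ entirely: rather than telescoping down to a bare $t$-only determinant, it keeps the full $L_0$ intact as the terminus of the recursion and converts the combination $-\begin{vmatrix} e\cdot h(n,0) & t(n,1)\\ e\cdot h(m,0) & t(m,1)\end{vmatrix}+L_0$ into the $l=k$ summand by appealing to the $k=0$ statement, proved first as a base case. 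That base-case input is precisely what your argument is missing: you need an independent evaluation of the boundary contribution, not an attempt to show the $t$-only determinant vanishes from the same two lemmas you have already exhausted.
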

\begin{proof}
First, when $k=0$, the statement should be read as
\begin{align*}
\begin{vmatrix}
h(n, 0) \cdot e & t(n, 1) \\
h(m, 0) \cdot e & t(m, 1)
\end{vmatrix} 
=
\frac{1}{2}
\begin{vmatrix}
e \cdot h(n, k) & s^2 \varsigma^+ (n, 0) + s^{-2} \varsigma^- (n, 0)\\
e \cdot h(m, k) & s^2 \varsigma^+ (m, 0) + s^{-2} \varsigma^- (m, 0)
\end{vmatrix}
\end{align*}
which follows straightforwardly by applying Equation \ref{eq:skewcommutator2} to the entries $t(n,1)$ and $t(m,1)$ before applying Lemma \ref{lemma:detgymnastics3}. In general, use Lemma \ref{lemma:skewcommutatordecomp} to write 
\begin{align*}
\begin{vmatrix}
h(n, k) \cdot e & t(n, k+1) \\
h(m, k) \cdot e & t(m, k+1) \\
\end{vmatrix}
&=
\begin{vmatrix}
h(n, k) \cdot e & \varsigma^+ (n, k) \\
h(m, k) \cdot e & \varsigma^+ (m, k)
\end{vmatrix}
+
\begin{vmatrix}
h(n, k) \cdot e & \varsigma^- (n, k) \\
h(m, k) \cdot e & \varsigma^- (m, k) 
\end{vmatrix} \\
& \quad +
\begin{vmatrix}
h(n, k) \cdot e & - t(n, k-1)\\
h(m, k) \cdot e & - t(m, k-1)
\end{vmatrix}.
\end{align*}
On the first two summands, apply Lemma \ref{lemma:detgymnastics3} to switch the order of the action in the first columns at the price of the specified scalar. For the third summand, observe
\begin{align*}
\begin{vmatrix}
h(n, k) \cdot e & - t(n, k-1) \\
h(m, k) \cdot e & - t(m, k-1) 
\end{vmatrix}
&=
\begin{vmatrix}
t(n, k-1) & h(n, k) \cdot e \\
t(m, k-1) & h(m, k) \cdot e
\end{vmatrix} \\
&= 
\begin{vmatrix}
t(n, k-1) & t(n, k) \\
t(m, k-1) & t(m, k)
\end{vmatrix} 
+
\begin{vmatrix}
t(n, k-1) & e \cdot h(n, k) \\
t(m, k-1) & e \cdot h(m, k)
\end{vmatrix} \\
&=
\begin{vmatrix}
h(n, k-1) \cdot e & t(n, k) \\
h(m, k-1) \cdot e & t(m, k)
\end{vmatrix} 
- 
\begin{vmatrix}
e \cdot h(n, k-1) & t(n, k) \\
e \cdot h(m, k-1) & t(m, k)
\end{vmatrix} \\
& \quad +
\begin{vmatrix}
e \cdot h(n, k) & - t(n, k-1) \\
e \cdot h(m, k) & - t(m, k-1)
\end{vmatrix}.
\end{align*}
Thus, for all $k \geq 0$, we have
\begin{equation}\label{eq:detgymnastics5}
\begin{split}
\begin{vmatrix}
h(n, k) \cdot e & t(n, k+1) \\
h(m, k) \cdot e & t(m, k+1)
\end{vmatrix}
&=
\begin{vmatrix}
e \cdot h(n, k) & s^2 \varsigma^+ (n, k) + s^{-2} \varsigma^- (n, k) - t(n, k-1) \\
e \cdot h(m, k) & s^2 \varsigma^+ (m, k) + s^{-2} \varsigma^- (m, k) - t(m, k-1)
\end{vmatrix} \\
& \quad -
\begin{vmatrix}
e \cdot h(n, k-1) & t(n, k) \\
e \cdot h(m, k-1) & t(m, k)
\end{vmatrix} 
+
\begin{vmatrix}
h(n, k-1) \cdot e & t(n, k) \\
h(m, k-1) \cdot e & t(m, k)
\end{vmatrix}.
\end{split}
\end{equation}
Next, recursively apply Equation \ref{eq:detgymnastics5} to its own trailing term $k$ times so that the right-hand side becomes
\begin{align*}
&\sum_{l=0}^{k-1} \left(
\begin{vmatrix}
e \cdot h(n, k-l) & s^2 \varsigma^+ (n, k-l) + s^{-2} \varsigma^- (n, k-l) - t(n, k-1-l) \\
e \cdot h(m, k-l) & s^2 \varsigma^+ (m, k-l) + s^{-2} \varsigma^- (m, k-l) - t(m, k-1-l)
\end{vmatrix} \right. \\
& \left. \qquad -
\begin{vmatrix}
e \cdot h(n, k-1-l) & t(n, k-l) \\
e \cdot h(m, k-1-l) & t(m, k-l)
\end{vmatrix} \right)
+
\begin{vmatrix}
h(n, 0) \cdot e & t(n, 1) \\
h(m, 0) \cdot e & t(m, 1)
\end{vmatrix}.
\end{align*}
Through some careful reindexing work, we rewrite this as
\begin{align*}
&\begin{vmatrix}
e \cdot h(n, k) & s^2 \varsigma^+ (n, k) + s^{-2} \varsigma^- (n, k) - t(n, k-1) \\
e \cdot h(m, k) & s^2 \varsigma^+ (m, k) + s^{-2} \varsigma^- (m, k) - t(m, k-1)
\end{vmatrix}\\
&+
\sum_{l=1}^{k-1}
\begin{vmatrix}
e \cdot h(n, k-l) & s^2 \varsigma^+ (n, k-l) + s^{-2} \varsigma^- (n, k-l) - t(n, k-1-l) - t(n, k+1-l)\\
e \cdot h(m, k-l) & s^2 \varsigma^+ (m, k-l) + s^{-2} \varsigma^- (m, k-l) - t(m, k-1-l) - t(m, k+1-l)
\end{vmatrix} \\
&-
\begin{vmatrix}
e \cdot h(n, 0) & t(n, 1) \\
e \cdot h(m, 0) & t(m, 1) \\
\end{vmatrix}
+
\begin{vmatrix}
h(n, 0) \cdot e & t(n, 1) \\
h(m, 0) \cdot e & t(m, 1)
\end{vmatrix}.
\end{align*}
Apply Lemma \ref{lemma:skewcommutatordecomp} to the terms in the entries in the second column of each matrix in the big sum. Finally, observe the following applications of Lemmas \ref{lemma:detgymnastics3} and  \ref{lemma:skewcommutatordecomp}:
\begin{align*}
&-
\begin{vmatrix}
e \cdot h(n, 0) & t(n, 1) \\
e \cdot h(m, 0) & t(m, 1) \\
\end{vmatrix}
+
\begin{vmatrix}
h(n, 0) \cdot e & t(n, 1) \\
h(m, 0) \cdot e & t(m, 1)
\end{vmatrix} \\
=& 
-
\begin{vmatrix}
e \cdot h(n, 0) & t(n, 1) \\
e \cdot h(m, 0) & t(m, 1) \\
\end{vmatrix}
+
\begin{vmatrix}
e \cdot h(n, 0) & s^2 \varsigma^+ (n, 0) + s^{-2} \varsigma^- (n, 0) - t(n, -1) \\
e \cdot h(m, 0) & s^2 \varsigma^+ (m, 0) + s^{-2} \varsigma^- (m, 0) - t(m, -1)
\end{vmatrix} \\
=& 
\begin{vmatrix}
e \cdot h(n, 0) & s^2 \varsigma^+ (n, 0) + s^{-2} \varsigma^- (n, 0) - 2 t(n, 1) \\
e \cdot h(m, 0) & s^2 \varsigma^+ (m, 0) + s^{-2} \varsigma^- (m, 0) - 2 t(m, 1)
\end{vmatrix} \\
=& 
\begin{vmatrix}
e \cdot h(n, 0) & (s^2 - 1) \varsigma^+ (n, 0) + (s^{-2} - 1) \varsigma^- (n, 0) \\
e \cdot h(m, 0) & (s^2 - 1) \varsigma^+ (m, 0) + (s^{-2} - 1) \varsigma^- (m, 0)
\end{vmatrix} 
\end{align*}
where we use $t(n, 1) = t(n, -1)$. This completes the proof. 
\end{proof}

\begin{remark} \label{rmk:detgymnastics6}
Using a parallel technique as in the proof of Lemma \ref{lemma:detgymnastics4}, one can show that
\begin{align*}
& \quad \,
\begin{vmatrix}
e \cdot h(n, k) & t(n, k+1) \\
e \cdot h(m, k) & t(m, k+1)
\end{vmatrix} \\
&=
\begin{vmatrix}
h(n, k) \cdot e & s^{-2} \varsigma^+ (n, k) + s^{2} \varsigma^- (n, k) - t(n, k-1) \\
h(m, k) \cdot e & s^{-2} \varsigma^+ (m, k) + s^{2} \varsigma^- (m, k) - t(m, k-1)
\end{vmatrix}\\
& \quad +
\sum_{l=1}^{k}
\begin{vmatrix}
h(n, k-l) \cdot e & (s^{-2} - 1) \varsigma^+ (n, k-l) + (s^{2} - 1) \varsigma^- (n, k-l) \\
h(m, k-l) \cdot e & (s^{-2} - 1) \varsigma^+ (m, k-l) + (s^{2} - 1) \varsigma^- (m, k-l)
\end{vmatrix}.
\end{align*}
\end{remark}

\begin{corollary}
\begin{align*}
\begin{vmatrix}
t(n, k) & t(n, k+1) \\
t(m, k) & t(m, k+1)
\end{vmatrix}
& =
\sum_{l=0}^{k}
\begin{vmatrix}
e \cdot h(n, k-l) & (s^2 - 1) \varsigma^+ (n, k-l) + (s^{-2} - 1) \varsigma^- (n, k-l) \\
e \cdot h(m, k-l) & (s^2 - 1) \varsigma^+ (m, k-l) + (s^{-2} - 1) \varsigma^- (m, k-l)
\end{vmatrix} \\
& = \sum_{l=0}^{k}
\begin{vmatrix}
h(n, k-l) \cdot e & (1 - s^{-2}) \varsigma^+ (n, k-l) + (1 - s^{2}) \varsigma^- (n, k-l) \\
h(m, k-l) \cdot e & (1 - s^{-2}) \varsigma^+ (m, k-l) + (1 - s^{2}) \varsigma^- (m, k-l)
\end{vmatrix}.
\end{align*}
\end{corollary}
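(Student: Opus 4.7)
The plan is to split the determinant on the left by linearity in its first column, using $t(n,k)=h(n,k)\cdot e-e\cdot h(n,k)$. This writes the LHS as $X-Y$, where
\[
X=\begin{vmatrix} h(n,k)\cdot e & t(n,k+1) \\ h(m,k)\cdot e & t(m,k+1) \end{vmatrix},\qquad
Y=\begin{vmatrix} e\cdot h(n,k) & t(n,k+1) \\ e\cdot h(m,k) & t(m,k+1) \end{vmatrix}.
\]
Lemma \ref{lemma:detgymnastics4} applies verbatim to $X$, giving $X=A+\sum_{l=1}^{k}B_l$, where $A$ is the ``leading'' determinant from that lemma and $B_l$ is exactly the $l$-th summand appearing in the first RHS of the corollary. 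So the first equality will follow provided $A-Y=B_0$.

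To verify $A-Y=B_0$, observe that $A$ and $Y$ share the first column $(e\cdot h(n,k),e\cdot h(m,k))^{T}$, so by linearity their difference is the determinant of that column against the second-column difference
\[
\bigl(s^{2}\varsigma^{+}(n,k)+s^{-2}\varsigma^{-}(n,k)-t(n,k-1)\bigr)-t(n,k+1),
\]
and similarly for the $m$-row. The identity $\varsigma^{+}(n,k)+\varsigma^{-}(n,k)=t(n,k+1)+t(n,k-1)$ from Lemma \ref{lemma:skewcommutatordecomp} (equivalently \eqref{eq:skewcommutator4}) collapses this to $(s^{2}-1)\varsigma^{+}(n,k)+(s^{-2}-1)\varsigma^{-}(n,k)$, which is precisely the second column of $B_0$. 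This settles $X-Y=\sum_{l=0}^{k}B_l$, the first equality.

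The second equality reduces to an equality summand-by-summand. I split each $B_l$ as
\[
(s^{2}-1)\begin{vmatrix} e\cdot h(n,k-l) & \varsigma^{+}(n,k-l)\\ e\cdot h(m,k-l) & \varsigma^{+}(m,k-l)\end{vmatrix}+(s^{-2}-1)\begin{vmatrix} e\cdot h(n,k-l) & \varsigma^{-}(n,k-l)\\ e\cdot h(m,k-l) & \varsigma^{-}(m,k-l)\end{vmatrix}
\]
and apply Lemma \ref{lemma:detgymnastics3}: the $\varsigma^{+}$-determinant acquires a factor $s^{-2}$ and the $\varsigma^{-}$-determinant a factor $s^{2}$ upon replacing $e\cdot h$ by $h\cdot e$ in the first column. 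This turns the coefficients $(s^{2}-1)$ and $(s^{-2}-1)$ into $(1-s^{-2})$ and $(1-s^{2})$ respectively, and reassembling the second column by linearity yields the $l$-th summand of the second RHS.

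The main obstacle is purely bookkeeping: tracking powers of $s$ and signs through Lemmas \ref{lemma:detgymnastics3} and \ref{lemma:detgymnastics4} and making sure the index-$k-l$ shift is uniform across all terms. No new skein-theoretic input is required beyond Lemmas \ref{lemma:detgymnastics3}, \ref{lemma:skewcommutatordecomp}, and \ref{lemma:detgymnastics4}; the entire argument is a few lines of determinant algebra over $\ca_\cd$.
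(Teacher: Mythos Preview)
Your argument is correct. For the first equality you follow the paper's proof essentially verbatim: split the first column as $t(n,k)=h(n,k)\cdot e - e\cdot h(n,k)$, apply Lemma~\ref{lemma:detgymnastics4} to the $h\cdot e$ piece, and then collapse the leading term against the $e\cdot h$ piece using the identity $t(n,k+1)+t(n,k-1)=\varsigma^{+}(n,k)+\varsigma^{-}(n,k)$ from Lemma~\ref{lemma:skewcommutatordecomp}.

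For the second equality you diverge slightly from the paper. The paper appeals to Remark~\ref{rmk:detgymnastics6}, effectively rerunning the whole first computation with the roles of $e\cdot h$ and $h\cdot e$ swapped. You instead deduce the second sum directly from the first, termwise, by splitting each $B_l$ along the second column and applying Lemma~\ref{lemma:detgymnastics3} to convert $e\cdot h$ to $h\cdot e$ at the cost of $s^{\mp 2}$; the coefficient bookkeeping $(s^{2}-1)s^{-2}=1-s^{-2}$ and $(s^{-2}-1)s^{2}=1-s^{2}$ is exactly right. Your route is a bit more economical since it avoids invoking the parallel lemma and makes clear that the two expressions are related term-by-term rather than merely globally equal.
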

\begin{proof}
We prove the first equality. The second follows by using Remark \ref{rmk:detgymnastics6}.
\begin{align*}
&\quad \begin{vmatrix}
t(n, k) & t(n, k+1) \\
t(m, k) & t(m, k+1)
\end{vmatrix} \\
&= - 
\begin{vmatrix}
e \cdot h(n, k) & t(n, k+1) \\
e \cdot h(m, k) & t(m, k+1)
\end{vmatrix} 
+
\begin{vmatrix}
h(n, k) \cdot e & t(n, k+1) \\
h(m, k) \cdot e & t(m, k+1)
\end{vmatrix}
\\
&= - 
\begin{vmatrix}
e \cdot h(n, k) & \varsigma^+ (n, k) + \varsigma^- (n, k) - t(n, k-1) \\
e \cdot h(m, k) & \varsigma^+ (m, k) + \varsigma^- (m, k) - t(m, k-1)
\end{vmatrix} \\
&\quad +
\begin{vmatrix}
e \cdot h(n, k) & s^2 \varsigma^+ (n, k) + s^{-2} \varsigma^- (n, k) - t(n, k-1) \\
e \cdot h(m, k) & s^2 \varsigma^+ (m, k) + s^{-2} \varsigma^- (m, k) - t(m, k-1)
\end{vmatrix}\\
&\quad +
\sum_{l=1}^{k}
\begin{vmatrix}
e \cdot h(n, k-l) & (s^2 - 1) \varsigma^+ (n, k-l) + (s^{-2} - 1) \varsigma^- (n, k-l) \\
e \cdot h(m, k-l) & (s^2 - 1) \varsigma^+ (m, k-l) + (s^{-2} - 1) \varsigma^- (m, k-l)
\end{vmatrix} \\
= & \sum_{l=0}^{k}
\begin{vmatrix}
e \cdot h(n, k-l) & (s^2 - 1) \varsigma^+ (n, k-l) + (s^{-2} - 1) \varsigma^- (n, k-l) \\
e \cdot h(m, k-l) & (s^2 - 1) \varsigma^+ (m, k-l) + (s^{-2} - 1) \varsigma^- (m, k-l)
\end{vmatrix} 
\end{align*}
\end{proof}

Let $\textrm{cl}_\ca : \ca \to \cc$ be the wiring which connects the two boundary points of $\ca$ by an arc above the annulus. 
\begin{figure}[H]
\centering
$\pic[7]{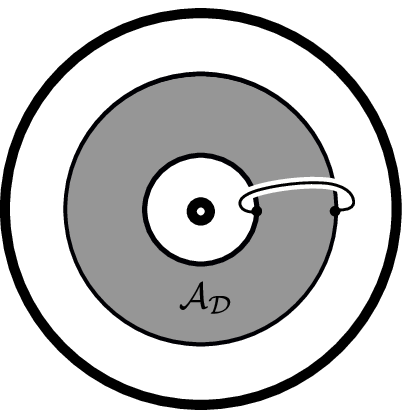}$
\caption{The wiring diagram which defines $\textrm{cl}_\ca : \ca \to \cc$.}
\end{figure}

This map is not unital nor multiplicative, but it is a right $\cc$-module homomorphism so that $\cl_\ca(e \cdot x) = \delta_\cd x$. Furthermore, the closure $\cl_\ca(x \cdot e)$ is the element $\phi(x)$ which is the meridian map $\phi$ applied to $x$. The following identity is a generalization of Lemma 8.3 in Lukac.

\begin{lemma} \label{lemma:tclosure}
Let 
\[
\alpha_n := \{ n \} \left( v^{-1} s^{n - 1} - v s^{1 - n} \right).
\] 
Then, 
\[
\textrm{cl}_\ca \left( \frac{1}{2} t(n,0) \right) = \textrm{cl}_\ca (t_n) = \alpha_n \tilde{h}_n
\]
and hence 
\[
\textrm{cl}_\ca \big( t(n, k) \big) = \alpha_{n+k} \tilde{h}_{n+k} + \alpha_{n-k} \tilde{h}_{n-k}.
\]
\end{lemma}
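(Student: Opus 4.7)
The plan is to reduce the first identity to the meridian eigenvalue formula of Theorem \ref{thm:ann_basis}(2) applied to the single-row partition $(n)$, and then to deduce the general case by linearity. There is no serious obstacle here: everything is forced once one recognizes that $\cl_\ca$ collapses the left--right asymmetry of the action on $\ca$ into the single eigenvalue scalar $c_{(n)} - \delta_\cd$.

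First I would use the two properties of $\cl_\ca$ recalled immediately before the lemma, namely $\cl_\ca(e \cdot x) = \delta_\cd x$ and $\cl_\ca(x \cdot e) = \phi(x)$, to compute
\[
\cl_\ca(t_n) \;=\; \cl_\ca(\tilde{h}_n \cdot e) - \cl_\ca(e \cdot \tilde{h}_n) \;=\; \phi(\tilde{h}_n) - \delta_\cd \, \tilde{h}_n.
\]
Since $\tilde{h}_n = \widetilde{Q}_{(n)}$ by definition, Theorem \ref{thm:ann_basis}(2) yields $\phi(\tilde{h}_n) = c_{(n)} \tilde{h}_n$, and therefore $\cl_\ca(t_n) = (c_{(n)} - \delta_\cd)\tilde{h}_n$. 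Thus it suffices to verify the scalar identity $c_{(n)} - \delta_\cd = \alpha_n$.

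Second I would evaluate $c_{(n)} - \delta_\cd$ explicitly. Because the content values $\cn(\square)$ for $\square \in (n)$ are exactly $0, 1, \ldots, n-1$, the geometric series
\[
\sum_{\square \in (n)} s^{2\cn(\square)} \;=\; \frac{s^{2n-1} - s^{-1}}{s - s^{-1}}, \qquad \sum_{\square \in (n)} s^{-2\cn(\square)} \;=\; \frac{s - s^{1 - 2n}}{s - s^{-1}}
\]
combine with the leading factor $(s - s^{-1})$ in the definition of $c_\lambda$ from Theorem \ref{thm:ann_basis}(2) to give
\[
c_{(n)} - \delta_\cd \;=\; v^{-1}(s^{2n-1} - s^{-1}) - v(s - s^{1 - 2n}).
\]
A direct factorization of the right-hand side as $(s^n - s^{-n})(v^{-1}s^{n-1} - v s^{1-n}) = \{n\}(v^{-1}s^{n-1} - v s^{1-n}) = \alpha_n$ completes the first equation (and also covers the $\tfrac{1}{2}t(n,0)$ version, since $t(n,0) = 2t_n$).

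Finally, the second equation is immediate: by $\cl_\ca$-linearity and the definition $t(n,k) = t_{n+k} + t_{n-k}$, one obtains $\cl_\ca\bigl(t(n,k)\bigr) = \cl_\ca(t_{n+k}) + \cl_\ca(t_{n-k}) = \alpha_{n+k}\tilde{h}_{n+k} + \alpha_{n-k}\tilde{h}_{n-k}$. The only ``hard'' part is the elementary geometric-series calculation in the second step; everything else is a direct appeal to the eigenvalue result of Lu--Zhong and the defining identities of $\cl_\ca$.
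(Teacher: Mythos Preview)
Your proof is correct and takes a genuinely different route from the paper's argument. The paper invokes the ``fundamental relation'' \eqref{eq:annfund}, writing $t_n = \{n\}\bigl(a^{-1}W^*_{n-1} - aW_{n-1}\bigr)$, and then computes $\cl_\ca(aW_{n-1})$ and $\cl_\ca(a^{-1}W^*_{n-1})$ directly from the diagrammatics: removing the extra loop via the framing relation picks up a factor of $v^{\pm 1}$, and the remaining crossings are absorbed by the symmetrizer $\tilde{y}_n$ at a cost of $s^{\pm(n-1)}$, yielding $\cl_\ca(aW_{n-1}) = s^{1-n}v\,\tilde{h}_n$ and $\cl_\ca(a^{-1}W^*_{n-1}) = s^{n-1}v^{-1}\tilde{h}_n$.

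Your argument bypasses the $W_n$ machinery entirely by recognizing that $\cl_\ca(\tilde{h}_n \cdot e) = \phi(\tilde{h}_n)$ is exactly the meridian map applied to a symmetrizer closure, so that the scalar $\alpha_n$ must coincide with $c_{(n)} - \delta_\cd$ from Theorem~\ref{thm:ann_basis}(2). This is cleaner and more conceptual: it makes transparent that Lemma~\ref{lemma:tclosure} is really just the single-row case of the Lu--Zhong eigenvalue formula in disguise. The paper's approach, by contrast, rederives that eigenvalue from the explicit recursive structure of the symmetrizers, which is more self-contained but also more laborious. There is no circularity in your route, since Theorem~\ref{thm:ann_basis} is imported from \cite{LZ02} and does not depend on the present lemma.
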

\begin{proof}
Recall Equation \eqref{eq:annfund}, which we restate here using different notation:
\[
t_n = \{ n \} a^{-1}W^*_{n - 1} - \{ n \} aW_{n - 1}.
\]
Apply the closure to both sides. Apply the framing relation to the diagrams on the right to pick up the framing parameter terms. Use the property that the BMW idempotents absorb crossings to a factor of $s$ for positive crossings and $s^{-1}$ for negative crossings. This completes the proof. 
\end{proof}

\begin{lemma} \label{lemma:skewclosures}
Define the constants
\begin{align*}
\omega_{n}^+ &:= - \{n\} s^{1-n} v, \\
\upsilon_{n}^+ &:= s^{n} [n+1] (\bar{\beta}_{n+1} - \beta_{n+1} ) \big( \delta_\cd + s^{n-1} [n] ( s v^{-1} + \beta_{n} ) \big), \\
\omega_{n}^- &:= \{n\} s^{n-1} v^{-1}, \\
\upsilon_{n}^- &:= -s^{-n} [n+1] ( \bar{\beta}_{n+1} - \beta_{n+1} ) \big( \delta_\cd + s^{n-1} [n] ( s v^{-1} + \beta_{n} ) \big).
\end{align*}
Then the following identities hold in $\cc$ for all $n \geq 0$ 
\begin{align*}
\textrm{cl}_\ca \left( \frac{1}{2} \varsigma^+(n,0) \right) &= \quad \textrm{cl}_\ca \big( s^{-1} \tilde{h}_n \cdot a - s a \cdot \tilde{h}_n \big) \quad = \omega_{n+1}^+ \tilde{h}_{n+1} + \upsilon_{n-1}^+ \tilde{h}_{n-1}, \\
\textrm{cl}_\ca \left( \frac{1}{2} \varsigma^-(n,0) \right) &=\textrm{cl}_\ca \big( s \tilde{h}_n \cdot a^{-1} - s^{-1} a^{-1} \cdot \tilde{h}_n \big) = \omega_{n+1}^- \tilde{h}_{n+1} + \upsilon_{n-1}^- \tilde{h}_{n-1}
\end{align*}
which implies that
\begin{align*}
\textrm{cl}_\ca \big( \varsigma^+ (n, k) \big) &= \omega_{n+k+1}^+ \tilde{h}_{n+k+1} + \upsilon_{n+k-1}^+ \tilde{h}_{n+k-1} + \omega_{n-k+1}^+ \tilde{h}_{n-k+1} + \upsilon_{n-k-1}^+ \tilde{h}_{n-k-1}, \\
\textrm{cl}_\ca \big( \varsigma^- (n, k) \big) &= \omega_{n+k+1}^- \tilde{h}_{n+k+1} + \upsilon_{n+k-1}^- \tilde{h}_{n+k-1} + \omega_{n-k+1}^- \tilde{h}_{n-k+1} + \upsilon_{n-k-1}^- \tilde{h}_{n-k-1}.
\end{align*}
\end{lemma}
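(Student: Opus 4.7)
First I would reduce the general case $k \geq 0$ to the case $k = 0$. Since $h(n,k) = \tilde{h}_{n+k} + \tilde{h}_{n-k}$ and the operations $\tilde{h} \mapsto \tilde{h}\cdot a^{\pm 1}$, $\tilde{h} \mapsto a^{\pm 1}\cdot\tilde{h}$ are $R$-linear, the skew-commutator splits as $\varsigma^{\pm}(n,k) = \tfrac{1}{2}\varsigma^{\pm}(n+k, 0) + \tfrac{1}{2}\varsigma^{\pm}(n-k, 0)$, so applying $\cl_\ca$ and the claimed $k=0$ identity to each summand immediately yields the four-term expression stated in the lemma. This leaves the two $k = 0$ identities, which I will prove by analogous arguments: the $\varsigma^+$ identity uses the recurrences \eqref{eq:r1} and \eqref{eq:r3}, while the $\varsigma^-$ identity uses their mirror partners \eqref{eq:r2} and \eqref{eq:r4}.

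For the $k = 0$ case of $\varsigma^+$, substitute \eqref{eq:r3} into $\tilde{h}_n \cdot e$ and \eqref{eq:r1} into $e \cdot \tilde{h}_n$, then multiply by $a$ on the appropriate side (using commutativity of $\ca_\cd$) to obtain expressions for $\tilde{h}_n \cdot a$ and $a \cdot \tilde{h}_n$. Forming $\tfrac{1}{2}\varsigma^+(n,0) = s^{-1}\tilde{h}_n \cdot a - s a \cdot \tilde{h}_n$, the $a^2 W_{n-1}$ contributions cancel (their coefficients $-s^{-1}[n]s$ and $-s[n]s^{-1}$ agree), $W_n\, a = a\, W_n$ collapses the $W_n$-terms into $-\{n+1\} a W_n$, and the $a^{-1} W^*_{n-1}$ contributions combine into $[n](\beta_n - \bar{\beta}_n) W^*_{n-1}$, giving
\[
\tfrac{1}{2}\varsigma^+(n,0) \;=\; -\{n+1\}\, aW_n \;+\; [n](\beta_n - \bar{\beta}_n)\, W^*_{n-1}.
\]
The problem reduces to evaluating $\cl_\ca(aW_n)$ and $\cl_\ca(W^*_{n-1})$.

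To compute $\cl_\ca(aW_n)$ I would unwind the wiring: $W_n = W(\tilde{y}_{n+1})$ presents $\tilde{y}_{n+1}$ with $n$ strands closed in the annulus and one boundary strand, so multiplying by $a$ adds a single wrap to the boundary strand. Absorbing the $n$ resulting crossings via $\sigma_i \tilde{y}_{n+1} = s\, \tilde{y}_{n+1}$ together with the Dubrovnik framing factor $v$ contributed by the closure arc yields $\cl_\ca(aW_n) = s^{-n} v\, \tilde{h}_{n+1}$, so $-\{n+1\}\cl_\ca(aW_n) = \omega^+_{n+1}\tilde{h}_{n+1}$. For $\cl_\ca(W^*_{n-1})$ I would iterate \eqref{eq:r2} and \eqref{eq:r4} once: the direct contribution $\cl_\ca(e\cdot\tilde{h}_{n-1}) = \delta_\cd\tilde{h}_{n-1}$ (from the right $\cc$-module property) together with elimination of the auxiliary $\cl_\ca(aW_{n-2})$ and $\cl_\ca(a^{-1}W^*_{n-2})$ via the fundamental relation \eqref{eq:f} (closed by Lemma \ref{lemma:tclosure}) and sum/difference recombinations of \eqref{eq:r1}--\eqref{eq:r4}, produce $\cl_\ca(W^*_{n-1}) = -s^{n-1}\bigl(\delta_\cd + s^{n-2}[n-1](sv^{-1}+\beta_{n-1})\bigr)\tilde{h}_{n-1}$; multiplying by $[n](\beta_n - \bar{\beta}_n)$ yields $\upsilon^+_{n-1}\tilde{h}_{n-1}$. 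The $\varsigma^-$ identity follows by the same template with the substitutions $W_n \leftrightarrow W^*_n$, $a \leftrightarrow a^{-1}$, $\beta_n \leftrightarrow \bar{\beta}_n$, producing $\omega^-_{n+1}\tilde{h}_{n+1} + \upsilon^-_{n-1}\tilde{h}_{n-1}$.

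The principal obstacle will be the careful bookkeeping of framing exponents of $v$ and absorption exponents of $s$ in the closures of $aW_n$ and $W^*_{n-1}$, and confirming that no extraneous $\tilde{h}_k$ contributions with $k \notin \{n-1, n+1\}$ survive in the final combination. A valuable consistency check is provided by Equation \eqref{eq:skewcommutator2}: applying $\cl_\ca$ to $t(n,1) = \tfrac{1}{2}(\varsigma^+(n,0)+\varsigma^-(n,0))$ and invoking Lemma \ref{lemma:tclosure} forces the scalar identities $\omega^+_{n+1} + \omega^-_{n+1} = \alpha_{n+1}$ and $\upsilon^+_{n-1} + \upsilon^-_{n-1} = \alpha_{n-1}$, both of which reduce to elementary algebraic identities in $s, v, \beta_n,$ and $\bar{\beta}_n$ that validate the stated constants.
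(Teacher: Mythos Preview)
Your proposal is essentially the paper's argument. Both reduce to $k=0$ by linearity, both use the pair \eqref{eq:r1}/\eqref{eq:r3} to rewrite $\tfrac{1}{2}\varsigma^+(n,0)$ as $-\{n+1\}\,aW_n + [n](\beta_n-\bar\beta_n)W^*_{n-1}$, and both finish by evaluating $\cl_\ca(aW_n)$ and $\cl_\ca(W^*_{n-1})$ (the $\varsigma^-$ case being the mirror).

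The only substantive difference is in how the closures $\cl_\ca(W_n)$ and $\cl_\ca(W^*_n)$ are obtained. The paper simply imports these from Lemma~21 of \cite{She16}, whereas you propose to derive $\cl_\ca(W^*_{n-1})$ internally via \eqref{eq:r2}/\eqref{eq:r4}. That route does work, but your description of it is more convoluted than necessary: once you have $\cl_\ca(aW_m)=s^{-m}v\,\tilde h_{m+1}$ and the analogous $\cl_\ca(a^{-1}W^*_m)=s^{m}v^{-1}\tilde h_{m+1}$ from the diagrammatic absorption argument, applying $\cl_\ca$ directly to \eqref{eq:r4} (with index $n-1$) gives $\cl_\ca(W^*_{n-1})$ immediately as a scalar times $\tilde h_{n-1}$. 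There is no need to bring in the fundamental relation \eqref{eq:f} or Lemma~\ref{lemma:tclosure} at this step; those give $\cl_\ca(t_n)$, which is not the quantity in play. Streamlining this would tighten your argument and make the ``principal obstacle'' you flag essentially disappear.
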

\begin{proof}
Apply Equations \eqref{eq:r3} and \eqref{eq:r1} :
\begin{align*}
& \quad \, ( s^{-1} \tilde{h}_n \cdot e - s e \cdot \tilde{h}_n ) a \\
&=  \Big( s^{-1} \big( [n+1] W_n - [n] s a W_{n-1} - [n] s \bar{\beta}_n a^{-1} W^*_{n-1} \big) \\
&\quad \, - s \big( [n+1] W_n - [n] s^{-1} a W_{n-1} - [n] s^{-1} \beta_n a^{-1} W^*_{n-1} \big) \Big) a \\
&= - \{n+1\} a W_n - ( \bar{\beta}_n - \beta_n ) [n] W^*_{n-1}.
\end{align*}
The identities below hold in $\cc$. Use Lemma 21 from \cite{She16} to prove the second two.
\begin{align*}
\textrm{cl}_\ca ( a W_n ) &= s^{-n} v \tilde{h}_{n+1} \\
\textrm{cl}_\ca ( a^{-1} W^*_n ) &= s^n v^{-1} \tilde{h}_{n+1} \\
\textrm{cl}_\ca ( W_n ) &= -s^{-n} \big( \delta_\cd + s^{n-1} [n] ( s v^{-1} + \beta_n ) \big) \tilde{h}_n \\
&= [n+1]^{-1} \big( \delta_\cd + [n]s^{-1} ( s^{-(n-1)} v - \beta_n s^{n-1} v^{-1} ) \big) \tilde{h}_n \\
\textrm{cl}_\ca ( W^*_n ) &= s^n \big( \delta_\cd + s^{n-1} [n] ( s v^{-1} + \beta_n ) \big) \tilde{h}_n
\end{align*}
Then applying $\textrm{cl}_\ca$ to the above gives
\begin{align*} 
& \textrm{cl}_\ca \big( s^{-1} \tilde{h}_n \cdot a - s a \cdot \tilde{h}_n \big) \\
=& - \{n+1\} \textrm{cl}_\ca ( a W_n ) - ( \bar{\beta}_n - \beta_n ) [n] \textrm{cl}_\ca ( W^*_{n-1} ) \\
= & - \{n+1\} s^{-n} v \tilde{h}_{n+1} - ( \bar{\beta}_n - \beta_n ) [n] s^{n-1} \big( \delta_\cd + s^{n-2} [n-1] ( s v^{-1} \beta_{n-1} ) \big) \tilde{h}_{n-1} \\
= & \omega_{n+1}^+ \tilde{h}_{n+1} + \upsilon_{n-1}^+ \tilde{h}_{n-1}.
\end{align*}
The proof of the other statement is similar. 
\end{proof}

\begin{remark}
As a quick corollary of this lemma, apply $\textrm{cl}_\ca$ to Equation \eqref{eq:skewcommutator4} with $k=0$, so
\[
\textrm{cl}_\ca \big( 2 t(n, 1) \big) = \textrm{cl}_\ca \big( \varsigma^+ (n, 0) + \varsigma^- (n, 0) \big)
\]
and apply the previous two Lemmas and divide by $2$ to get
\[
\alpha_{n+1} \tilde{h}_{n+1} + \alpha_{n-1} \tilde{h}_{n-1} = (\omega^+_{n+1} + \omega^-_{n+1}) \tilde{h}_{n+1} + (\upsilon^+_{n-1} + \upsilon^-_{n-1}) \tilde{h}_{n-1}.
\]
Since the $\tilde{h}_n$ are linearly independent, it's true that 
\begin{equation}
\alpha_n = \omega_n^+ + \omega_n^- = \upsilon_n^+ + \upsilon_n^-
\end{equation}
for any $n$. 
\end{remark}

Consider the algebra homomorphism $\Lambda \to \cd(A)$ defined by the assignment $hb_i \mapsto \tilde{h}_i$. This map is well defined since $\Lambda$ is generated freely as a commutative ring by the set $\{hb_i\}_{i}$. Let $S_\lambda$ be the image of $sb_\lambda$ under this assignment. Now we are ready to state the main conjecture of this section.

\begin{conjecture} \label{conj:schureigenvalues}
Let $\phi: \cc \to \cc$ be the meridian map.  Then
\begin{equation}
\phi ( S_\lambda ) = c_\lambda S_\lambda
\end{equation}
where
\begin{equation}
c_\lambda = \delta_\cd + ( s - s^{-1} ) \Big( v^{-1} \sum_{x \in \lambda} s^{2 \textrm{cn}(x)} - v \sum_{x \in \lambda} s^{-2 \textrm{cn}(x)} \Big)
\end{equation}
and where $\textrm{cn}(x)$ is the content of the cell $x$ of the Young diagram of $\lambda$ in the $(i,j)^{\textrm{th}}$ position, defined by $\textrm{cn}(x) := j-i$.
\end{conjecture}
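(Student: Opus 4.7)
The plan is to proceed via the Jacobi--Trudi formula \eqref{eq:jacobitrudi} and reduce the computation of $\phi(S_\lambda)$ to the closure identities of Lemmas \ref{lemma:tclosure} and \ref{lemma:skewclosures}. Concretely, under the homomorphism $hb_i \mapsto \tilde h_i$, each entry $hb_\lambda(i,j)$ of the Jacobi--Trudi matrix becomes $h(\lambda_i - i, j) - h(\lambda_i - i - 2, j)$, so $2 S_\lambda$ is an $r \times r$ determinant whose entries are built from the symbols $h(n,k)$. Since $\phi(x) = \textrm{cl}_\ca(x \cdot e)$ and $\textrm{cl}_\ca$ is right $\cc$-linear with $\textrm{cl}_\ca(e \cdot y) = \delta_\cd y$, applying $\phi$ and then using $h(n,k) \cdot e = e \cdot h(n,k) + t(n,k)$ on each column (one column at a time) lets us split $\phi(S_\lambda)$ into a sum of determinants, one of which recovers $\delta_\cd S_\lambda$ (the contribution from inserting $e$ in every column) and the others of which have one or more columns replaced by $t$-columns.

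The second step is to process these mixed determinants using the machinery developed earlier. Lemma \ref{lemma:detgymnastics4} together with its corollary expresses any $2 \times 2$ minor with a $t$-column as a telescoping sum of $2 \times 2$ minors whose non-trivial column is a closure of $\varsigma^+(n,k) + \varsigma^-(n,k)$, scaled by powers of $(s^2-1)$ and $(s^{-2}-1)$. By Lemma \ref{lemma:skewclosures}, $\textrm{cl}_\ca(\varsigma^\pm(n,k))$ is an explicit linear combination of four $\tilde h_m$'s with coefficients $\omega_m^\pm, \upsilon_m^\pm$, and these coefficients repack into shifted $h(n',k')$ entries. The resulting determinants can then be identified, row by row, with new Jacobi--Trudi determinants whose shape has a single box added or removed, i.e.\ with skew closures $S_\mu$ for partitions $\mu$ differing from $\lambda$ by one box.

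For length $\ell(\lambda) \leq 2$ the determinant has at most two rows, so only one telescoping step is needed. In the length $1$ case $\lambda = (n)$, the statement reduces to applying $\textrm{cl}_\ca$ to $h(n,0)\cdot e$ plus a correction involving $t(n,0)$, and Lemma \ref{lemma:tclosure} together with the identity $\alpha_n = \omega_n^+ + \omega_n^- = \upsilon_n^+ + \upsilon_n^-$ produces exactly $c_{(n)} \tilde h_n$ since $\sum_{x \in (n)} s^{2\cn(x)} = [n]s^{n-1}$, etc. In the length $2$ case, the single $2 \times 2$ determinant is handled by the corollary to Lemma \ref{lemma:detgymnastics4}, giving one $\delta_\cd S_\lambda$ term plus a $t$-column correction whose $\tilde h$-expansion, after the Jacobi--Trudi telescoping, evaluates to $(s-s^{-1})(v^{-1} \sum_{x \in \lambda} s^{2\cn(x)} - v \sum_{x \in \lambda} s^{-2\cn(x)}) S_\lambda$ with no surviving off-diagonal contributions.

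The main obstacle for general $\lambda$ is combinatorial: as the number of rows grows, the telescoping produces many mixed determinants with possibly several simultaneous $t$-columns, and one must show that all the Schur-type terms $S_\mu$ with $\mu \neq \lambda$ cancel. A cleaner route, once the length $\leq 2$ case is in hand, would be to exploit uniqueness: since the eigenvalues $c_\lambda$ in Theorem \ref{thm:ann_basis} are pairwise distinct and $\{\widetilde Q_\lambda\}$ is an eigenbasis of $\phi$, any element $x$ of $\cd(A)$ satisfying $\phi(x) = c_\lambda x$ must be a scalar multiple of $\widetilde Q_\lambda$. Thus it would suffice to show that $S_\lambda$ is an eigenvector of $\phi$ with eigenvalue $c_\lambda$, and to pin down the scalar by a leading-term computation. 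Proving the eigenvector property for all $\lambda$ appears to require an independent input, such as a Pieri-type rule for the $S_\lambda$ compatible with the $\phi$-action, and this is the step I expect to be the genuine obstruction beyond length two.
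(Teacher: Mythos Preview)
The statement you are attempting is labeled as a \emph{Conjecture} in the paper and is not proved there in full generality; the paper only establishes the case $\ell(\lambda)\le 2$ in the Proposition that follows. Your write-up is accordingly not a proof of the conjecture but rather an outline of the same strategy the paper uses for the length-$\le 2$ case, together with a correct diagnosis that the general case remains open.

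For $\ell(\lambda)\le 2$ your approach coincides with the paper's: compute $\phi(S_\lambda)-\delta_\cd S_\lambda = \cl_\ca(S_\lambda\cdot e - e\cdot S_\lambda)$, expand $S_\lambda$ via the Jacobi--Trudi determinant, use multilinearity to introduce $t$-columns, then apply the skew-commutator decomposition \eqref{eq:skewcommutator2} and the closure Lemmas \ref{lemma:tclosure} and \ref{lemma:skewclosures}, and finally verify by hand that each of the four resulting coefficients equals $c_\lambda-\delta_\cd$. One small discrepancy: in the $2\times 2$ case the paper does not invoke Lemma \ref{lemma:detgymnastics4} or its corollary at all; it splits the single $t$-column directly via \eqref{eq:skewcommutator2} and applies the more elementary Lemma \ref{lemma:detgymnastics3}. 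The heavier machinery you cite is set up precisely in anticipation of larger determinants, but is not actually deployed in the length-$2$ argument.

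Your final paragraph correctly identifies the genuine obstruction: for $\ell(\lambda)\ge 3$ the multilinear expansion produces determinants with several simultaneous $t$-columns, and one must control the resulting cross-terms. The paper does not resolve this, and neither does your sketch; the ``cleaner route'' you propose via eigenspace uniqueness is exactly the content of the Corollary immediately following the conjecture, but as you note it presupposes the eigenvector property, which is the conjecture itself. So your proposal and the paper are in the same position: a complete argument for $\ell(\lambda)\le 2$, and an acknowledged gap beyond.
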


Conjecture \ref{conj:schureigenvalues} states that $S_\lambda$ and $\widetilde{Q}_\lambda$ are in the same eigenspace with respect to $\phi$. By \cite{LZ02}, each eigenspace is $1$-dimensional, and so $S_\lambda$ is a scalar multiple of $\widetilde{Q}_\lambda$. What's left is to show that this scalar is $1$. Let's state this as a corollary.

\begin{corollary}
For any partition $\lambda$, we have 
\[
S_\lambda = \widetilde{Q}_\lambda.
\]
\end{corollary}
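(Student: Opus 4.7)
The plan is to exploit Conjecture \ref{conj:schureigenvalues} together with the $1$-dimensionality of the $\phi$-eigenspaces from Theorem \ref{thm:ann_basis} to reduce the identity $S_\lambda = \widetilde{Q}_\lambda$ to the determination of a single scalar $k_\lambda \in R$ satisfying $S_\lambda = k_\lambda \widetilde{Q}_\lambda$, and then to pin down $k_\lambda = 1$ by comparing leading coefficients in the algebraically independent generators $\tilde{h}_i$ of $\cd(A)$.

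For the first step, the conjecture places $S_\lambda$ in the $c_\lambda$-eigenspace of $\phi$, which by Theorem \ref{thm:ann_basis}(2) is one-dimensional and spanned by $\widetilde{Q}_\lambda$; hence $S_\lambda = k_\lambda \widetilde{Q}_\lambda$ for a unique $k_\lambda$. The single-row case $\lambda = (n)$ is immediate because both sides equal $\tilde{h}_n$ by definition, so $k_{(n)} = 1$, and we may focus on partitions of length at least $2$.

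To compute $k_\lambda$ in general, I would expand both sides in the monomials $\tilde{h}_\mu := \tilde{h}_{\mu_1}\cdots \tilde{h}_{\mu_r}$ indexed by partitions $\mu$; these span a subalgebra of $\cd(A)$ in which the $\tilde{h}_i$ may be taken to be algebraically independent (inherited from the polynomial generators $z_i$ discussed in Section \ref{sub:annulus}). On the symmetric function side, the Jacobi-Trudi formula \eqref{eq:jacobitrudi} together with $hb_i^\circ = hb_i - hb_{i-2}$ writes $sb_\lambda$ as a $\mathbb{Z}$-linear combination of monomials in the $hb_i$, whose highest term in reverse lexicographic order is $hb_\lambda$ with coefficient $1$ (the factor $1/2$ being cancelled by the doubled first column of the Jacobi-Trudi matrix). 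Pushing through the homomorphism $\Lambda \to \cd(A)$ yields $S_\lambda = \tilde{h}_\lambda + \sum_{\mu \lhd \lambda} a_\mu \tilde{h}_\mu$. Establishing the analogous triangular expansion $\widetilde{Q}_\lambda = \tilde{h}_\lambda + \sum_{\mu \lhd \lambda} b_\mu \tilde{h}_\mu$ would then force $k_\lambda = 1$ by matching coefficients of $\tilde{h}_\lambda$.

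The main obstacle, as I see it, is precisely this triangular expansion for $\widetilde{Q}_\lambda$. In the HOMFLYPT case it is a direct consequence of Lukac's isomorphism identifying $Q_\lambda$ with the ordinary Schur function, but in the Dubrovnik setting an analogous isomorphism is essentially what Conjecture \ref{conj:schureigenvalues} would imply, so it cannot be invoked here. I would derive the expansion independently by induction on $|\lambda|$ using the branching formula of Theorem \ref{thm:bmwbasis}(2): taking annular closures converts it into a recursion relating $\widetilde{Q}_{\lambda+\square}$ to $\tilde{h}_{(1)} \widetilde{Q}_\lambda$ plus cap-correction terms involving the $\widetilde{Q}_{\lambda-\square'}$ and the quantum-trace ratios $\langle \nu\rangle/\langle \lambda\rangle$. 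Careful bookkeeping in the commutative algebra $\cd(A)$ should show that the cap-correction terms contribute only monomials $\tilde{h}_\mu$ with $\mu \lhd \lambda + \square$, so that $\tilde{h}_{\lambda+\square}$ appears in $\widetilde{Q}_{\lambda+\square}$ with coefficient exactly $1$ by the inductive hypothesis. This is the step that requires genuine Dubrovnik skein computation rather than a formal import from the type-A story.
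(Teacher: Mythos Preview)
Your first step matches the paper exactly: invoke Conjecture \ref{conj:schureigenvalues} and the $1$-dimensionality of the $\phi$-eigenspaces to conclude $S_\lambda = k_\lambda \widetilde{Q}_\lambda$ for some scalar $k_\lambda$.

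For the second step, however, the paper takes a different and much shorter route. Rather than comparing leading monomials in the $\tilde{h}_\mu$, it compares Pieri-type multiplication rules. Koike--Terada give $S_\mu S_\square = \sum_{\mu'} S_{\mu'}$ summed over $\mu'$ obtained by adding or removing a single box, while the annular closure of the branching formula in Theorem \ref{thm:bmwbasis}(2) gives the identical rule $\widetilde{Q}_\mu \widetilde{Q}_\square = \sum_{\mu'} \widetilde{Q}_{\mu'}$. Since $S_\square = \widetilde{Q}_\square$ by definition, iterating yields $\sum_\lambda d_\lambda S_\lambda = S_\square^n = \widetilde{Q}_\square^n = \sum_\lambda d_\lambda \widetilde{Q}_\lambda$ with $d_\lambda \in \Z_{>0}$. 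Substituting $S_\lambda = k_\lambda \widetilde{Q}_\lambda$ and using linear independence of the $\widetilde{Q}_\lambda$ (Theorem \ref{thm:ann_basis}) forces $k_\lambda = 1$.

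Your approach has two genuine gaps. First, the algebraic independence of the $\tilde{h}_i$ is \emph{not} inherited from the $z_i$; it is equivalent to injectivity of the map $\Lambda \to \cd(A)$, which the paper explicitly flags as conjectural and which is essentially a consequence of the very corollary you are trying to prove. Second, your inductive scheme for the triangular expansion $\widetilde{Q}_\lambda = \tilde{h}_\lambda + (\text{lower})$ does not close: the branching formula expresses $\widetilde{Q}_\lambda \tilde{h}_1$ as a sum of several $\widetilde{Q}_{\lambda \pm \square}$, and isolating a single $\widetilde{Q}_{\lambda+\square}$ requires subtracting other $\widetilde{Q}_{\mu'}$ of the same size $|\lambda|+1$, to which the inductive hypothesis on $|\lambda|$ does not apply. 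Moreover, $\tilde{h}_\lambda \tilde{h}_1$ is only equal to $\tilde{h}_{\lambda+\square}$ when the added box starts a new row, so for other box additions there is no obvious leading-term match. The paper's Pieri argument sidesteps both issues by expanding a single element, $\tilde{h}_1^n$, in the basis $\{\widetilde{Q}_\lambda\}$ whose linear independence is already established.
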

\begin{proof}
This proof is a generalization of the arguement given in \cite{Luk05}. Firstly, the statement is true when $\lambda$ is either the empty partition or the unique partition $\square$ of $1$, simply by definition of $S_\lambda$. Koike and Terada provide the structure constants with respect to the $S_\lambda$ \cite{KT87}. This multiplication rule has as a special case the formula
\begin{equation}
S_\mu S_\square = \sum_{\mu'} S_{\mu'}
\end{equation}
where the sum runs over all $\mu'$ obtained by either adding or removing one square from $\mu$. The annular closure applied to the branching formula from \cite{BB01} shows the same formula
\begin{equation}
\widetilde{Q}_\mu \widetilde{Q}_\square = \sum_{\mu'} \widetilde{Q}_{\mu'}.
\end{equation}
Setting $\mu=\square$ implies the equations
\begin{align}
S_\square^n &= \sum_\lambda d_\lambda S_\lambda \label{eq:lukeq1} \\
\widetilde{Q}_\square^n &= \sum_\lambda d_\lambda \widetilde{Q}_\lambda . \label{eq:lukeq2}
\end{align}
where the scalars $d_\lambda$ are positive integers and the sums range over partitions $\lambda$ such that there exists an up-down tableau of length $n$ and shape $\lambda$. As stated above, Conjecture \ref{conj:schureigenvalues} states that $S_\lambda$ and $\widetilde{Q}_\lambda$ are contained in the same eigenspace of $\phi$. \cite{LZ02} shows that each eigenspace is $1$-dimensional, so
\begin{equation} \label{eq:lukeq3}
S_\lambda = k_\lambda \widetilde{Q}_\lambda
\end{equation}
for some scalar $k_\lambda$. Now, $S_\square = \widetilde{Q}_\square$ by definition of $S_\square$, so we may set Equations \eqref{eq:lukeq1} and \eqref{eq:lukeq2} equal to eachother. After expressing $S_\lambda$ in terms of $\widetilde{Q}_\lambda$ using Equation \eqref{eq:lukeq3}, we may write
\begin{equation}
\sum_\lambda d_\lambda (1-k_\lambda) \widetilde{Q}_\lambda = 0.
\end{equation}
Since the $\widetilde{Q}_\lambda$ are linearly independent \cite{LZ02}, it must be true that $k_\lambda=1$ for all $\lambda$ appearing in the sum. For each partition $\lambda$ there exist some up-down tableau of shape $\lambda$, so each $k_\lambda$ appears in the sum for some $n$. Therefore, $k_\lambda=1$ for all partitions $\lambda$. This completes the proof. 
\end{proof}

Using the machinery built in this section, we can prove a special case of Conjecture \ref{conj:schureigenvalues}.

\begin{proposition}
The conjecture holds when $\lambda$ has length at most $2$.
\end{proposition}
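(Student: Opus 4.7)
The base cases $\ell(\lambda)\leq 1$ are immediate: for $\lambda=\emptyset$ one has $\phi(1)=\delta_\cd=c_\emptyset$, and for $\lambda=(n)$ the Jacobi--Trudi formula \eqref{eq:jacobitrudi} reduces to $sb_{(n)}=hb_n$, so $S_{(n)}=\tilde h_n=\widetilde Q_{(n)}$, and the eigenvalue claim is precisely part (2) of Theorem \ref{thm:ann_basis}.

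For $\ell(\lambda)=2$, write $\lambda=(n,m)$ with $n\geq m\geq 1$ and abbreviate $\tilde h_i^\circ := \tilde h_i-\tilde h_{i-2}$ and $h(i,k)^\circ := \tilde h_{i+k}^\circ+\tilde h_{i-k}^\circ$. Expanding \eqref{eq:jacobitrudi} under the homomorphism $hb_i\mapsto\tilde h_i$ yields
\[
2\,S_{(n,m)} \;=\; h(n,0)^\circ\,h(m-1,1)^\circ \;-\; h(n,1)^\circ\,h(m-1,0)^\circ,
\]
a $2\times2$ determinantal identity in $\cc$. My plan is to lift this to a determinant in $\ca$, apply $\textrm{cl}_\ca$, and then invoke the machinery of the previous lemmas. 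Concretely, consider
\[
D^\circ \;:=\; (h(n,0)^\circ\cdot e)\,t(m-1,1)^\circ \;-\; (h(m-1,0)^\circ\cdot e)\,t(n,1)^\circ \;\in\;\ca,
\]
where $t(i,k)^\circ := t_{i+k}^\circ+t_{i-k}^\circ$ and $t_j^\circ := t_j-t_{j-2}$. Distributing $t(i,1)^\circ = h(i,1)^\circ\cdot e - e\cdot h(i,1)^\circ$ and closing via the geometric identities $\textrm{cl}_\ca((x\cdot e)(y\cdot e))=\phi(xy)$ and $\textrm{cl}_\ca((x\cdot e)(e\cdot y))=\phi(x)\,y$ (the meridian produced by closing the strand only links the ``$\cdot e$'' side) gives
\[
\textrm{cl}_\ca(D^\circ) \;=\; 2\,\phi(S_{(n,m)}) \;-\; \phi\bigl(h(n,0)^\circ\bigr)\,h(m-1,1)^\circ \;+\; \phi\bigl(h(m-1,0)^\circ\bigr)\,h(n,1)^\circ.
\]

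Independently, I evaluate $\textrm{cl}_\ca(D^\circ)$ by applying the $k=0$ case of Lemma \ref{lemma:detgymnastics4} in its $\circ$-incarnation (obtained as an alternating combination under the shift $i\mapsto i-2$ in each row); this rewrites $D^\circ$ with a second column consisting of $s^{2}\varsigma^+(i,0)^\circ+s^{-2}\varsigma^-(i,0)^\circ$. Closing and invoking Lemma \ref{lemma:skewclosures} expands the $\varsigma^\pm$ contributions in the basis $\{\tilde h_j\}$ with explicit coefficients $\omega_j^\pm,\upsilon_j^\pm$. The correction terms $\phi(h(i,0)^\circ)=2(c_{(i)}\tilde h_i - c_{(i-2)}\tilde h_{i-2})$ come from the length-one case already established. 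Substitution then produces an explicit expansion of $\phi(S_{(n,m)})$ as a $\Q(s,v)$-linear combination of products $\tilde h_a\tilde h_b$, which I compare to $c_{(n,m)}S_{(n,m)}$, where the contents of $(n,m)$ are $\{0,1,\ldots,n-1\}$ in row $0$ and $\{-1,0,\ldots,m-2\}$ in row $1$.

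The main obstacle will be this final matching. After substitution the coefficients of the various $\tilde h_a\tilde h_b$ involve intricate expressions in $\omega^\pm,\upsilon^\pm,\alpha,\beta_n,\bar\beta_n$, and many subtle cancellations must occur before the sum telescopes to $c_{(n,m)}S_{(n,m)}$. The identities $\alpha_n=\omega_n^++\omega_n^-=\upsilon_n^++\upsilon_n^-$ from the remark after Lemma \ref{lemma:skewclosures}, together with the algebraic relation $(\bar\beta_{n+1}-\beta_{n+1})(s-s^{-1}\beta_n)=-(s-s^{-1})$ from Section \ref{sec:powersumelements}, will drive the cancellations, but the bookkeeping has to be carried out carefully, with separate treatment of the boundary cases $m\leq 3$ (where some $\tilde h_{m-k}$ vanish) and $n=m$ (where a row-exchange symmetry appears). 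Once $\phi(S_{(n,m)})=c_{(n,m)}S_{(n,m)}$ is established, the distinct-eigenvalue property from \cite{LZ02} forces $S_{(n,m)}$ into the one-dimensional $c_{(n,m)}$-eigenspace, as predicted.
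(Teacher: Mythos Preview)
Your approach is in the same spirit as the paper's---lift the Jacobi--Trudi determinant to $\ca$, manipulate using the determinant lemmas, close via $\cl_\ca$, and match coefficients---but your execution is more circuitous than necessary and explicitly unfinished.

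The paper works not with your $D^\circ$ but directly with $S_\lambda\cdot e - e\cdot S_\lambda \in \ca$. The advantage is that closing this gives exactly $\phi(S_\lambda) - \delta_\cd S_\lambda$, with no correction terms like your $\phi(h(i,0)^\circ)h(m-1,1)^\circ$ to subtract off afterward. The paper then decomposes this commutator via multilinearity into two $2\times 2$ determinants with entries in $\ca$, applies Equation \eqref{eq:skewcommutator2} and only Lemma \ref{lemma:detgymnastics3} (not the heavier Lemma \ref{lemma:detgymnastics4} you invoke) to the second column, and closes using Lemmas \ref{lemma:tclosure} and \ref{lemma:skewclosures}. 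The result is a four-term expression in $\tilde h_a\tilde h_b$, $\tilde h_a\tilde h_{b-2}$, $\tilde h_{a+1}\tilde h_{b-1}$, $\tilde h_{a-1}\tilde h_{b-1}$, and each coefficient is checked by hand to equal $c_\lambda-\delta_\cd$. That final check is short: the first coefficient $\alpha_a + s^2\omega_b^+ + s^{-2}\omega_b^-$ telescopes directly to the content sum, and the others are shown equal to it using $\alpha_n = \omega_n^+ + \omega_n^- = \upsilon_n^+ + \upsilon_n^-$.

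Your route would likely work too, but the ``$\circ$-incarnation'' of Lemma \ref{lemma:detgymnastics4} you appeal to is not stated anywhere and would itself need justification, and by your own admission the decisive coefficient matching---the actual content of the proof---is left undone. As written this is a plausible plan, not a proof.
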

\begin{proof}
The statement is trivial if $\lambda$ has length $1$ by definition of the map. Assume $\lambda = (a, b)$. Apply Equation \eqref{eq:jacobitrudi}.
\[
S_\lambda = 
\begin{vmatrix}
\frac{1}{2}\tilde{h}_\lambda(0,0) & \tilde{h}_\lambda(0,1) \\
\frac{1}{2}\tilde{h}_\lambda(1,0) & \tilde{h}_\lambda(1,1)
\end{vmatrix}
\]
Use multilinearity of the determinant as we did above to decompose the commutator $S_\lambda \cdot e - e \cdot S_\lambda$.
\begin{equation*}
S_\lambda \cdot e - e \cdot S_\lambda = 
\begin{vmatrix}
\frac{1}{2} t_\lambda(0,0) & e \cdot \tilde{h}_\lambda(0,1) \\
\frac{1}{2} t_\lambda(1,0) & e \cdot \tilde{h}_\lambda(1,1)
\end{vmatrix}
+
\begin{vmatrix}
\frac{1}{2}\tilde{h}_\lambda(0,0) \cdot e & t_\lambda(0,1) \\
\frac{1}{2}\tilde{h}_\lambda(1,0) \cdot e & t_\lambda(1,1)
\end{vmatrix} \\
\end{equation*}
The entries in the second column of the second determinant need to be split using Equation \eqref{eq:skewcommutator2} into a form which is compatible with Lemma \ref{lemma:detgymnastics3}. The right-hand side becomes
\[
\begin{vmatrix}
\frac{1}{2} t_\lambda(0,0) & e \cdot \tilde{h}_\lambda(0,1) \\
\frac{1}{2} t_\lambda(1,0) & e \cdot \tilde{h}_\lambda(1,1)
\end{vmatrix}
+
\begin{vmatrix}
\frac{1}{2}\tilde{h}_\lambda(0,0) \cdot e & \frac{1}{2} \varsigma_\lambda^+(0,0) \\
\frac{1}{2}\tilde{h}_\lambda(1,0) \cdot e & \frac{1}{2} \varsigma_\lambda^+(1,0)
\end{vmatrix}
+
\begin{vmatrix}
\frac{1}{2}\tilde{h}_\lambda(0,0) \cdot e & \frac{1}{2} \varsigma_\lambda^-(0,0) \\
\frac{1}{2}\tilde{h}_\lambda(1,0) \cdot e & \frac{1}{2} \varsigma_\lambda^-(1,0)
\end{vmatrix}.
\]
Apply Lemma \ref{lemma:detgymnastics3} to the second two terms:
\[
\begin{vmatrix}
\frac{1}{2} t_\lambda(0,0) & e \cdot \tilde{h}_\lambda(0,1) \\
\frac{1}{2} t_\lambda(1,0) & e \cdot \tilde{h}_\lambda(1,1)
\end{vmatrix}
+ s^2
\begin{vmatrix}
e \cdot \frac{1}{2}\tilde{h}_\lambda(0,0) & \frac{1}{2} \varsigma_\lambda^+(0,0) \\
e \cdot \frac{1}{2}\tilde{h}_\lambda(1,0) & \frac{1}{2} \varsigma_\lambda^+(1,0)
\end{vmatrix}
+ s^{-2}
\begin{vmatrix}
e \cdot \frac{1}{2}\tilde{h}_\lambda(0,0) & \frac{1}{2} \varsigma_\lambda^-(0,0) \\
e \cdot \frac{1}{2}\tilde{h}_\lambda(1,0) & \frac{1}{2} \varsigma_\lambda^-(1,0)
\end{vmatrix}.
\]
Now apply $\cl_\ca$ and use the property that it is a right $\cc$-module homomorphism to remove $e$ from the $\tilde{h}_i$ terms. Apply Lemmas \ref{lemma:tclosure} and \ref{lemma:skewclosures} for the second equality.
\begin{align*}
&\quad \textrm{cl}_\ca ( S_\lambda \cdot e - e \cdot S_\lambda ) \\
&=
\begin{vmatrix}
\cl_\ca \big( \frac{1}{2} t_\lambda(0,0) \big) & \tilde{h}_\lambda(0,1) \\
\cl_\ca \big( \frac{1}{2} t_\lambda(1,0) \big) & \tilde{h}_\lambda(1,1)
\end{vmatrix}
+ s^2
\begin{vmatrix}
\frac{1}{2}\tilde{h}_\lambda(0,0) & \cl_\ca \big( \frac{1}{2} \varsigma_\lambda^+(0,0) \big) \\
\frac{1}{2}\tilde{h}_\lambda(1,0) & \cl_\ca \big( \frac{1}{2} \varsigma_\lambda^+(1,0) \big)
\end{vmatrix} \\
& \quad + s^{-2}
\begin{vmatrix}
\frac{1}{2}\tilde{h}_\lambda(0,0) & \cl_\ca \big( \frac{1}{2} \varsigma_\lambda^-(0,0) \big) \\
\frac{1}{2}\tilde{h}_\lambda(1,0) & \cl_\ca \big( \frac{1}{2} \varsigma_\lambda^-(1,0) \big)
\end{vmatrix} \\
&=
\begin{vmatrix}
\cl_\ca \big( \frac{1}{2} t(a,0) \big) & \tilde{h}(a,1) \\
\cl_\ca \big( \frac{1}{2} t(b-1,0) \big) & \tilde{h}(b-1,1)
\end{vmatrix}
+ s^2
\begin{vmatrix}
\frac{1}{2}\tilde{h}(a,0) & \cl_\ca \big( \frac{1}{2} \varsigma^+(a,0) \big) \\
\frac{1}{2}\tilde{h}(b-1,0) & \cl_\ca \big( \frac{1}{2} \varsigma^+(b-1,0) \big)
\end{vmatrix} \\
& \quad + s^{-2}
\begin{vmatrix}
\frac{1}{2}\tilde{h}(a,0) & \cl_\ca \big( \frac{1}{2} \varsigma^-(a,0) \big) \\
\frac{1}{2}\tilde{h}(b-1,0) & \cl_\ca \big( \frac{1}{2} \varsigma^-(b-1,0) \big)
\end{vmatrix} \\
&=
\begin{vmatrix}
\alpha_a \tilde{h}_a & \tilde{h}_{a+1} + \tilde{h}_{a-1} \\
\alpha_{b-1} \tilde{h}_{b-1} & \tilde{h}_{b} + \tilde{h}_{b-2}
\end{vmatrix}
+ s^2
\begin{vmatrix}
\tilde{h}_a & \omega_{a+1}^+ \tilde{h}_{a+1} + \upsilon^+_{a-1} \tilde{h}_{a-1} \\
\tilde{h}_{b-1} & \omega_{b}^+ \tilde{h}_{b} + \upsilon^+_{b-2} \tilde{h}_{b-2}
\end{vmatrix} \\
& \quad + s^{-2}
\begin{vmatrix}
\tilde{h}_a & \omega_{a+1}^- \tilde{h}_{a+1} + \upsilon^-_{a-1} \tilde{h}_{a-1} \\
\tilde{h}_{b-1} & \omega_{b}^- \tilde{h}_{b} + \upsilon^-_{b-2} \tilde{h}_{b-2}
\end{vmatrix} \\
&= \big( \alpha_a \tilde{h}_a (\tilde{h}_{b} + \tilde{h}_{b-2}) - \alpha_{b-1} \tilde{h}_{b-1} (\tilde{h}_{a+1} + \tilde{h}_{a-1}) \big) \\
& \quad + s^2 \big( \tilde{h}_a (\omega_{b}^+ \tilde{h}_{b} + \upsilon^+_{b-2} \tilde{h}_{b-2}) - \tilde{h}_{b-1}( \omega_{a+1}^+\tilde{h}_{a+1} + \upsilon^+_{a-1} \tilde{h}_{a-1}) \big) \\
& \quad + s^{-2} \big( \tilde{h}_a (\omega_{b}^- \tilde{h}_{b} + \upsilon^-_{b-2} \tilde{h}_{b-2}) - \tilde{h}_{b-1} (\omega_{a+1}^- \tilde{h}_{a+1} + \upsilon^-_{a-1} \tilde{h}_{a-1}) \big) \\
&= (\alpha_a + s^2 \omega_b^+ + s^{-2} \omega_b^-) \tilde{h}_a \tilde{h}_b + (\alpha_a + s^2 \upsilon_{b-2}^+ + s^{-2} \upsilon_{b-2}^-) \tilde{h}_a \tilde{h}_{b-2} \\
&\quad - (\alpha_{b-1} + s^{2} \omega_{a+1}^+ + s^{-2} \omega_{a+1}^-) \tilde{h}_{a+1} \tilde{h}_{b-1} - (\alpha_{b-1} + s^2 \upsilon_{a-1}^+ + s^{-2} \upsilon_{a-1}^-) \tilde{h}_{a-1} \tilde{h}_{b-1} 
\end{align*}
The conjecture claims that all of these coefficients are equal to the appropriate eigenvalue minus the value of the unknot $c_\lambda - \delta_\cd$. At first glance, it's not at all obvious that these coefficients are even equal to each other. Nevertheless, we will show it is true by looking at each coefficient one by one, going in the order in which they are written. Firstly, observe:
\begin{align*}
\alpha_a + s^2 \omega_b^+ + s^{-2}\omega_b^- &= \{a\} (v^{-1}s^{a-1} - v s^{1-a} ) - \{b\}vs^{3-b} + \{b\}v^{-1}s^{b-3} \\
&= (s^a - s^{-a}) (v^{-1}s^{a-1} - v s^{1-a} ) -  (s^b - s^{-b})(vs^{3-b} - v^{-1}s^{b-3}) \\
&= (v^{-1} s^{2a-1} - vs - v^{-1}s^{-1} + vs^{1-2a}) - \\
& \quad\,\, (vs^3 - v^{-1}s^{2b-3} - vs^{3-2b} + v^{-1}s^{-3}) \\
&= v^{-1} \big( (s^{2a-1} - s^{-1}) + (s^{2b-3} - s^{-3}) \big) - v \big( ( s - s^{1-2a}) + (s^3 - s^{3-2b}) \big) \\
&= v^{-1} \left( (s-s^{-1}) \sum_{i=0}^{a-1} s^{2a - 2 - 2i} + (s-s^{-1}) \sum_{i=0}^{b-1} s^{2b-4-2i} \right) \\
&\quad v \left( (s-s^{-1}) \sum_{i=0}^{a-1} s^{-2i} + (s-s^{-1}) \sum_{i=0}^{b-1} s^{2-2i} \right) \\
&= (s-s^{-1}) \left( v^{-1} \sum_{x \in \lambda} s^{2 \cn(x)} - v \sum_{x \in \lambda} s^{-2\cn(x)} \right) \\
&= c_\lambda - \delta_\cd.
\end{align*}
Therefore, the first coefficient is what we claimed. Now we can simply show the other coefficients are equal to the first. Let's show the second coefficient is equal to the first:
\begin{align*}
&\quad-\alpha_{b-1} - s^2 \omega_{a+1}^+ - s^{-2} \omega_{a+1}^- \\
&= -\{b-1\}(v^{-1}s^{b-2}-vs^{2-b}) + \{a+1\}vs^{2-a} - \{a+1\}v^{-1}s^{a-2} \\
&= (s^{1-b} - s^{b-1} ) (v^{-1}s^{b-2} - vs^{2-b}) + (s^{a+1}-s^{a+1})(vs^{2-a} - v^{-1}s^{a-2}) \\
&= (v^{-1}s^{-1} - v^{-1}s^{2b-3} - vs^{3-2b} + vs) - (vs^3 - vs^{1-2a} - v^{-1}s^{2a-1} + v^{-1}s) \\
&= v^{-1} \big( (s^{2a-1} - s^{-1}) + (s^{2b-3} - s^{-3}) \big) - v \big( ( s - s^{1-2a}) + (s^3 - s^{3-2b}) \big) \\
&= c_\lambda - \delta_\cd
\end{align*}

The next two are slightly different. We will avoid the complexity of $\upsilon_i^\pm$ by using the relation $\upsilon_i^+ + \upsilon_i^- = \alpha_i$. Observe:
\begin{align*}
& \quad \alpha_a + s^2 \upsilon_{b-2}^+ + s^{-2}\upsilon_{b-2}^- \\
&= \{a\}(v^{-1}s^{a-1} - vs^{1-a}) + (s^{b} - s^{-b}) [b-1] (\bar{\beta}_{b-1} - \beta_{b-1}) \big(\delta_\cd + s^{b-3}[b-2](sv^{-1}+\beta_{b-2})\big) \\
&= \{a\}(v^{-1}s^{a-1}-vs^{1-a}) \\
&\quad\,\, + (s^{b} - s^{-b})(s^{b-2}-s^{2-b})^{-1}(s^{b-2}-s^{2-b})[b-1] (\bar{\beta}_{b-1} - \beta_{b-1}) \big(\delta_\cd + s^{b-3}[b-2](sv^{-1}+\beta_{b-2})\big) \\
&= \{a\}(v^{-1}s^{a-1}-vs^{1-a}) + (s^{b} - s^{-b})(s^{b-2}-s^{2-b})^{-1}(\upsilon_{b-2}^+ + \upsilon_{b-2}^-) \\
&= \{a\}(v^{-1}s^{a-1}-vs^{1-a}) + (s^{b} - s^{-b})(s^{b-2}-s^{2-b})^{-1}\alpha_{b-2} \\
&= \{a\}(v^{-1}s^{a-1}-vs^{1-a}) - (s^{b} - s^{-b})(s^{b-2}-s^{2-b})^{-1}(s^{b-2}-s^{2-b})(vs^{3-b}-v^{-1}s^{b-3}) \\
&= (s^a - s^{-a})(v^{-1}s^{a-1}-vs^{1-a}) - (s^{b} - s^{-b})(vs^{3-b}-v^{-1}s^{b-3}) \\
&= c_\lambda - \delta_\cd
\end{align*}
and a similar computation holds for the final coefficient. This completes the proof.
\end{proof}

\section{Meridians and Jucys-Murphy Elements of $BMW_n$}

Consider the elements $M_{n, i} = \sigma_{i-1} \cdots \sigma_1 \sigma_1 \cdots \sigma_{i-1} \in BMW_n$ for $2 \leq i \leq n$ and set $M_{n,1} = \id_n$. The set of $M_{n, i}$ in $BMW_n$ are called the \textbf{BMW Jucys-Murphy elements}, which are invertible and which generate a commutative subalgebra (see \cite{IMO14} for more details). 
\begin{figure}[h]
\centering
$M_{n,i} = \pic[5]{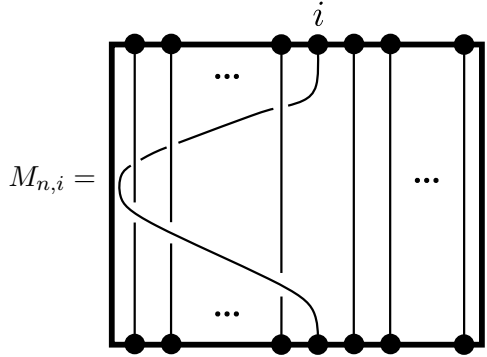}$
\caption{The Jucys-Murphy element wraps the $i^\textrm{th}$ strand around the first $i-1$ strands.}
\end{figure}

There is a homomorphism $\psi_n: \cd(A) \to BMW_n$ defined so that $\psi_n(x)$ is the element $x$ threaded by a meridian around the identity of $BMW_n$.
\begin{figure}
\centering
$\psi_n(x) = \pic[4]{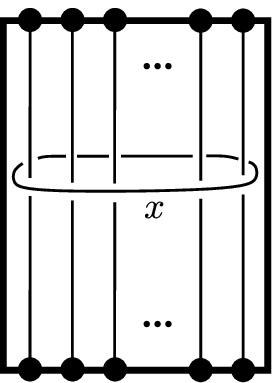}$
\caption{The image of $x$ under the homomorphism $\psi_n$.}
\end{figure}
The image of $\psi_n$ clearly lies in the center of $BMW_n$. In the next proposition, we give a simple application which expresses $\psi_n(\widetilde{P}_k)$ as a linear combination of the BMW Jucys-Murphy elements, their inverses, and the identity. One way to interpret the proposition is that we express $(s^k - s^{-k})^{-1}\big(\psi_n(\widetilde{P}_k) - \langle \widetilde{P}_k \rangle \id_n \big)$ as a power sum symmetric polynomial in the elements $\{ v^{\pm 1} M_{n,i}^{\pm 1} \}_i$. This is a Dubrovnik generalization of the HOMFLYPT counterpart found in \cite{Mor02b}.

\begin{proposition} \label{prop:murphy}
For any $k \geq 1$, and any $n \geq 1$, we may represent $\psi_n(\widetilde{P}_k)$ as
\begin{equation}
\psi_n(\widetilde{P}_k) = \langle \widetilde{P}_k \rangle \id_n + (s^k - s^{-k}) \sum_{i=2}^n v^{-k} M_{n, i}^k - v^k M_{n, i}^{-k}
\end{equation}
where $\langle \widetilde{P}_k \rangle$ is the evaluation of $\widetilde{P}_k$ in a $3$-ball. 
\end{proposition}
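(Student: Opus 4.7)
The proof will proceed by induction on $n$. For the base case $n=1$, the meridian $\psi_1(\widetilde{P}_k)$ wraps a single strand whose endpoints lie on the top and bottom faces of the ambient cube; it can therefore be isotoped off one end and evaluated in a $3$-ball, yielding $\psi_1(\widetilde{P}_k) = \langle \widetilde{P}_k \rangle \id_1$, which matches the formula since the sum $\sum_{i=2}^1$ is empty.

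For the inductive step, I would introduce a wiring $\tau: \ca_\cd \to BMW_{n+1}$ that embeds the single strand of $\ca_\cd$ as the $(n+1)$-th strand of $BMW_{n+1}$, with the hole of the annulus positioned over the vertical region containing the first $n$ strands. Under this wiring one computes that $\tau(e) = \id_{n+1}$; that $\tau(a^j) = v^{-j} M_{n+1,n+1}^j$ for every $j \in \Z$, identifying the strand winding $j$ times around the hole with the $(n+1)$-th Jucys--Murphy element up to a framing correction; and that the two actions $\widetilde{P}_k \cdot e$ and $e \cdot \widetilde{P}_k$ correspond, depending on the radial side on which $\widetilde{P}_k$ lies relative to $e$, to the meridian around just the first $n$ strands and to the meridian around all $n+1$ strands respectively --- that is, $\tau(\widetilde{P}_k \cdot e) = \iota_n(\psi_n(\widetilde{P}_k))$ and $\tau(e \cdot \widetilde{P}_k) = \psi_{n+1}(\widetilde{P}_k)$, where $\iota_n: BMW_n \hookrightarrow BMW_{n+1}$ is the standard inclusion obtained by adding a trivial strand on the right. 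Applying $\tau$ to the commutator relation of Theorem~\ref{thm:powersumcommutator} then yields
\[
\psi_{n+1}(\widetilde{P}_k) - \iota_n\big(\psi_n(\widetilde{P}_k)\big) = (s^k - s^{-k})\big(v^{-k} M_{n+1,n+1}^k - v^k M_{n+1,n+1}^{-k}\big).
\]
Combining this with the inductive hypothesis for $\psi_n$, and observing that $M_{n,i}$ and $M_{n+1,i}$ coincide under $\iota_n$ for $i \le n$, the desired formula for $\psi_{n+1}(\widetilde{P}_k)$ follows at once.

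The main obstacle is confirming the framing correction $\tau(a) = v^{-1} M_{n+1,n+1}$. This comes down to comparing the blackboard framing on $a$ (normal to the plane of the annulus in its standard embedding) with the blackboard framing inherited from the braid diagram on the $(n+1)$-th strand: a single revolution of that strand around the interior strands introduces exactly one Reidemeister-$1$ twist relative to the braid framing, whose sign is fixed by the orientation convention on the wiring and which contributes $v^{-1}$ via the Dubrovnik framing relation. Once this single computation is in hand, the identifications of $\tau$ on the other ingredients are geometrically immediate, and the remainder of the proof is straightforward bookkeeping.
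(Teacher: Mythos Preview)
Your inductive step is exactly the paper's argument, just phrased via an explicit wiring $\tau:\ca_\cd\to BMW_{n+1}$ rather than in pictures: the identifications $\tau(e)=\id_{n+1}$, $\tau(a^j)=v^{-j}M_{n+1,n+1}^{\,j}$, $\tau(\widetilde{P}_k\cdot e)=\iota_n\big(\psi_n(\widetilde{P}_k)\big)$, $\tau(e\cdot\widetilde{P}_k)=\psi_{n+1}(\widetilde{P}_k)$ are precisely what the diagrammatic manipulation is doing, and your discussion of the single-twist framing correction is correct.

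The gap is the base case. A meridian encircling the single strand in $BMW_1$ cannot be ``isotoped off one end'': the endpoints of the arc lie on $\partial(I^2\times I)$, ambient isotopy is required to fix the boundary pointwise, and the complement of that arc in the cube is a solid torus in which the meridian is homotopically nontrivial. Concretely, for $k=1$ Theorem~\ref{thm:ann_basis} with $\lambda=\square$ gives $\psi_1(\widetilde{P}_1)=\big(\delta_\cd+(s-s^{-1})(v^{-1}-v)\big)\id_1\neq\langle\widetilde{P}_1\rangle\,\id_1$. The paper does not handle $n=1$ by isotopy at all; it runs the same move as in the inductive step, pushing $\widetilde{P}_k$ past the lone strand via Theorem~\ref{thm:powersumcommutator} and picking up $(s^k-s^{-k})(v^{-k}-v^k)\id_1$ from the framing-corrected $a^{\pm k}$ terms. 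That extra term is exactly the $i=1$ summand $v^{-k}M_{n,1}^k-v^kM_{n,1}^{-k}$ (recall $M_{n,1}=\id_n$), so the paper's own computation in fact produces the formula with the sum beginning at $i=1$; the lower limit $i=2$ in the displayed statement appears to be a misprint, and your recursion is consistent with that corrected reading once the base case is repaired.
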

\begin{proof}
Induct on $n$. For $n=1$, simply apply Theorem \ref{thm:powersumcommutator} (via wirings coming from the skein functors described in Section \ref{sec:foundations}) and apply framing relations to get the desired expression (imagine removing the first $n-1$ strands from the diagrams which follow). Assume the induction hypothesis, that the statement is true for all $n<i$. Apply Theorem \ref{thm:powersumcommutator} to move $\widetilde{P}_k$ past the $n^{\mathrm{th}}$ strand and apply framing relations.
\begin{align*}
& \quad\,\, \pic[2.4]{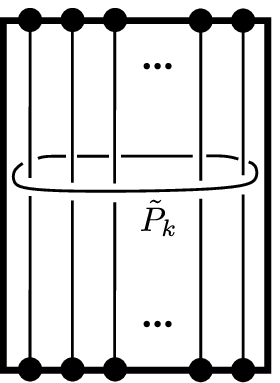} \\
&= \pic[2.4]{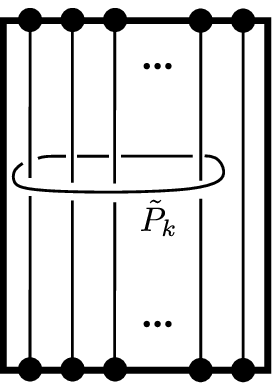} + (s^k - s^{-k}) \left( \pic[2.4]{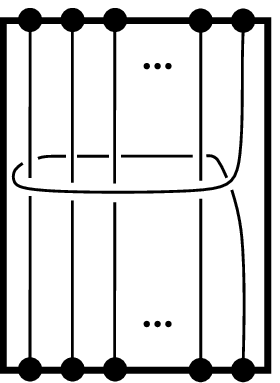}\right)^k - (s^{k}-s^{-k}) \left(\pic[2.4]{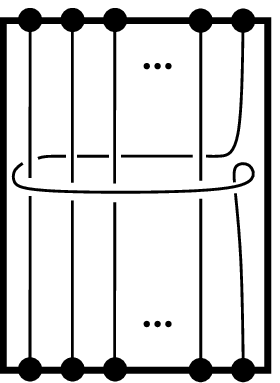}\right)^k \\
&= \pic[2.4]{meridianpkpass} + (s^k - s^{-k}) v^{-k} \left(\pic[2.4]{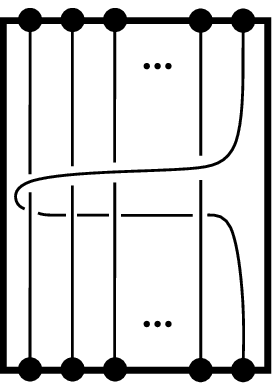}\right)^k - (s^{k}-s^{-k}) v^k \left(\pic[2.4]{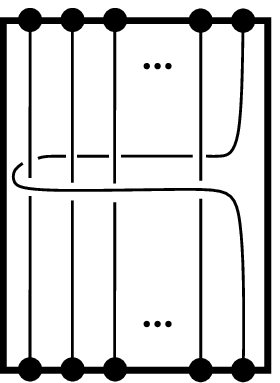}\right)^k \\
&= \psi_{n-1}(\widetilde{P}_k) \otimes \id_1  + (s^k - s^{-k}) (v^{-k}M_{n,n}^k - v^k M_{n,n}^{-k}) \\
&= \left( \langle \widetilde{P}_k \rangle \id_{n-1} + (s^k - s^{-k}) \sum_{i=2}^{n-1} v^{-k} M_{n-1,i}^k - v^k M_{n-1,i}^{-k} \right) \otimes \id_1 \\
&\quad\,\, + (s^k - s^{-k}) (v^{-k}M_{n,n}^k - v^k M_{n,n}^{-k}) \\
&= \langle \widetilde{P}_k \rangle \id_{n-1} \otimes \id_1 + (s^k - s^{-k}) \sum_{i=2}^{n-1} \left( v^{-k} M_{n-1,i}^k \otimes \id_1 - v^k M_{n-1,i}^{-k}\otimes \id_1 \right) \\
&\quad\,\, + (s^k - s^{-k}) (v^{-k}M_{n,n}^k - v^{-k} M_{n,n}^{-k}) \\
&= \langle \widetilde{P}_k \id_n + (s^k - s^{-k}) \sum_{i=2}^{n-1} \left( v^{-k} M_{n,i}^k - v^k M_{n,i}^{-k} \right) + (s^k - s^{-k}) (v^{-k}M_{n,n}^k - v^k M_{n,n}^{-k}) \\
&=  \langle \widetilde{P}_k \rangle \id_n + (s^k - s^{-k}) \sum_{i=2}^n v^{-k} M_{n, i}^k - v^k M_{n, i}^{-k}
\end{align*}
\end{proof}

For a minimal idempotent $\tilde{y}_\lambda \in BMW_n$, let $\widetilde{P}_k \cdot \tilde{y}_\lambda := \psi_n(\widetilde{P}_k) \tilde{y}_\lambda$, which is essentially a meridian threaded by $\widetilde{P}_k$ wrapped around $\tilde{y}_\lambda$. The next proposition shows that $\tilde{y}_\lambda$ is an eigenvector with respect to the assignment $\tilde{y}_\lambda \mapsto \widetilde{P}_k \cdot \tilde{y}_\lambda$. Note that all the eigenvalues are distinct from each other. The annular closure of these eigenvalue equations generalize the result found in \cite{LZ02}, which they show for $k=1$. 
\begin{proposition} \label{prop:zlgeneralization}
In $BMW_n$, we have that 
\[
\widetilde{P}_k \cdot \tilde{y}_\lambda = \left( \langle \widetilde{P}_k \rangle + (s^k - s^{-k})\sum_{\square \in \lambda} \left(  v^{-k} s^{2k \cn(\square)} - v^k s^{-2k \cn(\square)} \right) \right) \tilde{y}_\lambda
\]
where $\cn(\square)$ is the content of the cell $\square$ in the Young diagram of $\lambda$. 
\end{proposition}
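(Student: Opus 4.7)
The plan is to exploit the fact that $\psi_n(\widetilde{P}_k)$ is central in $BMW_n$: the closed meridian component can be isotoped freely past any braid strand, so $\psi_n(\widetilde{P}_k)\tilde{y}_\lambda = c_\lambda \tilde{y}_\lambda$ for some scalar $c_\lambda$, and the \emph{same} scalar multiplies every path idempotent $q_\Lambda$ sitting inside $\tilde{y}_\lambda$. To compute $c_\lambda$ I would choose $\Lambda = (\Lambda_1,\ldots,\Lambda_n=\lambda)$ to be a \emph{standard} Young tableau, so that each step $i$ adds a cell $\square_i$ (no removals) and the collection $\{\square_1,\ldots,\square_n\}$ enumerates the cells of $\lambda$ bijectively.

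The heart of the argument is the claim that each Jucys--Murphy element $M_{n,i}$ acts on this standard-tableau idempotent $q_\Lambda$ by the scalar $s^{2\cn(\square_i)}$. Under the natural inclusion $\iota:BMW_i\hookrightarrow BMW_n$ (append $n-i$ trivial strands) we have $\iota(M_{i,i}) = M_{n,i}$, and the recursive construction $\fa_\Lambda = (\fa_{\Lambda'}\otimes \id_1)\tilde{y}_{\Lambda_i}$ from Theorem \ref{thm:bmwbasis} shows that $q_\Lambda$ passes through the idempotent $(\tilde{y}_{\Lambda_{i-1}}\otimes \id_1)$ at the $i$-th level. The braiding coefficient formula of Theorem \ref{thm:bmwbasis}(3), applied inside $BMW_i$ to $\Lambda_i = \Lambda_{i-1}+\square_i$, then delivers exactly $s^{2\cn(\square_i)}$, and raising to the $k$-th power is immediate because the $M_{n,i}$ commute pairwise. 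Substituting the eigenvalues $s^{2k\cn(\square_i)}$ and $s^{-2k\cn(\square_i)}$ for $M_{n,i}^{\pm k}$ into the expansion of Proposition \ref{prop:murphy} and using the bijection $i\leftrightarrow\square_i$ yields
\[
\psi_n(\widetilde{P}_k)\,q_\Lambda = \Big(\langle \widetilde{P}_k\rangle + (s^k-s^{-k})\sum_{\square\in\lambda}\big(v^{-k}s^{2k\cn(\square)} - v^k s^{-2k\cn(\square)}\big)\Big)q_\Lambda,
\]
which identifies $c_\lambda$, and by centrality the same scalar multiplies $\tilde{y}_\lambda$ itself.

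The main obstacle is turning the braiding coefficient formula into a genuine eigenvalue statement for $M_{n,i}$ acting on $q_\Lambda$. As stated, Theorem \ref{thm:bmwbasis}(3) computes only the sandwiched quantity $\tilde{y}_\mu(\tilde{y}_\lambda\otimes \id_1)M_{n,n}(\tilde{y}_\lambda\otimes \id_1)\tilde{y}_\mu = s^{2\cn(\square)}\tilde{y}_\mu$, so one must verify that the outer idempotents $(\tilde{y}_{\Lambda_{i-1}}\otimes \id_1)$ and $\tilde{y}_{\Lambda_i}$ are already absorbed into $q_\Lambda$ by the recursive structure of $\fa_\Lambda$ and $\fb_\Lambda$ at every level of the inclusion tower $BMW_1\subset BMW_2\subset\cdots\subset BMW_n$. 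This is conceptually transparent but requires careful bookkeeping; once done, the result follows cleanly as above.
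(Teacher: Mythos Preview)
Your approach is correct but genuinely different from the paper's. The paper does \emph{not} pass through Proposition~\ref{prop:murphy}; instead it inducts on $|\lambda|$, inserts $\tilde{y}_\mu$ with $\mu=\lambda-\square$ via the branching formula, then applies Theorem~\ref{thm:powersumcommutator} directly to push $\widetilde{P}_k$ past the last strand. The three resulting diagrams are handled by the induction hypothesis, the braiding coefficient (iterated $k$ times at the top level only), and its mirror. So the paper never needs the statement that every $M_{n,i}$ acts diagonally on $q_\Lambda$; it only uses the sandwiched identity $\tilde{y}_\lambda(\tilde{y}_\mu\otimes\id_1)M_{n,n}^k\tilde{y}_\lambda = s^{2k\cn(\square)}\tilde{y}_\lambda$ once per inductive step.

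Your route trades this induction for the Jucys--Murphy eigenvalue statement, which is cleaner once established but, as you note, is not literally the content of Theorem~\ref{thm:bmwbasis}(3). The bookkeeping you flag can be completed within the paper's framework: since $M_{n,n}$ commutes with $BMW_{n-1}$, it commutes with every $q_{\Lambda'}\otimes\id_1$; by the branching formula, the restriction of $q_{\Lambda'}\otimes\id_1$ to the top block $M_{d_\lambda}$ (where $|\lambda|=n$) is exactly the single diagonal unit $q_\Lambda$, because a standard tableau of shape $\lambda$ is determined by its restriction to step $n-1$. Hence $M_{n,n}$ is diagonal on that block, and the diagonal entries are read off from $q_\Lambda M_{n,n} q_\Lambda$ via the braiding coefficient. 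Reducing $M_{n,i}$ to $M_{i,i}$ on $q_{\Lambda|_i}$ then finishes the argument. So both proofs are complete; the paper's is more self-contained, while yours isolates the representation-theoretic content (Jucys--Murphy spectrum) as a reusable lemma.
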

\begin{proof}
We will prove the formula by inducting on the size of $\lambda$. For $|\lambda| = 1$, we have that $\tilde{y}_\lambda = \id_1$, so simply apply Proposition \ref{prop:murphy} and verify that the fomulas coincide. Now proceed assuming the induction hypothesis. Let $\mu$ be the partition obtained by removing a choice of cell $\square$ from the Young diagram of $\lambda$. First apply the branching formula (see Corollary 5.3 in \cite{BB01}). Then apply Theorem \ref{thm:powersumcommutator} to move $\widetilde{P}_k$ past the $n^{\mathrm{th}}$ strand and apply framing relations.
\begin{align*}
&\quad\,\, \pic[2.4]{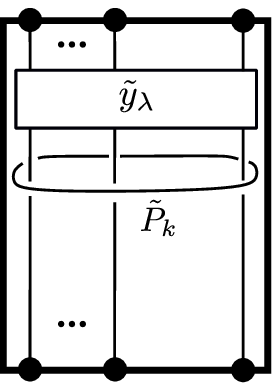} \\
&= \pic[2.4]{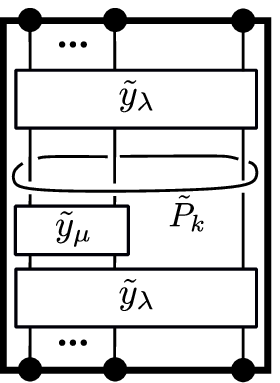} \\
&= \pic[2.4]{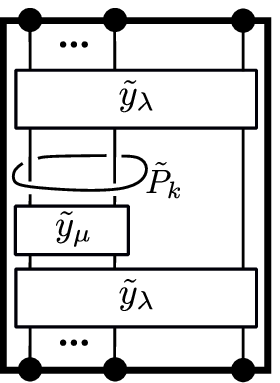} + (s^k - s^{-k}) \pic[2.4]{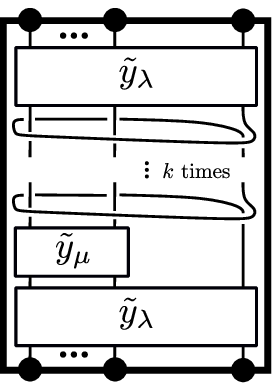} - (s^k - s^{-k}) \pic[2.4]{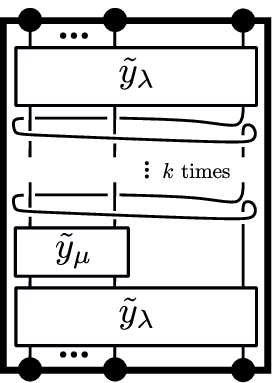} \\
&= \pic[2.4]{ylambdameridianpkbranchpass.eps} + (s^k - s^{-k}) v^{-k} \pic[2.4]{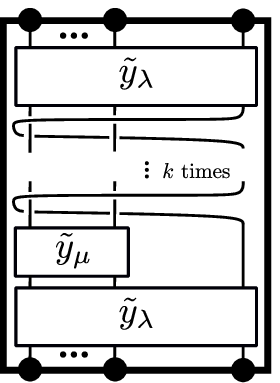} - (s^k - s^{-k}) v^k \pic[2.4]{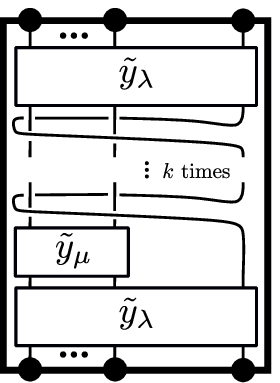}
\end{align*}
The first diagram is equal to $ \left( \langle \widetilde{P}_k \rangle+ (s^k - s^{-k})\sum_{\square \in \mu} \left(  v^{-k} s^{2k \cn(\square)} - v^k s^{-2k \cn(\square)} \right) \right) \tilde{y}_\lambda$ by the induction hypothesis. The second diagram is equal to $s^{2k \cn(\square)} \tilde{y}_\lambda$, which may be shown by repeating the main computation found in the proof of Theorem 5.5 of \cite{AM98} $k$ times. The third diagram is is mirror map applied to the second diagram, so it is equal to $s^{-2k \cn(\square)} \tilde{y}_\lambda$. This completes the proof.
\end{proof}


\renewcommand{\bibname}{References}

\addcontentsline{toc}{chapter}{\bibname}

\bibliographystyle{plain}
\bibliography{somerefs}

\end{document}